\pgfplotsset{compat = newest}
\numberwithin{equation}{section}
\theoremstyle{plain}
\newtheorem{theorem}{Theorem}[section]
\newtheorem{lemma}[theorem]{Lemma}
\newtheorem{proposition}[theorem]{Proposition}
\theoremstyle{definition}
\newtheorem{definition}[theorem]{Definition}
\newtheorem{example}[theorem]{Example}
\theoremstyle{remark}
\newtheorem{remark}[theorem]{Remark}
\newcounter{tmp}
\newcommand{\lt}{\left}
\newcommand{\rt}{\right}
\newcommand{\N}{\mathbb{N}}
\newcommand{\R}{\mathbb{R}}
\newcommand{\T}{\mathbb{T}}
\newcommand{\Z}{\mathbb{Z}}
\newcommand{\cA}{\mathcal{A}}
\newcommand{\cC}{\mathcal{C}}
\newcommand{\cL}{\mathcal{L}}
\newcommand{\cD}{\mathcal{D}}
\newcommand{\cM}{\mathcal{M}}
\newcommand{\cP}{\mathcal{P}}
\newcommand{\cS}{\mathcal{S}}
\newcommand{\cT}{\mathcal{T}}
\newcommand{\rd}{\mathrm{d}}
\newcommand{\supp}{{\rm supp\,}}
\newcommand{\Diam}{\textrm{Diam\,}}
\newcommand{\abs}{{\sf abs}}
\newcommand{\BL}{\mathcal{BL}}
\newcommand{\TV}{{\sf TV}}
\newcommand{\thickhline}{%
    \noalign {\ifnum 0=`}\fi \hrule height 1pt
    \futurelet \reserved@a \@xhline
}
\newcolumntype{"}{@{\hskip\tabcolsep\vrule width 1pt\hskip\tabcolsep}}
\begin{document}
\title[VEs on DGMs]
  {Vlasov Equations on Digraph Measures}
\author{Christian Kuehn}
\author{Chuang Xu}
\address{
Faculty of Mathematics\\
Technical University of Munich, Munich\\
Garching bei M\"{u}nchen\\
85748, Germany.}
\email{ckuehn@ma.tum.de}
\email{xuc@ma.tum.de}

\date{\today}

\noindent

\begin{abstract}
Many science phenomena are described as interacting particle systems (IPS). The mean field limit (MFL) of large all-to-all coupled deterministic IPS is given by the solution of a PDE, the \emph{Vlasov Equation (VE)}. Yet, many applications demand IPS coupled on networks/graphs. In this paper, we are interested in IPS on a sequence of directed graphs, or digraphs for short. It is interesting to know, how the limit of a sequence of digraphs associated with the IPS influences the macroscopic MFL. This paper studies VEs on a generalized digraph, regarded as limit of a sequence of digraphs, which we refer to as a \emph{digraph measure} (DGM) to emphasize that we work with its limit via measures. We provide (i) unique existence of solutions of the VE on continuous DGMs, and (ii) discretization of the solution of the VE by empirical distributions supported on solutions of an IPS via ODEs coupled on a sequence of digraphs converging to the given DGM. Our result extends existing results on one-dimensional Kuramoto-type networks coupled on dense graphs. Here we allow the underlying digraphs to be \emph{not necessarily dense} which include many interesting graphical structures such that stars, trees and rings, which have been frequently used in many sparse network models in finance, telecommunications, physics, genetics, neuroscience, and social sciences. A key contribution of this paper is a nontrivial generalization of Neunzert's in-cell-particle approach for all-to-all coupled \emph{indistinguishable} IPS with \emph{global} Lipschitz continuity in Euclidean spaces to distinguishable IPS on heterogeneous digraphs with local Lipschitz continuity, via a \emph{measure-theoretic viewpoint}. The approach together with the metrics is different from the known techniques in $L^p$-functions using graphons and their generalization via harmonic analysis of locally compact Abelian groups. Finally, to demonstrate the wide applicability, we apply our results to various models in higher-dimensional Euclidean spaces in epidemiology, ecology, and social sciences.
\end{abstract}

\keywords{Mean field limit, sparse networks, limit of graph sequences, Vlasov equation, interacting particle systems, epidemic model, Lotka-Volterra model, Hegselmann-Krause model.}

\maketitle
\tableofcontents

\section{Introduction}

Dynamical systems on networks are ubiquitous with wide applications in epidemiology, ecology, physics, social sciences, engineering, computer science, economics, neuroscience, etc. \cite{N18}. Networks can be dense or sparse. Almost all real world networks are \emph{sparse} \cite{L12}. For instance, social networks like friendship/collaboration network are sparse \cite{AN20}, since each person is connected to a finite number $m$ of other persons on the network of the entire population, and such $m$ is reasonably independent of the total population size on the earth. Generally, these social networks are locally dense while globally sparse \cite{P20}. Particle systems with short range interactions also form a sparse network \cite{ABPRS05}.

Some network models are coupled on graphs which are \emph{undirected}, since the bidirectional interaction of two nodes is symmetric, e.g., the oscillator network \cite{TSS20}, where these particles can be \emph{indistinguishable}. Other models admit directed network structure, e.g., the epidemic network, where the functional response of healthy individual and unhealthy individual may be different \cite{N18}. More importantly, these networks usually have a huge number of nodes, and it can be intractable to study these networks analytically or numerically. Both, the heterogeneity and density/sparsity of the network, may have a non-trivial impact on the dynamics on/of it. Let us illustrate this point by a simple Kuramoto oscillator network \cite{N18,SE19,TSS20}:
\begin{equation}\label{oscillator}
  \dot{\phi}_i(t)=\omega_i+\frac{1}{N}\sum_{j=1}^Na_{i,j}^N\sin(\phi_j(t)-\phi_i(t)),
\end{equation}
where $\omega_i$ is the natural frequency of the $i$-th oscillator, and $A^N=(a_{i,j}^N)$ is the adjacency matrix associated with a \emph{non-dense} network $G^N$ of the following four types: ring, star, tree, and ring of \emph{cliques}\footnote{A clique of a graph $G$ is an induced subgraph of $G$ that is complete.} (see Figure~\ref{fig-1}). These networks are widely used in modelling real-world phenomena (electrical network, interbank networks, television/computer networks, genetic network, social network) in finance, telecommunications, physics, genetics, neuroscience and social sciences \cite{FH16,SE19,P20,B20}.
  \tikzset{
    net node/.style = {circle, inner sep=0pt, outer sep=0pt, ball color=white},
    net root node/.style = {net node},
    net connect/.style = {draw=black},
  treenode/.style = {align=center, inner sep=0pt, text centered,
    font=\sffamily},
  arn_n/.style = {treenode, circle, font=\sffamily\bfseries, draw=black,
    fill=white, text width=1.5em},
   arn_r/.style = {treenode, circle, draw=black,
   fill=red, text width=1.5em},
  arn_x/.style = {treenode, circle, draw=black,
   fill=blue, opacity=0.7, text width=1.5em}
}
  \begin{figure}[h]
    \centering
    \begin{tikzpicture}
      \foreach \i in {1,...,10}
        \path (-126+\i*36:2) node (n\i) [arn_n,thick] {\i};
      \path [thick,net connect] (n1) -- (n2) -- (n3) -- (n4) -- (n5) -- (n6) -- (n7) -- (n8) -- (n9) -- (n10) -- (n1);
    \end{tikzpicture}\hspace{1cm}
    \begin{tikzpicture}
      \node (root) [arn_r,thick] {1};
      \foreach \i in {2,...,11}
        \draw [thick,-latex] (root)[arn_n] -- (-162+\i*36:2) node [arn_n] {\i};
    \end{tikzpicture}
    \vspace{.5cm}
\begin{tikzpicture}[-,thick,>=stealth',level/.style={sibling distance = 6cm/#1,
  level distance = 1.5cm},scale=.8]
\node [arn_r] {1}
    child{ node [arn_n] {2}
            {child{ node [arn_n] {4}
            	{
                child{ node [arn_x] {8}}
				child{ node [arn_x] {9}}
            }}
            child{ node [arn_n] {5}
				{
				child{ node [arn_x] {10}}
                child{ node [arn_x] {11}}
            }}
         }
    }
    child{ node [arn_n] {3}
            {child{ node [arn_n] {6}
            	{
				child{ node [arn_x] {12}}
                child{ node [arn_x] {13}}
            }}
            child{ node [arn_n] {7}
				{
				child{ node [arn_x] {14}}
                child{ node [arn_x] {15}}
            }}
         }
    };
\end{tikzpicture}
\hspace{.5cm}
     \begin{tikzpicture}
      \foreach \i in {1,4,...,28}
     {
      \path (-102+\i*12:2) node (n\i) [arn_r,thick] {\i};
      }
      \foreach \j in {2,5,...,29}
      {
      \path (-120+\j*12:3) node (n\j) [arn_n,thick] {\j};
      }
      \foreach \k in {3,6,...,30}
      {
      \path (-114+\k*12:3) node (n\k) [arn_n,thick] {\k};
      }
      \path [thick,net connect] (n1) -- (n4) -- (n7) -- (n10) -- (n13) -- (n16) -- (n19) -- (n22) -- (n25) -- (n28) -- (n1);
             \foreach \i/\j/\k  in 
             {1/2/3,4/5/6,7/8/9,10/11/12,13/14/15,16/17/18,19/20/21,22/23/24,25/26/27,28/29/30}
       {
       \path [thick,net connect](n\i) -- (n\j) -- (n\k) -- (n\i);
       }
     \end{tikzpicture}
        \caption{Types of non-dense networks/graphs $G^N$ with $N$ labelled nodes. Top Left: Ring network. Top Right: Star network. Middle: Hierarchical tree network. Bottom: Ring network of cliques of size 3. Every node of a ring network has exactly two neighbors (degree 2). Every node except the red central node of a star network has exactly one neighbor while the central node has $N-1$ neighbors. For the binary tree network, every node at all levels except the first and the last has 3 neighbors while the red root node has 2 neighbors and the blue nodes at the last level has 1 neighbor. For the ring of cliques network, red nodes on the ring have 4 neighbors while nodes off the ring have 2 neighbors.}\label{fig-1}
  \end{figure}It is noteworthy that the star network is neither sparse nor dense while a tree network, a ring network or a ring network of cliques is sparse (all nodes have uniformly bounded neighbors), c.f. \cite{L12} (see also Definition~\ref{graphing} and the comment that follows in Section~\ref{sect-preliminary}). An interesting yet natural question is: As the number $N$ of oscillators coupled on a graph $G^N$ tends to infinity, what is the dynamics of a \emph{typical} oscillator? It is known that the empirical distribution
\[\mu_N(t)=\frac{1}{N}\sum_{i=1}^N\delta_{\phi_i(t)},\]
is used to capture the dynamics of such a typical oscillator, where $\delta_{\phi_i(t)}$ is the Dirac measure at $\phi_i(t)$. The weak limit of $\mu_N(t)$ is the so-called \emph{mean field limit} (MFL). Furthermore, it is interesting to know how the typical dynamics of the network depends on the network structure. In order to have some understanding of this question, it seems fundamental to characterize the MFL of \eqref{oscillator}.

More generally, when the number of nodes of a sequence of  graphs increases to infinity, a dynamical system coupled on a sequence of graphs may also approach a \emph{limit} system. If the convergence is pointwise, then such a limit is the so-called \emph{continuum limit} of the particle system \cite{M19}. If the convergence is weak, then this limit is the MFL.

In the paper, we will try to address the arguably most fundamental topic regarding MFL of networks: Well-posedness and approximation of the MFL of finite dimensional dynamical systems on networks with heterogeneity ranging from dense to sparse. In subsequent works that follow, we will address the question how the heterogeneity and density/sparsity have an impact on the fine dynamics (e.g., synchronization, non-synchronization, bifurcation) of the network, in more specific contexts (e.g., for Kuramoto oscillators).

Next, we will briefly review some of the relevant results on MFL of dynamical systems on networks.
\subsection{Brief review of MFL of networks on graph limits}
Classical mathematical results on mean field theory of interacting particle systems (IPS) can be traced back at least to the 1970s~\cite{BH77,D79,N84}, where the MFL of dynamical systems coming from particle physics is studied. The relevant limit differential equation describing the MFL of the (all-to-all) coupled particle systems is the so-called \emph{Vlasov Equation} (VE) (on the complete graphs) \cite{D79,N84}. It is worth mentioning that despite \cite{D79} and \cite{N84} addressed the same problem independently with similar ideas, the metrics used in \cite{D79} and \cite{N84} are different: the Wasserstein metric is used in \cite{D79} while the bounded Lipschitz metric is used in \cite{N84}.
Systematic rigorous analysis of MFL of dynamical systems coupled on heterogeneous graphs seems to appear rather late, until the emergence of studies on \emph{limits of graph sequences} \cite{L12,KLS19,BS20}. In order to fully understand the impact of a sequence of discrete objects (graphs) on the dynamical behavior of the systems, one may need to find a way to represent these discrete objects in terms of analytical forms, which has only been developed recently, e.g., by Lov\'{a}sz \cite{L12}, Szegedy \cite{SL06}, and Backhausz \cite{BS20}. Most of these works focus on representing finite graphs and limit of a sequence of dense graphs as a function, so-called \emph{graphons} \cite{SL06,L12}. With this analytical representation of the discrete object, more works on MFL of dynamical systems (deterministic or stochastic) on graphons (deterministic or random) started to appear, see e.g.~\cite{KM18,BCW20}. With the most recent development of \emph{graphops} (for precise definitions, see \cite{BS20} or Section~\ref{sect-preliminary} below), operator representation of graph limits which may include sparse graphs, dense graphs, and those of intermediate density/sparsity, \cite{GK20} investigated MFLs of Kuramoto networks. We point out that thus far there have been several studies on MFLs of systems on graphons with wide applications, in control theory \cite{CH19}, neural networks and machine learning \cite{RCR20}, etc. It is worth pointing out that \cite{KM18} seems the first addressing the graphon MFL of dynamical systems introducing a \emph{fiber characteristic equation}, at least in the context of Kuramoto models. In this paper, we will try to address the MFL problem from the viewpoint of measure theory, i.e., by treating the digraph limits as a measure-valued function (\emph{digraph measures} as we name below), which naturally includes limits of classes of sparse graphs. Note that it is not new to regard limit of sparse graphs as measures \cite{KLS19,BS20}. Nevertheless, it seems our paper is the first to address MFL problem taking the measure analytic representation of graphs, \emph{by finding a proper complete metric space to work with}. To find a suitable complete metric space with a good metric seems the arguably most crucial part in successfully addressing the MFL problem with heterogeneity \cite{KM18,GK20}.

Next, let us review two relevant papers \cite{KM18,GK20} in a bit more detail. In \cite{KM18}, by extending Neunzert's idea \cite{N84} and introducing a fiber characteristic equation, the aforementioned two topics were investigated for a particular model (the Kuramoto oscillator model) coupled on graphons with a one-dimensional circular phase space; in \cite{GK20}, the results of \cite{KM18} were further extended to graphops, where harmonic analysis on locally compact Abelian groups was used to address the discretization of the graphops by graphons with kernels \cite{GK20}, under certain assumptions. This paper is motivated by \cite{N84,KM18,GK20}. We would like to stress that the metric for the analytical representation of graphs we use is different from \cite{KM18,GK20}.

\subsection{Contribution and main challenges}

In this paper, we aim to generalize Neunzert's approach to study well-posedness and discretization of particle systems on arbitrary finite-dimensional Euclidean space with a \emph{compact positively invariant subset} coupled on generalized digraphs. Despite the framework proposed in this paper is in the Euclidean space and the underlying digraph measures are assumed to be continuous in the vertex variable, it can be extended to IPS on \emph{Riemannian manifolds} such as sphere or torus coupled on digraph measures \emph{with finite discontinuity points} (see comments following the assumptions in Section~\ref{subsubsect-assumption} below), in a straightforward way with standard technicalities. Hence results in our paper apply potentially to e.g., Kuramoto models of higher-order interactions \cite{BBK20}, or the opinion dynamics model on the sphere \cite{CLP15}.

To provide some more intuitive understanding of digraph measures, let us revisit the graph sequences with the above four heterogeneity types in Figure~\ref{fig-1} and describe their weak limits as a measure-valued function. For graphs $G^N$ with a heterogeneous structure given in one of the four types in Figure~\ref{fig-1}, we assign uniform weight $N$ whenever two nodes are connected (i.e., $a_{ij}^N=0$ or $a_{ij}^N=N$). These graphs can be represented either as graphons, or measure-valued functions as proposed in this paper, both depending on the choice of the underlying vertex space $X$ as well as the reference measure $\mu_X$ assigned. To make a comparison, we list two representations of $G^N$ in Table~\ref{table-representations} to show the difference.
 \setlength{\tabcolsep}{9pt}
\renewcommand{\arraystretch}{1.8} \begin{table}[h]
  \begin{center}
  \resizebox{.95\columnwidth}{!}{
\begin{tabular}{p{2cm}|p{3.7cm}|p{3cm}|p{.8cm}|p{4.5cm}}
  \thickhline
 Network Type & $a^N_{ij}$ ($1\le i<j\le N$) for symmetric $G^N$ & $W^N$ & $X$&$\eta^N_x$ \\\hline
  Ring &  $N\mathbbm{1}_{\{1,N-1\}}(j-i)$& $\sum_{i,j=1}^Na^N_{i,j}\mathbbm{1}_{I^N_i\times I^N_j}$ & $\T$ &$\sum_{i=1}^N\mathbbm{1}_{A^N_i}(x)\frac{1}{N}\sum_{j=1}^Na^N_{i,j}\delta_{\frac{j-1}{N}}$ \\\hline
  Star & $N\mathbbm{1}_{\{1\}}(i)$ & $\sum_{i,j=1}^Na^N_{i,j}\mathbbm{1}_{I^N_i\times I^N_j}$ & $[0,1]$ & $\sum_{i=1}^N\mathbbm{1}_{I^N_i}(x)\frac{1}{N}\sum_{j=1}^Na^N_{i,j}\delta_{\frac{j-1}{N}}$\\\hline
  Binary tree & $N\mathbbm{1}_{\{i,i+1\}}(j-i)$ & $\sum_{i,j=1}^Na^N_{i,j}\mathbbm{1}_{I^N_i\times I^N_j}$ & [0,1] & $\sum_{i=1}^N\mathbbm{1}_{I^N_i}(x)\frac{1}{N}\sum_{j=1}^Na^N_{i,j}\delta_{\frac{j-1}{N}}$ \\\hline
 Ring of cliques & $\begin{cases}
    N\mathbbm{1}_{\{1,2,3,N-3\}}(j-i),\\ \hspace{1.8cm} \text{if}\ \frac{i+2}{3}\in\N,\\
    N\mathbbm{1}_{\{1\}}(j-i),\\ \hspace{1.8cm} \text{if}\ \frac{i+1}{3}\in\N,\\
    0,\hspace{2.5cm} \text{else}.
  \end{cases}$ \vspace{.3ex}& $\sum_{i,j=1}^Na^N_{i,j}\mathbbm{1}_{I^N_i\times I^N_j}$ & $\T$ & $\sum_{i=1}^N\mathbbm{1}_{A^N_i}(x)\frac{1}{N}\sum_{j=1}^Na^N_{i,j}\delta_{\frac{j-1}{N}}$ \\
  \thickhline
\end{tabular}
}
    \caption{Comparison between graphon and graph measure. The unit circle $\mathbb{T}$ is identified with $[0,1[$ \emph{via} the natural projection $\theta\to \text{e}^{2\pi{\rm i}}$. $I^N_i=\Bigl[\frac{i-1}{N},\frac{i}{N}\Bigr[$, for $1\le i<N$, $I^N_N=\Bigl[\frac{N-1}{N},1\Bigr]$ provides a partition for $[0,1]$ while $A^N_i=\Bigl[\frac{i-1}{N},\frac{i}{N}\Bigr[$, $1\le i\le N$, provides a partition for $\T$.}\label{table-representations}
    \end{center}
  \end{table}
\begin{table}[h]
  \begin{center}
\begin{tabular}{p{2.5cm}|p{3cm}|p{.8cm}|p{4.5cm}}
  \thickhline
 Network Type &  $W^{\infty}$ & $X$&$\eta^{\infty}_x$ \\\hline
  Ring &  -- & $\T$ &$2\delta_x$ \\\hline
  Star & -- & $[0,1]$ & $\begin{cases}
    \textnormal{--},\quad\ \text{if}\  x=0,\\
    \delta_0,\quad \text{if}\ 0<x\le1,
  \end{cases}$\vspace{.3ex} \\\hline
  Binary tree & -- & [0,1] & $\begin{cases}
2\delta_0,\hspace{1.55cm} \text{if}\ x=0,\\
2\delta_{2x}+\delta_{x/2},\quad \text{if}\ 0<x\le1/2,\\
\delta_{x/2},\hspace{1.46cm} \text{if}\ 1/2<x\le1.
  \end{cases}$\vspace{.3ex} \\\hline
 Ring of cliques  & -- & $\T$ & -- \\
  \thickhline
\end{tabular}
\caption{Limit of representations of graph sequence $(G^N)_N$. Here `--' stands for the non-existence and $\lambda$ is the uniform measure over $[0,1]$.}\label{table-limit}
    \end{center}
  \end{table}

We list the limit of $(G^N)_{N\in\N}$ \emph{via} different representations in Table~\ref{table-limit}. From Table~\ref{table-limit}, the sequence of $(G^N)_{N\in\N}$ represented as measure-valued functions has a limit $\eta$ on $X$ and it has at most three discontinuity points. In contrast, the sequence represented as graphons $W^N\in{\sf L}^p([0,1]^2)$ does not converge in ${\sf L}^p([0,1]^2)$ for any $1\le p\le \infty$.

We illustrate the power of our results to sparse networks via the model \eqref{oscillator}. Assume $(G^N)_N$ is a sequence of rings specified by $(a^N_{i,j})$ given in Table~\ref{table-representations}. The well-posedness as well as the approximation of the MFL of the Kuramoto oscillator network \eqref{oscillator} can be addressed using the methods proposed in this paper with slight adjustment. To be more precise, we can show that the mean field limit of \eqref{oscillator} is given by the weak solution to the following VE:
\begin{align}
 \label{VE-oscillator} &\frac{\partial\rho(t,x,\phi)}{\partial t}+\frac{\partial}{\partial \phi}\left(\rho(t,x,\phi)\Bigl(
 \omega+2\int_{\mathbb{T}}\rho(t,x,\psi)\sin(\psi-\phi)\rd\psi\Bigr)\right)=0, t\in(0,T],\ x\in\T,\\
 \nonumber&\hspace{12cm}\mathfrak{m}\text{-a.e.}\ \phi\in \mathbb{T},\\
\nonumber  &\rho(0,\cdot)=\rho_0(\cdot),
  \end{align}
where $\mathfrak{m}$ refers to the Haar measure on the circle $\mathbb{T}$. Let $X=\T$ and $\mu_X=\mathfrak{m}$. Using the approach presented in this paper, one can characterize the weak limit of $\mu^N_t$ as follows. Let \begin{equation}\label{oscillator-empirical}
(\nu^N_t)_x=\sum_{i=1}^N\mathbbm{1}_{A^N_i}(x)\delta_{\phi^N_i(t)}.\end{equation}
 be a piecewise constant measure-valued function on $X$. Then formally, we can represent the empirical distribution via an integral of the measure-valued function $(\nu^N_t)_x$: $$\mu^N_t=\int_X(\nu^N_t)_x\rd\mu_X(x).$$ We can show that $\nu^N_t$ converges in a certain \emph{weak} sense to the measure-valued function $\nu_t$, where $(\nu_t)_{x}=\rho(t,x,\cdot)\mathfrak{m}$ is absolutely continuous w.r.t. $\mathfrak{m}$ with density $\rho(t,x,\cdot)$ being the \emph{uniform weak solution} to \eqref{VE-oscillator} (see Theorem~\ref{th-C} in the next Section for details), provided
 $(\nu^N_0)_x$ converges to the initial distribution $(\nu_0)_x$ with density $\rho_0(x,\cdot)$ uniformly in $x\in X$. Hence the MFL $\mu^N_t(\cdot)$ converges in a certain weak sense to $(\int_X\rho(t,x,\cdot)\mu_X(x))\mathfrak{m}(\cdot)$, where the measure is well-defined via the integral of the density $\rho(t,x,\cdot)$. In contrast, one fails to capture the MFL of the oscillator network \eqref{oscillator} on rings, by taking the underlying graphs as graphons \cite{KM18}. Similarly, if the sequence of the underlying networks are binary trees, the approach presented in our paper also applies, by Table~\ref{table-limit}. This illustrates well that the analytical way of interpreting graphs together with the metric/topology utilized does have an impact on characterizing the dynamics of the MFL of the same network models.

Apart from the above, we like to mention that not all networks can be represented appropriately as measure-valued functions which admit another measure-valued function as a limit, e.g, the sequence of stars or the sequence of rings of cliques (see Table~\ref{table-limit}). An appropriate representation of these graph limits in a possibly larger space than the space of measure-valued functions, is desirable. This question itself may be of independent interest in graph theory. Moreover, it is also interesting to know, how much within the class of sparse networks can be represented appropriately as measure-valued functions that converge to another measure-valued function so that we may have an idea of the sharp boundary within the space of graphs of our setup. We leave both questions for our future research.

We now comment on the condition on \emph{compact positive invariant regions}. Systems without compact positive invariant regions seem technically formidable to directly apply Neunzert's approach, since respective \emph{finite} upper estimates via Gronwall inequalities may not be possible without globally bounded Lipschitz functions in the vector field as assumed in \cite{N84,KM18,GK20}. Nonetheless, most applications, e.g. population models or chemical models of mass-action kinetics are only locally Lipschitz but not globally Lipschitz, yet most of these dynamical systems naturally admit a compact positively invariant subset (e.g., the density of population or concentration of chemical species is reasonably bounded for all times). More importantly, MFL of these models arising in diverse areas of science, e.g., population biology, molecular biology, chemistry, etc, are interesting. Hence for the sake of the practitioners in these areas, rather than introducing more crude techniques (e.g., by restricting the entire phase space to be a large ball in the Euclidean space while controlling density flux leaving certain area in a subtle way) we utilize the compactness of a positively invariant subset and confine the initial distributions of the mean field equations to be supported on this subset.

Other than the above technical challenge, the arguably biggest difficulty which reflects the novelty of this paper lies in the generalization from a dense graph (graphon) to a graph limit that is not necessarily dense. As pointed out in \cite{M19}, no matter for continuum limit or mean field limit, the absolute continuity of underlying graph measures (i.e., a graphon-type assumption or an approximation by graphons), was crucial for all the previous Vlasov equations derived on graphs. It is believed \cite{M19} that results in \cite{KM18} cannot be extended to cases, where absolute continuity fails. The reason for this is that convergence results as in \cite{KM18} are established based on approximation theory of ${\sf L}^p$-functions, and the existence of an ${\sf L}^p$-integrable kernel is precisely the ${\sf L}^p$-graphon. That means the approach in dealing with the approximation of VEs in \cite{KM18} cannot be extended in any direct way. To overcome, or rather get round this difficulty, tools from harmonic analysis for operators on locally compact Abelian groups were used in \cite{GK20}. They successfully reduced the problem for graphops, which may not admit a kernel to the situation dealt with in \cite{KM18}, since under certain assumptions, graphops can be approximated by graphons, and thus the approximation problem can be solved. However, in general situations, the assumptions in \cite{GK20} are not always easy to verify.


As highlighted before, a natural analytical representation of the graph limit to work with in the context of dynamical systems is crucial and desirable. A smart choice of a complete metric space together with a good metric for the graph limits represented analytically to lie in is not trivial. To address this challenge, we consider in this paper graph limits purely from the perspective of measure theory: We regard a graph limit as a measure valued bounded (continuous) function. In doing so, the continuity on the vertex variable of the generalized graphs, the so-called \emph{digraph measures} (DGMs) (see Section~\ref{sect-preliminary} for the precise definition), is sufficient to guarantee well-posedness as well as discretization of Vlasov Equations on the DGMs. For the discretization result, we build upon the recently established results on deterministic empirical approximation of measures on Euclidean spaces \cite{XB19,C18}. We also point out that the IPS we study allow for distinguishable particles in terms of a directed generalized graph, whereas indistinguishable particles seem predominant in the literature where graphs are assumed to be symmetric.

Once we have successfully addressed the two main technical challenges, we carefully demonstrate how to apply our results to a wide variety of models ranging from epidemiology, ecology to social sciences (see Section~\ref{sect-applications}).

\subsection{Comparison with works in the literature}

We also compare our paper with the two aforementioned most relevant papers \cite{KM18,GK20}, from the technical perspective.

\begin{enumerate}
  \item[$\bullet$] In \cite{KM18}, the model is one dimensional and posed on a \emph{single} underlying generalized graph, which is symmetric and absolutely continuous (i.e., the graphon case is covered). In \cite{GK20}, the  \emph{single} underlying generalized graph is also symmetric (a certain class of graphops) with uniformly bounded fiber measures. In contrast, the main results in this paper hold for \emph{multiple} generalized digraphs which potentially are not symmetric.
  \item[$\bullet$] The topology utilized in \cite{GK20} which seems not easily metrizable, is also different from the uniform weak topology defined in this paper. Furthermore, the space of solutions of the VE is larger than those in \cite{KM18}, where all \emph{fiber measures} $(\nu_t)_x$  of the probability solutions of the VE are assumed to be probabilities (with normalized total variation norm) on the state space of the model. Indeed, $(\nu_t)_x$ stands for the distribution of particles at a location $x$, and in general particles may not be homogeneously distributed over all locations, e.g., one can even find no particles on certain locations in a sparse graph.
\item[$\bullet$] The reference probability measure on the vertex space $X$ is the Lebesgue measure on $[0,1]$ \cite{KM18} or the Haar measure on a locally compact Abelian group \cite{GK20}. In contrast, in our work the reference measure is not necessarily absolutely continuous w.r.t. the Lebesgue measure on the Euclidean space. Instead, the reference measure can be singular and discrete (see the examples in Section~\ref{sect-approximation-ode}). This demonstrates yet another advantage of the measure-theoretic viewpoint.
 \item[$\bullet$] The local Lipschitz continuity of functions assumed in this paper is weaker than the global ones generally assumed, e.g. in~\cite{KM18,GK20}. Such local Lipschitz assumptions are the only available ones in many applications (see Section~\ref{sect-applications}).
\end{enumerate}
In summary, we believe that a measure-theoretic approach can be extremely helpful to study a very large variety of IPS on graphs, as it exploits a natural analytical viewpoint of graph limits~\cite{BS20}, namely studying the graph limit purely \emph{via} fiber measures.

\section{Overview of the main results}
\begingroup
\setcounter{tmp}{\value{theorem}}
\setcounter{theorem}{0} 
\renewcommand\thetheorem{\Alph{theorem}}

Here, we first provide an informal overview of the assumptions and the main results of this paper. We also outline the general strategy in a bit more detail. Precise results will be stated in Sections~\ref{sect-setup}-\ref{sect-approximation-ode}.

\subsection{Summary of main results}

\subsubsection*{Assumptions}\label{subsubsect-assumption}

Let $\mathcal{T}=[0,T]$ be the time domain of the dynamics for some $T>0$, and $r_1,r_2,r\in\N$. Let $\mathfrak{m}$ be the Lebesgue measure on $\R^{r_2}$. Let $\mathfrak{B}(X)$ be the Borel sigma algebra of a metric space $X$, and $\cM_+(X)$ the space of all finite signed Borel measures on $X$. Let $\mathcal{B}(X_1,X_2)$ ($\mathcal{C}(X_1,X_2)$, respectively) the space of bounded measurable (continuous, respectively) $X_2$-valued functions on the space $X_1$. To provide the basic setup, we need to specify assumptions regarding the vertex space $X$ of the DGM $\eta$, the vertex dynamics phase space $Y$, the vector field $h$ for the vertex dynamics, the interaction forces $g_i$ among different vertices, and the nonlocal mapping $V$ defining the VE. Our goal is to construct a measure-valued solution $\nu_t$ to the VE and prove an approximation theorem of the VE via finite-dimensional ODEs. To achieve this, we make the following assumptions.

\medskip
\noindent{$\mathbf{(A1)}$} $(X,\mathfrak{B}(X),\mu_X)$ is a compact Polish probability space equipped with metric induced by the $\ell_1$-norm of $\R^{r_1}\supseteq X$.

\medskip
\noindent{$\mathbf{(A2)}$} $(t,\psi,\phi)\mapsto g_i(t,\psi,\phi)\in\R^{r_2}$ is continuous in $t\in\cT$, and locally Lipschitz continuous in $(\psi,\phi)\in \R^{2r_2}$ uniformly in $t$, i.e., for every $(\psi,\phi)\in \R^{2r_2}$, there exists a neighbourhood $(\psi,\phi)\ni\mathcal{N}\subseteq\R^{2r_2}$ such that
$$\sup_{t\in\cT}\sup_{\tiny\begin{array}{l}
  (\psi_1,\phi_1)\neq (\psi_2,\phi_2),\\
(\psi_1,\phi_1),(\psi_2,\phi_2)\in\mathcal{N}
\end{array}}\frac{|g_i(t,\psi_1,\phi_1)-g_i(t,\psi_2,\phi_2)|}{|(\psi_1,\phi_1)-(\psi_2,\phi_2)|}<\infty.$$

\medskip
\noindent{$\mathbf{(A3)}$} $(t,x,\phi)\mapsto h(t,x,\phi)\in\R^{r_2}$ is continuous in $t\in\cT$, and locally Lipschitz continuous in $\phi\in \R^{r_2}$ uniformly in $(t,x)$, i.e., for every $\phi\in \R^{r_2}$ for some $r_2\in\N$, there exists a neighbourhood $\phi\ni\mathcal{N}\subseteq\R^{r_2}$ such that $$\sup_{t\in\cT}\sup_{x\in X}\sup_{\tiny\begin{array}{l}
  \phi_1\neq \phi_2,\\
\phi_1,\phi_2\in\mathcal{N}
\end{array}}\frac{|h(t,x,\phi_1)-h(t,x,\phi_2)|}{|\phi_1-\phi_2|}<\infty.$$

\medskip
\noindent{$\mathbf{(A4)}$} $\eta=(\eta^1,\ldots,\eta^r)\in (\mathcal{B}(X,\cM_+(X)))^r$.

\medskip
\noindent{$\mathbf{(A4)'}$} $\eta=(\eta^1,\ldots,\eta^r)\in (\mathcal{C}(X,\cM_+(X)))^r$.

\medskip
\noindent{$\mathbf{(A5)}$} $\nu_{\cdot}\in \mathcal{C}(\cT,\mathcal{B}_{\mu_X,1}(X,\cM_+(\R^{r_2})))$ is uniformly compactly supported in the sense that
there exists a compact set $E_{\nu_{\cdot}}\subseteq\R^{r_2}$ such that $\cup_{t\in\R}\cup_{x\in X}\supp(\nu_t)_x\subseteq E_{\nu_{\cdot}}$.

\medskip
\noindent ($\mathbf{A6}$) There exists a convex compact set $Y\subseteq\R^{r_2}$\footnote{Here $r_2$ is the smallest dimension $l$ such that $Y\subseteq\R^l$.}
 such that for all $\nu_{\cdot}$ satisfying ($\mathbf{A5}$) uniformly supported within $Y$, the following inequality holds:
\[V[\eta,\nu_{\cdot},h](t,x,\phi)\cdot\upsilon(\phi)\le0,\quad \text{for all}\ t\in\mathcal{T},\ x\in X,\quad \phi\in\partial Y,\]
where $\partial Y=Y\cap\overline{\R^{r_2}\setminus Y}$, $\upsilon(\phi)$ is the outer normal vector at $\phi$, and
\begin{multline}\label{Eq-VOperator}
V[\eta,\nu_{\cdot},h](t,x,\phi)
=\sum_{i=1}^r\int_X\int_{\R^{r_2}}g_i(t,\psi,\phi)\rd(\nu_t)_y(\psi)\rd\eta^i_x(y)+h(t,x,\phi),\\ \quad t\in\cT, x\in X,\ \phi\in \R^{r_2}
\end{multline}

\medskip

\noindent{$(\mathbf{A7})$}  $(t,x,\phi)\mapsto h(t,x,\phi)\in\R^{r_2}$ is is continuous in $x$ uniformly in $\phi$:
\[\lim_{|x-y|\to0}\sup_{\phi\in Y}|h(t,x,\phi)-h(t,y,\phi)|=0,\quad t\in\cT,\]
where $Y$ is the compact set given in ($\mathbf{A6}$). Moreover, $h$ is integrable uniformly in $x$:
\[\int_0^T\int_Y\sup_{x\in X}|h(t,x,\phi)|\rd\phi\rd t<\infty.\]

Let us provide some intuitive explanation for these assumptions. Assumption $(\mathbf{A1})$ means that the underlying generalized digraphs (DGMs) have the same compact vertex space $X$. Such compactness is used in establishing discretization of DGMs. Note that if different DGMs $\eta^i$ have different vertex spaces $X^i\subseteq\R^{r_{1,i}}$, then one can take $X=\cup_{i=1}^r X^i\subseteq\R^{r_1}$ with the metric induced by the $\ell_1$-norm of $\R^{r_1}$ with $r_1=\max_{1\le i\le r}r_{1,i}$.  Assumptions $(\mathbf{A2})$-$(\mathbf{A3})$ are the standard Lipschitz conditions for the well-posedness of (non-local) ODE models. Assumption $(\mathbf{A4})$ means that we interpret the graphs as measure-valued functions; note that we can think of $\eta_x$ as describing the local edge density or connectivity near vertex $x$. Next, we need the assumption for the approximation of the VE (i.e., the mean field equation for the IPS) that the family of graph measures $\eta_x$ are continuous in the vertex variable $x$, which is encoded in assumption $(\mathbf{A4})'$ (essentially used in Lemma~\ref{le-graph}).
We would like to remark that $\mathbf{(A4)'}$ is indeed not crucial for the approximation results. One can relax this assumption by allowing $x\mapsto\eta^i_x$ ($i=1,\ldots,r$) to have finitely many discontinuity points. Nevertheless, for the ease of exposition and to avoid arousing further difficulty in understanding the approximation, we only present the result under the continuity assumption $\mathbf{(A4)'}$.
In fact, such regularity condition does not exclude interesting situations, where the graph limits can be sparse, dense, or neither sparse nor dense (see the examples in Sections~\ref{sect-preliminary} and \ref{sect-approximation-ode}). Assumption $(\mathbf{A5})$ ensures the uniform boundedness of the time-dependent measures $\nu_t$ in total variation norm, which is used to establish the well-posedness of the non-local equation \eqref{Charac} of characteristics (see Theorem~\ref{theo-well-posedness-characteristic} below). Assumption $(\mathbf{A6})$ is \emph{Bony's condition} \cite{B69}  (also called \emph{Nagumo's condition} \cite{N42}) for the existence of a compact positively invariant subset $Y$ of the equation of characteristics; for vast research on this classical topic of independent interest, c.f.,  \cite{B69,B70,Y70,H72,R72,C72,M73,C75,C76} and \cite[Chap.10]{W98}. The compactness of $Y$ in $(\mathbf{A6})$ is required in $(\mathbf{A5})$ for bounds of $\nu_t$.  Assumption $(\mathbf{A7})$ is technical, used to establish the continuous dependence of solutions to the VE on $h$ (see Proposition~\ref{prop-sol-fixedpoint}).

\subsubsection*{Equation of characteristics}

Under {$\mathbf{(A1)}$}-{$\mathbf{(A5)}$}, the \emph{Vlasov operator} $V$ given in \eqref{Eq-VOperator} is well defined.

Let $t_0\in\cT$ and $\phi_0\in\mathcal{B}(X,\R^{r_2})$. For every $x\in X$, consider the following IVP of a measure-induced differential equation
\begin{equation}
  \label{Charac}
  \frac{\partial}{\partial t}\phi(t,x)=V[\eta,\nu_{\cdot},h](t,x,\phi),\quad t\in\cT,\quad \phi(t_0,x)=\phi_0(x).
\end{equation}
The IVP of \eqref{Charac} confined to a finite time interval $\cT$ is the so-called \emph{equation of characteristics} (or \emph{characteristic equation}). When the underlying space $X$ is finite, and the measures $(\nu_t)_x$ and $\eta^i_x$ for all $x\in X$ are finitely supported, \eqref{Charac} becomes a system of ODEs coupled on a finite set of directed graphs in terms of $\{\eta^i\}_{1\le i\le r}$. Hence, the characteristic equation forms an intermediate bridge between a finite-dimensional IPS and the Vlasov equation, effectively containing the information about both systems. The well-posedness of \eqref{Charac} is standard from ODE theory.

\begin{theorem}\label{theo-well-posedness-characteristic}
Assume ($\mathbf{A1}$)-($\mathbf{A5}$). Let $\phi_0\in\mathcal{B}(X;\R^{r_2})$. Then for every $x\in X$ and $t_0\in\cT$, there exists a solution $\phi(t,x)$ to the IVP of \eqref{Charac} with $\phi(t_0,x)=\phi_0(x)$ for all $t\in(T^{x,t_0}_{\min},T^{x,t_0}_{\max})\cap\cT$ with $(T^{x,t_0}_{\min},T^{x,t_0}_{\max})\subseteq\R$ being a neighbourhood of $t_0$ such that
\begin{enumerate}
\item[(i)] either (i-a) $T^{x,t_0}_{\max}>T$ or (i-b) $T^{x,t_0}_{\max}\le T$ and $\lim_{t\uparrow T^{x,t_0}_{\max}}|\phi(t,x)|=\infty$ holds, and
\item[(ii)] either (ii-a) $T^{x,t_0}_{\min}<0$ or (ii-b) $T^{x,t_0}_{\min}\ge0$ and $\lim_{t\downarrow T^{x,t_0}_{\min}}|\phi(t,x)|=\infty$ holds.
\end{enumerate}
In addition, assume ($\mathbf{A6}$) and $\nu_{\cdot}$ is uniformly supported within $Y$, then $(T^{x,t_0}_{\min},T^{x,0}_{\max})\cap\cT=\cT$ for all $x\in X$, and there exists a set $\left\{\mathcal{S}^x_{t,s}[\eta,\nu_{\cdot},h]\right\}_{t,s\in\cT}$ of transformations forming a group on $Y$ such that
$$\phi(t,x)=\mathcal{S}^x_{t,s}[\eta,\nu_{\cdot},h]\phi(s,x),\quad \text{for all}\ s,t\in\cT.$$
\end{theorem}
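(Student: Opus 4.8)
The plan is to fix a vertex $x\in X$ and read \eqref{Charac} as a (non-local) ordinary differential equation $\dot\phi=F_x(t,\phi)$ on $\R^{r_2}$, where $F_x(t,\phi):=V[\eta,\nu_{\cdot},h](t,x,\phi)$, and then to deduce everything from the classical Picard--Lindel\"of existence--uniqueness theorem together with the continuation (maximal-interval) theorem. Since the paper rightly regards the well-posedness as standard once the hypotheses of these theorems are checked, the only genuinely non-routine step is to verify that $F_x$ inherits the required regularity from the assumptions on $g_i$, $h$, $\eta$ and $\nu_{\cdot}$: continuity in $t$ and \emph{local} Lipschitz continuity in $\phi$, uniformly in $t\in\cT$.

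First I would establish the regularity of $F_x$. Continuity in $t$ follows by dominated convergence, using that $g_i,h$ are continuous in $t$ by $(\mathbf{A2})$--$(\mathbf{A3})$, that $t\mapsto(\nu_t)_y$ is weakly continuous by $(\mathbf{A5})$, and that by $(\mathbf{A4})$--$(\mathbf{A5})$ the integrations run over the compact sets $\supp(\nu_t)_y\subseteq E_{\nu_{\cdot}}$ and $\supp\eta^i_x\subseteq X$ with finite total masses. The crucial estimate is the local Lipschitz bound in $\phi$. Fixing a compact neighbourhood $K\ni\phi$ one writes
\begin{multline*}
|F_x(t,\phi_1)-F_x(t,\phi_2)|
\le \sum_{i=1}^r\int_X\int_{\R^{r_2}}|g_i(t,\psi,\phi_1)-g_i(t,\psi,\phi_2)|\,\rd(\nu_t)_y(\psi)\,\rd\eta^i_x(y)\\
+\,|h(t,x,\phi_1)-h(t,x,\phi_2)|.
\end{multline*}
Because $\psi$ ranges over the compact set $E_{\nu_{\cdot}}$ and $\phi_1,\phi_2\in K$, compactness lets one cover $E_{\nu_{\cdot}}\times K$ by finitely many of the neighbourhoods furnished by $(\mathbf{A2})$ and extract a single Lipschitz constant $L_g$ that is uniform in $t\in\cT$; likewise $(\mathbf{A3})$ gives a constant $L_h$ on $K$. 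Invoking the uniform bounds $\sup_{t}\sup_{y}\|(\nu_t)_y\|_{\TV}<\infty$ from $(\mathbf{A5})$ and $\|\eta^i_x\|_{\TV}<\infty$ from $(\mathbf{A4})$, this yields a bound of the form $C_x(K)\,|\phi_1-\phi_2|$, so $F_x$ is locally Lipschitz. This is the step I expect to be the \textbf{main obstacle}: it is the precise point at which compact support $(\mathbf{A5})$, finite total variation $(\mathbf{A4})$, and the \emph{merely local} (not global) Lipschitz continuity $(\mathbf{A2})$--$(\mathbf{A3})$ must be combined so that the local Lipschitz constants survive integration against the fibre measures $(\nu_t)_y$ and $\eta^i_x$.

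With $F_x$ continuous in $t$ and locally Lipschitz in $\phi$, Picard--Lindel\"of produces a unique local solution through $(t_0,\phi_0(x))$, and the standard continuation theorem yields the maximal interval $(T^{x,t_0}_{\min},T^{x,t_0}_{\max})$ together with the blow-up dichotomy: a solution either survives past the relevant endpoint of $\cT$ or its norm diverges there. This is exactly the content of items (i) and (ii), completing the first assertion.

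For the addendum I would invoke the Bony--Nagumo subtangentiality criterion. Assumption $(\mathbf{A6})$, namely $V[\eta,\nu_{\cdot},h](t,x,\phi)\cdot\upsilon(\phi)\le0$ for every $\phi\in\partial Y$ with $\upsilon(\phi)$ the outer normal, is precisely the condition guaranteeing that the convex compact set $Y$ is positively invariant for $\dot\phi=F_x(t,\phi)$; the convexity of $Y$ makes the outer-normal formulation of the subtangency condition applicable on all of $\partial Y$. Consequently a characteristic issuing from $\phi_0(x)\in Y$ remains in the bounded set $Y$ for increasing time, so the divergence alternatives (i-b)/(ii-b) are excluded along $\cT$ and no finite-time blow-up can occur; posing the characteristic equation as an initial-value problem from the left endpoint then gives $(T^{x,t_0}_{\min},T^{x,t_0}_{\max})\cap\cT=\cT$ for all $x\in X$. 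Finally, writing $\mathcal{S}^x_{t,s}[\eta,\nu_{\cdot},h]$ for the solution operator sending $\phi(s,x)$ to $\phi(t,x)$ along the now-global trajectory, uniqueness of solutions supplies the cocycle identities $\mathcal{S}^x_{t,t}=\mathrm{id}$ and $\mathcal{S}^x_{t,s}\circ\mathcal{S}^x_{s,u}=\mathcal{S}^x_{t,u}$, while the reversibility in time of the flow of an ODE makes each transformation invertible with $(\mathcal{S}^x_{t,s})^{-1}=\mathcal{S}^x_{s,t}$; together these exhibit $\{\mathcal{S}^x_{t,s}[\eta,\nu_{\cdot},h]\}_{t,s\in\cT}$ as a group acting on the $Y$-valued characteristics, as claimed.
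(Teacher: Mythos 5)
Your proposal follows essentially the same route as the paper: the regularity of $\phi\mapsto V[\eta,\nu_{\cdot},h](t,x,\phi)$ (continuity in $t$ via dominated convergence, local Lipschitz continuity via a finite-cover argument combined with the total-variation bounds from $(\mathbf{A4})$--$(\mathbf{A5})$) is exactly the content of Proposition~\ref{Vlasovf}(i)--(ii), after which the paper likewise invokes Picard--Lindel\"of with the standard continuation dichotomy and then Bony's subtangentiality condition from $(\mathbf{A6})$ to get positive invariance of $Y$, global existence on $\cT$, and the group property of the solution maps. The argument is correct and matches the paper's proof in both structure and the identification of where the local-versus-global Lipschitz issue must be resolved.
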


As a next step, we try to link the characteristic equation to the mean-field Vlasov equation. Since $\left\{\mathcal{S}^x_{t,s}\right\}_{t,s\in[0,+\infty)}$ is a group, from Theorem~\ref{theo-well-posedness-characteristic}, we have for all $x\in X$, $$(\mathcal{S}^x_{t,0}[\eta,\nu_{\cdot},h])^{-1}=\mathcal{S}^x_{0,t}[\eta,\nu_{\cdot},h],\quad t\in\cT.$$ The pushforward under the flow $\mathcal{S}^x_{t,0}[\eta,\nu_{\cdot},h]$ of an initial measure $(\nu_0)_x\in\mathcal{B}_{\mu_X,1}(X,\cM_+(Y))$ defines another time-dependent measure in $\mathcal{B}_{\mu_X,1}(X,\cM_+(Y))$ via the following \emph{fixed point equation} \begin{equation*}
  \mathcal{A}^x\nu_{\cdot}(t)=(\nu_t)_x=(\nu_0)_x\circ\mathcal{S}^x_{0,t}[\eta,\nu_{\cdot},h],\quad t\in\cT.
\end{equation*}
In particular, if $\nu_{\cdot}\in \mathcal{C}(\cT,\mathcal{B}_{\mu_X,1}(X,\cM_{+,\abs}(\R^{r_2})))$, then $\nu_{\cdot}\in \mathcal{C}(\cT,\mathcal{B}_{\mu_X,1}(X,\cM_{+,\abs}(Y)))$ by the positive invariance of $Y$. Hence the \emph{Vlasov operator} can be represented in terms of the density $\rho(t,y,\phi)\colon=\frac{\rd(\nu_t)_y(\phi)}{\rd\mu_X(y)\rd\phi}$ for every $t\in\cT$: \begin{equation}\label{Vrho-0}
\widehat{V}[\eta,\rho(\cdot),h](t,x,\phi)=\sum_{i=1}^r\int_X\int_{Y}g_i(t,\psi,\phi)\rho(t,y,\phi)\rd\psi
\rd\eta^i_x(y)+h(t,x,\phi).
\end{equation}

Let $${\sf L}^1_+(X\times Y,\mu_X\otimes\mathfrak{m})=\{f\in {\sf L}^1(X\times Y,\mu_X\otimes\mathfrak{m})\colon \int_{X\times Y}f\rd\mu_X\rd\mathfrak{m}=1\},$$ be the space of densities of probabilities on $X\times Y$. Conversely, for every function $\rho\colon\mathcal{T}\to{\sf L}^1_+(X\times Y,\mu_X\otimes\mathfrak{m})$, $$\rd(\nu_t)_y(\phi)=\rho(t,y,\phi)\rd\mu_X(y)\rd\phi$$ defines $\nu_{\cdot}\in\mathcal{B}(\mathcal{T},\mathcal{B}_{\mu_X,1}(X,\cM_+(Y)))$. Hence \eqref{Vrho-0} can be transformed to the Vlasov operator \eqref{Eq-VOperator} in terms of $\nu_{\cdot}$.

Let $\rho_0\colon X\times Y\to\R_+$ be continuous in $x$ for $\mathfrak{m}$-a.e. $\phi\in Y$, and integrable in $\phi$ for every $x\in X$ such that $$\int_X\int_Y\rho_0(x,\phi)\rd\phi\rd\mu_X=1.$$  Consider the VE
\begin{align}
 \label{Vlasov} &\frac{\partial\rho(t,x,\phi)}{\partial t}+\textrm{div}_{\phi}\left(\rho(t,x,\phi)\widehat{V}[\eta,\rho(\cdot),h](t,x,\phi)\right)=0, t\in(0,T],\ x\in X,\ \mathfrak{m}\text{-a.e.}\ \phi\in Y,\\
\nonumber  &\rho(0,\cdot)=\rho_0(\cdot).
  \end{align}
With the above assumptions and under appropriate metrics, one can show that the operator $\mathcal{A}=(\mathcal{A}^x)_{x\in X}\colon \mathcal{C}(\cT,\mathcal{B}_{\mu_X,1}(X,\mathcal{M}_+(Y)))\to  \mathcal{C}(\cT,\mathcal{B}_{\mu_X,1}(X,\mathcal{M}_+(Y)))$ is a contraction. Using the Banach fixed point theorem, it follows that the unique solution $\nu_{\cdot}$ to the fixed point equation exists. The fixed point equation was named by Neunzert \cite{N84} the \emph{generalized VE}, since in particular, $(\nu_t)_x$ is absolutely continuous for all $x\in X$ with its density solving the VE \eqref{Vlasov}, provided the initial measure $(\nu_0)_x$ is so for all $x\in X$. Hence we obtain the well-posedness of the VE \eqref{Vlasov}.

\begin{theorem}
Assume ($\mathbf{A1}$)-($\mathbf{A4}$) and ($\mathbf{A6}$). Assume $\rho_0(x,\phi)$ is continuous in $x\in X$ for $\mathfrak{m}$-a.e. $\phi\in Y$ such that $\rho_0\in {\sf L}^1_+(X\times Y,\mu_X\otimes\mathfrak{m})$, then there exists a unique uniformly weak solution\footnote{See Definition~\ref{def-weak-sol}.} to the IVP of \eqref{Vlasov} with initial condition $\rho(0,x,\phi)=\rho_0(x,\phi)$, $x\in X$, $\phi\in Y$.
\end{theorem}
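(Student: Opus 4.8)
The plan is to recast the VE \eqref{Vlasov} as the fixed-point (generalized Vlasov) equation described above and to solve it by the Banach fixed point theorem on the complete metric space $\mathcal{C}(\cT,\mathcal{B}_{\mu_X,1}(X,\cM_+(Y)))$, equipped with the uniform bounded-Lipschitz metric $d(\nu_\cdot,\tilde\nu_\cdot)=\sup_{t\in\cT}\sup_{x\in X} d_{\BL}\bigl((\nu_t)_x,(\tilde\nu_t)_x\bigr)$, possibly reweighted by $e^{-\lambda t}$ in time. The key point enabling this is that every element of this space is, by construction, uniformly compactly supported in $Y$, so ($\mathbf{A5}$) holds automatically and Theorem~\ref{theo-well-posedness-characteristic} applies: combined with Bony's condition ($\mathbf{A6}$) it yields a global-in-$\cT$ flow $\mathcal{S}^x_{t,s}[\eta,\nu_\cdot,h]$ forming a group that leaves $Y$ positively invariant. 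Once the unique fixed point $\nu_\cdot$ is found, its fibres $(\nu_t)_x$ are shown to be absolutely continuous with density the sought uniformly weak solution, and uniqueness of the weak solution is inherited from uniqueness of the fixed point.

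First I would check that $\mathcal{A}=(\mathcal{A}^x)_{x\in X}$ is a well-defined self-map. Given $\nu_\cdot$ supported in $Y$, positive invariance guarantees the pushforward $(\nu_t)_x=(\nu_0)_x\circ\mathcal{S}^x_{0,t}$ is again supported in $Y$ and has the same total mass, so the normalisation defining $\mathcal{B}_{\mu_X,1}$ is preserved; continuity in $t$ and measurability in $x$ (through the $x$-dependence of the Vlasov operator $V$ via $\eta$) give $\mathcal{A}\nu_\cdot\in\mathcal{C}(\cT,\mathcal{B}_{\mu_X,1}(X,\cM_+(Y)))$. The heart of the argument, and what I expect to be the main obstacle, is the contraction estimate, since it must control the dependence of the \emph{entire characteristic flow} on the driving measure-valued function $\nu_\cdot$, uniformly in $x$. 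For two inputs I would first compare the Vlasov fields: because $g_i$ is, by ($\mathbf{A2}$), Lipschitz on the compact set $Y\times Y$ (local Lipschitzness upgrades to a uniform constant on the compact $Y$ by a covering argument), the dual-Lipschitz characterisation of $d_{\BL}$ and the finiteness of the fibre measures from ($\mathbf{A4}$) give, uniformly in $x,\phi$,
$$\bigl|V[\eta,\nu_\cdot,h](t,x,\phi)-V[\eta,\tilde\nu_\cdot,h](t,x,\phi)\bigr|\le C\sum_{i=1}^r\int_X d_{\BL}\bigl((\nu_t)_y,(\tilde\nu_t)_y\bigr)\,\rd|\eta^i_x|(y).$$

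Feeding this into the variation-of-the-flow estimate and using that the field is globally Lipschitz in $\phi$ on $Y$ (by ($\mathbf{A2}$)--($\mathbf{A3}$)), a Gr\"onwall argument yields
$$\sup_{\phi\in Y}\bigl|\mathcal{S}^x_{0,t}[\eta,\nu_\cdot,h]\phi-\mathcal{S}^x_{0,t}[\eta,\tilde\nu_\cdot,h]\phi\bigr|\le C_T\int_0^t\sup_{y\in X} d_{\BL}\bigl((\nu_s)_y,(\tilde\nu_s)_y\bigr)\,\rd s.$$
A pushforward estimate (again via the Lipschitz test-function formulation of $d_{\BL}$) converts this displacement bound into a bound on $d_{\BL}\bigl((\mathcal{A}^x\nu)(t),(\mathcal{A}^x\tilde\nu)(t)\bigr)$ that is uniform in $x$, so that $d(\mathcal{A}\nu_\cdot,\mathcal{A}\tilde\nu_\cdot)$ is controlled by the time integral of $d(\nu_\cdot,\tilde\nu_\cdot)$. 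Choosing $\lambda$ large in the weighted metric, or equivalently passing to a sufficiently high iterate $\mathcal{A}^n$, makes $\mathcal{A}$ a strict contraction, and the Banach fixed point theorem delivers the unique $\nu_\cdot$.

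Finally I would translate this fixed point into a weak solution of \eqref{Vlasov}. When $\rho_0\in{\sf L}^1_+(X\times Y,\mu_X\otimes\mathfrak{m})$ gives $(\nu_0)_x=\rho_0(x,\cdot)\mathfrak{m}$, the flow $\mathcal{S}^x_{t,0}$ is a bi-Lipschitz homeomorphism of $Y$ with Lipschitz inverse (Theorem~\ref{theo-well-posedness-characteristic}); since bi-Lipschitz maps preserve Lebesgue-null sets, the pushforward $(\nu_t)_x$ stays absolutely continuous with a density $\rho(t,x,\cdot)$, and continuity of $\rho_0$ in $x$ together with ($\mathbf{A4}$) transfers the required $x$-regularity. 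Differentiating $\int_Y\varphi\,\rd(\nu_t)_x$ in $t$ along characteristics for test functions $\varphi\in C^1_c(Y)$ and using the transport identity for the flow produces exactly the weak formulation of the continuity equation, so $\rho$ is the uniformly weak solution in the sense of Definition~\ref{def-weak-sol}. Uniqueness of this weak solution follows from the uniqueness of the fixed point $\nu_\cdot$, completing the proof.
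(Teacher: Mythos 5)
Your strategy coincides with the paper's: solve the fixed point equation $\nu_\cdot=\mathcal{A}[\eta,h]\nu_\cdot$ by the Banach contraction principle in the weighted metric $d_\alpha$ on $\mathcal{C}(\cT,\mathcal{B}_{\mu_X,1}(X,\cM_+(Y)))$, using positive invariance of $Y$ from ($\mathbf{A6}$) to make $\mathcal{A}$ a self-map and the Lipschitz estimates on $V$ and on the flow to get the contraction, then show the fibres stay absolutely continuous (the paper does this via Rademacher's theorem and the Lipschitz change-of-variables formula, which is the rigorous version of your ``bi-Lipschitz maps preserve null sets'' remark) and verify the weak formulation by transporting test functions along characteristics.

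The one place where your argument is incomplete is the final sentence: ``Uniqueness of this weak solution follows from the uniqueness of the fixed point.'' That inference requires the \emph{converse} implication, namely that \emph{every} uniformly weak solution $\rho$ of \eqref{Vlasov} gives rise to a measure-valued function $\nu_\cdot$ satisfying the fixed point equation $(\nu_t)_x=(\nu_0)_x\circ\mathcal{S}^x_{0,t}[\eta,\nu_\cdot,h]$. You only establish the forward direction (fixed point $\Rightarrow$ weak solution), which yields existence but not uniqueness. The paper devotes Step II of its proof to this converse: starting from the integral identity \eqref{eq-test}, it integrates by parts back along the characteristics to obtain an identity of the form $\int_0^T\int_Y v\,\rd(\nu_t)_x\,\rd t=\int_0^T\int_Y v\,\rd\bigl((\nu_0)_x\circ\mathcal{S}^x_{0,t}\bigr)\,\rd t$ for test functions $v=\partial_1 w$, and then uses a mollification of $\mathbbm{1}_{\{t\}}$ in the time variable to localize this to each fixed $t$ and each $f\in\mathcal{C}(Y)$, concluding $(\nu_t)_x=(\nu_0)_x\circ\mathcal{S}^x_{0,t}$ for every $t$ and $x$. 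Without this step (or an equivalent uniqueness argument for the linear transport equation with the frozen vector field $\widehat{V}[\eta,\rho(\cdot),h]$), the uniqueness assertion of the theorem is not proved.
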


Based on  $\mathbf{(A1)}$-$\mathbf{(A7)}$ with $\mathbf{(A4)}$ replaced by $\mathbf{(A4)'}$, we also establish continuous dependence of solutions to the fixed point equation on the underlying DGMs $\eta^i$ for $i=1,\ldots,m$, on the initial measure $\nu_0$,  as well as on function $h$ (see Proposition~\ref{prop-sol-fixedpoint} in Section~\ref{sect-setup}). Using this result combined with the recently established results on \emph{deterministic empirical approximation of positive measures} \cite{XB19,C18} (Lemma~\ref{le-partition}), we establish the discretization of solutions of VE over finite time interval $\cT$ by a sequence of discrete ODE systems coupled on finite graphs converging \emph{weakly} to the DGMs $\eta^i$.

Indeed, for any $\nu_0\in\mathcal{C}(X,\cM_+(Y))$ and $\eta$ satisfying $\mathbf{(A4)'}$,
by Lemmas~\ref{le-partition}-\ref{le-graph} in Section~\ref{sect-approximation-ode}, there exists
 \begin{enumerate}
 \item[$\bullet$] a partition $\{A^m_i\}_{1\le i\le m}$ of $X$ and points $x^m_i\in A^m_i$ for $i=1,\ldots,m$, for every $m\in\N$,
 \item[$\bullet$]  a sequence $\{\varphi^{m,n}_{(i-1)n+j}\colon i=1,\ldots,m,j=1,\ldots,n\}_{n,m\in\N}\subseteq Y$ and $\{a_{m,i}\colon i=1,\ldots,$ $m\}_{m\in\N}\subseteq\R_+$,
  \item[$\bullet$] a sequence $\{y^{\ell,m,n}_{(i-1)n+j}\colon i=1,\ldots,m,j=1,\ldots,n\}_{m,n\in\N}\subseteq Y$ and $\{b_{\ell,m,i}\colon i=1,\ldots,$ $m\}_{m\in\N}\subseteq\R_+$, for $\ell=1,\ldots,r$,
 \end{enumerate}
 such that
  $$\lim_{m\to\infty}\lim_{n\to\infty}d_{\infty}(\nu_0^{m,n},\nu_0)=0,$$
   $$\lim_{m\to\infty}\lim_{n\to\infty}d_{\infty}(\eta^{\ell,m,n},\eta)=0,\quad \ell=1,\ldots,r,$$
     $$\lim_{m\to\infty}\int_0^T\int_Y\sup_{x\in X}\lt|h^m(t,x,\phi)-h(t,x,\phi)\rt|\rd\phi\rd t=0,$$
 where
 \begin{subequations}
   \label{discretization-0}
 \begin{alignat}{2}
(\nu_0^{m,n})_x\colon=\sum_{i=1}^m\mathbbm{1}_{A^m_i}(x)\frac{a_{m,i}}{n}\sum_{j=1}^n
\delta_{\varphi^{m,n}_{(i-1)n+j}},\quad x\in X,\\ \eta^{\ell,m,n}_x\colon=\sum_{i=1}^m\mathbbm{1}_{A^m_i}(x)\frac{b_{\ell,m,i}}{n}
\sum_{j=1}^n\delta_{y^{\ell,m,n}_{(i-1)n+j}},\quad x\in X,\\
 h^m(t,z,\phi)=\sum_{i=1}^m\mathbbm{1}_{A^m_i}(z)h(t,x_i^m,\phi),\quad t\in\cT,\ z\in X,\ \phi\in Y.
 \end{alignat}
 \end{subequations}

Here we provide some heuristic intuition on how to understand the above approximations. Let us take $\eta$ for an example (the other approximation for $\nu_{\cdot}$ is analogous). Note that $\{A^m_i\}_{1\le i\le m}$ is a partition of $X$ with uniformly vanishing diameter $\sup_{1\le i\le m}\Diam A^m_i\to0$ as $m\to\infty$. Since $x\mapsto\eta_x$ is continuous, we have $\sup_{1\le i\le m}\sup_{x,x'\in A^m_i}d_{\sf BL}(\eta_x,\eta_{x'})$ is small for large $m$, and one can choose any point $x^m_i$ in $A^m_i$ so that $\eta_{x_i^m}$ is a representative for all fiber measures $\eta_x$ for $x\in A^m_i$. Then given $m\in\N$, for every $n\in\N$, one can look for uniform $n$-approximations (i.e., the deterministic empirical approximation with at most $n$ atoms) of the finite positive measure $\eta_{x^m_i}$ for each $i$, which is $\frac{b_{\ell,m,i}}{n}\sum_{j=1}^n\delta_{y^{\ell,m,n}_{(i-1)n+j}}$, where $b_{\ell,m,i}$ is the averaged total mass of $\eta^{\ell}_{x}$ for $x\in A^m_i$ provided $A^m_i$ is not a $\mu_X$-measure zero set, and is the total mass of $\eta^{\ell}_{x^m_i}$ otherwise. Equivalently, due to continuity of $\eta$, $\frac{1}{n}\sum_{j=1}^n\delta_{y^{\ell,m,n}_{(i-1)n+j}}$ is a deterministic empirical approximation of the probability measure $\frac{1}{b_{\ell,m,i}}\eta^{\ell}_{x^m_i}$, provided $\eta^{\ell}_{x^m_i}$ is not degenerate (i.e., $b_{\ell,m,i}\neq0$). The possibility that one can always construct such an approximation for a probability measure in the Euclidean space is guaranteed by recent works on deterministic empirical approximation of probabilities \cite{C18,XB19}. This is why the approximation seems different from those in e.g. \cite{KM18}, since all fiber measures $\eta_x$ are probabilities therein and the partition of $X=[0,1]$ is uniform (i.e., $\mu_X(A^m_i)=1/m$ for all $i=1,\ldots,m$, where $\mu_X$ is the Lebesgue measure on $X$).

Based on the above discretization of measures and functions, consider the following IVP of a coupled ODE system:
\begin{multline}
  \label{lattice-0}
  \dot{\phi}_{(i-1)n+j}=F^{m,n}_{i}(t,\phi_{(i-1)n+j},\Phi),\quad 0<t\le T,\quad \phi_{(i-1)n+j}(0)=\varphi_{(i-1)n+j},\\ i=1,\ldots,m,\ j=1,\ldots,n,
\end{multline}
where $\Phi=(\phi_{(i-1)n+j})_{1\le i\le m,1\le j\le n}$ and $$F^{m,n}_{i}(t,\psi,\Phi)=\sum_{\ell=1}^r\sum_{p=1}^{m}\frac{a_{m,i}b_{\ell,m,p}}{n^2}\sum_{j=1}^n
\mathbbm{1}_{A^m_p}(y^{\ell,m,n}_{(i-1)n+j})\sum_{q=1}^{n}g_{\ell}(t,\psi,\phi_{(p-1)n+q})+h^m(t,x^m_i,\psi).$$
For $t\in\cT$, let $\phi^{m,n}(t)=(\phi^{m,n}_{(i-1)n+j}(t))$ be the solution to \eqref{lattice-0}.  Define the time-dependent measures generated by the solutions to \eqref{lattice-0}: \begin{equation}\label{Eq-approx-0}
(\nu_t^{m,n})_x\colon=\sum_{i=1}^m\mathbbm{1}_{A^m_i}(x)\frac{a_{m,i}}{n}
\sum_{j=1}^{n}\delta_{\phi^{m,n}_{(i-1)n+j}(t)},\quad x\in X.
\end{equation}
\begin{theorem}\label{th-C}
Assume ($\mathbf{A1}$)-($\mathbf{A3}$), $\mathbf{(A4)'}$, ($\mathbf{A6}$)-($\mathbf{A7}$). Assume $\rho_0(x,\phi)$ is continuous in $x\in X$ for $\mathfrak{m}$-a.e. $\phi\in Y$ such that $\rho_0\in {\sf L}^1_+(X\times Y,\mu_X\otimes\mathfrak{m})$ and $$\sup_{x\in X}\|\rho_0(x,\cdot)\|_{{\sf L}^1(Y,\mathfrak{m})}<\infty.$$  Let  $\rho(t,x,\phi)$ be the uniformly weak solution to the VE \eqref{Vlasov} with initial condition $\rho_0$. Let $\nu_{\cdot}\in\mathcal{C}(\cT;\mathcal{B}_{\mu_X,1}(X,\cM_{\abs}(Y)))$ be the measure-valued function defined in terms of the uniformly weak solution to \eqref{Vlasov}:
$$\rd(\nu_t)_x=\rho(t,x,\phi)\rd\phi,\quad \text{for every}\quad t\in\cT\quad \text{and}\quad x\in X.$$ Then $\nu_t\in \mathcal{C}(\cT,\mathcal{C}_{\mu_X,1}(X,\cM_+(Y)))$. Moreover, let
$\nu^{m,n}_0\in\mathcal{B}_{\mu_X,1}(X,\cM_+(Y))$, $\eta^{\ell,m,n}\in \mathcal{B}(X,\cM_+(Y))$, and $h^m\in \mathcal{C}(\cT\times X\times Y,\R^{r_2})$ be defined in \eqref{discretization-0},
and $\nu^{m,n}_{\cdot}$ be defined in \eqref{Eq-approx-0}. Then $$\lim_{n\to\infty}d_{\infty}(\nu_t^{m,n},\nu_t)=0.$$
\end{theorem}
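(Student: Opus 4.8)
The plan is to reduce the convergence assertion to the contraction and continuous-dependence machinery already established for the generalized Vlasov equation, by recognizing the empirical objects $\nu^{m,n}_{\cdot}$ as the \emph{exact} fixed-point solutions associated with the discretized data $(\eta^{m,n},\nu^{m,n}_0,h^m)$. First I would prove the auxiliary regularity claim $\nu_t\in\mathcal{C}(\cT,\mathcal{C}_{\mu_X,1}(X,\cM_+(Y)))$. By Theorem~\ref{theo-well-posedness-characteristic} the solution corresponds to the fixed point $(\nu_t)_x=(\nu_0)_x\circ\mathcal{S}^x_{0,t}[\eta,\nu_{\cdot},h]$. Continuity in $t$ is immediate from the group property of $\{\mathcal{S}^x_{t,s}\}$ together with the uniform local Lipschitz bounds of $V$ on the compact invariant set $Y$ coming from $(\mathbf{A2})$--$(\mathbf{A3})$ and $(\mathbf{A6})$. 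For continuity in $x$, the key point is that $(\mathbf{A4})'$ and $(\mathbf{A7})$ make $x\mapsto V[\eta,\nu_{\cdot},h](t,x,\phi)$ continuous, uniformly in $(t,\phi)\in\cT\times Y$; continuous dependence of ODE flows on a parameter then gives continuity of $x\mapsto\mathcal{S}^x_{0,t}$, and since $x\mapsto(\nu_0)_x$ is $d_{\sf BL}$-continuous (because $\rho_0$ is continuous in $x$), the pushforward $x\mapsto(\nu_t)_x$ is $d_{\sf BL}$-continuous uniformly in $t$. This is essentially the content of Lemma~\ref{le-graph}.

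Next I would show that the measure-valued function $\nu^{m,n}_{\cdot}$ defined in \eqref{Eq-approx-0} is precisely the generalized-VE solution for the discretized data. Because $\nu^{m,n}_0$ and $\eta^{\ell,m,n}$ are finitely supported and piecewise constant in $x$ on the partition $\{A^m_i\}$, the driving field $V[\eta^{m,n},\nu^{m,n}_{\cdot},h^m](t,x,\cdot)$ is constant in $x$ on each cell $A^m_i$ and equals its value at $x^m_i$. Substituting the atomic ansatz \eqref{Eq-approx-0} into the Vlasov operator \eqref{Eq-VOperator} (with $\eta\mapsto\eta^{m,n}$, $h\mapsto h^m$) and evaluating at $x=x^m_i$ collapses the characteristic equation \eqref{Charac} to the finite coupled system \eqref{lattice-0} with vector field $F^{m,n}_i$: the integral against $\eta^{\ell,m,n}_{x^m_i}$ reduces to the sum over $\mathbbm{1}_{A^m_p}(y^{\ell,m,n}_{(i-1)n+j})$, and the integral against $(\nu^{m,n}_t)_y$ to the sum over the atoms $\phi^{m,n}_{(p-1)n+q}(t)$. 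Since Dirac masses transport to Dirac masses along the flow, $(\nu^{m,n}_0)_x\circ\mathcal{S}^x_{0,t}[\eta^{m,n},\nu^{m,n}_{\cdot},h^m]$ equals \eqref{Eq-approx-0}, and by uniqueness of the fixed point (the Banach contraction for $\mathcal{A}$) this atomic object is indeed the solution for the discretized data.

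Finally I would combine the two ingredients. By Lemmas~\ref{le-partition}--\ref{le-graph} and the convergences recorded just before the statement, the discretized data converge to the true data, namely $d_{\infty}(\eta^{\ell,m,n},\eta)\to0$, $d_{\infty}(\nu^{m,n}_0,\nu_0)\to0$ and $\int_0^T\int_Y\sup_{x\in X}|h^m-h|\,\rd\phi\,\rd t\to0$ in the iterated limit $\lim_{m\to\infty}\lim_{n\to\infty}$. Proposition~\ref{prop-sol-fixedpoint} (continuous dependence of the fixed-point solution on $\eta$, on $\nu_0$, and on $h$) then yields $d_{\infty}(\nu^{m,n}_t,\nu_t)\to0$ uniformly in $t\in\cT$, which is the claimed convergence, read in the same iterated sense as the data convergences.

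I expect the heart of the difficulty to be the identification step together with keeping the discretized flow inside $Y$. The bookkeeping that the double-indexed weights $a_{m,i},b_{\ell,m,p}$ and the indicators $\mathbbm{1}_{A^m_p}(y^{\ell,m,n}_{\cdot})$ reproduce $\sum_{\ell}\int_X\int g_\ell\,\rd(\nu^{m,n}_t)_y\,\rd\eta^{\ell,m,n}_x$ exactly, and the self-consistency that the empirical measure driving $F^{m,n}$ is the very one being transported, must be verified with care. Moreover, to invoke the uniform Lipschitz bounds and the total-variation control of $(\mathbf{A5})$ one needs the atoms $\phi^{m,n}_{(i-1)n+j}(t)$ to remain in $Y$ for all $t\in\cT$; since $(\mathbf{A6})$ is phrased for the true field $V[\eta,\cdot,h]$, one must check (or arrange through the approximation construction) that Bony's condition persists for the discretized field $V[\eta^{m,n},\cdot,h^m]$, so that $Y$ stays positively invariant uniformly in $m,n$ and the solutions of \eqref{lattice-0} exist globally on $\cT$.
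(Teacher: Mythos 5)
Your proposal follows essentially the same route as the paper's proof: establish the regularity $\nu_\cdot\in\mathcal{C}(\cT,\mathcal{C}_{\mu_X,1}(X,\cM_+(Y)))$ via the fixed-point machinery restricted to the subspace of spatially continuous measure-valued functions, identify $\nu^{m,n}_\cdot$ as the exact solution of the fixed-point equation for the discretized data $(\eta^{m,n},\nu^{m,n}_0,h^m)$ by collapsing the characteristic equation to the ODE system \eqref{lattice-0}, and conclude by chaining the three continuous-dependence estimates of Proposition~\ref{prop-sol-fixedpoint} through auxiliary intermediate solutions and the triangle inequality. The delicate points you flag (the bookkeeping in the identification step and the persistence of the positive invariance of $Y$ for the discretized field, which the paper disposes of in Proposition~\ref{prop-well-posed-lattice}) are indeed where the paper spends its effort, so the plan is sound and matches the published argument.
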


\endgroup

Finally we are going to apply the above main results to models in epidemiology, ecology, and social sciences (see Section~\ref{sect-applications}).

\subsection{Brief description of methods}
Here we provide a new perspective from measure theory rather than utilizing operator-theoretic methods from functional analysis. Instead of using the weak topology of the space of measures on the product space, we introduce the so-called \emph{uniform weak topology} in terms of the uniform metric which induces this slightly stronger topology (than the weak topology). We then define the uniform weak solution to the VE, and show that the solution of a fixed point equation (in the sense of Neunzert \cite{N84}), coincides with the solution of the VE, provided the initial distribution is absolutely continuous. Such an approach can be viewed as a generalization of Neunzert's in-cell-method \cite{N84}. With this new setup, the additional assumption of continuity of solutions to the VE required in \cite{GK20} is proved, via the Banach fixed point theorem by confining the contraction operator to the subclass $\mathcal{C}(\cT,\mathcal{C}_{\mu_X,1}(X,\cM_+(Y)))$ of continuous in time measure-valued functions which is continuous in the vertex variable (see Proposition~\ref{prop-continuousdependence}). We mention that to show this contraction operator from the space $\mathcal{C}(\cT,\mathcal{C}_{\mu_X,1}(X,\cM_+(Y)))$ to itself is technically very challenging, since the pushforward of a given initial measure under the flow of the equation of characteristics may not necessarily define an operator from a space of spatially \emph{continuous} measure-valued functions to the space itself.

A second difficulty comes from the compactness barrier. On the one hand, the compactness of the underlying phase space is technically crucial in \cite{KM18,GK20}, where this compactness condition is automatically fulfilled since the phase space is the unit circle. The technical reason for this assumption is that the arguments require a global bounded Lipschitz condition of the functions appearing in the vector field of the dynamical systems. On the other hand, Neunzert's approach requires that the measure under the pushforward solution map (flow) again lies in the space of measures supported on the same phase space, so that the operator is from one metric space to the same metric space in order for the Banach fixed point theorem to apply. That explains, why Neunzert's method cannot immediately apply to Euclidean spaces, which are \emph{not} compact. Most importantly, this might explain why Neunzert's approach has rarely been generalized to other models than the Kuramoto type models, since other models e.g., the epidemic models of mass-action kinetics for disease transmission and competition models have local but \emph{not} global Lipschitz functions in the vector field. We mention that as Neunzert pointed out \cite{N84}, as long as the functions in the vector field is not globally Lipschitz, the solution to the VE may only exist for a finite maximal time locally (which in general is numerically unknown). We deal with these problems by working on \emph{positively invariant compact subsets} of the phase space. We then carefully extend the approach in several different arguments, e.g., by showing the absolute continuity of solutions of the generalized VE (i.e., the fixed point equation) via Rademacher's change of variables' formula, which classically also holds only on the entire Euclidean space. Then we construct the fixed point equation by taking initial distributions supported on the positively invariant compact subset. In this way, we overcome the above two difficulties.

In addition, there is another difficulty in the approximation of solutions to the VE. In \cite{KM18}, the martingale convergence theorem is applied to the Hilbert space of ${\sf L}^2$-integrable functions (graphons). In \cite{GK20}, such approximation relies on certain technical assumptions in harmonic analysis from the viewpoint of operators (existence of summability kernels), which are crucial for approximation of ${\sf L}^1$-integrable functions on locally compact Abelian groups. However, for graphs which are not dense (with an ${\sf L}^2$ kernel) or graphops which are not limit of graphons (e.g., when $X$ is not simply the unit interval and $\mu_X$ the Lebesgue measure), the two approaches aforementioned fail to help. In this paper, we use the continuity of DGMs as well as the recently established results on uniform approximation of positive measures \cite{C18,XB19} (Proposition~\ref{prop-XBC}) combined with partitions of Euclidean space (Lemma~\ref{le-partition}) to derive an approximation of the initial distribution of the VE as well as the DGMs (Lemma~\ref{le-ini-2} amd Lemma~\ref{le-graph}).

\subsection*{Outline of the paper}

In the next section, we introduce notation, recall preliminaries on metric spaces, measure theory, and graph theory, and establish properties of several spaces of measure-valued functions, which play a crucial role in setting up the problem. The results are subsequently used to obtain continuity properties of flows of characteristic equations. In Section~\ref{sect-setup}, we establish continuity and then also Lipschitz continuity of the vector field as well as the flow of the equation of characteristics. In Section~\ref{sect-approximation-ode}, we provide specific approximation schemes for measure-valued continuous functions for several underlying vertex spaces, e.g., $[0,1]$, $\mathbb{T}^1$, $\mathbb{S}^2$, and $[0,1]^2$. Moreover, we provide discretization of VE on DGMs. In Section~\ref{sect-applications}, the main results are applied to models in epidemiology, ecology, and social sciences. A brief discussion including possible future research topics is presented in Section~\ref{sect-discussion}. The proofs of main results are contained in Section~\ref{sect-proof}. Finally, proofs of propositions and lemmas as  well as a quadratic Gronwall inequality are appended.

\section{Preliminaries}\label{sect-preliminary}
\subsection*{Notation}
Let $\R_{+}$ be the set of nonnegative real numbers. For every $x\in\R$, let $\lfloor x\rfloor$, $\lceil x\rceil$, and $\langle x \rangle\in\R/\Z$ be the largest integer not exceeding $x$, the smallest integer not smaller than $x$, and  the fractional part of $x$, respectively.
For $i=1,2$, let $X_i$ be a complete subspace of a finite dimensional Euclidean space endowed with the metric $d_i$ induced by the $\ell_1$-norm $|\cdot|$.
For instance $X_i$ can be a sphere or a torus in which case the metric induced by $|\cdot|$ is equivalent to the standard geodesic distance on $X_i$. For $i=1,2$, let $\pi_i$ denote the natural projection onto the $i$-th coordinate of the product space $X_1\times X_2$. For any $k\in\N$ and $x\in\R^k$, let $|x|$ denote the $1$-norm of $x$, $\delta_x$ denote the Dirac measure at $x$, and $\mathfrak{m}$ be the Lebesgue measure on $\R^k$; here we omit the dependence of $\mathfrak{m}$ on the dimension $k$. For any set $A\subseteq\R^{k}$, let $\overline{A}$ and $\overset{\circ}{A}$ denote its closure and interior, respectively. Let
$\textrm{Diam} A\colon=\sup_{x,y\in A}|x-y|$ be its diameter (for convention, $\Diam A=0$ if $\# A\le1$).  We use $\lambda|A$ to denote the uniform (probability) measure over $A$ whenever appropriate (e.g., when $A$ is either bounded but uncountable or finite and countable). Let $\mathbbm{1}_A$ be the indicator function on $A$. Let $B\subseteq\R^{k}$. We say $A$ is \emph{compactly embedded} in $B$ and denoted $A\subset\subset B$ if $\overline{A}\subseteq\overset{\circ}{B}$.

\subsection*{Spaces of functions on metric spaces}

A function $f\colon X_1\to X_2$ is \emph{bounded} if $f(X_1)\subseteq X_2$ is bounded.
Let $(\mathcal{B}(X_1,X_2),d_{\infty})$ be the space of bounded measurable functions $f\colon X_1\to X_2$ equipped with the uniform metric
\[d_{\infty}(f,g)=\sup_{x\in X_1}d_2(f(x),g(x)).\]
Let $\mathcal{C}(X_1,X_2)$ ($\mathcal{C}_{\sf b}(X_1,X_2)$, $\mathcal{C}_{0}(X_1,X_2)$, respectively) be the space of continuous functions (bounded continuous functions, continuous functions with compact support, respectively) from $X_1$ to $X_2$ equipped with the same uniform metric. Recall that both $(\mathcal{B}(X_1,X_2),d_{\infty})$ and $(\mathcal{C}_b(X_1,X_2),d_{\infty})$ are complete provided $(X_2,d_2)$ is a complete metric space. Hence $\mathcal{C}(X_1,X_2)=\mathcal{C}_b(X_1,X_2)$ is complete provided $X_1$ is compact.

Let $\mathcal{L}(X_1,X_2)\colon=\{g\in\mathcal{C}(X_1,X_2)\colon \mathcal{L}(g)=\sup_{x\neq y}\frac{d_2(g(x),g(y))}{d_1(x,y)}<\infty\}$ be the space of Lipschitz continuous functions from $X_1$ to $X_2$. Hence $\mathcal{BL}(X_1,X_2)=\mathcal{B}(X_1,X_2)\cap\mathcal{L}(X_1,X_2)$ denotes the space of bounded Lipschitz continuous functions. In particular, when $X_2=\R$, we suppress $X_2$ in $\mathcal{B}(X_1,X_2)$ and simply write $\mathcal{B}(X_1)$. Similarly, we write $\mathcal{C}(X_1)$ for $\mathcal{C}(X_1,\R)$, etc. Let $\mathcal{B}_1(X_1)=\{g\in \mathcal{B}(X_1)\colon\|g\|_{\infty}=\sup_{x\in X_1}|g(x)|\le1\}$, $\mathcal{L}_1(X_1)=\{g\in \mathcal{L}(X_1)\colon\mathcal{L}(g)\le1\}$, and $\mathcal{BL}_1(X_1)=\{g\in \mathcal{BL}(X_1)\colon \mathcal{BL}(g)=\|g\|_{\infty}+\mathcal{L}(g)\le1\}$.

\subsection*{Measure theory}
Let $i=1,2$. With a Borel (probability) measure $\mu_{X_i}$ on $X_i$, $(X_i,\mathfrak{B}(X_i),$ $\mu_{X_i})$ becomes a  Polish (probability) measure space. Let $\cM_{+}(X_i)$ be the set of all finite positive Borel measures on $X_i$ and $\cP(X_i)$ the set of all Borel probability measures on $X_i$. Let $\cM_{+,\abs}(X_i)\subseteq\cM_+(X_i)$ the set of finite positive absolutely continuous measures w.r.t. $\mu_{X_i}$. Let ${\sf L}^1(X_i,\mu_{X_i})$ denote the set of integrable functions w.r.t. $\mu_{X_i}$.
 For every $\mu\in\cM_+(X_i)$, let $\supp \mu$ be the support of $\mu$. For $f\in \mathcal{C}_b(X_i)$, denote
\[\mu(f)=\int_{X_i} f\mathrm{d}\mu.\]

Recall for $\mu_1,\ \mu_2\in\cM_+(X_i)$,  $\mu_1$ is \emph{absolutely continuous} with respect to $\mu_2$ and denoted $\mu_1\ll\mu_2$, if $\mu_2(A)=0$ implies that $\mu_1(A)=0$ for all $A\in\mathfrak{B}(X_i)$.

\begin{definition}
Given a set $A\subseteq X_1^2$. The set $A^*=\{(x,y)\in X_1^2\colon (y,x)\in A\}$ is called the \emph{dual of $A$}.
\end{definition}
\begin{definition}
Given a measure $\eta\in\cM_+(X_1^2)$. The measure $\eta^*$ defined by
$$\eta^*(A)=\eta(A^*),\quad \forall A\in\mathfrak{B}(X_1^2),$$ is called the \emph{dual of $\eta$}.
\end{definition}

\subsection*{Measure metrics}

For every $\eta\in\cM_+(X_1)$, let $$\|\eta\|_{\TV}=\sup_{A\in\mathfrak{B}(X_1)}\eta(A)=\eta(X_1)$$ be the total variation \emph{norm} of $\eta$. Recall that $\|\cdot\|_{\TV}$ is a norm for the Banach space of all finite signed Borel measures \cite{B07}.

The total variation norm induces the total variation metric:
\[d_{\sf TV}(\eta^1,\eta^2)=\sup_{A\in\mathfrak{B}(X_1)}|\eta^1(A)-\eta^2(A)|
=\sup_{f\in\mathcal{B}_1(X_1)}\int f\rd(\eta^1-\eta^2),\quad \eta^1,\ \eta^2\in\cM_+(X_1).\]

For every $\eta\in\cM_{+,\abs}(X_1)$, let $\rho_{\eta}\colon=\frac{\rd\eta}{\rd\mu_{X_1}}$ denote the Radon-Nikodym derivative w.r.t. the reference measure $\mu_{X_1}$.
\begin{proposition}\label{prop-total}
For every $\eta^1,\ \eta^2\in\cM_{+,\abs}(X_1)$,
  \[d_{\sf TV}(\eta^1,\eta^2)=\tfrac{1}{2}\sup_{f\in\mathcal{B}_1(X_1)}\lt|\int_{X_1}f(x)(\rho_{\eta^1}-\rho_{\eta^2})
  \rd\mu_{X_1}(x)\rt|
  =\tfrac{1}{2}\|\rho_{\eta^1}-\rho_{\eta^2}\|_{L^1(X_1,\mu_{X_1})}.\]
\end{proposition}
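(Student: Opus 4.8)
The plan is to pass to densities and reduce everything to the ${\sf L}^1$-geometry of the signed density. Write $\rho\colon=\rho_{\eta^1}-\rho_{\eta^2}$, which lies in ${\sf L}^1(X_1,\mu_{X_1})$ because $\eta^1,\eta^2$ are finite; then $\eta^1(A)-\eta^2(A)=\int_A\rho\,\rd\mu_{X_1}$ for every $A\in\mathfrak{B}(X_1)$, and $\int_{X_1}f(\rho_{\eta^1}-\rho_{\eta^2})\rd\mu_{X_1}=\int_{X_1}f\rho\,\rd\mu_{X_1}$ for every bounded measurable $f$. Under this substitution the three quantities in the statement become $\sup_{A}\lt|\int_A\rho\,\rd\mu_{X_1}\rt|$, $\tfrac12\sup_{f\in\mathcal{B}_1(X_1)}\lt|\int_{X_1}f\rho\,\rd\mu_{X_1}\rt|$, and $\tfrac12\|\rho\|_{{\sf L}^1(X_1,\mu_{X_1})}$, and I would prove the chain by establishing the last two coincide and then that the first equals the last.

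First I would dispatch the equality of the last two expressions, which is the elementary ${\sf L}^\infty$--${\sf L}^1$ duality restricted to the unit ball. Any $f\in\mathcal{B}_1(X_1)$ has $|f|\le1$, so $\lt|\int f\rho\,\rd\mu_{X_1}\rt|\le\int|f|\,|\rho|\,\rd\mu_{X_1}\le\|\rho\|_{{\sf L}^1}$; conversely the measurable function $f^*=\mathbbm{1}_{\{\rho\ge0\}}-\mathbbm{1}_{\{\rho<0\}}$ belongs to $\mathcal{B}_1(X_1)$ and yields $\int f^*\rho\,\rd\mu_{X_1}=\int|\rho|\,\rd\mu_{X_1}=\|\rho\|_{{\sf L}^1}$. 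Hence the supremum equals $\|\rho\|_{{\sf L}^1}$ and the middle term equals $\tfrac12\|\rho\|_{{\sf L}^1}$.

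It remains to show $\sup_{A\in\mathfrak{B}(X_1)}\lt|\int_A\rho\,\rd\mu_{X_1}\rt|=\tfrac12\|\rho\|_{{\sf L}^1}$, for which I would invoke the Hahn--Jordan decomposition of the signed measure $\rho\,\rd\mu_{X_1}$. Setting $P=\{\rho\ge0\}$ and $N=\{\rho<0\}$, one has $\|\rho^+\|_{{\sf L}^1}=\int_P\rho\,\rd\mu_{X_1}$ and $\|\rho^-\|_{{\sf L}^1}=-\int_N\rho\,\rd\mu_{X_1}$. For arbitrary $A$, splitting $\int_A\rho=\int_{A\cap P}\rho+\int_{A\cap N}\rho$ confines $\int_A\rho\,\rd\mu_{X_1}$ to the interval $\lt[-\|\rho^-\|_{{\sf L}^1},\,\|\rho^+\|_{{\sf L}^1}\rt]$, whose endpoints are attained at $A=P$ and $A=N$; therefore $\sup_A\lt|\int_A\rho\,\rd\mu_{X_1}\rt|=\max\lt(\|\rho^+\|_{{\sf L}^1},\|\rho^-\|_{{\sf L}^1}\rt)$.

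The crux, and the only genuinely nontrivial point, is the passage $\max\lt(\|\rho^+\|_{{\sf L}^1},\|\rho^-\|_{{\sf L}^1}\rt)=\tfrac12\lt(\|\rho^+\|_{{\sf L}^1}+\|\rho^-\|_{{\sf L}^1}\rt)=\tfrac12\|\rho\|_{{\sf L}^1}$, which holds exactly when $\|\rho^+\|_{{\sf L}^1}=\|\rho^-\|_{{\sf L}^1}$. Since $\|\rho^+\|_{{\sf L}^1}-\|\rho^-\|_{{\sf L}^1}=\int_{X_1}\rho\,\rd\mu_{X_1}=\eta^1(X_1)-\eta^2(X_1)$, this balance is equivalent to the two measures carrying equal total mass, i.e. $\|\eta^1\|_{\TV}=\|\eta^2\|_{\TV}$. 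I expect this to be the step requiring care: it is automatic in the intended setting, where $\eta^1,\eta^2$ are densities of probability (or, more generally, mass-conserving) fiber measures, and it is precisely this normalization that turns the $\max$ into the symmetric average, closing the chain of equalities.
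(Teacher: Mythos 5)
Your route is essentially the paper's: both arguments reduce to the signed density $\rho=\rho_{\eta^1}-\rho_{\eta^2}$, use the Hahn set $\{\rho_{\eta^1}>\rho_{\eta^2}\}$, and extremize the ${\sf L}^\infty$--${\sf L}^1$ duality with the sign function $\mathbbm{1}_{\{\rho\ge0\}}-\mathbbm{1}_{\{\rho<0\}}$. The difference is that you carried the bookkeeping out honestly, and the caveat you raise in your last paragraph is not a side remark: it is a genuine gap, and it sits in the paper's proof rather than in yours. The identity $|\eta^1(A)-\eta^2(A)|=\tfrac12\int_{X_1}(\rho_{\eta^1}-\rho_{\eta^2})(\mathbbm{1}_{A}-\mathbbm{1}_{X_1\setminus A})\,\rd\mu_{X_1}$ on which the paper's chain opens, and likewise the justification of its closing inequality via $|\eta^1(B)-\eta^2(B)|=\tfrac12\bigl|\int_{X_1}(\mathbbm{1}_B-\mathbbm{1}_{X_1\setminus B})(\rho_{\eta^1}-\rho_{\eta^2})\bigr|$, hold precisely when $\int_{X_1}(\rho_{\eta^1}-\rho_{\eta^2})\,\rd\mu_{X_1}=0$, i.e.\ when $\eta^1(X_1)=\eta^2(X_1)$ --- exactly the balance $\|\rho^+\|_{{\sf L}^1}=\|\rho^-\|_{{\sf L}^1}$ you isolate. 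The paper's assertion that it proves the conclusion ``without this assumption'' is false for general elements of $\cM_{+,\abs}(X_1)$: take $\eta^1=2\mu_{X_1}$ and $\eta^2=\mu_{X_1}$, so that $d_{\sf TV}(\eta^1,\eta^2)=\sup_{A}\mu_{X_1}(A)=1$ while $\tfrac12\|\rho_{\eta^1}-\rho_{\eta^2}\|_{{\sf L}^1(X_1,\mu_{X_1})}=\tfrac12$. (Your duality step shows the second equality of the proposition, middle term equals right term, is unconditional; it is only the first equality that fails.)

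So your proof is correct as far as it goes, and it correctly identifies the hypothesis that must be added: the proposition holds if and only if $\eta^1(X_1)=\eta^2(X_1)$ (e.g.\ both probabilities, as in the standard reference the paper invokes), in which case $\max(\|\rho^+\|_{{\sf L}^1},\|\rho^-\|_{{\sf L}^1})=\tfrac12(\|\rho^+\|_{{\sf L}^1}+\|\rho^-\|_{{\sf L}^1})$ and your chain closes. Without that normalization the statement is simply wrong and no proof can repair it. Note also that the paper's definition of $d_{\sf TV}$ already equates $\sup_{A\in\mathfrak{B}(X_1)}|\eta^1(A)-\eta^2(A)|$ with $\sup_{f\in\mathcal{B}_1(X_1)}\int f\,\rd(\eta^1-\eta^2)$; that identification suffers from the same defect (the latter is the full variation $\|\rho^+\|_{{\sf L}^1}+\|\rho^-\|_{{\sf L}^1}$, the former the maximum of the two halves), so your reading of $\sup_A$ as the operative definition is the consistent one. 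In the places the paper actually uses this proposition the fiber measures compared do have equal mass, so the fix is to add the hypothesis rather than to change the argument.
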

\begin{proof}
  The proof is standard assuming $\eta^1$ and $\eta^2$ are probabilities \cite{RR04}. Following a similar argument as in \cite{RR04}, we rigourously prove this conclusion without this assumption. Let $A=\{\rho_{\eta^1}>\rho_{\eta^2}\}$. Let $g=\mathbbm{1}_{A}-\mathbbm{1}_{X_1\setminus A}$. By definition, $\|g\|_{\infty}\le1$.
\begin{align}\label{Eq-5}
  d_{\sf TV}(\eta^1,\eta^2)\ge&|\eta^1(A)-\eta^2(A)|\\
  \nonumber=&\tfrac{1}{2}\int_{X_1}(\rho_{\eta^1}-\rho_{\eta^2})g\rd\mu_{X_1}\\
  \nonumber=&\tfrac{1}{2}\sup_{f\in\mathcal{B}_1(X_1)}\lt|\int_{X_1}f(x)(\rho_{\eta^1}-\rho_{\eta^2})\rd\mu_{X_1}(x)\rt|\\
  \label{Eq-7}\ge&\sup_{B\in\mathfrak{B}(X_1)}|\eta^1(B)-\eta^2(B)|=d_{\sf TV}(\eta^1,\eta^2),
\end{align}
where \eqref{Eq-7} holds since $$|\eta^1(B)-\eta^2(B)|=\tfrac{1}{2}\left|\int_{X_1}(\mathbbm{1}_B-\mathbbm{1}_{X_1\setminus B})(\rho_{\eta^1}-\rho_{\eta^2})\right|$$ and $|\mathbbm{1}_B-\mathbbm{1}_{X_1\setminus B}|\le1$. In summary, we have shown that the two inequalities \eqref{Eq-5} and \eqref{Eq-7} are also equalities.
\end{proof}
Define the bounded Lipschitz norm (on the space of all finite signed Borel measures):
\[\|\nu\|_{\sf BL}\colon=\sup_{f\in\mathcal{BL}_1(X_1)}\int_{X_1}f\rd\nu,\quad \nu\in\cM_+(X_1),\]
which induces the bounded Lipschitz distance: For $\nu^1,\ \nu^2\in\cM_+(X_1)$,
\[d_{\sf BL}(\nu^1,\nu^2)=\sup_{f\in\mathcal{BL}_1(X_1)}\int f(x)\mathrm{d}(\nu^1(x)-\nu^2(x)).\]
In particular, if $\nu^1(X_1)=\nu^2(X_1)$, then $d_{\sf BL}$ is equivalent to the Kantorovich-Rubinstein  metric \cite{B07}:
\[d_{\sf KR}(\nu^1,\nu^2)=\sup_{f\in\mathcal{L}_1(X_1)}\int f(x)\mathrm{d}(\nu^1(x)-\nu^2(x))\]
such that $$d_{\sf BL}(\nu^1,\nu^2)\le \nu^1(X_1)^{-1}d_{\sf KR}(\nu^1,\nu^2)\le2\min\{d_{\sf BL}(\nu^1,\nu^2),d_{\sf TV}(\nu^1,\nu^2)\}.$$
Moreover, $d_{\sf BL}$ also metrizes the weak-$*$ topology on $\cM_+(X_1)$ \cite[Theorem~8.3.2]{B07} and $(\cM_+(X_1),$ $d_{\sf BL})$ is a Polish space \cite[Theorem~8.9.4]{B07}.

\subsection*{Relation between measures on product spaces and measure-valued functions}

The reference measure of the product space $X_1\times X_2$ is $\mu_{X_1}\otimes\mu_{X_2}$ via  Carath\'{e}odory's extension. For every $\eta\in\cM_+(X_1\times X_2)$ such that  its first marginal $\eta\circ \pi_1^{-1}\ll\mu_{X_1}$,  $$\eta=\mu_{X_1}\otimes\eta_x$$ is understood in the integral sense \cite[Chap.1]{B14}:
\[\int_{X_1\times X_2}f\rd\eta=\int_{X_1}\int_{X_2}f(x,y)\rd\eta_x(y)\rd\mu_{X_1}(x),\quad \forall f\in \mathcal{C}_b(X_1\times X_2),\] where $\eta_x$ is called the \emph{fiber measure}.

When $X_1$ is compact, we have $\mathcal{C}_b(X_1,\cM_+(X_2))=\mathcal{C}(X_1,\cM_+(X_2))$. Let $$\mathcal{B}_{\mu_{X_1},1}(X_1,\cM_+(X_2))=\{\eta\in \mathcal{B}(X_1,\cM_+(X_2))\colon \|\eta_x(X_2)\|_{{\sf L}^1(X_1,\mu_{X_1})}=1\}.$$
Analogously, let  $\mathcal{C}_{\mu_{X_1},1}(X_1,\cM_+(X_2))=\mathcal{C}(X_1,\cM_+(X_2))\cap \mathcal{B}_{\mu_{X_1},1}(X_1,\cM_+(X_2))$.

By Proposition~\ref{prop-fibercomplete} below, one can identify every $\eta\in \mathcal{B}(X_1,\cM_+(X_2))$ with a finite measure $\mu_{X_1}\otimes\eta_x\in\cM_+(X_1\times X_2)$, and $\eta\in \mathcal{B}_{\mu_{X_1},1}(X_1,\cM_+(X_2))$ with a finite measure $\mu_{X_1}\otimes\eta_x\in\cP(X_1\times X_2)$. Nevertheless, since the metric $d_{\infty}$ defined in \eqref{fibermetricBL} below is stronger than $d_{\sf BL}$ inducing the weak topology on $\cM_+(X_1\times X_2)$, two measure-valued functions $\eta^1,\ \eta^2\in \mathcal{B}_{\mu_{X_1}}(X_1,\cM_+(X_2))$ identify with a same finite measure in $\cM_+(X_1\times X_2)$ provided $\mu_{X_1}(\{\eta^1\neq\eta^2\})=0$. Hence, we will slightly abuse any measure-valued function $\eta\in \mathcal{B}(X_1,\cM_+(X_2))$ for a measure $\mu_{X_1}\otimes\eta_x$ in $\cM_+(X_1\times X_2)$.

Therefore, every function in $\mathcal{B}_{\mu_{X_1},1}(X_1,\cM_{+}(X_2))$ can be identified with a probability measure in $\cP(X_1\times X_2)$, i.e., $\mathcal{B}_{\mu_{X_1},1}(X_1,\cM_{+}(X_2))\hookrightarrow\cP(X_1\times X_2)$. Analogously, we have $\mathcal{B}(X_1,\cM_+(X_2))\hookrightarrow\cM_+(X_1\times X_2)$, etc.

For $\eta\in\mathcal{B}(X_1,\cM_+(X_2))$, let $$\|\eta\|=\sup_{x\in X_1}\|\eta_x\|_{\TV}.$$
Hence given $\eta^1,\eta^2\in \mathcal{B}(X_1,\cM_+(X_2))$, define the \emph{uniform bounded Lipschitz metric}:
\begin{equation}\label{fibermetricBL}
  d_{\infty}(\eta^1,\eta^2)=\sup_{x\in X}d_{\sf BL}(\eta^1_x,\eta^2_x).
\end{equation}
Since $(\cM_+(X_2),d_{\sf BL})$ is complete, both $\mathcal{B}(X_1,\cM_+(X_2))$ and $\mathcal{C}(X_1,\cM_+(X_2))$ equipped with the uniform bounded Lipschitz metric are complete.

We simply denote $\eta_x^i$ for $(\eta^i)_x$, and write $d_{\sf BL}$ for the Lipschitz bounded metric for $\cM_+(X_1\times X_2)$.

In the following proposition, we compare the uniform bounded Lipschitz distance between two measure-valued functions $\eta^1,\ \eta^2\in \mathcal{B}(X_1,\cM_+(X_2))$ as well as the bounded Lipschitz distance of the two measures in $\cM_+(X_1\times X_2)$ identified with $\eta^1,\ \eta^2$.
\begin{proposition}\label{prop-1}
Let $\eta^1,\ \eta^2\in \mathcal{B}(X_1,\cM_+(X_2))$. Then
  $$d_{\infty}(\eta^1,\eta^2)\ge d_{\sf BL}(\eta^1,\eta^2).$$ In other words, the convergence induced by the uniform bounded Lipschitz metric is no weaker than the weak convergence.
\end{proposition}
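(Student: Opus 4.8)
The plan is to test the product measures $\mu_{X_1}\otimes\eta^1_x$ and $\mu_{X_1}\otimes\eta^2_x$ against an arbitrary bounded Lipschitz function on $X_1\times X_2$ and to reduce the resulting pairing, fiber by fiber, to the quantities $d_{\sf BL}(\eta^1_x,\eta^2_x)$ that define $d_{\infty}$. Fix $f\in\mathcal{BL}_1(X_1\times X_2)$, so that $\|f\|_{\infty}+\mathcal{L}(f)\le1$. Using the identification of $\eta^i$ with $\mu_{X_1}\otimes\eta^i_x$ in the integral sense, I would write
$$\int_{X_1\times X_2}f\,\rd(\eta^1-\eta^2)=\int_{X_1}\left(\int_{X_2}f(x,y)\,\rd(\eta^1_x-\eta^2_x)(y)\right)\rd\mu_{X_1}(x),$$
so that it suffices to control the inner fiber integral uniformly in $x$.

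The key observation is that for each fixed $x\in X_1$ the slice $f(x,\cdot)$ lies in $\mathcal{BL}_1(X_2)$. Indeed, clearly $\|f(x,\cdot)\|_{\infty}\le\|f\|_{\infty}$; and since the metric on $X_1\times X_2$ is induced by the $\ell_1$-norm, one has $d\big((x,y_1),(x,y_2)\big)=d_2(y_1,y_2)$, whence $\mathcal{L}(f(x,\cdot))\le\mathcal{L}(f)$. Adding these gives $\mathcal{BL}(f(x,\cdot))=\|f(x,\cdot)\|_{\infty}+\mathcal{L}(f(x,\cdot))\le\|f\|_{\infty}+\mathcal{L}(f)\le1$. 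Consequently, directly from the definition of the bounded Lipschitz distance on $X_2$,
$$\int_{X_2}f(x,y)\,\rd(\eta^1_x-\eta^2_x)(y)\le d_{\sf BL}(\eta^1_x,\eta^2_x)\le\sup_{x'\in X_1}d_{\sf BL}(\eta^1_{x'},\eta^2_{x'})=d_{\infty}(\eta^1,\eta^2)$$
for every $x\in X_1$. Integrating this bound over $X_1$ and using that $\mu_{X_1}$ is a probability measure (so $\mu_{X_1}(X_1)=1$) yields
$$\int_{X_1\times X_2}f\,\rd(\eta^1-\eta^2)\le\int_{X_1}d_{\infty}(\eta^1,\eta^2)\,\rd\mu_{X_1}(x)=d_{\infty}(\eta^1,\eta^2).$$
Taking the supremum over all $f\in\mathcal{BL}_1(X_1\times X_2)$ then gives $d_{\sf BL}(\eta^1,\eta^2)\le d_{\infty}(\eta^1,\eta^2)$.

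The only genuinely load-bearing point, and the step I would flag as the crux, is the fiberwise membership $f(x,\cdot)\in\mathcal{BL}_1(X_2)$: it is precisely the additivity of the $\ell_1$-norm across the product (built into Assumption $\mathbf{(A1)}$) that guarantees restriction to a fiber inflates neither the sup-norm nor the Lipschitz constant, so that no normalization is lost and the same unit ball $\mathcal{BL}_1$ is respected on each slice. The remaining ingredients are routine and already available in the established framework: the measurability of $x\mapsto\int_{X_2}f(x,\cdot)\,\rd(\eta^1_x-\eta^2_x)$ needed to perform the outer integration, and the validity of the disintegration identity for $\mu_{X_1}\otimes\eta^i_x$; finally, the probability normalization of $\mu_{X_1}$ is exactly what prevents the estimate from acquiring a spurious factor of $\mu_{X_1}(X_1)$.
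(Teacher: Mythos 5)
Your argument is correct and is essentially the paper's own proof: both disintegrate $\int_{X_1\times X_2}f\,\rd(\eta^1-\eta^2)$ into fiber integrals, use that each slice $f(x,\cdot)$ stays in $\mathcal{BL}_1(X_2)$ to bound the inner integral by $d_{\sf BL}(\eta^1_x,\eta^2_x)$, and integrate against the probability measure $\mu_{X_1}$. Your write-up merely makes explicit the norm and Lipschitz-constant estimates for the slice that the paper states without elaboration.
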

\begin{proof}
For any $f\in\mathcal{BL}(X_1\times X_2)$, $x\in X_1$, we have $f(x,\cdot)\in\mathcal{BL}(X_2)$. Note that
  \begin{align*}
    d_{\sf BL}(\eta^1,\eta^2)=&\sup_{f\in\mathcal{BL}(X_1\times X_2)}\int_{X_1\times X_2}f(x,y)\mathrm{d}(\eta^1(x,y)-\eta^2(x,y))\\
    =&\sup_{f\in\mathcal{BL}(X_1\times X_2)}\int_{X_1\times X_2}f(x,y)\mathrm{d}(\eta^1_x(y)-\eta^2_x(y))\mathrm{d}\mu_{X_1}(x)\\
    \le&\sup_{f\in\mathcal{BL}(X_1\times X_2)}\int_{X}d_{\sf BL}(\eta^1_x,\eta^2_x)\mathrm{d}\mu_X(x)\le \sup_{x\in X_1}d_{\sf BL}(\eta^1_x,\eta^2_x)=d_{\infty}(\eta^1,\eta^2).
  \end{align*}
\end{proof}

Indeed, $d_{\infty}$ can induce a stronger topology than the weak topology in $\cM_+(X_1\times X_2)$.
\begin{example}\label{ex-stronger-topology}
Let $X_1=X_2=[0,1]$ with $\mu_{X_1}=\mu_{X_2}=\lambda|_{[0,1]}$. For $n\in\N$, let $$f(x)=1-\sqrt{1-x^2},\quad f_n(x)=\begin{cases}
  x,\quad \textnormal{if}\quad 0\le x\le 1-1/n,\\
  -(n-1)(x-1),\quad \textnormal{if}\quad 1-1/n<x\le 1,
\end{cases}\ \ x\in[0,1].$$ Then $f(x),f_n(x)\in[0,1]$ and $\{x\in X\colon f_n(x)\neq f(x)\}=]1-1/n,1]$. Let $\eta^n=\mu_X\otimes\delta_{f_n(x)}$ and $\eta=\mu_X\otimes\delta_{f(x)}$. It is easy to see that $\eta^n,\eta\in \mathcal{C}(X,\cM_+(X))$. Moreover, $$d_{\infty}(\eta,\eta^n)=d_{\sf BL}(\eta^n_1,\eta_1)=1,\quad \textnormal{for}\ n\in\N.$$ Hence $\lim_{n\in\N}d_{\infty}(\eta^n,\eta)=1$. On the other hand,
\begin{align*}
  d_{\sf BL}(\eta^n,\eta)=&\sup_{g\in\BL_1([0,1]^2)}\int_0^1(g(x,f(x))-g(x,f_n(x)))\rd x\\
  \le&\int_0^1|f(x)-f_n(x)|\rd x\\
  =&\int_{]1-1/n,1]}|f(x)-f_n(x)|\rd x\\
  \le&2/n\to0,\quad \textnormal{as}\quad n\to\infty,
\end{align*}
which implies that $\lim_{n\to\infty}d_{\sf BL}(\eta^n,\eta)=0$. This shows that $d_{\infty}$ does induce a stronger topology than $d_{\sf BL}$.
\end{example}

Next, we provide some properties of the above function spaces which play an important role in the proof of the main results in subsequent sections.
\begin{proposition}\label{prop-fibercomplete}
For $i=1,2$, let $X_i$ be a complete subspace of a finite dimensional Euclidean space. Assume $(X_1,\mathfrak{B}(X_1),\mu_{X_1})$ is a compact probability space.
\begin{enumerate}
\item[\textnormal{(i)}] For every $\eta_{\cdot}\in \mathcal{B}(X_1,\cM_+(X_2))$, $\|\eta\|<\infty$.
\item[\textnormal{(ii)}] $(\mathcal{B}(X_1,\cM_+(X_2)), d_{\infty})$ and $(\mathcal{B}_{\mu_{X_1},1}(X_1,\cM_+(X_2)), d_{\infty})$ are complete metric spaces. In particular, $(\mathcal{C}_b(X_1,\cM_+(X_2)), d_{\infty})$ and $(\mathcal{C}_{\mu_{X_1},1}(X,\cM_+(X_2)), d_{\infty})$ are so.
\end{enumerate}
\end{proposition}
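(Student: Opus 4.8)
The plan is to prove the two assertions separately, reducing the completeness statements in (ii) to the fact (recalled just before the proposition) that bounded, respectively bounded continuous, functions into a complete metric space form a complete space under the uniform metric, together with $(\cM_+(X_2),d_{\sf BL})$ being Polish. For (i), the key observation is that the total variation norm of a \emph{positive} measure is recovered by testing against the constant function: since $\mathbbm{1}\in\mathcal{BL}_1(X_2)$ (it has sup-norm $1$ and Lipschitz constant $0$) and $\eta_x\ge0$,
\[\|\eta_x\|_{\TV}=\eta_x(X_2)=\int_{X_2}\mathbbm{1}\,\rd\eta_x\le\sup_{f\in\mathcal{BL}_1(X_2)}\int_{X_2}f\,\rd\eta_x=\|\eta_x\|_{\sf BL}=d_{\sf BL}(\eta_x,0).\]
As $\eta_{\cdot}\in\mathcal{B}(X_1,\cM_+(X_2))$ is by definition bounded as a map into $(\cM_+(X_2),d_{\sf BL})$, its image lies in a bounded subset, so $\sup_{x}d_{\sf BL}(\eta_x,0)<\infty$ and hence $\|\eta\|=\sup_x\|\eta_x\|_{\TV}<\infty$. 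For the two \emph{base} spaces in (ii) I would simply invoke the completeness of $(\mathcal{B}(X_1,\cM_+(X_2)),d_{\infty})$ and $(\mathcal{C}(X_1,\cM_+(X_2)),d_{\infty})$ already recorded; the identity $\mathcal{C}_b(X_1,\cM_+(X_2))=\mathcal{C}(X_1,\cM_+(X_2))$ uses compactness of $X_1$, and a uniform limit of continuous functions is continuous, so $\mathcal{C}_b$ is closed in $\mathcal{B}$.

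The substance is to show that the two \emph{normalized} subspaces are closed in their ambient complete spaces, since a closed subset of a complete metric space is complete. Take $\eta^n\in\mathcal{B}_{\mu_{X_1},1}(X_1,\cM_+(X_2))$ with $d_{\infty}(\eta^n,\eta)\to0$ for some $\eta\in\mathcal{B}(X_1,\cM_+(X_2))$. Testing again against $\mathbbm{1}$ gives
\[\sup_{x\in X_1}|\eta^n_x(X_2)-\eta_x(X_2)|\le\sup_{x\in X_1}d_{\sf BL}(\eta^n_x,\eta_x)=d_{\infty}(\eta^n,\eta)\to0,\]
so the total-mass functions $x\mapsto\eta^n_x(X_2)$ converge \emph{uniformly} to $x\mapsto\eta_x(X_2)$. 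The limit mass function is measurable (a uniform limit of the measurable maps $x\mapsto\eta^n_x(X_2)$, each measurable because $\nu\mapsto\nu(X_2)=\int\mathbbm{1}\,\rd\nu$ is $d_{\sf BL}$-continuous) and bounded by (i), hence $\mu_{X_1}$-integrable. Since $\mu_{X_1}$ is a probability measure, uniform convergence lets me pass the constraint to the limit:
\[\left|\int_{X_1}\eta^n_x(X_2)\,\rd\mu_{X_1}-\int_{X_1}\eta_x(X_2)\,\rd\mu_{X_1}\right|\le\sup_{x\in X_1}|\eta^n_x(X_2)-\eta_x(X_2)|\to0.\]
As the left-hand integrals all equal $1$, we obtain $\int_{X_1}\eta_x(X_2)\,\rd\mu_{X_1}=1$, i.e. $\eta\in\mathcal{B}_{\mu_{X_1},1}(X_1,\cM_+(X_2))$; this is the required closedness. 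The identical argument with $\mathcal{C}$ in place of $\mathcal{B}$ shows $\mathcal{C}_{\mu_{X_1},1}(X_1,\cM_+(X_2))$ is closed in the complete space $\mathcal{C}(X_1,\cM_+(X_2))$.

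I do not expect a serious obstacle: the whole proof hinges on the single elementary fact that the admissible constant test function makes $d_{\sf BL}$ dominate the difference of total masses, so control in $d_{\infty}$ yields uniform control of masses and, via finiteness of $\mu_{X_1}$, of the normalization integral. The only point requiring a little care is confirming the measurability and integrability of the limiting mass function $x\mapsto\eta_x(X_2)$, so that the constraint defining the normalized spaces is even meaningful for the limit; this is settled by the $d_{\sf BL}$-continuity of $\nu\mapsto\nu(X_2)$ together with the bound from part (i).
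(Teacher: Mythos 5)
Your proof is correct and follows essentially the same route as the paper: part (i) rests on testing against the constant function $\mathbbm{1}\in\mathcal{BL}_1(X_2)$ so that $d_{\sf BL}$ controls total masses (the paper anchors at a fixed fiber $\eta_x$ where you anchor at the zero measure, an immaterial difference), and part (ii) reduces completeness of the normalized subspaces to their closedness in the ambient complete spaces. The only divergence is the final limit interchange in (ii): the paper builds a dominating function from the Cauchy property and invokes dominated convergence, whereas you pass to the limit directly via uniform convergence of the mass functions $x\mapsto\eta^n_x(X_2)$ integrated against the probability measure $\mu_{X_1}$ --- a slightly cleaner step that changes nothing essential.
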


\begin{proof}
\noindent(i). Let $x\in X_1$. Since $\eta_{\cdot}\in \mathcal{B}(X_1,\cM_+(X_2))$,  $$\sup_{y\in X}\sup_{f\in\mathcal{BL}_1(X_2)}\lt|\int_{X_2}f\rd(\eta_x-\eta_y)\rt|=\sup_{y\in X_1}d_{\sf BL}(\eta_x,\eta_y)<\infty.$$ Taking $f=\mathbbm{1}_Z$ yields $$\|\eta\|=\sup_{y\in X_1}\eta_{y}(X_2)\le\eta_x(X_2)+\sup_{y\in X_1}d_{\sf BL}(\eta_x,\eta_y)<\infty,$$ since $\eta_x\in\cM_+(X_2)$.

\noindent(ii). It suffices to show $(\mathcal{B}_{\mu_X,1}(X_1,\cM_+(X_2)), d_{\infty})$ is closed. For any Cauchy sequence $(\eta^n)\subseteq (\mathcal{B}_{\mu_X,1}(X_1,\cM_+(X_2)), d_{\infty})$, since $\mathcal{B}(X_1,\cM_+(X_2))$ is complete, there exists $\eta\in \mathcal{B}(X_1,\cM_+(X_2))$ such that $d_{\infty}(\eta,\eta^n)\to0$ as $n\to\infty$.  Since $(\eta^n)$ is Cauchy, in the light of the proof of case (i), there exists $N'\in\N$ such that for all $m,n\ge N'$,
\begin{equation*}\sup_{x\in X_1}|\eta_x^m(X_2)-\eta^n_x(X_2)|\le\sup_{x\in X_1}d_{\sf BL}(\eta^m_x,\eta^n_x)<1,\end{equation*} which implies that
$$0\le\eta^n_x(X_2)\le\max\left\{\eta^{N'}_x(X_2)+1,\max_{1\le j\le N'-1}\eta^j_x(X_2)\right\},\quad \forall x\in X_1.$$ Since $\eta^j\in {\sf L}^1(X_1,\mu_{X_1})$ for all $1\le j\le N'$, and $\int_{X_1}\eta^n_x(X_2)\mu_{X_1}(x)=1$ for all $n\in\N$, by Dominated Convergence Theorem, \[\int_{X_1}\eta_x(X_2)\mu_{X_1}(x)=\lim_{n\to\infty}\int_{X_1}\eta^n_x(X_2)\mu_{X_1}(x)=1,\]
i.e., $\eta\in \mathcal{B}_{\mu_{X_1},1}(X_1,\cM_+(X_2))$.

Since the intersection of closed sets are closed, $(\mathcal{C}_{\mu_{X_1},1}(X_1,\cM_+(X_2)), d_{\infty})$ is also complete.
\end{proof}

\begin{definition}
For $i=1,2$, let $X_i$ be a complete subspace of a finite dimensional Euclidean space. Assume $(X_1,\mathfrak{B}(X_1),\mu_{X_1})$ is a compact probability space.
For $$\mathcal{B}_{\mu_X,1}(X_1,\cM_+(X_2))\ni\eta\colon\begin{cases}
  X_1\to \cM_+(X_2),\\ x\mapsto\eta_x,
\end{cases}$$ $\eta$ is  \emph{weakly continuous} if for every $f\in\mathcal{C}_{\sf b}(X_2)$,
\[\mathcal{C}(X_1)\ni\eta(f)\colon\begin{cases}
  X_1\to\R,\\ x\mapsto\eta_x(f)\colon=\int_{X_2}f\rd\eta_x.
\end{cases}\]
\end{definition}

\begin{definition} For $i=1,2$, let $X_i$ be a complete subspace of a finite dimensional Euclidean space. Assume $(X_1,\mathfrak{B}(X_1),\mu_{X_1})$ is a compact probability space.
Let $\mathcal{I}\subseteq\R$ be a compact interval. For $$\eta_{\cdot}\colon\begin{cases}
  \mathcal{I}\to \mathcal{B}(X_1,\cM_+(X_2)),\\ t\mapsto\eta_t,
\end{cases}$$ $\eta_{\cdot}$ is  \emph{uniformly weakly continuous} if for every $f\in \mathcal{C}_{\sf b}(X_2)$, $t\mapsto(\eta_t)_x(f)$ is continuous in $t$ uniformly in $x\in X_1$.
\end{definition}
By slightly abusing the notation, for $\eta_{\cdot}\in\mathcal{C}(\mathcal{I},\mathcal{B}(X_1,\cM_+(X_2)))$, let $$\|\eta_{\cdot}\|=\sup_{t\in\mathcal{I}}\sup_{x\in X_1}\|(\eta_t)_x\|_{\TV}.$$

The following proposition unveils the relation between continuity and (uniform) weak continuity.
\begin{proposition}\label{prop-nu}
For $i=1,2$, let $X_i$ be a complete subspace of a finite dimensional Euclidean space. Assume $(X_1,\mathfrak{B}(X_1),\mu_{X_1})$ is a compact probability space. Let $\mathcal{I}\subseteq\R$ be a compact interval.
\begin{enumerate}
\item[\textnormal{(i)}] Let $\eta_{\cdot}\colon \mathcal{I}\to \mathcal{B}_{\mu_{X_1},1}(X_1,\cM_+(X_2))$. Then $\eta_{\cdot}$ is uniformly weakly continuous if and only if $\eta_{\cdot}\in \mathcal{C}(\mathcal{I},\mathcal{B}_{\mu_{X_1},1}(X_1,\cM_+(X_2)))$.
\item[\textnormal{(ii)}] Assume $\eta_{\cdot},\ \xi_{\cdot}\in \mathcal{C}(\mathcal{I},\mathcal{B}_{\mu_{X_1},1}(X_1,\cM_+(X_2)))$, then
    $\|\eta_{\cdot}\|<\infty$ and $t\mapsto d_{\infty}(\eta_t,\xi_t)$ is continuous.
\item[\textnormal{(iii)}] Assume $\eta\in \mathcal{C}(X_1,\cM_+(X_2))$. Then $\eta$ is weakly continuous.
\end{enumerate}

\end{proposition}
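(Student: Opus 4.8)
The three parts share one engine: the elementary estimate
\[ |\nu(f)-\nu'(f)|\le \mathcal{BL}(f)\, d_{\sf BL}(\nu,\nu'),\qquad f\in\mathcal{BL}(X_2),\ \nu,\nu'\in\cM_+(X_2), \]
which is immediate from the definition of $d_{\sf BL}$ as a supremum over $\mathcal{BL}_1(X_2)$. This transfers $d_{\sf BL}$-smallness into test-function smallness whenever $f$ is Lipschitz; the two genuinely nontrivial points are (a) upgrading from Lipschitz $f$ to arbitrary $f\in\mathcal{C}_{\sf b}(X_2)$, and (b) in part (i), the reverse passage from fixed-$f$ control to control uniform over the whole ball $\mathcal{BL}_1(X_2)$. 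I would prove (ii) first, then (iii), and finally (i).

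For (ii), the constant function $1$ lies in $\mathcal{BL}_1(X_2)$, so testing against it gives $|(\eta_t)_x(X_2)-(\eta_s)_x(X_2)|\le d_{\sf BL}((\eta_t)_x,(\eta_s)_x)\le d_{\infty}(\eta_t,\eta_s)$; taking the supremum over $x$ shows $t\mapsto\|\eta_t\|=\sup_{x}(\eta_t)_x(X_2)$ is $1$-Lipschitz in $d_{\infty}$, hence continuous, hence bounded on the compact interval $\mathcal{I}$, i.e. $\|\eta_{\cdot}\|<\infty$ (finiteness of each $\|\eta_t\|$ being Proposition~\ref{prop-fibercomplete}(i)). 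Continuity of $t\mapsto d_{\infty}(\eta_t,\xi_t)$ is then the reverse triangle inequality $|d_{\infty}(\eta_t,\xi_t)-d_{\infty}(\eta_s,\xi_s)|\le d_{\infty}(\eta_t,\eta_s)+d_{\infty}(\xi_t,\xi_s)$, whose right side tends to $0$ as $s\to t$ by continuity of $\eta_{\cdot}$ and $\xi_{\cdot}$.

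For (iii), fix $f\in\mathcal{C}_{\sf b}(X_2)$. If $f$ is Lipschitz the displayed estimate gives $|\eta_x(f)-\eta_{x'}(f)|\le\mathcal{BL}(f)\,d_{\sf BL}(\eta_x,\eta_{x'})\to0$ as $x'\to x$, so $\eta(f)\in\mathcal{C}(X_1)$. For general $f$ I would first extract a \emph{uniform tightness}: since $X_1$ is compact and $\eta\in\mathcal{C}(X_1,\cM_+(X_2))$, the image $\{\eta_x:x\in X_1\}$ is a compact, hence relatively compact, subset of the Polish space $(\cM_+(X_2),d_{\sf BL})$ with uniformly bounded masses, so by Prokhorov's theorem \cite{B07} it is uniformly tight; choose a compact $K\subseteq X_2$ with $\sup_x\eta_x(X_2\setminus K)<\varepsilon$. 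On $K$ approximate $f$ uniformly by a Lipschitz $f_\varepsilon$ with $\sup_K|f-f_\varepsilon|<\varepsilon$ and $\|f_\varepsilon\|_\infty\le\|f\|_\infty$; splitting $\eta_x(f)-\eta_{x'}(f)$ into the $f_\varepsilon$-part (handled by the Lipschitz case) and the errors on $K$ and on $X_2\setminus K$ (each $O(\varepsilon)$ by the uniform mass bound and tightness) gives continuity of $x\mapsto\eta_x(f)$, i.e. weak continuity.

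Finally, for (i): the implication $\eta_{\cdot}\in\mathcal{C}(\mathcal{I},\cdot)\Rightarrow$ uniform weak continuity runs as in (iii) with $s\to t$ in place of $x'\to x$, the Lipschitz case giving $\sup_x|(\eta_s)_x(f)-(\eta_t)_x(f)|\le\mathcal{BL}(f)\,d_{\infty}(\eta_s,\eta_t)\to0$ and the general case using $\|\eta_{\cdot}\|<\infty$ from (ii) together with uniform tightness of the fibers $\{(\eta_t)_x\}_{t,x}$. The converse is the crux: uniform weak continuity controls each \emph{fixed} $g$ uniformly in $x$, whereas $d_{\infty}(\eta_s,\eta_t)=\sup_x\sup_{g\in\mathcal{BL}_1(X_2)}\bigl((\eta_s)_x(g)-(\eta_t)_x(g)\bigr)$ also demands uniformity \emph{over} $g$. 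I would obtain this by an Arzel\`a--Ascoli reduction: $\mathcal{BL}_1(X_2)$ is uniformly bounded and equi-Lipschitz, so its restriction to a compact $K$ carrying all but $\varepsilon$ of the mass of every fiber is totally bounded in $(\mathcal{C}(K),\|\cdot\|_\infty)$; replacing $\mathcal{BL}_1(X_2)$ by a finite $\varepsilon$-net $g_1,\dots,g_M$ and bounding the net error and tails by the uniform mass bound reduces $d_{\infty}(\eta_s,\eta_t)$ to $\max_{1\le k\le M}\sup_x|(\eta_s)_x(g_k)-(\eta_t)_x(g_k)|+O(\varepsilon)$, the first term vanishing as $s\to t$ by uniform weak continuity of the finitely many $g_k$. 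The main obstacle is exactly this exchange of suprema: it requires equicontinuity/total-boundedness of $\mathcal{BL}_1(X_2)$ on a compact set together with uniform tightness of $\{(\eta_t)_x\}_{t,x}$, which is where compactness of the state space $X_2$ (or uniform compact support as in $(\mathbf{A5})$) enters; without such control the passage from bounded Lipschitz to general bounded continuous test functions can genuinely fail.
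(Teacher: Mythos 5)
Your architecture is sound, and parts (ii) and (iii) are correct. For (ii) you argue essentially as the paper does: the paper gets $\|\eta_{\cdot}\|<\infty$ from compactness of $\mathcal{I}$ plus completeness of the target space, you get it from continuity of $t\mapsto\|\eta_t\|$ via the test function $1\in\mathcal{BL}_1(X_2)$; these are interchangeable, and the triangle-inequality step for $t\mapsto d_{\infty}(\eta_t,\xi_t)$ is identical. For (iii) the paper is much shorter: it only invokes the fact that $d_{\sf BL}$ metrizes the weak-$*$ topology on $\cM_+(X_2)$, so $d_{\sf BL}(\eta_{x_j},\eta_x)\to0$ already gives $\eta_{x_j}(f)\to\eta_x(f)$ for every $f\in\mathcal{C}_{\sf b}(X_2)$ --- weak continuity is a pointwise statement, so no uniformity over a parameter and hence no tightness argument is needed. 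Your Prokhorov-plus-Lipschitz-approximation proof of (iii) is nevertheless valid (the range $\{\eta_x\}$ is the continuous image of the compact $X_1$, hence compact and uniformly tight), just more work than necessary. Where you genuinely diverge from --- and improve on --- the paper is part (i): the paper cites the same metrization theorem, which yields $d_{\sf BL}((\eta_{t_j})_x,(\eta_t)_x)\to0$ for each \emph{fixed} $x$, and then asserts without argument that this convergence is ``uniform in $x$'' (and, in the converse direction, that $(\eta_{t_j})_x(f)\to(\eta_t)_x(f)$ uniformly in $x$ for every $f\in\mathcal{C}_{\sf b}(X_2)$). Your Arzel\`a--Ascoli/finite-net exchange of suprema is exactly the justification the paper omits at this point.

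The one caveat concerns the uniform tightness of $\{(\eta_t)_x\}_{t\in\mathcal{I},\,x\in X_1}$ that both directions of your argument for (i) consume. It does not follow from the stated hypotheses: $\eta_t$ is only bounded measurable in $x$, so the fibers over different $x$ need not form a tight family (already $x\mapsto\delta_{1/x}$ on $X_2=\R$ is a bounded measure-valued function with non-tight range). And the issue is not cosmetic: taking $X_1=[0,1]$, $X_2=\R$ and $(\eta_t)_x=\delta_{1/x+t}$ gives $d_{\infty}(\eta_t,\eta_s)\le|t-s|$, yet for $f(y)=\sin(y^2)$ the quantity $\sup_{u\ge1}|f(u+t)-f(u+s)|$ stays bounded away from zero for $s\neq t$, so uniform weak continuity fails and the forward implication of (i) is false for unbounded $X_2$. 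Thus the tightness you require is not a removable convenience but a genuinely needed additional hypothesis; you flag this correctly at the end. Since every invocation of the proposition in the paper has the second factor compact ($X$ or $Y$) or the fibers uniformly compactly supported as in $(\mathbf{A5})$, your proof covers the intended scope, but both the statement and the paper's own two-line proof of (i) should be read with that restriction in mind.
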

\begin{proof}
\noindent{(i)}
\begin{enumerate}
\item[Step I.] Uniform weak continuity implies continuity. Assume $\eta_{\cdot}$ is uniformly weakly continuous. Fix $t\in\mathcal{I}$ and $\mathcal{I}\supseteq t_j\to t$. Since $\eta_{\cdot}$ is uniformly weakly continuous, for every $f\in \mathcal{C}_{\sf b}(X_2)$, $$(\eta_{t_j})_{x}(f)\to(\eta_t)_{x}(f)\quad \text{uniformly in}\ x.$$
Since $X_2$ is Polish,  $d_{\sf BL}$ metrizes the weak-$*$ topology of $\cM_+(X_2)$ \cite[Thm.~8.3.2]{B07}, and we have
\[\lim_{j\to\infty}d_{\sf BL}((\eta_{t_j})_x,(\eta_t)_x)=0,\]
and the convergence is uniform in $x\in X_1$. This means that
\[\lim_{j\to\infty}\sup_{x\in X_1}d_{\sf BL}((\eta_{t_j})_x,(\eta_t)_x)=0,\]
i.e., $$\lim_{j\to\infty}d_{\infty}(\eta_{t_j},\eta_t)=0.$$
This shows $\eta_{\cdot}\in\mathcal{C}(\mathcal{I},\mathcal{B}_{\mu_{X_1},1}(X_1,\cM_+(X_2)))$.

\item[Step II.] Continuity implies uniform weak continuity. Assume $\eta_{\cdot}\in\mathcal{C}(\mathcal{I},\mathcal{B}_{\mu_{X_1},1}(X_1,\cM_+(X_2)))$. For every fixed $t\in\mathcal{I}$ and $\mathcal{I}\supseteq t_j\to t$, we have
\[\lim_{j\to\infty}d_{\infty}(\eta_{t_j},\eta_t)=0.\] Hence  $$d_{\sf BL}((\eta_{t_j})_x,(\eta_t)_x)\to0,\quad \text{uniformly in}\ x\in X_1.$$ Since $d_{\sf BL}$ metrizes the weak-$*$ topology of $\cM_+(X_2)$,
$$(\eta_{t_j})_{x}(f)\to(\eta_t)_{x}(f)\quad \forall\ f\in \mathcal{C}_{\sf b}(X_2),$$ also uniformly in $x\in X_1$. This shows that $\eta_{\cdot}$ is uniformly weakly continuous.
\end{enumerate}
\noindent{(ii)}
Since $\mathcal{I}$ is compact, and $(\mathcal{B}_{\mu_{X_1},1}(X_1,\cM_+(X_2)),d_{\infty})$ is complete by Proposition~\ref{prop-fibercomplete}(ii),  we have $\eta_{\cdot}$ is uniformly bounded: $$\sup_{t\in\mathcal{I}}\sup_{x\in X_1}\sup_{f\in\mathcal{BL}_1(X_2)}\lt|\int_{X_2}f\rd((\eta_t)_x-(\eta_0)_x)\rt|
=\sup_{t\in\mathcal{I}}d_{\infty}(\eta_0,\eta_t)<\infty.$$ Letting $f=\mathbbm{1}_{X_2}$ yields
\[\|\eta_{\cdot}\|\le\|\eta_0\|+\sup_{t\in\mathcal{I}}d_{\infty}(\eta_0,\eta_t)<\infty,\] since $\eta_0\in \mathcal{B}_{\mu_{X_1},1}(X_1,\cM_+(X_2))$.

Finally we show $t\mapsto d_{\infty}(\eta_t,\xi_t)$ is continuous, which directly follows from the following triangle inequality: For $s,t\in\mathcal{I}$,
\begin{align*}
  |d_{\infty}(\xi_t,\eta_t)-d_{\infty}(\xi_s,\eta_s)|
  \le&|d_{\infty}(\xi_t,\eta_t)-d_{\infty}(\xi_t,\eta_s)|+|d_{\infty}(\xi_t,\eta_s)-d_{\infty}(\xi_s,\eta_s)|\\
  \le& d_{\infty}(\eta_t,\eta_s)+d_{\infty}(\xi_t,\xi_s)\to0,\quad \text{as}\ |t-s|\to0,
\end{align*}since $\eta_{\cdot},\ \xi_{\cdot}\in \mathcal{C}(\mathcal{I},\mathcal{B}_{\mu_{X_1},1}(X_1,\cM_+(X_2)))$.

\noindent{(iii)} The argument in Step II in (i) applies by replacing $t$ by $x$ as well as $|t-s|$ by $|x-y|$.
\end{proof}

\begin{definition}\label{def-alphametric}
For $i=1,2$, let $X_i$ be a complete subspace of a finite dimensional Euclidean space. Assume that $(X_1,\mathfrak{B}(X_1),\mu_{X_1})$ is a compact probability space.  Let $\mathcal{I}\subseteq\R$ be a compact interval and $\alpha>0$.  For $\nu^1_{\cdot},\nu^2_{\cdot}\in \mathcal{C}(\mathcal{I},\mathcal{B}_{\mu_{X_1},1}(X_1,\cM_+(X_2)))$, let
  \[d_{\alpha}(\nu^1_{\cdot},\nu^2_{\cdot})=\sup_{t\in\mathcal{I}}\textnormal{e}^{-\alpha t}d_{\infty}(\nu^1_t,\nu^2_t)\]be a \emph{weighted} uniform metric.
\end{definition}
These metrics are going to be used below to establish the contraction of a mapping used in the unique existence of a fixed point equation.
\begin{proposition}\label{alpha-complete}
For $i=1,2$, let $X_i$ be a complete subspace of a finite dimensional Euclidean space. Assume that $(X_1,\mathfrak{B}(X_1),\mu_{X_1})$ is a compact probability space.  Let $\mathcal{I}\subseteq\R$ be a compact interval and $\alpha>0$. Then $(\mathcal{C}(\mathcal{I},\mathcal{B}_{\mu_{X_1},1}(X_1,\cM_+(X_2))),d_{\alpha})$ and $(\mathcal{C}(\mathcal{I},\mathcal{C}_{\mu_{X_1},1}(X_1,$ $\cM_+(X_2))),d_{\alpha})$ are both complete.
\end{proposition}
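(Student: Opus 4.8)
The plan is to reduce the claim to two standard facts: that the weighted metric $d_{\alpha}$ is bi-Lipschitz equivalent to the ordinary uniform metric in $t$, and that the space of continuous maps from a compact interval into a complete metric space is itself complete under uniform convergence. The completeness of the two fiber spaces $\mathcal{B}_{\mu_{X_1},1}(X_1,\cM_+(X_2))$ and $\mathcal{C}_{\mu_{X_1},1}(X_1,\cM_+(X_2))$ under $d_{\infty}$ is already supplied by Proposition~\ref{prop-fibercomplete}(ii), so these serve as the target complete spaces.

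First I would observe that, since $\mathcal{I}$ is a compact interval, the weight $t\mapsto\textnormal{e}^{-\alpha t}$ is bounded between the positive constants $c_1:=\textnormal{e}^{-\alpha\max\mathcal{I}}$ and $c_2:=\textnormal{e}^{-\alpha\min\mathcal{I}}$. Comparing pointwise in $t$ and then taking suprema gives
\[
c_1\,\sup_{t\in\mathcal{I}}d_{\infty}(\nu^1_t,\nu^2_t)\ \le\ d_{\alpha}(\nu^1_{\cdot},\nu^2_{\cdot})\ \le\ c_2\,\sup_{t\in\mathcal{I}}d_{\infty}(\nu^1_t,\nu^2_t).
\]
Thus $d_{\alpha}$ is bi-Lipschitz equivalent to the unweighted uniform metric $d^*(\nu^1_{\cdot},\nu^2_{\cdot}):=\sup_{t\in\mathcal{I}}d_{\infty}(\nu^1_t,\nu^2_t)$; in particular the two metrics share the same Cauchy sequences and the same convergent sequences, so it suffices to establish completeness with respect to $d^*$.

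Next I would run the classical uniform-limit argument. Write $M$ for either $\mathcal{B}_{\mu_{X_1},1}(X_1,\cM_+(X_2))$ or $\mathcal{C}_{\mu_{X_1},1}(X_1,\cM_+(X_2))$, both complete under $d_{\infty}$ by Proposition~\ref{prop-fibercomplete}(ii). Given a $d^*$-Cauchy sequence $(\nu^n_{\cdot})_n$, for each fixed $t\in\mathcal{I}$ the estimate $d_{\infty}(\nu^n_t,\nu^m_t)\le d^*(\nu^n_{\cdot},\nu^m_{\cdot})$ shows that $(\nu^n_t)_n$ is $d_{\infty}$-Cauchy in $M$, hence converges to a limit $\nu_t\in M$. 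A standard $\varepsilon$-argument — choosing $N$ with $d^*(\nu^n_{\cdot},\nu^m_{\cdot})<\varepsilon$ for $n,m\ge N$ and letting $m\to\infty$ inside $d_{\infty}(\nu^n_t,\nu^m_t)$ — upgrades this to uniform convergence, i.e.\ $\sup_{t\in\mathcal{I}}d_{\infty}(\nu^n_t,\nu_t)\le\varepsilon$ for all $n\ge N$. Finally, continuity of the limit map $t\mapsto\nu_t$ follows because a uniform limit of the continuous maps $\nu^n_{\cdot}\in\mathcal{C}(\mathcal{I},M)$ is continuous, via the usual triangle-inequality $3\varepsilon$-estimate. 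Hence $\nu_{\cdot}\in\mathcal{C}(\mathcal{I},M)$ and $d^*(\nu^n_{\cdot},\nu_{\cdot})\to0$, which is exactly completeness with respect to $d^*$, and therefore with respect to $d_{\alpha}$.

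I do not anticipate a genuine obstacle here; the only point requiring care is that the limit $\nu_{\cdot}$ must land in the correct space, i.e.\ each $\nu_t$ must retain the normalization $\|(\nu_t)_x(X_2)\|_{{\sf L}^1(X_1,\mu_{X_1})}=1$ and, in the second case, the continuity in the vertex variable $x$. Both are automatic once one invokes Proposition~\ref{prop-fibercomplete}(ii), since that result precisely asserts that these two fiber spaces are closed (complete) under $d_{\infty}$, so the $d_{\infty}$-limit $\nu_t$ of $(\nu^n_t)_n$ inherits membership in $M$. The two asserted completeness statements then differ only in the choice of $M$ and are proved by the identical argument.
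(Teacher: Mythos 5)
Your proof is correct and follows essentially the same route as the paper: the paper likewise notes that $d_{\alpha}$ is equivalent to the unweighted uniform metric on the compact interval $\mathcal{I}$ and then invokes Proposition~\ref{prop-fibercomplete} together with the standard completeness of spaces of (bounded) continuous functions into a complete metric space. You have merely written out in full the classical uniform-limit argument that the paper leaves implicit.
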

\begin{proof}
Since the weighted uniform metric $d_{\alpha}$
 is equivalent to the uniform metric $\sup_{t\in\mathcal{I}}\textnormal{e}^{-\alpha t}$ $\cdot d_{\infty}(\nu^1_t,\nu^2_t)$
 between $\nu^1_{\cdot}$ and $\nu^2_{\cdot}$, the conclusions yield immediately from Proposition~\ref{prop-fibercomplete}, since all continuous functions on $\mathcal{I}$ are bounded.
\end{proof}

Let $X_1=X$, $X_2=X$ or $X_2=\R^{r_2}$, and $\mathcal{I}=\cT$. The spaces $\mathcal{B}(\cT,\mathcal{B}_{\mu_X,1}(X,\cM_+(\R^{r_2})))$, $\mathcal{C}(\cT,\mathcal{B}_{\mu_X,1}(X,\cM_+(\R^{r_2})))$ and $\mathcal{C}(\cT$, $C_{\mu_X,1}(X,\cM_{\abs}(\R^{r_2})))$ will serve as the underlying spaces for initial probabilities of the generalized VEs (with the last in the sense of the classical VE), and $\mathcal{B}(X,\cM_+(X))$ and $\mathcal{C}_{\sf b}(X,\cM_+(X))$ will correspond to the space of \emph{generalized digraphs} (DGMs), as illuminated below.

\subsection*{Digraph measures}

Let $X$ be the vertex space. For any $\eta\in \mathcal{B}(X,\cM_+(X))$, the measure $\eta_x$ represents the ``edge'' from $x$ to other vertices in $X$. For instance, $\sup_{x\in X}\supp\eta_x<\infty$ is interpreted as  every vertex $x$ has uniformly finitely many outward directed edges while $\inf_{x\in X}\supp\eta_x=\infty$ means that every vertex connects infinitely many other vertices. Hence $\eta$ can be viewed as a \emph{digraph}.

We now classify digraph measures into sub-categories according to their \emph{denseness}. Similar notions have appeared in the literature, in particular we mention the recent theory of graphops (graph operators), where families of fiber measures associated to graphops, plays a key role in this regard \cite{BS20}. Our work is motivated directly by this theory, but as we have explained above, staying purely on the level of operator theory as in~\cite{GK20} leads to relatively strong requirements on the graphop, so a measure-theoretic viewpoint is a natural generalization/alternative.

\begin{definition}\label{DGM}
Any measure-valued function in $\mathcal{B}(X,\cM_+(X))$ is a \emph{digraph measure} (abbreviated as ``\emph{DGM}'').
\end{definition}

\begin{definition}\label{graphop}
  Let $\eta\in \mathcal{B}(X,\cM_+(X))$. We say $\eta$ is \emph{symmetric} w.r.t.~a reference measure $\mu_X\in\cP(X)$ if $\mu_X\otimes\eta_x\in\cM_+(X^2)$ is symmetric. A symmetric DGM is called a \emph{GM} for short, which is also called a \emph{graphop}.
\end{definition}

That a GM can be also viewed as a graphop \cite{BS20}, is due to \emph{Riesz representation theorem} \cite{BS20}. Yet, it turns out to be crucial from a technical perspective in the context of Vlasov equations, whether one works directly with measures or via operator-theoretic representations. Indeed, it is a known theme in PDEs that the choice of solution space is critical, so our setting can be viewed as another manifestation of this problem. Furthermore, we remark that the DGM slightly generalizes the notion of graphops to the asymmetric setting. Other notions in the literature \cite{BS20} can be also analogously extended to digraph measures.

\begin{definition}\label{graphon}
A DGM $\eta\in \mathcal{B}(X,\cM_+(X))$ is called a \emph{digraphon} w.r.t. a reference measure $\mu_X\in\cP(X)$ if $\mu_X\otimes\eta_x\in\cM_{+,\abs}(X^2)$. A symmetric digraphon is a \emph{graphon} \cite[Sec.8]{BS20}.
\end{definition}

\begin{remark}\label{re-generalization}
Let $W\in {\sf L}^1(X^2,\mu_X\otimes\mu_X)$ be graphon.
In \cite{KM18}, $X=[0,1]$, $\mu_X$ is the Lebesgue measure on $X$, and it is assumed that $W$ satisfies
\begin{equation}
\label{KM-condition}\lim_{z\to0}\int_I|W(x+z,y)-W(x,y)|\rd y=0,\quad \forall\ x\in X.
\end{equation} 
Then it is easy to show that $x\mapsto\eta_x$ is continuous since \begin{align*}
  d_{\sf BL}(\eta_x,\eta_{x'})=&\sup_{f\in\BL_1(X)}\int_Xf\rd(\eta_x-\eta_{x'})\\
  =&\sup_{f\in\BL_1(X)}\int_Xf(y)(W(x,y)-W(x',y))\rd\mu_X(y)\\
  \le&\|W(x,\cdot)-W(x',\cdot)\|_{{\sf L}^1(X,\mu_X)},
  \end{align*}
  as well as \eqref{KM-condition}. 
  This demonstrates that our main results that follow assuming only continuity of DGMs does generalize the result in \cite{KM18} (except that technically the underlying phase space is different), 
  where the network has a graphon limit, or continuous graphops which can be approximated by graphons as in \cite{GK20}.
\end{remark}

Let $\mathcal{D}(X)\colon=\{\eta\in \mathcal{B}(X,\cM_+(X))\colon \sup_{x\in X}\supp\eta_x<\infty\}$.
\begin{definition}\label{graphing}
A DGM $\eta\in\cD(X)$ is called a \emph{digraphing}. A symmetric digraphing is a \emph{graphing} \cite[Sec.9]{L12,BS20}.
\end{definition}

In terms of denseness of a graph, a digraphon is \emph{dense} while a digraphing is \emph{sparse} \cite{BS20}.

In the following, we provide several examples to show the diversity of DGMs in $\mathcal{C}(X,\cM_+(X))$, in terms of the denseness as well as heterogeneity of the graphs.

\begin{figure}[h]
\centering
{
\begin{tikzpicture}
    [
        dot/.style={circle,draw=black,fill,inner sep=1pt},scale=4
    ]
  \path[-] (0,3/4) edge (1/4,1);
  \path[-,very thick] (0,1/4) edge (3/4,1);
  \path[-] (1/4,0) edge (1,3/4);
  \path[-,very thick] (3/4,0) edge (1,1/4);
 \path[-,dashed] (1,0) edge (1,3/4);
  \path[-,dashed] (0,1) edge (3/4,1);
  \path[-,dashed] (1/4,1) edge (1/4,0);
  \path[-,dashed] (3/4,1) edge (3/4,0);
  \path[-,dashed] (0,3/4) edge (1,3/4);
  \path[-,dashed] (0,1/4) edge (1,1/4);
\draw[->,thick,-latex] (0,-.1) -- (0,1.1);
\draw[->,thick,-latex] (-.1,0) -- (1.1,0);
\node[] at (-0.05,-0.05){0};
\node[] at (1/4,-0.07){1/4};
\node[] at (1/2,-0.07){1/2};
\node[] at (3/4,-0.07){3/4};
\node[] at (1,-0.05){1};
\node[] at (-0.08,1/4){1/4};
\node[] at (-0.08,1/2){1/2};
\node[] at (-0.08,3/4){3/4};
\node[] at (-0.05,1){1};
\end{tikzpicture}
\caption{Example~\ref{ex-circle}. The circle graphop $\eta$, when viewed as a measure on the product space $\mathbb{S}^1\times\mathbb{S}^1=\mathbb{T}^2\equiv[0,1[^2$, is a uniform measure over two disjoint circles (one thin and one thick) on $\mathbb{T}^2$.}\label{fig-circlegraphop}
}
\end{figure}
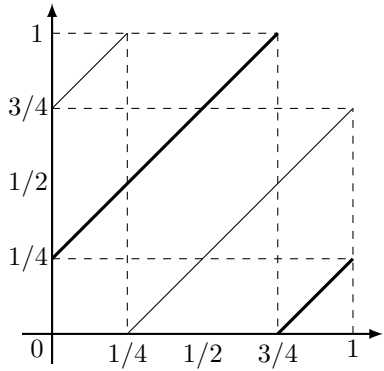

\begin{example}[Circle graphop]\label{ex-circle}
Let $X=\mathbb{S}^1$ be the unit circle, identified with $[0,1[$ by the mapping $x\mapsto \textnormal{e}^{2\pi \textnormal{i} x}$.
Define the \emph{circle graphop} $\eta$ by
$$\eta_x=\delta_{\langle x+1/4\rangle}+\delta_{\langle x-1/4\rangle},\quad x\in X.$$ By definition, the circle graphop is a graphing, and  we can view it as an abstract graph $G=(X,E)$ with $(x,y)\in E$ if and only if $x\perp y$. See Figure~\ref{fig-circlegraphop} for its illustration. One can take $\eta$ as a uniform measure over two disjoint circles on $\mathbb{T}^2$.
\end{example}

\begin{example}\label{ex-mix}
\begin{enumerate}
\item[(i)]    Let $X=\mathbb{S}^2$. For every $x\in\mathbb{S}^2$, let $x^{\perp}\colon=\{y\in\mathbb{S}^2\colon y\perp x\}$ be the circle on $\mathbb{S}^2$ perpendicular to $x$ and $\eta_x=\lambda|_{x^{\perp}}$ be the uniform measure over $x^{\perp}$. This definition yields a GM $\eta\colon x\mapsto\eta_x$ that can also be viewed as a graphop, called the \emph{spherical graphop}~\cite{BS20}. This graphop is neither a graphing nor a graphon. See Figure~\ref{fig-i-ii}(a).
\item[(ii)] Let $X=[0,1]$. For every $x\in X$, let $$A_x=\left[\frac{3}{2}x,1-\frac{3}{2}x\right]\mathbbm{1}_{[0,1/3[}(x)+\{1/2\}\mathbbm{1}_{[1/3,2/3]}(x)
    +\left[\frac{3}{2}(1-x),1-\frac{3}{2}
    (1-x)\right]\mathbbm{1}_{]2/3,1]}(x),$$ and $\eta_x=\lambda|_{A_x}$. Hence \[\eta_x\in\begin{cases}
      \cM_{+,\abs}(X),\quad \textnormal{if}\quad x\in[0,1/3[\,\cup\,]2/3,1],\\
      \cD(X),\quad\hspace{.9cm} \textnormal{if}\quad   x\in[1/3,2/3].
    \end{cases}\] Moreover, it is straightforward to verify that $$d_{\sf BL}(\eta_x,\eta_y)\le d_{\sf TV}(\eta_x,\eta_y)\to0,\quad \text{as}\ |x-y|\to0,$$ which implies $\eta\in \mathcal{C}(X;\cM_+(X))$.  This DGM $\eta$ is again neither dense nor sparse, but can be viewed as a measure\---a linear combination of an absolutely continuous measure supported on $([0,1/3]\cup[2/3,1])\times[0,1]$ and a singular measure supported on $[1/3,2/3]\times\{1/2\}$. See Figure~\ref{fig-i-ii}(b).
\item[(iii)] Let $X=\mathbb{S}^1$. Let ${\sf C}$ be the Cantor set on $[0,1]$. Let $F_{\zeta_{\sf C}}$ be the distribution function of the uniform measure $\zeta_{\sf C}$ over ${\sf C}$. For every $x\in X$, define a uniform measure $\eta_x$ over a Cantor-like set (see Figure~\ref{fig-iii}) within $[x,x+\tfrac{3}{4}[\!\!\mod 1\subsetneq X$ by its distribution function $$F_{\eta_x}(z)=F_{\zeta_{\sf C}}\left(\langle\frac{4}{3}(z-x)\rangle\right),\quad z\in [x,x+\tfrac{3}{4}[\!\!\!\!\mod 1.$$ For every $f\in \BL_1(X)$, extend it to be a periodic function on $\R$, we have\begin{align*}
      \lt|\int f\rd(\eta_x-\eta_y)\rt|=&\lt|\int_{[x,x+3/4[\!\!\!\!\mod 1}f(z)\rd F_{\zeta_{\sf C}}\left(\langle\frac{4}{3}(z-x)\rangle\right)\rt.\\&\lt.-\int_{[y,y+3/4[\!\!\!\!\mod 1}f(z)\rd F_{\zeta_{\sf C}}\left(\langle\frac{4}{3}(z-y)\rangle\right)\rt|\\
      =&\int_0^1\lt|f\left(x+\frac{3}{4}z\right)-f\left(y+\frac{3}{4}z\right)\rt|\rd F_{\zeta_{\sf C}}(z)\\
      \le&\int_{0}^1d^{X}(x,y)\rd F_{\zeta_{\sf C}}(z)=d^X(x,y),
    \end{align*} where $d^X(x,y)=\min\{|x-y|,1-|x-y|\}$ is the arc length between $x$ and $y$ on $X$. This shows that $x\mapsto\eta_x$ is continuous, by the supremum representation of the bounded Lipschitz metric.
    Hence $\eta\in\mathcal{C}(X;\cM_+(X))$ is a DGM which can be regarded as a uniform measure over a curve of Hausdorff dimension $(1+\frac{\log 2}{\log 3})$ on $\mathbb{T}^2$. Moreover, $\eta$ is a continuous (since $\zeta_{\sf C}$ is so) but \emph{not} absolutely continuous measure on $\mathbb{T}^2$.
    \end{enumerate}
\end{example}

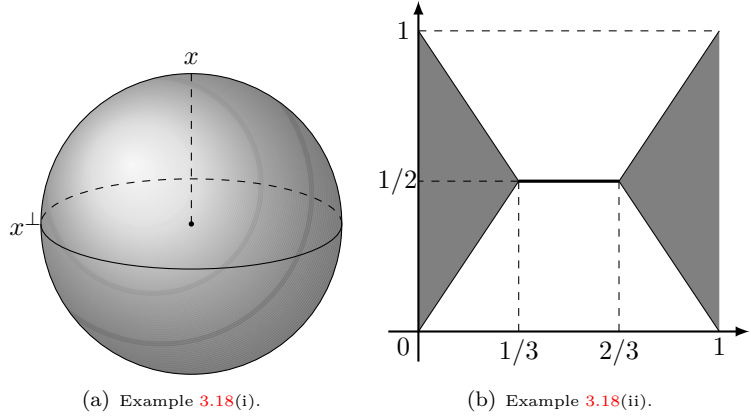
\begin{figure}[h]
\centering
{\captionsetup[subfigure]{justification=centering, width=6cm, belowskip=-3cm}

\subfigure[\tiny{Example~\ref{ex-mix}(i).}]{
\begin{tikzpicture}
  \shade[ball color = gray!40, opacity = 0.4] (0,0) circle (2cm);
  \draw (0,0) circle (2cm);
  \draw (-2,0) arc (180:360:2 and 0.6);
  \draw[dashed] (2,0) arc (0:180:2 and 0.6);
  \fill[fill=black] (0,0) circle (1pt);
  \draw[dashed] (0,0 ) -- node[above]{} (0,2);
  \node[] at (0,2.2) {$x$};
  \node[] at (-2.2,0) {$x^{\perp}$};
\end{tikzpicture}
}
\subfigure[\tiny{Example~\ref{ex-mix}(ii).}]{
\begin{tikzpicture}
    [
        dot/.style={circle,draw=black,fill,inner sep=1pt},scale=4
    ]
  \path[-] (0,1) edge (1/3,1/2);
  \path[-] (0,0) edge (1/3,1/2);
  \path[-,very thick] (1/3,1/2) edge (2/3,1/2);
  \path[-] (2/3,1/2) edge (1,0);
  \path[-] (2/3,1/2) edge (1,1);
  \path[-,dashed] (0,1) edge (1,1);
  \path[-,dashed] (2/3,1/2) edge (2/3,0);
  \path[-,dashed] (1/3,1/2) edge (1/3,0);
  \path[-,dashed] (1/3,1/2) edge (0,1/2);
\draw[->,thick,-latex] (0,-.1) -- (0,1.1);
\draw[->,thick,-latex] (-.1,0) -- (1.1,0);
\fill[black, opacity=.5] (0,0) -- (1/3,1/2) -- (0,1) -- cycle;
\fill[black, opacity=.5] (1,0) -- (2/3,1/2) -- (1,1) -- cycle;
\node[] at (-0.05,-0.05){0};
\node[] at (1/3,-0.07){1/3};
\node[] at (2/3,-0.07){2/3};
\node[] at (1,-0.05){1};
\node[] at (-0.07,1/2){1/2};
\node[] at (-0.05,1){1};
\end{tikzpicture}
}
}
\caption{Illustration for Example~\ref{ex-mix}(i)-(ii). (a): For every vertex $x$ on the sphere, $\eta_x$ is the uniform measure over the circle $x^{\perp}$. (b): the shaded region is the support of the absolutely continuous part of the measure $\eta$ in Example~\ref{ex-mix}(ii) while the thick line between $(1/3,1/2)$ and $(2/3,1/2)$ is the support of its singular part.}\label{fig-i-ii}
\end{figure}

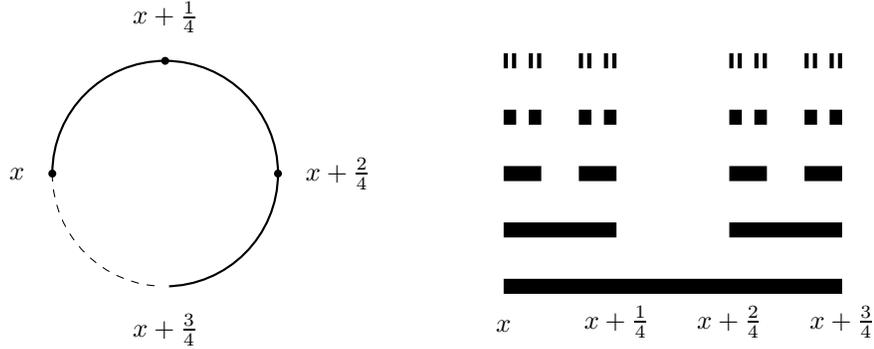
\begin{figure}
\begin{tikzpicture}[decoration=Cantor set,line width=2mm,scale=1.5,color=black]
  \draw[thick] (-3,0) arc (-90:180:1);
  \draw[dashed,thin] (-4,1) arc (180:270:1);
  \node[circle,fill=black,inner sep=0pt,minimum size=3pt,label=left:{$x$}] (a) at (-4,1) {};
  \node[circle,fill=black,inner sep=0pt,minimum size=3pt,label=above:{$x+\frac{1}{4}$}] (b) at (-3,2) {};
  \node[circle,fill=black,inner sep=0pt,minimum size=3pt,label=right:{$x+\frac{2}{4}$}] (c) at (-2,1) {};
  \node[circle,fill=white,inner sep=0pt,minimum size=3pt,label=below:{$x+\frac{3}{4}$}] (d) at (-3,0) {};
  \draw [black] (0,-.55)  node[anchor=south] {$x$};
  \draw [black] (1,-.6) node[anchor=south] {$x+\frac{1}{4}$};
  \draw [black] (2,-.6)  node[anchor=south] {$x+\frac{2}{4}$};
  \draw [black] (3,-.6)  node[anchor=south] {$x+\frac{3}{4}$};
  \draw (0,0) -- (3,0);
  \draw decorate{ (0,.5) -- (3,.5) };
  \draw decorate{ decorate{ (0,1) -- (3,1) }};
  \draw decorate{ decorate{ decorate{ (0,1.5) -- (3,1.5) }}};
  \draw decorate{ decorate{ decorate{ decorate{ (0,2) -- (3,2) }}}};
\end{tikzpicture}
\caption{Illustration of Example~\ref{ex-mix}(iii). Construction of a Cantor-like set on $[x+1/4,x+1[\mod 1$.
}\label{fig-iii}
\end{figure}

\begin{example}
Let $\zeta_{\sf C}^{-1}$ be the inverse measure of $\zeta_{\sf C}$, i.e., the \emph{quantile function} of $\zeta_{\sf C}^{-1}$ is $F_{\zeta_{\sf C}}$ \cite{XB19}. Note that $\zeta_{\sf C}^{-1}$ is discrete \cite{XB19}. In an analogous way as demonstrated in Example~\ref{ex-mix}(i), one can construct the following measure-valued function:
$$F_{\eta_x}(z)=F_{\zeta_{\sf C}^{-1}}\left(\langle\frac{4}{3}(z-x)\rangle\right),\quad x,z\in X.$$
Hence it is easy to verify that $\eta\in\mathcal{C}(X;\cM_+(X))$ is a DGM which can be regarded as a singular measure supported on countably smooth curves on $\mathbb{T}^2$. Note that in contrast to Example~\ref{ex-mix}(iii), $\eta$ is \emph{not} a continuous measure (in the sense of its joint distribution function) on $\mathbb{T}^2$.
\end{example}

\begin{remark}\label{re-not-dense}
In general, the space of continuous functions $\mathcal{C}(X,Y)$ is \emph{not dense} in the space of bounded functions $\mathcal{B}(X,Y)$ in the uniform metric. For instance, take $X=[0,1]$ and $Y=\R$. Let $f=\mathbbm{1}_{X\setminus\mathbb{Q}}$. Then $f$ cannot be approximated by any continuous function in the supremum norm. Hence given any $\xi\in\cP_{\abs}(Y)$, let $x\mapsto\eta_x=f(x)\xi$. Then $\eta\in \mathcal{B}_{\mu_X,1}(X,\cM_{+,\abs}(Y))$. It is obvious that $\eta$ cannot be approximated by any sequence in $\mathcal{C}_{\mu_X,1}(X,\cM_{+}(Y))$. This gives us a clue that using the uniform bounded Lipschitz metric, one may not expect approximation of VE on a DGM of arbitrary weak regularity (see Section~\ref{sect-approximation-ode}).
\end{remark}
\begin{remark}
  From these examples one can see that given a sequence of graphs, there will be different ways to represent each finite digraph as a digraph measure (a digraphon or a digraphing), even when the vertex space $X$ is prescribed. We here provide a specific situation to illustrate this point. Let $W^N=(W^N_{i,j})$ be the adjacency matrix of the digraph $G^N$ for every $N\in\N$. Assume $\sup_{N\in\N}\max_{1\le i\le N}\#\{j\colon W_{ij}\neq0\}$, the supremum of the maximum degree of the graphs, is finite. Let $X=[0,1]$. Then one can take the following DMG (which is a digraphing) as a representation of each $W^N$:
  $$\eta^{W^N}_x=\sum_{j=1}^NW^N_{i,j}\delta_{i/N},\quad x\in I_i^N,\quad i=1,\ldots,N,$$ where $I_i^N=\bigl[\frac{i-1}{N},\frac{i}{N}\bigr[$ for $i<N$ and $I_N^N=\bigl[1-\frac{1}{N},1\bigr]$. With this representation, one can obtain the limit of $\eta^{W^N}$ as a finitely supported measure valued function. In contrast, recall that there is another graphon representation for each $W^N$ \cite{L12}. However, if represented as a graphon, then the sequence of graphons converges to the zero digraphon defined on $[0,1]^2$. This also shows digraphons are limits of dense digraphs, which may not be suitable to characterize limits of non-dense digraphs.
\end{remark}

\section{Vlasov equation on the DGMs}\label{sect-setup}

In this section, we establish well-posedness of weak solutions to the VE \eqref{Vlasov}. To do this, we first study the equation of characteristics, namely \eqref{Charac-recall} below. Then we construct a fixed point equation via the solution map of the equation of characteristics. Using Lipschitz properties of the flow of the equation of characteristics, we prove the existence of a unique solution to the fixed point equation by the Banach contraction mapping theorem. Then, by establishing the connection between solutions to the fixed point equation and weak solutions of the VE, we prove the well-posedness of the VE in an indirect way. This idea originally is due to Nuenzert \cite{N84} and we provide a generalization of his method incorporating DGMs and carefuly associated choices of function spaces.

\subsection{Characteristic equation}

In this subsection, we will establish the Lipschitz continuity and continuity of the Vlasov operator $V[\eta,\nu_{\cdot},h]$ for the characteristic equation. Recall the characteristic equation: For every $x\in X$,
\begin{equation}
  \label{Charac-recall}
  \frac{\partial}{\partial t}\phi(t,x)=V[\eta,\nu_{\cdot},h](t,x,\phi),\quad t\in[s,T],\ \quad \phi(s,x)=\phi_s(x),\quad \text{for}\ 0\le s<T,
\end{equation}
where $V[\eta,\nu_{\cdot},h]$ is the Vlasov operator defined by
\begin{equation*}
V[\eta,\nu_{\cdot},h](t,x,\phi)=\sum_{i=1}^r\int_X\int_{\R^{r_2}}g_i(t,\psi,\phi)\rd(\nu_t)_y(\psi)\rd\eta^i_x(y)+h(t,x,\phi),
\end{equation*}
for $t\in\cT$, $x\in X$, and $\phi\in \R^{r_2}$. We first establish properties of the Vlasov operator.

\begin{proposition}
  \label{Vlasovf}
  Assume $\mathbf{(A1)}$-$\mathbf{(A5)}$. Then $V[\eta,\nu_{\cdot},h](t,x,\phi)$ is
  \begin{enumerate}
  \item[\textnormal{(i)}] continuous in $t$,
  \item[\textnormal{(ii)}] locally Lipschitz continuous in $\phi\in\mathcal{N}$ for some bounded open set $\mathcal{N}\subseteq\R^{r_2}$ uniformly in $(t,x)$ with Lipschitz constant $L_1$:
  \[\sup_{t\in\cT}\sup_{x\in X}\lt|V[\eta,\nu_{\cdot},h](t,x,\phi_1)-V[\eta,\nu_{\cdot},h](t,x,\phi_2)\rt|\le L_1|\phi_1-\phi_2|,\]
  \end{enumerate}where
$$L_1=L_1(\eta,\nu_{\cdot})\colon=\|\nu_{\cdot}\|\sum_{i=1}^r\mathcal{L}_{\mathcal{N}}(g_i)\|\eta^i\|
+\mathcal{L}_{\mathcal{N}}(h)<\infty,$$where $\mathcal{L}_{\mathcal{N}}(g_i)$ ($\mathcal{L}_{\mathcal{N}}(h)$, respectively) is the Lipschitz constant of $g_i$ ($h$, respectively) restricted to $\mathcal{N}$.
  Additionally assume $\mathbf{(A6)}$ with the convex compact set $Y$, then $V[\eta,\nu_{\cdot},h](t,x,\phi)$ is
  \begin{enumerate}
  \item[\textnormal{(iii)}] Lipschitz continuous in $h$ with Lipschitz constant $1$:
  \[\sup_{t\in\cT}\sup_{x\in X}\sup_{\phi\in Y}|V[\eta,\nu_{\cdot},h_1](t,x,\phi)-V[\eta,\nu_{\cdot},h_2](t,x,\phi)|\le\|h_1-h_2\|_{\infty},\]
  where $$\|h_1-h_2\|_{\infty}=\sup_{t\in\cT}\sup_{x\in X}\sup_{\phi\in Y}|h_1(t,x,\phi)-h_2(t,x,\phi)|.$$
   \item[\textnormal{(iv)}] Lipschitz continuous in $\nu_{\cdot}$ with Lipschitz constant $L_2$:
   \begin{equation*}
  \sup_{x\in X}\sup_{\phi\in Y}|V[\nu^1_{\cdot}](t,x,\phi)-V[\nu^2_{\cdot}](t,x,\phi)|
  \le L_2d_{\infty}(\nu^1_t,\nu^2_t),
\end{equation*}
where $L_2=L_2(\eta)\colon=r_2\sum_{i=1}^r(\BL(g_i)+1)\|\eta^i\|$.
   \item[\textnormal{(v)}] continuous in $\eta$: Let $\bigl\{\eta^K\bigr\}_{K\in\N}=\bigl\{(\eta^{K,i})_{1\le i\le r}\bigr\}_{K\in\N}\subseteq \mathcal{B}(X,\cM_+(Y))$, for $i=1,\ldots,r$. If $\lim_{K\to\infty}\sum_{i=1}^rd_{\infty}(\eta^i,\eta^{K,i})=0$, then
for every $\xi_{\cdot}\in \mathcal{B}(\cT,\cM_+(Y))$,
\[\lim_{K\to\infty}\int_0^t\int_Y\sup_{x\in X}|V[\eta,\nu_{\cdot},h](\tau,x,\phi)-V[\eta^K,\nu_{\cdot},h](\tau,x,\phi)|\rd\xi_{\tau}(\phi)\rd\tau=0,
\quad \forall t\in\cT,\] provided $\nu_{\cdot}\in \mathcal{C}(\cT,\mathcal{C}_{\mu_X,1}(X,\cM_+(Y)))$. holds.
   \end{enumerate}
   \end{proposition}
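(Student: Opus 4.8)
The plan is to treat parts (i)--(iv) as direct estimates on the integral defining $V$, and to reserve the real work for part (v), where continuity in the graph variable forces an approximation argument. For part (i), fix $x,\phi$ and let $t_j\to t$: continuity of $g_i$ and $h$ in $t$ (from $\mathbf{(A2)}$--$\mathbf{(A3)}$) together with continuity of $\tau\mapsto(\nu_\tau)_y(f)$ guaranteed by $\mathbf{(A5)}$ (via Proposition~\ref{prop-nu}(i)) lets one pass to the limit under the double integral by dominated convergence, the dominating constant coming from $\|\nu_\cdot\|<\infty$ (Proposition~\ref{prop-nu}(ii)) and the total masses $\|\eta^i\|<\infty$ (Proposition~\ref{prop-fibercomplete}(i)). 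For part (ii), subtract the two operators, move the absolute value inside, and apply the local Lipschitz bound of $\mathbf{(A2)}$--$\mathbf{(A3)}$ on the neighbourhood $\mathcal{N}$; since $\int_X(\nu_t)_y(\R^{r_2})\,\rd\eta^i_x(y)\le\|\nu_\cdot\|\,\|\eta^i\|$, the Lipschitz constant collapses to exactly $L_1$. Part (iii) is immediate because $h$ enters additively, so the difference is pointwise $|h_1-h_2|\le\|h_1-h_2\|_\infty$.

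For part (iv) I would write
\[
V[\nu^1_{\cdot}]-V[\nu^2_{\cdot}]
=\sum_{i=1}^r\int_X\Bigl(\int_{\R^{r_2}}g_i(t,\psi,\phi)\,\rd\bigl((\nu^1_t)_y-(\nu^2_t)_y\bigr)(\psi)\Bigr)\rd\eta^i_x(y),
\]
estimate each of the $r_2$ components of $g_i(t,\cdot,\phi)$ on $Y$ by its bounded Lipschitz norm, and invoke the definition of $d_{\sf BL}$ to bound each inner integral by $d_{\sf BL}((\nu^1_t)_y,(\nu^2_t)_y)\le d_{\infty}(\nu^1_t,\nu^2_t)$. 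Summing over the components and integrating $\rd\eta^i_x$ (which contributes $\|\eta^i\|$) produces the constant $L_2$.

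The genuinely delicate part is (v). Writing $G_i(\tau,\phi,y):=\int_{\R^{r_2}}g_i(\tau,\psi,\phi)\,\rd(\nu_\tau)_y(\psi)$, the difference equals $\sum_{i=1}^r\int_X G_i(\tau,\phi,y)\,\rd(\eta^i_x-\eta^{K,i}_x)(y)$. The obstacle is that $d_{\infty}(\eta^i,\eta^{K,i})\to0$ only controls integrals of \emph{bounded Lipschitz} test functions in $y$, whereas $G_i(\tau,\phi,\cdot)$ is a priori merely continuous. Here the hypothesis $\nu_{\cdot}\in\mathcal{C}(\cT,\mathcal{C}_{\mu_X,1}(X,\cM_+(Y)))$ is essential: by Proposition~\ref{prop-nu}(iii) the map $y\mapsto(\nu_\tau)_y$ is weakly continuous, and since $g_i(\tau,\cdot,\phi)\in\mathcal{C}_{\sf b}(Y)$, the function $G_i(\tau,\phi,\cdot)$ is continuous and bounded on the compact set $X$. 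I would then fix $(\tau,\phi)$, approximate $G_i(\tau,\phi,\cdot)$ uniformly on $X$ by a Lipschitz function $\widetilde G$ with $\|G_i-\widetilde G\|_\infty<\varepsilon$, and split
\[
\Bigl|\int_X G_i\,\rd(\eta^i_x-\eta^{K,i}_x)\Bigr|
\le \mathcal{BL}(\widetilde G)\,d_{\sf BL}(\eta^i_x,\eta^{K,i}_x)+\varepsilon\bigl(\|\eta^i\|+\|\eta^{K,i}\|\bigr).
\]
Taking $\sup_{x\in X}$, then $K\to\infty$ (the first term vanishes because $d_{\infty}(\eta^i,\eta^{K,i})\to0$, the masses $\|\eta^{K,i}\|$ staying uniformly bounded along the convergent sequence), and finally $\varepsilon\to0$ shows that $\sup_{x\in X}|\int_X G_i\,\rd(\eta^i_x-\eta^{K,i}_x)|\to0$ for each fixed $(\tau,\phi)$. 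To conclude I would integrate over $(\tau,\phi)$ against $\rd\xi_\tau(\phi)\rd\tau$ and apply dominated convergence: the integrand is dominated by the constant $\sum_{i=1}^r\|g_i\|_{\infty,Y}\,\|\nu_{\cdot}\|\,(\|\eta^i\|+\sup_K\|\eta^{K,i}\|)$, which is $\rd\xi_\tau(\phi)\rd\tau$-integrable over $[0,t]\times Y$ since $\xi_{\cdot}$ is a bounded family in $\cM_+(Y)$. The main obstacle throughout is precisely this passage from $d_{\sf BL}$-convergence (Lipschitz test functions) to convergence against the merely continuous integrand $G_i$, which is why continuity of $\nu_{\cdot}$ in the vertex variable cannot be dropped.
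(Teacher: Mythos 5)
Your proposal is correct and follows essentially the same route as the paper's proof: dominated convergence with the uniform bounds $\|\nu_\cdot\|$, $\|\eta^i\|$ for (i)--(ii), the additive structure for (iii), normalising the components of $g_i$ to $\mathcal{BL}_1$ test functions for (iv), and, for (v), exploiting weak continuity of $y\mapsto(\nu_\tau)_y$ to make $G_i(\tau,\phi,\cdot)$ continuous on the compact $X$, then uniformly approximating it by Lipschitz functions (the paper uses the inf-convolution $\inf_z(G_i(\tau,\phi,z)+n|y-z|)$) before splitting against $d_{\sf BL}(\eta^i_x,\eta^{K,i}_x)$ and applying dominated convergence. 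The only step you leave implicit is the finite-covering argument needed in (ii) to turn the pointwise local Lipschitz hypothesis of $\mathbf{(A2)}$ into a constant uniform over $\psi$ in the compact support $E_{\nu_\cdot}$, which is routine.
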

The proof of Proposition~\ref{Vlasovf} is provided in Appendix~\ref{appendix-Vlasovf}.

\begin{remark}
 The technical property in Proposition~\ref{Vlasovf}(v) will be used in the proof of Proposition~\ref{prop-continuousdependence}(iv) below.
\end{remark}
\begin{theorem}\label{theo-well-posedness-characteristic}
Assume ($\mathbf{A1}$)-($\mathbf{A5}$). Let $\phi_0\in\mathcal{B}(X;\R^{r_2})$. Then for every $x\in X$ and $t_0\in\cT$, there exists a solution $\phi(t,x)$ to the IVP of \eqref{Charac} with $\phi(t_0,x)=\phi_0(x)$ for all $t\in(\tau_{x,t_0}^-,\tau_{x,t_0}^+)\subseteq\cT$ with $(\tau_{x,t_0}^-,\tau_{x,t_0}^+)\ni t_0$ such that
\begin{enumerate}
\item[\textnormal{(i)}] either \textnormal{(i-a)} $\tau_{x,t_0}^+=T$ or \textnormal{(i-b)} $\tau_{x,t_0}^+<T$ and $\lim_{t\uparrow \tau_{x,t_0}^+}|\phi(t,x)|=\infty$ holds;
\item[\textnormal{(ii)}] either \textnormal{(ii-a)} $\tau_{x,t_0}^-=0$ or \textnormal{(ii-b)} $\tau_{x,t_0}^->0$ and $\lim_{t\downarrow \tau_{x,t_0}^-}|\phi(t,x)|=\infty$ holds.
\end{enumerate}
In addition, assume ($\mathbf{A6}$) and $\nu_{\cdot}$ is uniformly supported within $Y$, then $\tau_{x,t_0}^+=T$ for all $x\in X$, and there exists a family of transformations $\left\{\mathcal{S}^x_{t,s}[\eta,\nu_{\cdot},h]\right\}_{t,s\in\cT}$ on $Y$ such that
$$\phi(t,x)=\mathcal{S}^x_{t,s}[\eta,\nu_{\cdot},h]\phi(s,x),\quad \text{for all}\ s,t\in\cT.$$
\end{theorem}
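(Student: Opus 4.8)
The plan is to treat \eqref{Charac-recall} as a classical non-autonomous ODE in the variable $\phi$ for each frozen vertex $x\in X$, and to extract every assertion from standard ODE theory combined with the regularity of the Vlasov operator recorded in Proposition~\ref{Vlasovf}. First I would fix $x\in X$ and $t_0\in\cT$ and set $f(t,\phi):=V[\eta,\nu_{\cdot},h](t,x,\phi)$, regarded as a map $\cT\times\R^{r_2}\to\R^{r_2}$. By Proposition~\ref{Vlasovf}(i)--(ii), $f$ is continuous in $t$ and locally Lipschitz in $\phi$ with a Lipschitz constant $L_1$ uniform in $(t,x)$; in particular $f$ is continuous on $\cT\times\R^{r_2}$. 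The Picard--Lindel\"of theorem then yields a unique local solution $\phi(\cdot,x)$ through $(t_0,\phi_0(x))$, and the standard continuation argument extends it to a maximal relatively open subinterval $(\tau^-_{x,t_0},\tau^+_{x,t_0})\subseteq\cT$ containing $t_0$.

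The dichotomies (i) and (ii) are the usual blow-up alternative: since $f$ is defined and locally Lipschitz on all of $\R^{r_2}$, a maximal solution whose right endpoint satisfies $\tau^+_{x,t_0}<T$ must escape every compact subset of $\R^{r_2}$, i.e. $\lim_{t\uparrow\tau^+_{x,t_0}}|\phi(t,x)|=\infty$; the same reasoning at the left endpoint gives (ii). This step uses only the local Lipschitz regularity from Proposition~\ref{Vlasovf}(ii).

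For the second part I would invoke $(\mathbf{A6})$. Since $\nu_{\cdot}$ is uniformly supported within the convex compact set $Y$, the subtangential (Bony--Nagumo) inequality $V[\eta,\nu_{\cdot},h](t,x,\phi)\cdot\upsilon(\phi)\le0$ holds on $\partial Y$ for all $t\in\cT$ and $x\in X$. By the classical invariance criterion \cite{B69,N42}, $Y$ is positively invariant for \eqref{Charac-recall}, so a trajectory issuing from $\phi_0(x)\in Y$ stays in $Y$ for all forward times at which it is defined; as $Y$ is compact the solution remains bounded, case (i-b) is excluded, and $\tau^+_{x,t_0}=T$. I would then define $\mathcal{S}^x_{t,s}[\eta,\nu_{\cdot},h]$ as the solution operator mapping the value at time $s$ to the value at time $t$ along the unique trajectory. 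Uniqueness gives the cocycle identities $\mathcal{S}^x_{s,s}=\mathrm{id}$ and $\mathcal{S}^x_{t,s}\circ\mathcal{S}^x_{s,r}=\mathcal{S}^x_{t,r}$, while a Gronwall estimate with the uniform Lipschitz constant of $V$ on $Y$ gives continuous dependence on the initial value, so each $\mathcal{S}^x_{t,s}$ is continuous; positive invariance makes the forward maps ($t\ge s$) send $Y$ into $Y$, uniqueness makes them injective, and their inverses furnish the backward maps on the forward-reachable sets $\mathcal{S}^x_{t,s}(Y)\subseteq Y$, yielding the family $\{\mathcal{S}^x_{t,s}\}_{t,s\in\cT}$ with the group (cocycle) relations and $\phi(t,x)=\mathcal{S}^x_{t,s}\phi(s,x)$.

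I expect the genuinely delicate step to be the global existence in the second part: because $(\mathbf{A2})$--$(\mathbf{A3})$ provide only local rather than global Lipschitz bounds, finite-time blow-up cannot be ruled out by any growth estimate, and everything hinges on converting the compactness and forward invariance of $Y$ supplied by $(\mathbf{A6})$ into an a priori confinement of the trajectory. A secondary subtlety is that $(\mathbf{A6})$ yields invariance of $Y$ only in forward time, so the ``group on $Y$'' must be read through the injective forward maps and their inverses on forward images rather than as literal bijections of $Y$.
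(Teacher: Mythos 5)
Your proposal takes essentially the same route as the paper's proof: unique local existence by Picard--Lindel\"of using the regularity of $V$ from Proposition~\ref{Vlasovf}(i)--(ii), the standard blow-up alternative on the maximal interval for the dichotomies (i)--(ii), Bony's invariance criterion under $(\mathbf{A6})$ to confine the trajectory to the compact set $Y$ and thereby exclude (i-b), and uniqueness (with $t_0$ arbitrary) to assemble the two-parameter family $\{\mathcal{S}^x_{t,s}\}$ satisfying the group relations. Your closing caveat that $(\mathbf{A6})$ gives only forward invariance, so the ``group on $Y$'' should be read through the injective forward maps and their inverses on forward images, is a fair flag of a subtlety the paper's one-line conclusion passes over silently.
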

The proof of Theorem~\ref{theo-well-posedness-characteristic} is provided in Section~\ref{sect-proof}.

For every $\alpha>0$, let $d_{\alpha}$ be the metric in Definition~\ref{def-alphametric}. Define the following operator: For $\nu_{\cdot}\in(\mathcal{C}(\cT,\mathcal{B}_{\mu_X,1}(X,\cM_+(Y))),d_{\alpha})$, $$ \nu_{\cdot}\mapsto\mathcal{A}[\eta,h](\nu_{\cdot}),$$
 via$$((\mathcal{A}[\eta,h](\nu_{\cdot}))_t)_x=(\nu_0)_x\circ \mathcal{S}_{t,0}^x[\eta,\nu_{\cdot},h],\quad x\in X.$$
We will show there exists $\alpha>0$ such that $\cA[\eta,h]$ is a contraction mapping and hence by Banach fixed point theorem, the fixed point equation
\begin{equation}\label{Fixed}
\nu_{\cdot}=\cA[\eta,h]\nu_{\cdot},\quad t\in\cT.
\end{equation}
admits a unique solution.
Beforehand, let us investigate properties of $\cA$.
\begin{proposition}\label{prop-continuousdependence}
Assume $\mathbf{(A1)}$-$\mathbf{(A6)}$. Let $L_1$ and $L_2$ be given in Proposition~\ref{Vlasovf}. Then $\cA[\eta,h]$ is
\begin{enumerate}
\item[\textnormal{(i)}] is continuous in $t$: For every $\nu_{\cdot}\in \mathcal{C}(\cT,\mathcal{B}_{\mu_X,1}(X,\cM_+(Y)))$, we have $t\mapsto(\cA[\eta,h]\nu_{\cdot})_{t}\in \mathcal{C}(\cT,\mathcal{B}_{\mu_X,1}(X,\cM_+(Y)))$. In particular,
    if $\nu_{\cdot}\in\mathcal{C}(\cT,$ $\mathcal{C}_{\mu_X,1}(X,\cM_+(Y)))$, then $\cA[\eta,h]\nu_{\cdot}$ $\in \mathcal{C}(\cT,\mathcal{C}_{\mu_X,1}(X,$ $\cM_+(Y)))$.
    Moreover, the mass conservation law holds: $$((\cA[\eta,\nu_{\cdot},h]\nu_{\cdot})_{t})_x(Y)=(\nu_0)_x(Y),\quad \forall x\in X.$$
    \item[\textnormal{(ii)}] Lipschitz continuous in $\nu_{\cdot}$: For all $t\in\cT$, and $\nu^1_{\cdot},\nu^2_{\cdot}\in \mathcal{C}(\cT,\mathcal{B}_{\mu_X,1}(X,\cM_+(Y)))$,
  \begin{equation*}
  \begin{split}
  &d_{\infty}((\cA[\eta,h]\nu^1_{\cdot})_t,(\cA[\eta,h]\nu^2_{\cdot})_t)\\
  \le& \textnormal{e}^{L_1(\nu^2_{\cdot})t}d_{\infty}(\nu_0^1,\nu_0^2)+ L_2\|\nu_{\cdot}^1\|\textnormal{e}^{L_1(\nu^2_{\cdot})t}\int_0^td_{\infty}(\nu^1_{\tau},\nu^2_{\tau})
  \textnormal{e}^{-L_1(\nu^2_{\cdot})\tau}\rd\tau.
  \end{split}
  \end{equation*}
  \item[\textnormal{(iii)}] Lipschitz continuous in $h$: For all $t\in\cT$, and $h_1,h_2$ both fulfilling $\mathbf{(A3)}$ with $h$ replaced by $h_1,h_2$, respectively,\begin{equation*}
        d_{\infty}(\mathcal{A}[\eta,h_1](\nu_{t}),\mathcal{A}[\eta,h_2](\nu_{t}))\le T\|\nu_{\cdot}\|\textnormal{e}^{L_3t}\|h_1-h_2\|_{\infty},
\end{equation*}
where $L_3=L_3(\nu_{\cdot},h_2)\colon=L_1(\nu_{\cdot})+\BL(h_2)\textnormal{e}^{L_1(\nu_{\cdot})T}$.
  \item[\textnormal{(iv)}] Absolute continuity. If $\nu_0\in \mathcal{B}_{\mu_X,1}(X,\cM_{+,\abs}(Y))$, then $$(\cA[\eta,\nu_{\cdot},h]\nu_{\cdot})_{t}\in \mathcal{B}_{\mu_X,1}(X,\cM_{+,\abs}(Y)),\quad \forall t\in\cT.$$
\end{enumerate}
\end{proposition}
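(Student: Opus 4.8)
The plan is to use throughout that $\cA[\eta,h]$ acts as a pushforward of the fixed initial datum along the characteristic flow: since $((\cA[\eta,h]\nu_\cdot)_t)_x=(\nu_0)_x\circ\cS^x_{0,t}[\eta,\nu_\cdot,h]$ with $\cS^x_{0,t}=(\cS^x_{t,0})^{-1}$, for every $f\in\BL_1(Y)$ one has
\[
((\cA[\eta,h]\nu_\cdot)_t)_x(f)=\int_Y f\bigl(\cS^x_{t,0}[\eta,\nu_\cdot,h]\phi\bigr)\rd(\nu_0)_x(\phi).
\]
Every assertion then reduces to quantitative control of the flow, whose two inputs are: the Lipschitz bounds for the Vlasov operator in Proposition~\ref{Vlasovf}; and the fact, from Theorem~\ref{theo-well-posedness-characteristic}, that $\{\cS^x_{t,s}\}$ is a group of bijections of the compact convex $Y$, each $\cS^x_{t,0}$ and its inverse being Lipschitz with constant at most $\textnormal{e}^{L_1 t}$ (Gronwall applied to two trajectories of the same field started at different points). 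First I would dispose of the elementary parts of (i): because $\cS^x_{t,0}$ maps $Y$ bijectively onto $Y$, the pushforward conserves mass, $((\cA\nu)_t)_x(Y)=(\nu_0)_x(Y)$, whence $\int_X((\cA\nu)_t)_x(Y)\rd\mu_X=1$ and $(\cA\nu)_t\in\mathcal{B}_{\mu_X,1}(X,\cM_+(Y))$; and continuity in $t$ follows from $d_{\sf BL}(((\cA\nu)_t)_x,((\cA\nu)_s)_x)\le\int_Y|\cS^x_{t,0}\phi-\cS^x_{s,0}\phi|\rd(\nu_0)_x\le\|\nu_\cdot\|\,\omega(|t-s|)$, where $\omega$ is the time modulus of continuity of the flow, uniform in $x\in X$ and $\phi\in Y$ because $V$ is bounded on $\cT\times X\times Y$; taking $\sup_x$ yields $d_\infty$-continuity.

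The main obstacle is the ``in particular'' clause of (i): that $\cA$ maps the spatially continuous class $\mathcal{C}(\cT,\mathcal{C}_{\mu_X,1}(X,\cM_+(Y)))$ into itself. Here one must prove $x\mapsto((\cA\nu)_t)_x$ is $d_{\sf BL}$-continuous, and the difficulty—flagged in the introduction—is that $x$ enters the flow only through the $x$-dependence of the field $V[\eta,\nu_\cdot,h](\tau,x,\cdot)$, coming from $\eta^i_x$ and $h(\tau,x,\cdot)$, which is merely continuous, not Lipschitz, in $x$. My plan is to Gronwall the two flows $\cS^x$ and $\cS^{x'}$ started at the same $\phi$, adding and subtracting $V[\eta,\nu_\cdot,h](\tau,x',\cS^x_{\tau,0}\phi)$, so that $\sup_{\phi\in Y}|\cS^x_{t,0}\phi-\cS^{x'}_{t,0}\phi|$ is bounded by $\textnormal{e}^{L_1 t}\int_0^t\sup_{\phi\in Y}|V[\eta,\nu_\cdot,h](\tau,x,\phi)-V[\eta,\nu_\cdot,h](\tau,x',\phi)|\rd\tau$, and then to show this last quantity vanishes as $x'\to x$. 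This draws, for the continuity clause, on the continuity of $x\mapsto\eta^i_x$ (assumption $\mathbf{(A4)'}$), passed through the bounded-Lipschitz integrand $y\mapsto\int_Y g_i(\tau,\psi,\phi)\rd(\nu_\tau)_y(\psi)$ via Lemma~\ref{le-graph}, together with the uniform-in-$\phi$ continuity of $h$ in $x$ (assumption $\mathbf{(A7)}$). Feeding the resulting spatial continuity of the flow, and the weak continuity of $x\mapsto(\nu_0)_x$, into the pushforward identity gives $d_{\sf BL}$-continuity of $x\mapsto((\cA\nu)_t)_x$. I expect this to be the delicate step precisely because, a priori, pushing a spatially continuous family of measures along an $x$-dependent flow need not return a spatially continuous family.

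For (ii) and (iii) I would keep the pushforward identity and split the difference into a ``datum'' term and a ``flow'' term. For (ii), writing $\cS^{x,i}$ for the flow of $\nu^i_\cdot$,
\[
((\cA\nu^1)_t)_x(f)-((\cA\nu^2)_t)_x(f)=\int_Y\bigl[f(\cS^{x,1}_{t,0}\phi)-f(\cS^{x,2}_{t,0}\phi)\bigr]\rd(\nu^1_0)_x+\int_Y f(\cS^{x,2}_{t,0}\phi)\,\rd\bigl[(\nu^1_0)_x-(\nu^2_0)_x\bigr].
\]
The datum term is bounded by $\textnormal{e}^{L_1(\nu^2_\cdot)t}d_\infty(\nu^1_0,\nu^2_0)$, using that $f\circ\cS^{x,2}_{t,0}$ has bounded-Lipschitz norm at most $\textnormal{e}^{L_1(\nu^2_\cdot)t}\ge1$. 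For the flow term I would Gronwall the two trajectories, adding and subtracting $V[\eta,\nu^2_\cdot,h]$ along the $\nu^1$-trajectory so that the increment is controlled by the Lipschitz-in-$\phi$ constant $L_1(\nu^2_\cdot)$ (Proposition~\ref{Vlasovf}(ii)) and the Lipschitz-in-$\nu$ bound $L_2\,d_\infty(\nu^1_\tau,\nu^2_\tau)$ (Proposition~\ref{Vlasovf}(iv)); the integral form of Gronwall then produces the stated $L_2\|\nu^1_\cdot\|\textnormal{e}^{L_1(\nu^2_\cdot)t}\int_0^t d_\infty(\nu^1_\tau,\nu^2_\tau)\textnormal{e}^{-L_1(\nu^2_\cdot)\tau}\rd\tau$, and $\sup_x$ gives (ii). Part (iii) has the same shape with the fields differing only through $h_1,h_2$: the datum term vanishes (common $\nu_0$), the flow difference is driven by $\|h_1-h_2\|_\infty$ (Proposition~\ref{Vlasovf}(iii)), and the constant $L_3=L_1(\nu_\cdot)+\BL(h_2)\textnormal{e}^{L_1(\nu_\cdot)T}$ emerges from a two-step Gronwall that also absorbs the Lipschitz constant of the comparison field.

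Finally, for (iv) I would argue that $\cS^x_{t,0}$ being bi-Lipschitz forces preservation of absolute continuity: if $A\subseteq Y$ is $\mathfrak{m}$-null then $\cS^x_{0,t}A$, the image of $A$ under the Lipschitz map $\cS^x_{0,t}$, is $\mathfrak{m}$-null, so $((\cA\nu)_t)_x(A)=(\nu_0)_x(\cS^x_{0,t}A)=0$ whenever $(\nu_0)_x\ll\mathfrak{m}$. To make this quantitative I would invoke Rademacher's theorem—$\cS^x_{0,t}$ is differentiable $\mathfrak{m}$-a.e.\ on $Y$—and the change-of-variables formula, obtaining the explicit density $\rho_0\bigl(x,\cS^x_{0,t}\phi\bigr)\,|\det D_\phi\cS^x_{0,t}|$ for $((\cA\nu)_t)_x$; this confirms $((\cA\nu)_t)_x\in\cM_{+,\abs}(Y)$ and simultaneously supplies the representation later used to identify the fixed point with the weak solution of the VE. All constants are uniform since everything takes place on the compact $Y$.
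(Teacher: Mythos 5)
Your proposal is correct and follows essentially the same route as the paper's proof: the pushforward identity, the datum/flow splitting with Gronwall estimates driven by Proposition~\ref{Vlasovf}, the Gronwall comparison of the two flows $\cS^x$ and $\cS^{x'}$ for the delicate spatial-continuity clause of (i), and Rademacher plus the Lipschitz change-of-variables formula for (iv). One small caveat: the integrand $z\mapsto\int_Y g_i(\tau,\psi,\phi)\,\rd(\nu_\tau)_z(\psi)$ is only bounded and \emph{continuous} (not Lipschitz) in $z$, so the $\eta$-term in the spatial-continuity step must be handled via weak convergence of $\eta^i_{x'}\to\eta^i_x$ (the paper uses inf-convolution Lipschitz approximants and dominated convergence) rather than by testing $d_{\sf BL}(\eta^i_x,\eta^i_{x'})$ directly against that integrand as your appeal to a ``bounded-Lipschitz integrand'' suggests.
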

The proof of Proposition~\ref{prop-continuousdependence} is provided in Appendix~\ref{appendix-prop-continuousdependence}.

\begin{proposition}\label{prop-sol-fixedpoint}
Assume $\mathbf{(A1)}$-$\mathbf{(A4)}$ and $\mathbf{(A6)}$-$\mathbf{(A7)}$. Let $\nu_0\in \mathcal{B}_{\mu_X,1}(X,\cM_+(Y))$, and $L_1$, $L_2$, and $L_3$ be given in Proposition~\ref{prop-continuousdependence}. Then there exists a unique solution $\nu_{\cdot}\in\mathcal{C}(\cT,\mathcal{B}_{\mu_X,1}(X,\cM_+(Y)))$ to the fixed point equation
\eqref{Fixed}. In particular, if $\nu_0\in \mathcal{C}_{\mu_X,1}(X,\cM_+(Y))$, then $\nu_{\cdot}\in\mathcal{C}(\cT,\mathcal{C}_{\mu_X,1}(X,$ $\cM_+(Y)))$. Moreover, the solutions have continuous dependence on
\begin{enumerate}
\item[\textnormal{(i)}] the initial conditions:
\[d_{\infty}(\nu_t^1,\nu_t^2)\le \textnormal{e}^{(L_1(\nu^2_{\cdot})+L_2\|\nu_{\cdot}^1\|)t}d_{\infty}(\nu_0^1,\nu_0^2),\quad t\in\cT,\]
where $\nu^i_{\cdot}$ is the solution to \eqref{Fixed} with initial condition $\nu^i_0$ for $i=1,2$.
\item[\textnormal{(ii)}] $h$: $$d_{\infty}(\nu_t^1,\nu_t^2)\le \frac{1}{L_3(\nu^2_{\cdot})}\|\nu^1_{\cdot}\|
    \textnormal{e}^{\BL(h_2)\textnormal{e}^{L_1(\nu^2_{\cdot})T}T}\textnormal{e}^{(L_1(\nu^2_{\cdot})+L_2\|\nu^1_{\cdot}\|)t}\|h_1-h_2\|_{\infty}.$$
where $\nu^i_{\cdot}$ is the solution to \eqref{Fixed} with functions $h_i$ for $i=1,2$.
\item[\textnormal{(iii)}] $\eta$: Let $\{\eta^K\}_{K\in\N}=\{(\eta^{K,i})_{1\le i\le r}\}_{K\in\N}\subseteq \mathcal{B}(X,\cM_+(Y))$, for $i=1,\ldots,r$, such that $\lim_{K\to\infty}\sum_{i=1}^rd_{\infty}(\eta^i,\eta^{K,i})=0$. Assume $\nu_0\in \mathcal{C}_{\mu_X,1}(X,\cM_+(Y))$. Then
    $$\lim_{K\to\infty}\sup_{t\in\cT}d_{\infty}(\nu_t,\nu^K_t)=0,$$
where $\nu^K_{\cdot}$ is the solution to \eqref{Fixed} with DGMs $\eta^K$ for $K\in\N$.
\end{enumerate}
\end{proposition}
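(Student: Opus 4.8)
The plan is to get existence and uniqueness from the Banach fixed point theorem, and then read off the three continuous-dependence statements by feeding the one-step Lipschitz estimates of Proposition~\ref{prop-continuousdependence} into Gronwall's inequality. First I would restrict $\mathcal{A}[\eta,h]$ to the set
\[\mathcal{X}_{\nu_0}=\{\nu_{\cdot}\in\mathcal{C}(\cT,\mathcal{B}_{\mu_X,1}(X,\cM_+(Y)))\colon (\nu_t)_x(Y)=(\nu_0)_x(Y)\ \text{for all}\ t\in\cT,\ x\in X\}.\]
By the mass conservation law in Proposition~\ref{prop-continuousdependence}(i), $\mathcal{A}[\eta,h]$ maps $\mathcal{X}_{\nu_0}$ into itself; since $Y$ is compact one has $1\in\mathcal{BL}_1(Y)$, so total mass is $d_{\infty}$-continuous and the constraint passes to $d_{\alpha}$-limits, making $\mathcal{X}_{\nu_0}$ a closed subset of the space which is complete under $d_{\alpha}$ by Proposition~\ref{alpha-complete}, hence itself complete. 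On $\mathcal{X}_{\nu_0}$ we have $\|\nu_{\cdot}\|=\|\nu_0\|$, so $L_1(\nu_{\cdot})$ collapses to a fixed constant $L_1$. Taking equal initial data in Proposition~\ref{prop-continuousdependence}(ii) and using $d_{\infty}(\nu^1_{\tau},\nu^2_{\tau})\le \textnormal{e}^{\alpha\tau}d_{\alpha}(\nu^1_{\cdot},\nu^2_{\cdot})$ gives, for $\alpha>L_1$,
\[d_{\alpha}(\mathcal{A}[\eta,h]\nu^1_{\cdot},\mathcal{A}[\eta,h]\nu^2_{\cdot})\le\frac{L_2\|\nu_0\|}{\alpha-L_1}\,d_{\alpha}(\nu^1_{\cdot},\nu^2_{\cdot}).\]
Choosing $\alpha>L_1+L_2\|\nu_0\|$ makes $\mathcal{A}[\eta,h]$ a contraction, and the Banach fixed point theorem produces the unique solution of \eqref{Fixed}. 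If moreover $\nu_0\in\mathcal{C}_{\mu_X,1}(X,\cM_+(Y))$, Proposition~\ref{prop-continuousdependence}(i) shows $\mathcal{A}[\eta,h]$ preserves the closed (hence complete) subspace $\mathcal{C}(\cT,\mathcal{C}_{\mu_X,1}(X,\cM_+(Y)))\cap\mathcal{X}_{\nu_0}$, so by uniqueness the fixed point lies there.

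\emph{Dependence on the initial condition and on $h$.} For (i), with $\nu^1_{\cdot},\nu^2_{\cdot}$ the fixed points for data $\nu^1_0,\nu^2_0$, I substitute $\nu^i_{\cdot}=\mathcal{A}[\eta,h]\nu^i_{\cdot}$ into Proposition~\ref{prop-continuousdependence}(ii) and set $u(t)=\textnormal{e}^{-L_1(\nu^2_{\cdot})t}d_{\infty}(\nu^1_t,\nu^2_t)$, obtaining $u(t)\le d_{\infty}(\nu^1_0,\nu^2_0)+L_2\|\nu^1_{\cdot}\|\int_0^t u(\tau)\rd\tau$; Gronwall's inequality then reproduces exactly the claimed bound. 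For (ii) I compare $\nu^1_{\cdot}=\mathcal{A}[\eta,h_1]\nu^1_{\cdot}$ with $\nu^2_{\cdot}=\mathcal{A}[\eta,h_2]\nu^2_{\cdot}$ through
\[d_{\infty}(\nu^1_t,\nu^2_t)\le d_{\infty}((\mathcal{A}[\eta,h_1]\nu^1_{\cdot})_t,(\mathcal{A}[\eta,h_1]\nu^2_{\cdot})_t)+d_{\infty}((\mathcal{A}[\eta,h_1]\nu^2_{\cdot})_t,(\mathcal{A}[\eta,h_2]\nu^2_{\cdot})_t),\]
bounding the first summand by Proposition~\ref{prop-continuousdependence}(ii) (the initial-data contribution vanishing since both use $\nu_0$) and the second by Proposition~\ref{prop-continuousdependence}(iii); a Gronwall argument in $t$, using $t\le T$ to absorb the exponential forcing into the constant $\textnormal{e}^{\BL(h_2)\textnormal{e}^{L_1(\nu^2_{\cdot})T}T}$, yields the stated estimate.

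\emph{Dependence on $\eta$ and the main obstacle.} For (iii) I would compare the characteristic flows $\mathcal{S}^x_{t,0}[\eta,\nu_{\cdot},h]$ and $\mathcal{S}^x_{t,0}[\eta^K,\nu^K_{\cdot},h]$ at a fixed $x$. Writing both pushforward trajectories via the integral form of \eqref{Charac-recall}, subtracting, and splitting the vector-field difference as
\[V[\eta,\nu_{\cdot},h]-V[\eta^K,\nu^K_{\cdot},h]=\bigl(V[\eta,\nu_{\cdot},h]-V[\eta^K,\nu_{\cdot},h]\bigr)+\bigl(V[\eta^K,\nu_{\cdot},h]-V[\eta^K,\nu^K_{\cdot},h]\bigr),\]
the local Lipschitz-in-$\phi$ bound of Proposition~\ref{Vlasovf}(ii) and the Lipschitz-in-$\nu$ bound of Proposition~\ref{Vlasovf}(iv) control the last summand by a term proportional to $d_{\infty}(\nu_{\tau},\nu^K_{\tau})$, while the DGM-difference summand is handled by Proposition~\ref{Vlasovf}(v) after a change of variables converting the integral of the trajectory against $(\nu_0)_x$ into one against the family $(\nu_{\tau})_x$ (legitimate because $\nu_0\in\mathcal{C}_{\mu_X,1}$ forces $\nu_{\cdot}\in\mathcal{C}(\cT,\mathcal{C}_{\mu_X,1})$, so $(\nu_{\cdot})_x\in\mathcal{B}(\cT,\cM_+(Y))$ is an admissible fixed family for (v)). Gronwall's inequality then gives $\sup_{t}d_{\infty}(\nu_t,\nu^K_t)\le C\sup_{t,x}E_K(t,x)$, with $E_K$ the contribution driven by the $\eta$-difference. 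The main obstacle is exactly \emph{uniformity in the vertex variable}: Proposition~\ref{Vlasovf}(v) only gives convergence to $0$ against one fixed family of measures, whereas $\sup_x E_K(t,x)$ requires this uniformly over all $x\in X$. I expect to close the gap by using that the continuity of $\nu_{\cdot}$ in $x$, together with compactness of $X$ and $\cT$, makes the family $y\mapsto\int_Y g_i(\tau,\psi,\phi)\rd(\nu_\tau)_y(\psi)$ uniformly bounded and equicontinuous on $X$ (uniformly in $\tau\in\cT$, $\phi\in Y$); approximating these integrands by Lipschitz functions then upgrades the uniform weak convergence $d_{\infty}(\eta^i,\eta^{K,i})\to0$ to $\sup_{\tau,x,\phi}|V[\eta,\nu_{\cdot},h]-V[\eta^K,\nu_{\cdot},h]|\to0$, after which the Gronwall estimate delivers the claim.
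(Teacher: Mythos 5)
Your treatment of existence and uniqueness, of part (i), and of part (ii) is essentially the paper's own argument: contraction of $\mathcal{A}[\eta,h]$ in the weighted metric $d_{\alpha}$ with $\alpha>L_1+L_2\|\nu_0\|$ (using mass conservation to freeze $\|\nu_{\cdot}\|$), then Gronwall applied to the one-step estimates of Proposition~\ref{prop-continuousdependence}(ii)--(iii); your triangle-inequality split for (ii) varies $\nu$ first and $h$ second where the paper does the reverse, but this only reshuffles which solution's norm appears in which constant.

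The genuine gap is in (iii), and you have correctly located it but not closed it. You end with ``I expect to close the gap by \dots'' upgrading $d_{\infty}(\eta^i,\eta^{K,i})\to0$ to $\sup_{\tau,x,\phi}|V[\eta,\nu_{\cdot},h]-V[\eta^K,\nu_{\cdot},h]|\to0$ via equicontinuity of $y\mapsto\int_Y g_i(\tau,\psi,\phi)\,\rd(\nu_\tau)_y(\psi)$. That route can be made to work (the family is uniformly bounded and uniformly equicontinuous because $(\tau,y)\mapsto(\nu_\tau)_y$ is uniformly continuous on the compact set $\cT\times X$, so the infimal-convolution Lipschitz regularization used inside the proof of Proposition~\ref{Vlasovf}(v) has error uniform in $(\tau,\phi)$), but as written it is a conjecture, not a proof, and it asks for more than is needed. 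The paper instead keeps $\sup_{x}$ \emph{inside} an integral: it introduces the dominating measure $\widehat{\nu}_\tau(E):=\sup_{x\in X}(\nu_\tau)_x(E)$, bounds the forcing term $\sup_x\int_0^t\int_Y|V[\eta,\nu_{\cdot},h]-V[\eta^K,\nu_{\cdot},h]|\,\rd(\nu_\tau)_x\,\rd\tau$ by $\int_0^t\int_Y\sup_x|\cdots|\,\rd\widehat{\nu}_\tau\,\rd\tau$, and this is exactly the quantity Proposition~\ref{Vlasovf}(v) already sends to zero (pointwise convergence in $(\tau,\phi)$ plus dominated convergence suffices; no uniformity in $(\tau,\phi)$ is required). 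You should either adopt this device or carry out the equicontinuity upgrade in detail. A second omission: your closing step is a linear Gronwall bound $\sup_t d_{\infty}(\nu_t,\nu^K_t)\le C\sup_{t,x}E_K(t,x)$, whereas the paper's estimate produces a cross term of the form $d_{\infty}(\nu_\tau,\nu^K_\tau)\,\bigl|(\nu^K_\tau)_x(Y)-(\nu_\tau)_x(Y)\bigr|$, i.e.\ a \emph{quadratic} integral inequality, which it resolves with the dedicated quadratic Gronwall lemma (Lemma~\ref{le-gronwall}) and which only yields a bound once the forcing constant $c_K$ is small enough. If you want to bypass the quadratic lemma you must say explicitly why the cross term is harmless in your setting (here $\nu_0=\nu^K_0$ and mass conservation give $(\nu^K_\tau)_x(Y)=(\nu_\tau)_x(Y)$); silently writing a linear Gronwall leaves the estimate unjustified.
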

The proof of Proposition~\ref{prop-sol-fixedpoint} is provided in Appendix~\ref{appendix-prop-sol-fixedpoint}.

\subsection{VE}

In this subsection, we will use properties in the previous subsection to show well-posedness of VE \eqref{Vlasov}.

First, let us define the weak solution to  \eqref{Vlasov}.

\begin{definition}\label{def-weak-sol}
Let $Y$ be a compact positively invariant subset of \eqref{Charac} given in Theorem~\ref{theo-well-posedness-characteristic}. We say $\rho\colon\cT\times X\times Y\to\R_{\ge0} $ is a \emph{uniformly weak solution} to the IVP \eqref{Vlasov} if for every $x\in X$, the following two conditions are satisfied:
\medskip

\noindent(i) Normalization. $\int_X\int_Y\rho(t,x,\phi)\rd\phi\rd x=1$, for all $t\in\cT$.
\medskip

  \noindent(ii) Uniform weak continuity. $t\mapsto\int_Yf(\phi)\rho(t,x,\phi)\rd\phi$ is continuous uniformly in $x\in X$, for every $f\in \mathcal{C}(Y)$.
\medskip

  \noindent(iii) Integral identity: For all test functions $w\in \mathcal{C}^1(\cT\times Y)$ with $\supp w\subseteq [0,T[\times U$ and $U\subset\subset Y$, the equation below holds: \begin{multline}\label{eq-test}
    \int_0^T\int_Y\rho(t,x,\phi)\lt(\frac{\partial w(t,\phi)}{\partial t}+\widehat{V}[\eta,\rho(\cdot),h](t,x,\phi)\cdot\nabla_{\phi} w(t,\phi)\rt)\rd\phi\rd t\\
    +\int_Yw(0,\phi)\rho_0(x,\phi)\rd\phi=0,
  \end{multline}
  where $\supp w=\overline{\{(t,u)\in \cT\times Y\colon w(t,u)\neq0\}}$ is the support of $w$, and we recall
\begin{equation*}
\widehat{V}[\eta,\rho(\cdot),h](t,x,\phi)=\sum_{i=1}^r\int_X\int_{Y}g_i(t,\psi,\phi)\rho(t,y,\phi)
\rd\psi\rd\eta^i_x(y)+h(t,x,\phi).
\end{equation*}
\end{definition}

\begin{remark}
Definition~\ref{def-weak-sol} is well-posed, since from \eqref{eq-test}, by choosing suitable test functions one can show that $$\rho(t,\cdot)\in {\sf L}^1_+(X\times Y,\mu_X\otimes\mathfrak{m}),\quad \text{for all}\ t\in\cT,$$ provided $\rho_0\in {\sf L}^1_+(X\times Y,\mu_X\otimes\mathfrak{m})$ and $\rho(t,\cdot)$ solves \eqref{eq-test}. Hence this definition of a uniformly weak solution can be
slightly \emph{stronger} than the weak solution defined in \cite{N84,KM18}, since \eqref{eq-test} is required to hold for every $x\in X$ but not just $\mu_X$-a.e. $x\in X$. This is because for every $t\in\cT$, we regard $\rho(t,x,\phi)$ as densities of every point $(\nu_{t})_x$ on the continuous curve $\{(\nu_{t})_x\}_{x\in X}$, instead of the density of a probability in the space $\cP(X\times Y)$.\end{remark}
Now we present the unique existence of solutions to the VE \eqref{Vlasov}.
\begin{theorem}\label{th-equi}
Assume ($\mathbf{A1}$)-($\mathbf{A4}$) and ($\mathbf{A6}$). Assume $\rho_0(x,\phi)$ is continuous in $x\in X$ for $\mathfrak{m}$-a.e. $\phi\in Y$ such that $\rho_0\in {\sf L}^1_+(X\times Y,\mu_X\otimes\mathfrak{m})$, then there exists exists a unique uniformly weak solution $\rho(t,x,\phi)$ to the IVP of \eqref{Vlasov} with initial condition $\rho(0,x,\phi)=\rho_0(x,\phi)$, $x\in X$, $\phi\in Y$. \end{theorem}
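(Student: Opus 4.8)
The plan is to realise the uniform weak solution as the density of the unique fixed point of the operator $\mathcal{A}[\eta,h]$ from \eqref{Fixed}, following Neunzert's indirect route. First I would encode the initial datum as a measure-valued function: since $\rho_0\in{\sf L}^1_+(X\times Y,\mu_X\otimes\mathfrak{m})$, setting $\rd(\nu_0)_x(\phi)=\rho_0(x,\phi)\rd\phi$ defines $\nu_0\in\mathcal{B}_{\mu_X,1}(X,\cM_{+,\abs}(Y))$. Proposition~\ref{prop-sol-fixedpoint} then yields a unique $\nu_{\cdot}\in\mathcal{C}(\cT,\mathcal{B}_{\mu_X,1}(X,\cM_+(Y)))$ solving \eqref{Fixed}, and Proposition~\ref{prop-continuousdependence}(iv) guarantees $(\nu_t)_x\in\cM_{+,\abs}(Y)$ for every $t$ and $x$, so that the density $\rho(t,x,\phi)\colon=\rd(\nu_t)_x/\rd\phi$ is well defined. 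This $\rho$ is my candidate.

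Next I would verify the three requirements of Definition~\ref{def-weak-sol}. Normalisation is immediate from the mass conservation law in Proposition~\ref{prop-continuousdependence}(i), which gives $(\nu_t)_x(Y)=(\nu_0)_x(Y)$ and hence $\int_X\int_Y\rho(t,x,\phi)\rd\phi\rd\mu_X=\int_X(\nu_0)_x(Y)\rd\mu_X=1$. Uniform weak continuity is a direct translation of $\nu_{\cdot}\in\mathcal{C}(\cT,\mathcal{B}_{\mu_X,1})$ through Proposition~\ref{prop-nu}(i), since $\int_Y f\rho(t,x,\cdot)\rd\phi=(\nu_t)_x(f)$. The integral identity \eqref{eq-test} is the substantive point: using $(\nu_t)_x=(\nu_0)_x\circ\mathcal{S}^x_{0,t}[\eta,\nu_{\cdot},h]$ I would write, for a test function $w$, $\int_Y w(t,\phi)\rd(\nu_t)_x=\int_Y w(t,\mathcal{S}^x_{t,0}\psi)\rd(\nu_0)_x(\psi)$, differentiate in $t$ using the characteristic ODE $\tfrac{\rd}{\rd t}\mathcal{S}^x_{t,0}\psi=V[\eta,\nu_{\cdot},h](t,x,\mathcal{S}^x_{t,0}\psi)$ (differentiation under the integral being justified by the $\mathcal{C}^1$ dependence of the flow on $t$, compactness of $Y$, and finiteness of $(\nu_0)_x$), and then integrate over $[0,T]$. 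Since $\supp w\subseteq[0,T[\times U$ kills the boundary term at $t=T$, and since $V$ rewrites as $\widehat{V}[\eta,\rho,h]$ through \eqref{Vrho-0}, this produces exactly \eqref{eq-test}.

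For uniqueness I would show, conversely, that any uniform weak solution $\rho$ must solve the fixed point equation, so that uniqueness of the fixed point in Proposition~\ref{prop-sol-fixedpoint} transfers to the weak solution. Given such $\rho$, set $(\nu_t)_x=\rho(t,x,\cdot)\mathfrak{m}$ and form the flow $\mathcal{S}^x_{\cdot,\cdot}[\eta,\nu_{\cdot},h]$ generated by $V[\eta,\nu_{\cdot},h]$, which is well defined on the invariant set $Y$ by Theorem~\ref{theo-well-posedness-characteristic} (its hypotheses hold since $\rho$ is supported in $Y$ and, by Proposition~\ref{prop-nu}(i), $\nu_{\cdot}\in\mathcal{C}(\cT,\mathcal{B}_{\mu_X,1})$). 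Using the uniform weak continuity to give meaning to $\int_Y w(s,\cdot)\rd(\nu_s)_x$, I would first upgrade \eqref{eq-test} to an integrated identity on $[0,s]$, then insert the test function $w(t,\phi)=\zeta(\mathcal{S}^x_{s,t}\phi)$ for $\zeta\in\mathcal{C}^1_c(\overset{\circ}{Y})$. Because $w$ is constant along characteristics it satisfies $\partial_t w+V\cdot\nabla_\phi w=0$, so the right-hand side vanishes, leaving $\int_Y\zeta\rd(\nu_s)_x=\int_Y\zeta(\mathcal{S}^x_{s,0}\psi)\rd(\nu_0)_x$, i.e.\ $(\nu_s)_x=(\nu_0)_x\circ\mathcal{S}^x_{0,s}$ for all $s$. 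This is precisely \eqref{Fixed}, and uniqueness follows.

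The main obstacle is the regularity of the characteristic flow in the uniqueness step. Assumptions $\mathbf{(A2)}$--$\mathbf{(A3)}$ give only local Lipschitz continuity, so although $V[\eta,\nu_{\cdot},h]$ is globally Lipschitz in $\phi$ on the compact invariant set $Y$ by Proposition~\ref{Vlasovf}(ii) and the flow $\mathcal{S}^x_{s,t}$ is therefore bi-Lipschitz, the test function $w(t,\phi)=\zeta(\mathcal{S}^x_{s,t}\phi)$ is in general only Lipschitz, not $\mathcal{C}^1$, hence not admissible in \eqref{eq-test} as stated. I would resolve this either by first extending the weak formulation to Lipschitz test functions by density (legitimate since $\rho(t,x,\cdot)\in{\sf L}^1(Y)$ and $\widehat{V}$ is bounded and continuous on the compact set $Y$) and invoking Rademacher's theorem so that $\partial_t w+V\cdot\nabla_\phi w=0$ holds $\mathfrak{m}$-a.e., or alternatively by mollifying $V$ to obtain $\mathcal{C}^1$ flows, running the argument, and passing to the limit via the continuous-dependence estimates of Proposition~\ref{prop-sol-fixedpoint}. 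Controlling the $\mathfrak{m}$-null exceptional sets uniformly in $t$ (respectively, controlling the mollification limit) is the delicate part.
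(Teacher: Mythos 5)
Your existence half coincides with the paper's Step I: both take the unique fixed point of \eqref{Fixed} from Proposition~\ref{prop-sol-fixedpoint}, use Proposition~\ref{prop-continuousdependence}(iv) to get $(\nu_t)_x\in\cM_{+,\abs}(Y)$, read off normalisation from the mass conservation law and uniform weak continuity from Proposition~\ref{prop-nu}, and verify \eqref{eq-test} by transporting the test function along characteristics and applying the fundamental theorem of calculus in $t$. The uniqueness half is where you genuinely diverge. The paper reverses the existence computation: starting from \eqref{eq-test} it substitutes $\phi\mapsto\mathcal{S}^x_{t,0}[\eta,\nu_{\cdot},h]\phi$ inside the transport term, arrives at $\int_0^T\int_Y v\,\rd(\nu_t)_x\rd t=\int_0^T\int_Y v\,\rd((\nu_0)_x\circ\mathcal{S}^x_{0,t})\rd t$ for $v=\partial_1 w$, and then localises in time with continuous mollifiers of an indicator to extract the pointwise-in-$t$ identity $(\nu_t)_x=(\nu_0)_x\circ\mathcal{S}^x_{0,t}$; all test functions stay $\mathcal{C}^1$ throughout. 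You instead run the classical duality argument for the linear transport equation, inserting $w(t,\phi)=\zeta(\mathcal{S}^x_{s,t}\phi)$, which is constant along characteristics and so annihilates the transport terms directly. Your route is the more transparent one, and its cost is exactly the obstacle you flag: the flow of a merely Lipschitz field is only Lipschitz, so the $\mathcal{C}^1$ test class of Definition~\ref{def-weak-sol}(iii) must first be enlarged; your mollification/Rademacher repair is standard and does close the gap here, precisely because $(\nu_t)_x\ll\mathfrak{m}$ makes an $\mathfrak{m}$-a.e.\ chain rule sufficient under the integral. It is worth observing that the paper's substitution step --- replacing $\rd(\nu_t)_x$ by $\rd((\nu_0)_x\circ\mathcal{S}^x_{0,t})$ inside the $\nabla_\phi w\cdot V$ integral --- is itself an instance of the identity being proved, so the published Step~II requires extra justification at exactly the point your argument confronts head-on; in that respect your version is the more defensible of the two. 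One bookkeeping remark: you invoke Proposition~\ref{prop-sol-fixedpoint}, which is stated under $(\mathbf{A7})$ as well, whereas the theorem assumes only $(\mathbf{A1})$--$(\mathbf{A4})$ and $(\mathbf{A6})$; the paper's own proof has the same mismatch, and $(\mathbf{A7})$ is not needed for bare existence and uniqueness of the fixed point, so this is inherited rather than introduced.
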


The proof of Theorem~\ref{th-equi} is provided in Section~\ref{sect-proof}.

\section{Approximation of time-dependent solutions of VE}\label{sect-approximation-ode}

In this section, we seek approximations of solutions of the VE in bounded Lipschitz distance by probability measures with finitely supported piecewise defined measures generated from solutions to discretized ODEs. More specifically, we construct approximation of the absolutely continuous solutions of \eqref{Fixed} by finitely supported probabilities on $X\times Y$ on atoms $(x_i^n,\varphi_i^n(t))$ with $(\varphi_1^n(t),\ldots,\varphi_n^n(t))$ being the solution of the ODEs whose initial data are distributed asymptotically converging to the initial distribution of \eqref{Fixed}. In terms of measures, we seek for approximations by $\cD(Y)$-valued step functions on $X$.

To state the approximation result, let us first recall several recent results on approximation of probability measures by deterministic empirical measures.
\begin{proposition}\label{prop-XBC}
  \cite{C18,XB19}
Let $q\in\N$  and $\cP(\R^q)$ be the space of all Borel probability measures on $\R^q$. Let $\eta\in\cP(\R^q)$. Assume $\eta$ is compactly supported. Then there exists a uniformly bounded sequence $\{z^n_j\}_{j=1,\ldots,n;n\in\N}\subseteq \R^q$ such that
$$\lim_{n\to\infty}d_{\sf BL}(\eta^n,\eta)=0,$$
where $\eta^n=\frac{1}{n}\sum_{\ell=1}^n\delta_{z^n_j}$ is a deterministic empirical approximation of $\eta$.
\end{proposition}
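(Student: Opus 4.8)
The statement asserts that every compactly supported probability measure on $\R^q$ is the bounded-Lipschitz limit of uniformly weighted empirical measures whose atoms remain in a fixed bounded set. As this is quoted from \cite{C18,XB19}, I only outline the argument I would use to reprove it. The plan is to split the total error into two independent pieces — a \emph{spatial discretization} error governed by the mesh of a finite partition, and a \emph{rounding} error incurred by forcing all atomic weights to be integer multiples of $1/n$ — and then to balance the two along a single sequence indexed by $n$.

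First I would fix a compact set $K\supseteq\supp\eta$, say $K\subseteq[-M,M]^q$, so that $\eta(K)=1$. For a parameter $\epsilon>0$ I would take a finite Borel partition $\{C_1,\dots,C_m\}$ of $K$ with $\Diam C_i<\epsilon$ (for instance by intersecting $K$ with a grid of mesh $\epsilon$ and discarding empty cells), and pick a representative $x_i\in C_i\subseteq K$. Writing $p_i=\eta(C_i)$, the first estimate is that for every $f\in\mathcal{BL}_1(\R^q)$ one has $|\int f\,\rd\eta-\sum_i p_i f(x_i)|\le\sum_i\int_{C_i}|f(y)-f(x_i)|\,\rd\eta(y)\le\epsilon$, using $\mathcal{L}(f)\le1$ together with $\Diam C_i<\epsilon$. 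This gives $d_{\sf BL}(\eta,\sum_i p_i\delta_{x_i})\le\epsilon$.

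The second step replaces the real weights $p_i$ by multiples of $1/n$. I would set $n_i=\lfloor n p_i\rfloor$; since the deficit $n-\sum_i n_i=\sum_i\{np_i\}$ is a nonnegative integer strictly less than $m$, I can add one unit to that many cells so that $\sum_i n_i=n$ and $|n_i/n-p_i|\le1/n$ for every $i$. Using $\|f\|_\infty\le1$ this yields $d_{\sf BL}(\sum_i p_i\delta_{x_i},\frac1n\sum_i n_i\delta_{x_i})\le\sum_i|p_i-n_i/n|\le m/n$. Listing each $x_i$ with multiplicity $n_i$ as $z_1^n,\dots,z_n^n$, the measure $\eta^n=\frac1n\sum_{j=1}^n\delta_{z_j^n}$ therefore satisfies $d_{\sf BL}(\eta,\eta^n)\le\epsilon+m/n$, and since all $z_j^n\in K$ the atoms are uniformly bounded by $M$.

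Finally I would close the argument by a diagonal choice coupling $\epsilon$ to $n$. Since the number of nonempty cells obeys $m=m(\epsilon)\le C\epsilon^{-q}$ for a constant $C=C(M,q)$, taking $\epsilon=\epsilon_n=n^{-1/(2q)}$ gives $\epsilon_n\to0$ and $m(\epsilon_n)/n\le Cn^{-1/2}\to0$, hence $d_{\sf BL}(\eta,\eta^n)\to0$ with the atoms confined to $K$. The only genuine difficulty is precisely this balancing: one cannot shrink the mesh to kill the discretization error $\epsilon$ without inflating the cell count $m$, which worsens the rounding error $m/n$, so the mesh must be sent to zero strictly slower than $n^{-1/q}$; every other step is routine. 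I note that \cite{XB19} instead constructs the atoms explicitly from the quantile function of $\eta$, which in dimension $q=1$ gives a cleaner and sharper scheme, whereas the partition-and-round construction above has the advantage of working uniformly in the dimension $q$.
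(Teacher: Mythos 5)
Your argument is correct and complete. The paper itself offers no proof of this proposition --- it is imported verbatim from \cite{C18,XB19} --- so there is no internal proof to compare against; what you have written is a self-contained elementary derivation of the qualitative statement. The two-error decomposition is sound: the spatial term is bounded by $\epsilon$ because $\mathcal{L}(f)\le 1$ for $f\in\mathcal{BL}_1$ and each cell has diameter below $\epsilon$, the rounding term is bounded by $m/n$ because $\|f\|_\infty\le 1$ and the floor-plus-redistribution scheme keeps $|n_i/n-p_i|\le 1/n$ while forcing $\sum_i n_i=n$, and the coupling $\epsilon_n=n^{-1/(2q)}$ sends both terms to zero with all atoms confined to the fixed compact $K$, which is exactly the uniform boundedness the proposition asks for. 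The one small thing worth making explicit is that the deficit $n-\sum_i\lfloor np_i\rfloor$ is an integer equal to a sum of $m$ fractional parts, hence lies in $\{0,\dots,m-1\}$, so there are indeed enough cells to absorb it; you state this correctly.

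The comparison with the cited literature is worth one remark. The references prove strictly more than what the paper uses: \cite{XB19} constructs, in dimension one, the \emph{best} uniform $n$-point approximation explicitly from the quantile function (and the paper's Examples 4.2--4.3 rely on that explicit construction, not merely on existence), while \cite{C18} gives quantitative rates in general dimension. Your partition-and-round argument buys none of that optimality or rate information, but it establishes the existence-and-convergence statement actually quoted in Proposition 4.1 with minimal machinery and uniformly in the dimension $q$. For the role the proposition plays in Lemmas 4.5--4.6 (where only $\lim_{n\to\infty}d_{\sf BL}(\eta^n,\eta)=0$ with uniformly bounded atoms is invoked), your proof would be an adequate substitute for the citation.
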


We first provide two one-dimensional examples to illustrate how these empirical approximations are constructed, given a probability measure.

\begin{example}
  Let $X=[0,1]$. Let $\eta\in\mathcal{C}(X;\cM_+(X))$ be defined in Example~\ref{ex-mix}(ii). Hence $\frac{1}{\eta_x(X)}\eta_x\in\cP(X)$, for every $x\in X$.
  Note that $$\eta_x(X)=\begin{cases}
    1-3x,\quad \textnormal{if}\quad x\in[0,1/3[,\\
    1,\quad\hspace{2.2em} \textnormal{if}\quad x\in[1/3,2/3],\\
    3x-2,\quad \textnormal{if}\quad x\in]2/3,1].
  \end{cases}$$
  For every $m\in\N$, let
  $$x_i^m=\frac{i}{m+1},\quad \textnormal{for}\quad i=1,\ldots,m,$$  \begin{align*}
      \eta^{m,n}_{x^m_i}=&\begin{cases}
        \frac{1}{n}\sum_{j=1}^n\delta_{\frac{3}{2}x^m_i+\frac{j}{n+1}(1-\frac{3}{2}x^m_i)},\hspace{4.1em} \text{for}\quad i=1,\ldots,\lfloor\frac{m+1}{3}\rfloor-1,\\
         \delta_{1/2},\hspace{13.3em} \text{for}\quad i=\lfloor\frac{m+1}{3}\rfloor+1,\ldots,\lceil\frac{2(m+1)}{3}\rceil-1,\\
         \frac{1}{n}\sum_{j=1}^n\delta_{\frac{3}{2}(1-x^m_i)+\frac{j}{n+1}(1-\frac{3}{2}(1-x^m_i))},\quad \text{for}\quad i=\lceil\frac{2(m+1)}{3}\rceil,\ldots,m.
      \end{cases}\end{align*}
      It is also straightforward to show that \[\lim_{n\to\infty}d_{\sf BL}\left(\tfrac{1}{\eta_{x_i^m}(X)}\eta_{x_i^m},\eta_{x_i^m}^{m,n}\right)=0,\quad \text{for}\ i=1,\ldots,m.\] Indeed, $\eta_{x_i^m}^{m,n}$ are the best uniform $n$-approximation of $\tfrac{1}{\eta_{x_i^m}(X)}\eta_{x_i^m}$ with at most $n$ atoms \cite{XB19}.
\end{example}
\begin{example}
Let $X=\mathbb{S}^1$ and $\eta$ be defined in Example~\ref{ex-mix}(iii).
  Moreover, for every $n\in\N$, let $\zeta^n\colon=\frac{1}{n}\sum_{i=1}^n\delta_{y_i^n}$ be the best uniform approximation of the Cantor measure $\zeta_{\sf C}$ \cite{XB19}. Since $\supp\zeta^n\subset]0,1[$ and the distance on the circle is no greater than that on the real line, for every $x\in X$, one can show that $$\eta_{x}^{n}\colon=\frac{1}{n}\sum_{i=1}^n\delta_{\langle x+\frac{3}{4}y_i^n\rangle},$$ is a uniform approximation of $\eta_x$ such that $\lim_{n\to\infty}d_{\sf BL}(\eta_x,\eta_{x}^{n})=0$.
\end{example}

\begin{lemma}
  [Partition of $X$]\label{le-partition}
Assume  $(\mathbf{A1})$. Then there exists a sequence of pairwise disjoint partitions $\{A^m_i\colon i=1,\ldots,m\}_{m\in\N}$ of $X$ such that
  $X=\cup_{i=1}^m A_i^m$ for every $m\in\N$ and
  \[\lim_{m\to\infty}\max_{1\le i\le m}\Diam A^m_i=0.\]
\end{lemma}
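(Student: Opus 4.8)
The plan is to use compactness of $X$ in the $\ell_1$-metric to build a refining sequence of finite partitions whose mesh vanishes, and then to reindex and pad these to produce exactly $m$ cells for every $m$. First I would note that by $(\mathbf{A1})$ the set $X$ is a compact subset of $\R^{r_1}$ in the metric induced by the $\ell_1$-norm, hence bounded, so $X\subseteq Q:=[-M,M]^{r_1}$ for some $M>0$. For each $n\in\N$ I cut $Q$ into the $2^{nr_1}$ half-open dyadic subcubes of side length $2M\cdot2^{-n}$; each such subcube $C$ has $\ell_1$-diameter $\Diam C=r_1\cdot2M\cdot2^{-n}$, and the level-$(n+1)$ grid refines the level-$n$ grid. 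Intersecting these subcubes with $X$ and discarding the empty ones yields a nested sequence of partitions $\mathcal{P}_n=\{B^n_1,\ldots,B^n_{N_n}\}$ of $X$ into $N_n$ nonempty Borel cells with
\[\max_{1\le i\le N_n}\Diam B^n_i\le 2r_1M\cdot2^{-n}\to0\quad(n\to\infty),\]
and, because subdividing a nonempty cell yields at least one nonempty subcell, the counts $N_n$ are nondecreasing in $n$.

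The remaining task, and the only delicate point, is to convert this nested family into partitions with \emph{exactly} $m$ cells for each $m\in\N$ while retaining the vanishing-mesh conclusion. Here I would exploit the convention $\Diam A=0$ for $\#A\le1$, which allows me to pad any partition with copies of $\varnothing$ at no cost to the maximal diameter. If $X$ is finite, say $\#X=K$, then for $m\ge K$ I take the $m$-cell partition consisting of the $K$ singletons together with $m-K$ copies of $\varnothing$ (all of diameter $0$), and for the finitely many $m<K$ I take any $m$-cell partition; the maximal diameter is then $0$ for all $m\ge K$, so the limit is $0$.

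If $X$ is infinite, then $N_n\to\infty$, so for every $m\ge N_1$ the index $n(m):=\max\{n:N_n\le m\}$ is well defined and finite, and $n(m)\to\infty$ as $m\to\infty$ (for any fixed $n_0$, once $m\ge N_{n_0}$ one has $N_{n_0}\le m$, whence $n(m)\ge n_0$). I then set $A^m_i:=B^{n(m)}_i$ for $1\le i\le N_{n(m)}$ and $A^m_i:=\varnothing$ for $N_{n(m)}<i\le m$, while for the finitely many $m<N_1$ I take any $m$-cell partition. For each $m$ this gives a partition of $X$ into exactly $m$ pairwise disjoint Borel sets with
\[\max_{1\le i\le m}\Diam A^m_i\le 2r_1M\cdot2^{-n(m)}\to0\quad(m\to\infty),\]
as required. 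I expect the dyadic covering and diameter estimate to be entirely routine; the genuinely structural step is matching the prescribed cardinality $m$ without spoiling the mesh bound, which the monotonicity of $N_n$ together with empty-set padding resolves cleanly.
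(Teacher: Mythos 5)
Your proof is correct and follows essentially the same route as the paper's: enclose the compact $X$ in a cube, intersect a uniform grid of vanishing $\ell_1$-mesh with $X$, and adjust the cell count to exactly $m$. The only difference is in the bookkeeping — the paper hits the count $m$ by further subdividing some of the $\lfloor m^{1/r_1}\rfloor^{r_1}$ grid cubes for each $m$, while you pad a coarser dyadic partition with copies of $\varnothing$, which is legitimate here since the convention $\Diam A=0$ for $\#A\le1$ applies and the paper's own cells $A^m_i=\widehat{B}^m_i\cap X$ may likewise be empty.
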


The proof of Lemma~\ref{le-partition} is provided in Appendix~\ref{appendix-le-partition}.

\begin{lemma}[Approximation of the initial distribution]\label{le-ini-2}
Assume $(\mathbf{A1})$ and $(\mathbf{A7})$. Let $\{A^m_i\}_{1\le i\le m}$ be a partition of $X$ for $m\in \N$ satisfying \[\lim_{m\to\infty}\max_{1\le i\le m}\Diam A^m_i=0.\] Let $x_i^m\in A_i^m$, for $i=1,\ldots,m$, $m\in\N$.
Then there exists a sequence $\{\varphi^{m,n}_{(i-1)n+j}\colon i=1,\ldots,m,j=1,\ldots,n\}_{n,m\in\N}\subseteq Y$ such that $$\lim_{m\to\infty}\lim_{n\to\infty}d_{\infty}(\nu_0^{m,n},\nu_0)=0,$$
where $\nu_0^{m,n}\in \mathcal{B}_{\mu_X,1}(X,\cM_+(Y))$ with $$(\nu_0^{m,n})_x\colon=\sum_{i=1}^m\mathbbm{1}_{A^m_i}(x)\frac{a_{m,i}}{n}\sum_{j=1}^n
\delta_{\varphi^{m,n}_{(i-1)n+j}},\quad x\in X,$$
$$a_{m,i}=\begin{cases}
  \frac{\int_{A_i^m}(\nu_0)_x(Y)\rd\mu_X(x)}{\mu_X(A_i^m)},\quad \textnormal{if}\quad \mu_X(A_i^m)>0,\\
  (\nu_0)_{x^m_i}(Y),\hspace{4.2em} \textnormal{if}\quad \mu_X(A_i^m)=0.
\end{cases}$$
\end{lemma}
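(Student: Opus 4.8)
The plan is to bound $d_{\infty}(\nu_0^{m,n},\nu_0)=\sup_{x\in X}d_{\sf BL}((\nu_0^{m,n})_x,(\nu_0)_x)$ by splitting the error, for each $x$, into a \emph{spatial} part coming from replacing $(\nu_0)_x$ by the fibre $(\nu_0)_{x_i^m}$ at the representative point of the cell $A_i^m\ni x$, and an \emph{atomic} part coming from replacing that fibre measure by an $n$-atom empirical measure. Throughout I use the (standing) hypothesis that $\nu_0\in\mathcal{C}_{\mu_X,1}(X,\cM_+(Y))$ together with compactness of $X$ from $(\mathbf{A1})$ and of the convex set $Y$ from $(\mathbf{A6})$. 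Since $x\mapsto(\nu_0)_x$ is $d_{\sf BL}$-continuous on the compact space $X$, it is uniformly continuous (Heine--Cantor); fix a modulus of continuity $\omega$ with $\omega(\delta)\to0$ as $\delta\downarrow0$, so $d_{\sf BL}((\nu_0)_x,(\nu_0)_{x'})\le\omega(|x-x'|)$. Testing against the constant $\mathbbm{1}_Y\in\BL_1(Y)$ also yields $|(\nu_0)_x(Y)-(\nu_0)_{x'}(Y)|\le d_{\sf BL}((\nu_0)_x,(\nu_0)_{x'})\le\omega(|x-x'|)$, and $\|\nu_0\|=\sup_{x}(\nu_0)_x(Y)<\infty$ by Proposition~\ref{prop-fibercomplete}(i).

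Next I would construct the atoms. Fix $m$. For each cell $i$ with $(\nu_0)_{x_i^m}(Y)>0$, normalise $\xi_i^m:=(\nu_0)_{x_i^m}/(\nu_0)_{x_i^m}(Y)\in\cP(Y)$, which is compactly supported, and apply Proposition~\ref{prop-XBC} (with $q=r_2$) to obtain uniformly bounded atoms whose empirical average converges to $\xi_i^m$ in $d_{\sf BL}$. To force the atoms into $Y$, compose with the metric projection $P_Y$ onto the convex compact $Y$: $P_Y$ is $1$-Lipschitz and fixes $\supp\xi_i^m\subseteq Y$, so the pushforward preserves $\xi_i^m$ and does not increase $d_{\sf BL}$; this produces $\varphi^{m,n}_{(i-1)n+j}\in Y$ with $\mu_i^{n}:=\frac1n\sum_{j=1}^n\delta_{\varphi^{m,n}_{(i-1)n+j}}\to\xi_i^m$ in $d_{\sf BL}$ as $n\to\infty$. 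For the remaining cells (where $(\nu_0)_{x_i^m}(Y)=0$) I place all atoms at an arbitrary fixed $y_0\in Y$. The normalisation $\nu_0^{m,n}\in\mathcal{B}_{\mu_X,1}(X,\cM_+(Y))$ then follows from $\int_X(\nu_0^{m,n})_x(Y)\,\rd\mu_X=\sum_{i:\,\mu_X(A_i^m)>0}\int_{A_i^m}(\nu_0)_x(Y)\,\rd\mu_X=1$, using the definition of $a_{m,i}$.

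Then I would carry out the three-term estimate. For $x\in A_i^m$ with $(\nu_0)_{x_i^m}(Y)>0$, the triangle inequality gives $d_{\sf BL}((\nu_0^{m,n})_x,(\nu_0)_x)\le \mathrm{(T1)}+\mathrm{(T2)}+\mathrm{(T3)}$, where $\mathrm{(T1)}=d_{\sf BL}(a_{m,i}\mu_i^n,a_{m,i}\xi_i^m)=a_{m,i}\,d_{\sf BL}(\mu_i^n,\xi_i^m)\le\|\nu_0\|\,d_{\sf BL}(\mu_i^n,\xi_i^m)$ by positive homogeneity of $d_{\sf BL}$; $\mathrm{(T2)}=d_{\sf BL}(a_{m,i}\xi_i^m,(\nu_0)_{x_i^m})=\bigl|\tfrac{a_{m,i}}{(\nu_0)_{x_i^m}(Y)}-1\bigr|\,\|(\nu_0)_{x_i^m}\|_{\sf BL}\le|a_{m,i}-(\nu_0)_{x_i^m}(Y)|$ since $\|\cdot\|_{\sf BL}\le\|\cdot\|_{\TV}$; and $\mathrm{(T3)}=d_{\sf BL}((\nu_0)_{x_i^m},(\nu_0)_x)\le\omega(\Diam A_i^m)$. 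The averaging bound $|a_{m,i}-(\nu_0)_{x_i^m}(Y)|\le\sup_{x\in A_i^m}|(\nu_0)_x(Y)-(\nu_0)_{x_i^m}(Y)|\le\omega(\Diam A_i^m)$ (trivially $0$ when $\mu_X(A_i^m)=0$) controls $\mathrm{(T2)}$. For the degenerate cells $(\nu_0)_{x_i^m}(Y)=0$ both masses are $\le\omega(\Diam A_i^m)$, so directly $d_{\sf BL}((\nu_0^{m,n})_x,(\nu_0)_x)\le a_{m,i}+(\nu_0)_x(Y)\le2\omega(\Diam A_i^m)$. Taking the supremum over $x\in X$ yields, for every $m$, the bound $d_{\infty}(\nu_0^{m,n},\nu_0)\le\|\nu_0\|\max_{1\le i\le m}d_{\sf BL}(\mu_i^n,\xi_i^m)+2\omega\bigl(\max_{1\le i\le m}\Diam A_i^m\bigr)$.

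Finally I would take the limits in the prescribed order. For fixed $m$ the maximum is over finitely many cells and each $d_{\sf BL}(\mu_i^n,\xi_i^m)\to0$ as $n\to\infty$ by construction, so $\limsup_{n\to\infty}d_{\infty}(\nu_0^{m,n},\nu_0)\le2\omega(\max_{i}\Diam A_i^m)$; letting $m\to\infty$ and invoking $\max_i\Diam A_i^m\to0$ with $\omega(\delta)\to0$ gives the claimed iterated limit $\lim_{m\to\infty}\lim_{n\to\infty}d_{\infty}(\nu_0^{m,n},\nu_0)=0$. I expect the main obstacle to be the \emph{decoupling} of the two error sources, so that the finitely-many-cells structure makes the inner limit $n\to\infty$ harmless while the spatial error is pushed entirely onto $\omega$ and the diameters; the genuinely delicate point is the (near-)degenerate cells, where $(\nu_0)_{x_i^m}(Y)$ is small or zero so that $\xi_i^m$ is ill-defined and the scaling mismatch between the prescribed mass $a_{m,i}$ and $(\nu_0)_{x_i^m}(Y)$ must be absorbed into $\omega$ via the averaging estimate together with the separate treatment above. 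A secondary technicality is the projection step forcing the empirical atoms to lie in $Y$.
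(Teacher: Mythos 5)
Your proof is correct and follows essentially the same route as the paper's: deterministic empirical approximation of the normalized fibre measures at the representative points via Proposition~\ref{prop-XBC}, the averaging estimate $|a_{m,i}-(\nu_0)_{x_i^m}(Y)|\le\sup_{x\in A_i^m}|(\nu_0)_x(Y)-(\nu_0)_{x_i^m}(Y)|$, and the uniform continuity of $x\mapsto(\nu_0)_x$ on the compact $X$, with the limits taken in the order $n\to\infty$ then $m\to\infty$. The only cosmetic difference is that the paper routes the triangle inequality through an explicit intermediate measure $\widehat{\nu}_0^{m,n}$ built with the pointwise masses $\widehat{a}_{m,i}=(\nu_0)_{x_i^m}(Y)$, whereas you split the same three error terms directly; your projection-onto-$Y$ remark is a harmless technical addition the paper leaves implicit.
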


The proof of Lemma~\ref{le-ini-2} is provided in Appendix~\ref{appendix-le-ini-2}.

\begin{lemma}[Approximation of the DGM]\label{le-graph}
Assume $(\mathbf{A1})$ and $(\mathbf{A4})'$. For every $m\in\N$, let $A^m_i$ and $x_i^m$ be defined in Lemma~\ref{le-ini-2} for $i=1,\ldots,m$, $m\in\N$. Then for every $\ell=1,\ldots,r$, there exists a sequence $\{y^{\ell,m,n}_{(i-1)n+j}\colon i=1,\ldots,m,j=1,\ldots,n\}_{m,n\in\N}\subseteq Y$ such that $$\lim_{m\to\infty}\lim_{n\to\infty}d_{\infty}(\eta^{\ell,m,n},\eta^{\ell})=0,$$
where $\eta^{\ell,m,n}\in \mathcal{B}(X,\cM_+(Y))$ with $$\eta^{\ell,m,n}_x\colon=\sum_{i=1}^m\mathbbm{1}_{A^m_i}(x)\frac{b_{\ell,m,i}}{n}
\sum_{j=1}^n\delta_{y^{\ell,m,n}_{(i-1)n+j}},\quad x\in X,$$ $$b_{\ell,m,i}=\begin{cases}
  \frac{\int_{A_i^m}(\eta^{\ell})_x(X)\rd\mu_X(x)}{\mu_X(A_i^m)},\quad \textnormal{if}\quad \mu_X(A_i^m)>0,\\
  \eta^{\ell}_{x_i^m}(X),\hspace{4.2em}\quad \textnormal{if}\quad \mu_X(A_i^m)=0.
\end{cases}$$
\end{lemma}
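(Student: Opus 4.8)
The plan is to mirror the construction and error analysis of Lemma~\ref{le-ini-2}, replacing the initial fiber measures $(\nu_0)_x$ by the DGM fiber measures $\eta^\ell_x$, and to exploit that by $(\mathbf{A4})'$ together with compactness of $X$ (from $(\mathbf{A1})$), the map $x\mapsto\eta^\ell_x$ is \emph{uniformly} continuous in the bounded Lipschitz metric. Fix $\ell$. On a cell $A^m_i$ with $\eta^\ell_{x^m_i}(X)>0$, the normalised fiber measure $\eta^\ell_{x^m_i}/\eta^\ell_{x^m_i}(X)$ is a compactly supported probability measure, so Proposition~\ref{prop-XBC} furnishes, for every $n$, atoms $y^{\ell,m,n}_{(i-1)n+j}$ ($j=1,\dots,n$) with $\tfrac1n\sum_{j}\delta_{y^{\ell,m,n}_{(i-1)n+j}}\to\eta^\ell_{x^m_i}/\eta^\ell_{x^m_i}(X)$ in $d_{\sf BL}$ as $n\to\infty$; on a degenerate cell with $\eta^\ell_{x^m_i}(X)=0$ I would place all atoms at $x^m_i$. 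Scaling by the prescribed mass $b_{\ell,m,i}$ then defines $\eta^{\ell,m,n}_x$ on $A^m_i$ exactly as in the statement, and $\|\eta^\ell\|<\infty$ by Proposition~\ref{prop-fibercomplete}(i).

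Since $\eta^{\ell,m,n}_x$ is constant in $x$ on each $A^m_i$, I would estimate, for $x\in A^m_i$ with $\eta^\ell_{x^m_i}(X)>0$, via the triangle inequality with the intermediate measures $b_{\ell,m,i}\,\eta^\ell_{x^m_i}/\eta^\ell_{x^m_i}(X)$ and $\eta^\ell_{x^m_i}$:
\begin{align*}
d_{\sf BL}(\eta^{\ell,m,n}_x,\eta^\ell_x)\le\;& b_{\ell,m,i}\, d_{\sf BL}\Bigl(\tfrac1n\textstyle\sum_{j}\delta_{y^{\ell,m,n}_{(i-1)n+j}},\tfrac{\eta^\ell_{x^m_i}}{\eta^\ell_{x^m_i}(X)}\Bigr)\\
&+\bigl|b_{\ell,m,i}-\eta^\ell_{x^m_i}(X)\bigr|+d_{\sf BL}(\eta^\ell_{x^m_i},\eta^\ell_x),
\end{align*}
using $d_{\sf BL}(c\mu,c\nu)=c\,d_{\sf BL}(\mu,\nu)$ in the first term and $d_{\sf BL}(c\mu,\mu)\le|c-1|\,\mu(X)$ in the second. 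For fixed $(m,i)$ the first term tends to $0$ as $n\to\infty$ by Proposition~\ref{prop-XBC}, hence so does its maximum over the finitely many cells $i=1,\dots,m$; the remaining two terms are independent of $n$.

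For the outer limit I would invoke uniform continuity. Because $\mathbbm{1}_X\in\mathcal{BL}_1(X)$, the total mass obeys $|\eta^\ell_x(X)-\eta^\ell_{x'}(X)|\le d_{\sf BL}(\eta^\ell_x,\eta^\ell_{x'})$, so $x\mapsto\eta^\ell_x(X)$ is uniformly continuous; consequently both $|b_{\ell,m,i}-\eta^\ell_{x^m_i}(X)|\le\sup_{x\in A^m_i}|\eta^\ell_x(X)-\eta^\ell_{x^m_i}(X)|$ and $\sup_{x\in A^m_i}d_{\sf BL}(\eta^\ell_{x^m_i},\eta^\ell_x)$ are dominated by the modulus of continuity of $\eta^\ell$ evaluated at $\max_i\Diam A^m_i$, which tends to $0$ as $m\to\infty$ (Lemma~\ref{le-partition}). (On a degenerate cell the whole error is $\le b_{\ell,m,i}+\eta^\ell_x(X)$, and both summands are again controlled by this modulus.) Taking first $n\to\infty$ and then $m\to\infty$ yields $\lim_{m\to\infty}\lim_{n\to\infty}d_\infty(\eta^{\ell,m,n},\eta^\ell)=0$.

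I expect the principal obstacle to be keeping the estimates uniform in the cell index $i$ while the number of cells grows with $m$; this is precisely why I route every $m$-dependent error through the \emph{single} uniform modulus of continuity of $\eta^\ell$ rather than through individual cells, and why $(\mathbf{A4})'$ (continuity of the DGM), and not merely its measurability, is indispensable here. The only further subtlety is the degenerate cells $\eta^\ell_{x^m_i}(X)=0$ with possibly $b_{\ell,m,i}>0$, where normalisation is unavailable, but as noted the contributed error still vanishes through the same modulus and is therefore harmless.
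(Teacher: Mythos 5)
Your proposal is correct and follows essentially the same route as the paper, which proves this lemma by repeating the argument of Lemma~\ref{le-ini-2} with $\eta^\ell_x$ in place of $(\nu_0)_x$: empirical approximation of the normalised fiber measure at the representative point $x^m_i$ via Proposition~\ref{prop-XBC}, plus a mass-correction term and a spatial term, both controlled by the uniform modulus of continuity of $x\mapsto\eta^\ell_x$ coming from $(\mathbf{A4})'$ and compactness of $X$. Your single three-term triangle inequality is just a compressed version of the paper's two auxiliary limits, and your explicit treatment of the degenerate cells $\eta^\ell_{x^m_i}(X)=0$ is if anything slightly more careful than the original.
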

The proof of Lemma~\ref{le-graph} is provided in Appendix~\ref{appendix-le-graph}.

In what follows, we provide several concrete examples of the deterministic empirical approximation of DGMs with different vertex spaces together with their partitions. All these examples can be used in the applications in Section~\ref{sect-applications}.

The example below gives a high dimensional $X$ with a reference measure $\mu_X$ supported on a lower dimensional curve.
\begin{example}\label{ex-triangle}
  Let $X=\{(z_1,z_2)\in\R^2_+\colon |z|\le1\}$ be the triangle, and $\mu_X=\lambda|_{\{(x_1,x_2)\in X\colon x_1=x_2\}}$ be the uniform measure over the line segment from $(0,0)$ to $(1/2,1/2)$. Hence $\mu_X$ is singular to the Lebesgue measure $\mathfrak{m}$ on $X$. Let $\eta_x=|x|\lambda|_{E_x}$, where $E_x=\{(z_1,z_2)\in X\colon z_1x_2=z_2x_1\}$. Hence $E_x$ is a line through $x$ provided $x\neq0$, and $E_0=X$. It is easy to verify that $x\mapsto\eta_x$ is continuous. Since $\supp\eta_x=\infty$ for all $x\in X\setminus\{0\}$ and $\eta_0=0$, and $\eta_x\perp\mu_X$ for all $x\in X\setminus E_{(1/2,1/2)}$ and $\eta_x\ll \mu_X$ for all $x\in E_{(1/2,1/2)}\setminus\{0\}$, we have that $\eta$ is neither a graphing nor a graphon. For $x\neq0$, the uniform measure $$\eta^n_x=\frac{1}{n}\sum_{j=1}^n\delta_{\tfrac{j}{n+1}\tfrac{x}{|x|}}$$ over the equipartition of the line segment $E_x$ is the best uniform approximation of $\frac{1}{|x|}\eta_x$. For every $m\in\N$, we can take the uniform partition such that $A^m$ is no coarser than $A^{\lfloor\sqrt{m}\rfloor^2}$ and no finer than $A^{\lceil\sqrt{m}\rceil^2}$ with $\{A^{m^2}_{i}\}_{1\le i\le m^2}=\{\Delta((p/m,q/m),((p+1)/k,q/k),(p/k,(q+1)/k))\}_{0\le p,q\le m-1}$ of $X$, consisting of $m^2$ congruent triangles, where $\Delta(a,b,c)$ stands for the triangle with vertices $a,b,c$. See Figure~\ref{fig-ex-triangle}. Hence we can take $x^m_i$ to be any point in $A^m_{i}\setminus\{0\}$ and $$y^{m,n}_{(i-1)n+j}=\tfrac{j}{n+1}\tfrac{x^m_i}{|x^m_i|},\quad j=1,\ldots,n;\quad
b_{m,i}=\begin{cases}
  \tfrac{\int_{A_i^m}|x|\rd\mu_X(x)}{\mu_X(A_i^m)},\quad \text{if}\quad \mu_X(A_i^m)>0,\\
  |x^m_i|,\hspace{3.7em}\quad \text{if}\quad \mu_X(A_i^m)=0.
\end{cases}$$ In other words, $b_{m,i}$ is the $\ell_1$-norm of the \emph{barycenter} of $A_i^m$ w.r.t. $\mu_X$, provided $A_i^m$ has a non-empty intersection with the line segment $E_{(1/2,1/2)}$, and $b_{m,i}$ is the $\ell_1$-norm of $x^m_i$ otherwise. Moreover, there are at most $(1+2\lfloor2^{-1}\lceil\sqrt{m}\,\rceil\rfloor)$ triangles $A^m_i$ with a positive $\mu_X$-measure (a non-empty intersection with $E_{(1/2,1/2)}$).  \end{example}

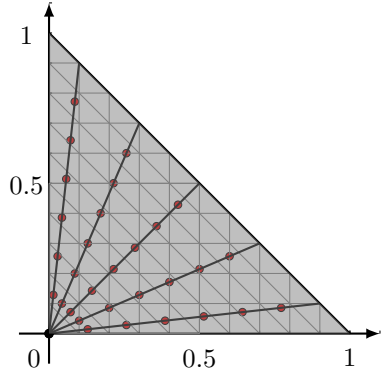
\begin{figure}
\begin{center}
\begin{tikzpicture}
   [
        dot/.style={circle,draw=black, fill,inner sep=1pt},scale=4
    ]
\foreach \x in {.1,.2,...,1.1}
    \draw (\x,.01) -- node[below,yshift=-1mm] {} (\x,0);
    \draw (1,.01) -- node[below,yshift=-1mm] {1} (1,0);
\node[below,xshift=-2mm,yshift=-1mm] at (0,0) {0};
\foreach \y in {.1,.2,...,1.1}
    \draw (.01,\y) -- node[below,xshift=-3mm,yshift=2mm] {} (0,\y);
     \node[below,xshift=-3mm,yshift=2mm] at (0,1){1};
     \node[below,yshift=-1mm] at (.5,0){0.5};
     \node[below,xshift=-3mm,yshift=2mm] at (0,.5){0.5};
    \foreach \y in {0,.1,...,.7}
     {\node[red,dot] at (\y,\y*3/7){};
     \node[red,dot] at (\y*3/7,\y){};
     \node[red,dot] at (\y*5/7,\y*5/7){};
     \node[red,dot] at (\y*9/7,\y/7){};
     \node[red,dot] at (\y/7,\y*9/7){};
     }
     \foreach \y in {1,...,10}
    {\draw[thin,gray] (0,\y/10) -- (1-\y/10,\y/10);
    \draw[thin,gray] (\y/10,0) -- (0,\y/10);
   \draw[thin,gray] (\y/10,0) -- (\y/10,1-\y/10);
}

\draw[->,thick,-latex] (0,-.1) -- (0,1.1);
\draw[->,thick,-latex] (-.1,0) -- (1.1,0);
\draw[step=.5cm,gray,very thin] (0,0) grid (.5,.5);
\draw[thick,black] (0,0) -- (.5,.5);
\draw[thick,black] (0,0) -- (.3,.7);
\draw[thick,black] (0,0) -- (.7,.3);
\draw[thick,black] (0,0) -- (.1,.9);
\draw[thick,black] (0,0) -- (.9,.1);
\draw[thick,black] (0,1) -- (1,0);
   \node[dot,thick] at (0,0){};
  \fill[gray, opacity=.5] (0,0) -- (1,0) -- (0,1) -- cycle;
  \end{tikzpicture}
  \caption{Example~\ref{ex-triangle}. $m=10$ and $n=7$. Red dots: atoms of uniform $n$-approximations of $\eta_x$ for different $x$.}\label{fig-ex-triangle}
\end{center}
\end{figure}

Indeed, one can also take $X$ to be a discrete subset of the Euclidean space and the DGM is neither sparse not dense.
\begin{example}\label{ex-discrete}
  Let $X=\{1/n\}_{n\in\N}\cup\{0\}\subseteq\R$. Let $\mu_X=\sum_{i=2}^{\infty}2^{-i+1}\delta_{1/i}$, $\eta\in\mathcal{B}(X,\cM_+(X))$ be such that $\eta_x=x\lambda|_{\{y\in X\colon y\ge x\}}$ for $x\in X$, where for convention $0\lambda_X=0$ denotes the trivial zero measure. Since $\sup_{x\in X}\#\supp\eta=\infty$ and $\supp\mu_X\subsetneq\sup\eta_1$, we have $\eta\in\cC(X,\cM_+(X))$ is neither a digraphing nor a digraphon. Hence for every $m\in\N$, let $A^m_i=\{1/i\}$ for $1\le i<m$ and $A^m_m=\{y\in X\colon y\le1/m\}$. Then it is easily seen that $\max_{1\le i\le m}\Diam A^m=1/m\to0$ as $m\to\infty$. Let $x^m_i=1/i$ for $i=1,\ldots,m$. We have $\eta_{x^m_i}=\tfrac{x^m_i}{i}\sum_{k=1}^i\delta_{1/k}$, for $i=1,\ldots,m$. Therefore, for all $n\ge m$,
  \[\eta_x^{m,n}=\begin{cases}\delta_{1},\quad\hspace{12.7em} \text{if}\ x\in A^m_1,\\
  \frac{1}{i}\sum_{j=1}^i\delta_{1/j},\quad\hspace{8.7em} \text{if}\ x\in A^m_i,\quad 1<i<m,\\ \left(\sum_{k=0}^{\infty}2^{-k-1}\tfrac{1}{m+k}\right)\frac{1}{m}\sum_{j=1}^m\delta_{1/j},\quad \text{if}\ x\in A^m_m.\end{cases}\]
\end{example}

\begin{lemma}[Approximation of $h$]\label{le-h}
Assume $(\mathbf{A3})$ and $(\mathbf{A7})$.

For every $m\in\N$, let $$h^m(t,z,\phi)=\sum_{i=1}^m\mathbbm{1}_{A^m_i}(z)h(t,x_i^m,\phi),\quad t\in\cT,\ z\in X,\ \phi\in Y.$$ Then
  $$\lim_{m\to\infty}\int_0^T\int_Y\sup_{x\in X}\lt|h^m(t,x,\phi)-h(t,x,\phi)\rt|\rd\phi\rd t=0.$$
\end{lemma}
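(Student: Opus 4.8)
The plan is to reduce the statement to a single application of the dominated convergence theorem on the product space $\cT\times Y$ (with Lebesgue measure), and to observe that assumption $(\mathbf{A7})$ has been arranged to supply exactly the two ingredients this requires: pointwise convergence of the integrand and an integrable majorant. First I would rewrite the integrand using the partition structure. Since $\{A^m_i\}_{1\le i\le m}$ is a partition of $X$, each $x\in X$ lies in a unique cell $A^m_{i(x)}$, so that $h^m(t,x,\phi)=h(t,x^m_{i(x)},\phi)$ with $x^m_{i(x)}\in A^m_{i(x)}$ and hence $|x-x^m_{i(x)}|\le\Diam A^m_{i(x)}\le\delta_m$, where $\delta_m:=\max_{1\le i\le m}\Diam A^m_i$. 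Writing $F_m(t,\phi):=\sup_{x\in X}|h^m(t,x,\phi)-h(t,x,\phi)|$, this yields the pointwise bound
$$F_m(t,\phi)\le \sup_{\substack{x,y\in X\\ |x-y|\le\delta_m}}|h(t,x,\phi)-h(t,y,\phi)|.$$

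Next I would establish pointwise convergence $F_m(t,\phi)\to 0$ for each fixed $(t,\phi)\in\cT\times Y$. By Lemma~\ref{le-partition}, $\delta_m\to 0$ as $m\to\infty$. The uniform-in-$\phi$ continuity in $x$ from $(\mathbf{A7})$ provides, for every $\varepsilon>0$, some $\delta>0$ with $\sup_{\phi'\in Y}|h(t,x,\phi')-h(t,y,\phi')|<\varepsilon$ whenever $|x-y|<\delta$; in particular the right-hand side of the displayed bound is $\le\varepsilon$ once $\delta_m<\delta$, so $F_m(t,\phi)\to 0$. It is important that $(\mathbf{A7})$ gives continuity \emph{uniformly} in $\phi$ here, as mere separate continuity in $x$ would not control the supremum over $x$ simultaneously at the level needed.

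For the majorant, I would bound $F_m(t,\phi)\le\sup_{x\in X}|h^m(t,x,\phi)|+\sup_{x\in X}|h(t,x,\phi)|\le 2\sup_{x\in X}|h(t,x,\phi)|$, using that $h^m(t,\cdot,\phi)$ only takes values among $\{h(t,x^m_i,\phi)\}_{1\le i\le m}$. By the integrability clause of $(\mathbf{A7})$, the function $(t,\phi)\mapsto 2\sup_{x\in X}|h(t,x,\phi)|$ lies in ${\sf L}^1(\cT\times Y)$. With the pointwise convergence $F_m\to 0$ and this fixed integrable majorant, dominated convergence on $\cT\times Y$ gives $\int_0^T\int_Y F_m(t,\phi)\rd\phi\rd t\to 0$, which is precisely the assertion.

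The argument carries little analytic difficulty, and the only point I expect to require care is the measurability of $(t,\phi)\mapsto F_m(t,\phi)$, needed before invoking dominated convergence. I would settle this by noting that $X$ is separable, so each $\sup_{x\in A^m_i}$ may be replaced by a supremum over a countable dense subset, and that $h$ is continuous in $t$ and in $\phi$ (by $(\mathbf{A3})$ and $(\mathbf{A7})$); this makes $F_m=\max_{1\le i\le m}\sup_{x\in A^m_i}|h(t,x^m_i,\phi)-h(t,x,\phi)|$ a finite maximum of countable suprema of measurable functions. I regard this routine measurability verification, rather than the passage to the limit, as the only genuinely technical step, and I would emphasise that $(\mathbf{A7})$ is tailored exactly so that both hypotheses of dominated convergence are met.
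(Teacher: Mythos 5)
Your proposal is correct and takes essentially the same route as the paper's proof: pointwise convergence of $\sup_{x\in X}|h^m(t,x,\phi)-h(t,x,\phi)|$ obtained from the uniform-in-$\phi$ continuity in $x$ of $(\mathbf{A7})$ (upgraded to a uniform modulus via compactness of $X$), followed by dominated convergence with the majorant $2\sup_{x\in X}|h(t,x,\phi)|$, whose integrability is the second clause of $(\mathbf{A7})$. Your closing remark on the measurability of the supremum is a detail the paper leaves implicit, handled correctly by separability of $X$ and continuity of $h$.
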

The proof of Lemma~\ref{le-h} is provided in Appendix~\ref{appendix-le-h}.

Now we are ready to provide a discretization of the the VE on the DGM by a sequence of ODEs.
 From Lemmas~\ref{le-partition}-\ref{le-graph}, there exists
 \begin{enumerate}
 \item[$\bullet$] a partition $\{A^m_i\}_{1\le i\le m}$ of $X$ and points $x^m_i\in A^m_i$ for $i=1,\ldots,m$, for every $m\in\N$,
 \item[$\bullet$]  a sequence $\{\varphi^{m,n}_{(i-1)n+j}\colon i=1,\ldots,m,j=1,\ldots,n\}_{n,m\in\N}\subseteq Y$ and $\{a_{m,i}\colon i=1,\ldots,$ $m\}_{m\in\N}\subseteq\R_+$,
  \item[$\bullet$] a sequence $\{y^{\ell,m,n}_{(i-1)n+j}\colon i=1,\ldots,m,j=1,\ldots,n\}_{m,n\in\N}\subseteq Y$ and $\{b_{\ell,m,i}\colon i=1,\ldots,$ $m\}_{m\in\N}\subseteq\R_+$, for $\ell=1,\ldots,r$,
 \end{enumerate}
 such that
  $$\lim_{m\to\infty}\lim_{n\to\infty}d_{\infty}(\nu_0^{m,n},\nu_0)=0,$$
   $$\lim_{m\to\infty}\lim_{n\to\infty}d_{\infty}(\eta^{\ell,m,n},\eta)=0,\quad \ell=1,\ldots,r,$$
     $$\lim_{m\to\infty}\int_0^T\int_Y\sup_{x\in X}\lt|h^m(t,x,\phi)-h(t,x,\phi)\rt|\rd\phi\rd t=0,$$
 where  \begin{subequations}
   \label{discretization}
 \begin{alignat}{2}
(\nu_0^{m,n})_x\colon=\sum_{i=1}^m\mathbbm{1}_{A^m_i}(x)\frac{a_{m,i}}{n}\sum_{j=1}^n
\delta_{\varphi^{m,n}_{(i-1)n+j}},\quad x\in X,\\ \eta^{\ell,m,n}_x\colon=\sum_{i=1}^m\mathbbm{1}_{A^m_i}(x)\frac{b_{\ell,m,i}}{n}
\sum_{j=1}^n\delta_{y^{\ell,m,n}_{(i-1)n+j}},\quad x\in X,\\
 h^m(t,z,\phi)=\sum_{i=1}^m\mathbbm{1}_{A^m_i}(z)h(t,x_i^m,\phi),\quad t\in\cT,\ z\in X,\ \phi\in Y.
 \end{alignat}
 \end{subequations}
 Consider the following IVP of a coupled ODE system:
\begin{multline}
  \label{lattice}
  \dot{\phi}_{(i-1)n+j}=F^{m,n}_{i}(t,\phi_{(i-1)n+j},\Phi),\quad 0<t\le T,\quad \phi_{(i-1)n+j}(0)=\varphi_{(i-1)n+j},\\ i=1,\ldots,m,\ j=1,\ldots,n,
\end{multline}
where $\Phi=(\phi_{(i-1)n+j})_{1\le i\le m,1\le j\le n}$ and $$F^{m,n}_{i}(t,\psi,\Phi)=\sum_{\ell=1}^r\sum_{p=1}^{m}\frac{a_{m,i}b_{\ell,m,p}}{n^2}\sum_{j=1}^n
\mathbbm{1}_{A^m_p}(y^{\ell,m,n}_{(i-1)n+j})\sum_{q=1}^{n}g_{\ell}(t,\psi,\phi_{(p-1)n+q})+h^m(t,x^m_i,\psi).$$
\begin{proposition}\label{prop-well-posed-lattice}
Then there exists a unique solution $\phi^{m,n}(t)=(\phi^{m,n}_{(i-1)n+j}(t))$ to \eqref{lattice},
for $m,n\in\N$.
\end{proposition}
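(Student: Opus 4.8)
The plan is to treat \eqref{lattice} as a genuine finite-dimensional initial value problem and to combine the classical Picard--Lindel\"of theorem with the positive invariance of the compact set $Y$ supplied by $(\mathbf{A6})$. Collecting the $mn$ unknowns into a single vector $\Phi=(\phi_{(i-1)n+j})_{1\le i\le m,\,1\le j\le n}\in(\R^{r_2})^{mn}$, the system reads $\dot\Phi=G^{m,n}(t,\Phi)$, where the $((i-1)n+j)$-th block of $G^{m,n}$ is $F^{m,n}_i(t,\phi_{(i-1)n+j},\Phi)$. Since each $F^{m,n}_i$ is a finite linear combination, with constant nonnegative coefficients $\tfrac{a_{m,i}b_{\ell,m,p}}{n^2}$, of the maps $g_\ell$ and of $h^m(t,x^m_i,\cdot)$, it inherits continuity in $t$ from the continuity of $g_\ell$ and $h$ in $(\mathbf{A2})$--$(\mathbf{A3})$, and local Lipschitz continuity in $\Phi$ (uniformly in $t\in\cT$) from the local Lipschitz bounds on $g_\ell$ in $(\mathbf{A2})$ and on $h$ in $(\mathbf{A3})$, because finite sums and compositions of locally Lipschitz maps are again locally Lipschitz.

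First I would invoke the Picard--Lindel\"of (Cauchy--Lipschitz) theorem to obtain, for the initial datum $\Phi(0)=(\varphi_{(i-1)n+j})$, a unique maximal solution $\Phi$ on an interval $[0,\tau^+)\cap\cT$ with the standard blow-up alternative: either $\tau^+\ge T$, or $\tau^+<T$ and $|\Phi(t)|\to\infty$ as $t\uparrow\tau^+$. Uniqueness on the whole of $\cT$ is then automatic once global existence is secured, so the remaining task is to rule out finite-time blow-up.

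The crux is to show that $Y^{mn}$ is positively invariant. By Lemma~\ref{le-ini-2} each atom $\varphi_{(i-1)n+j}$ lies in $Y$, so $\Phi(0)\in Y^{mn}$. As long as $\Phi(t)\in Y^{mn}$, the empirical measure $(\nu^{m,n}_t)_x$ in \eqref{Eq-approx-0} is a finite nonnegative combination of Dirac masses located at the points $\phi^{m,n}_{(p-1)n+q}(t)\in Y$, hence is uniformly supported within $Y$; correspondingly $F^{m,n}_i(t,\phi,\Phi(t))$ coincides with the value $V[\eta^{m,n},\nu^{m,n}_\cdot,h^m](t,x^m_i,\phi)$ of the Vlasov operator on the finitely supported DGM $\eta^{m,n}=(\eta^{\ell,m,n})_{\ell=1}^r$ and this empirical measure. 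Thus $(\mathbf{A6})$ is legitimately invoked at each instant: at every boundary point $\phi\in\partial Y$ one has $F^{m,n}_i(t,\phi,\Phi(t))\cdot\upsilon(\phi)\le0$. Because $Y$ is convex and compact, the outer normal cone of the product $Y^{mn}$ at a boundary point is the direct sum of the outer normal cones of the components lying on $\partial Y$, so this componentwise inward-pointing condition is exactly the Bony--Nagumo subtangency criterion for positive invariance of $Y^{mn}$ (cf.\ the references cited after $(\mathbf{A6})$). Hence $\Phi(t)\in Y^{mn}$ throughout the interval of existence, and the compactness of $Y$ keeps $|\Phi(t)|$ bounded, so the blow-up alternative forces $\tau^+\ge T$ and the unique solution extends to all of $\cT=[0,T]$.

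The only genuinely delicate step is this last invariance argument: one must check that the \emph{self-consistent} (coupled) vector field satisfies Nagumo's inward condition on the product set $Y^{mn}$, and that the empirical measure generated along the flow indeed stays supported in $Y$ so that $(\mathbf{A6})$ may be applied instantaneously. Everything else reduces to a routine application of finite-dimensional existence--uniqueness theory, since once confinement to the compact $Y$ is established the local Lipschitz regularity of $g_\ell$ and $h$ is all that is needed.
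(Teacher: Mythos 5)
Your proposal is correct and follows essentially the same route as the paper: the paper's own (very terse) proof likewise establishes local Lipschitz continuity of the vector field, applies Picard--Lindel\"of for local existence and uniqueness, and then rules out blow-up on $\cT$ by the positive invariance of $Y$ guaranteed through $(\mathbf{A6})$ and Bony's condition. Your additional observation that $F^{m,n}_i(t,\phi,\Phi(t))$ is exactly the Vlasov operator $V[\eta^{m,n},\nu^{m,n}_{\cdot},h^m](t,x^m_i,\phi)$ evaluated at the empirical measure is precisely the identification the paper itself makes later in Step~I of the proof of Theorem~\ref{th-approx}, so it is a faithful (and more detailed) rendering of the intended argument.
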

The proof of Proposition~\ref{prop-well-posed-lattice} is provided in Appendix~\ref{appendix-prop-well-posed-lattice}.

For $t\in\cT$, let $\phi^{m,n}(t)=(\phi^{m,n}_{(i-1)n+j}(t))$ be the solution to \eqref{lattice}. Define \begin{equation}\label{Eq-approx}
(\nu_t^{m,n})_x\colon=\sum_{i=1}^m\mathbbm{1}_{A^m_i}(x)\frac{a_{m,i}}{n}
\sum_{j=1}^{n}\delta_{\phi^{m,n}_{(i-1)n+j}(t)},\quad x\in X.
\end{equation}

\begin{theorem}\label{th-approx}
Assume ($\mathbf{A1}$)-($\mathbf{A3}$), $\mathbf{(A4)'}$, ($\mathbf{A6}$)-($\mathbf{A7}$). Assume $\rho_0(x,\phi)$ is continuous in $x\in X$ for $\mathfrak{m}$-a.e. $\phi\in Y$ such that $\rho_0\in {\sf L}^1_+(X\times Y,\mu_X\otimes\mathfrak{m})$ and $$\sup_{x\in X}\|\rho_0(x,\cdot)\|_{{\sf L}^1(Y,\mathfrak{m})}<\infty.$$  Let  $\rho(t,x,\phi)$ be the uniformly weak solution to the VE \eqref{Vlasov} with initial condition $\rho_0$. Let $\nu_{\cdot}\in\mathcal{C}(\cT;\mathcal{B}_{\mu_X,1}(X,\cM_{\abs}(Y)))$ be the measure-valued function defined in terms of the uniformly weak solution to \eqref{Vlasov}:
$$\rd(\nu_t)_x=\rho(t,x,\phi)\rd\phi,\quad \text{for every}\quad t\in\cT\quad \text{and}\quad x\in X.$$ Then $\nu_t\in \mathcal{C}_{\mu_X,1}(X,\cM_+(Y))$, for all $t\in\cT$. Moreover, let
$\nu^{m,n}_0\in\mathcal{B}_{\mu_X,1}(X,\cM_+(Y))$, $\eta^{\ell,m,n}\in \mathcal{B}(X,\cM_+(Y))$, and $h^m\in \mathcal{C}(\cT\times X\times Y,\R^{r_2})$ be defined in \eqref{discretization},
and $\nu^{m,n}_{\cdot}$ be defined in \eqref{Eq-approx}. Then $$\lim_{n\to\infty}d_{\infty}(\nu_t^{m,n},\nu_t)=0.$$
\end{theorem}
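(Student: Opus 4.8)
The plan is to recognise $\nu^{m,n}_\cdot$ as the solution of a discretized copy of the fixed point equation \eqref{Fixed} and then to transfer the convergence of the data to the solutions through Proposition~\ref{prop-sol-fixedpoint}. I first settle the regularity assertion $\nu_t\in\cC_{\mu_X,1}(X,\cM_+(Y))$: by Theorem~\ref{th-equi} the density $\rho$ is the density of the fixed point solution $\nu_\cdot$ of \eqref{Fixed}, whose fibers are absolutely continuous by Proposition~\ref{prop-continuousdependence}(iv). The hypotheses on $\rho_0$ — continuity in $x$ for $\mathfrak{m}$-a.e. $\phi$ together with $\sup_x\|\rho_0(x,\cdot)\|_{{\sf L}^1(Y,\mathfrak{m})}<\infty$ — give $\nu_0\in\cC_{\mu_X,1}(X,\cM_+(Y))$ via $d_{\sf BL}((\nu_0)_x,(\nu_0)_{x'})\le\|\rho_0(x,\cdot)-\rho_0(x',\cdot)\|_{{\sf L}^1(Y)}$ and a standard Scheff\'e-type argument. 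Proposition~\ref{prop-sol-fixedpoint} then upgrades this to $\nu_\cdot\in\cC(\cT,\cC_{\mu_X,1}(X,\cM_+(Y)))$, which is the first claim.

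The core step is that $\nu^{m,n}_\cdot$ defined in \eqref{Eq-approx} is exactly the fixed point solution of \eqref{Fixed} associated with the discretized triple $(\eta^{m,n},h^m,\nu_0^{m,n})$ of \eqref{discretization}. To verify this I evaluate the Vlasov operator \eqref{Eq-VOperator} at this triple on a vertex $x\in A^m_i$: since $\eta^{\ell,m,n}_x$ and $(\nu^{m,n}_t)_y$ are finite sums of Dirac masses, the double integral collapses to a finite double sum, and after sorting the edge atoms $y^{\ell,m,n}_{(i-1)n+j}$ into their cells $A^m_p$ through the indicators $\mathbbm{1}_{A^m_p}$ this sum coincides with the field $F^{m,n}_i$ driving \eqref{lattice}. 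Hence the characteristic flow $\cS^x_{t,0}[\eta^{m,n},\nu^{m,n}_\cdot,h^m]$ carries each initial atom $\varphi^{m,n}_{(i-1)n+j}$ to the ODE orbit $\phi^{m,n}_{(i-1)n+j}(t)$, so that the pushforward of $(\nu^{m,n}_0)_x$ along this flow reproduces \eqref{Eq-approx}; well-posedness of the discrete problem is Proposition~\ref{prop-well-posed-lattice}.

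Granting the reduction, convergence becomes a continuous-dependence statement. Lemmas~\ref{le-ini-2}, \ref{le-graph} and \ref{le-h} supply $\lim_m\lim_n d_\infty(\nu_0^{m,n},\nu_0)=0$, $\lim_m\lim_n d_\infty(\eta^{\ell,m,n},\eta^\ell)=0$, and $\lim_m\int_0^T\!\int_Y\sup_x|h^m-h|\,\rd\phi\,\rd t=0$. I then chain the three estimates of Proposition~\ref{prop-sol-fixedpoint}: perturbing the triple one coordinate at a time and using the triangle inequality for $d_\infty$, the Lipschitz dependence on $\nu_0$ (part (i)), the dependence on $h$ (part (ii)), and the continuity in $\eta$ (part (iii)) bound $d_\infty(\nu^{m,n}_t,\nu_t)$ by a sum of the three data-discrepancies with constants uniform in $t\in\cT$ (the exponential prefactors are bounded on the compact interval $\cT$). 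Sending $n\to\infty$ and then $m\to\infty$ drives each term to zero, yielding $\lim_{m\to\infty}\lim_{n\to\infty}d_\infty(\nu^{m,n}_t,\nu_t)=0$, the iterated limit matching the convergence of the data.

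Two points demand the most care. The first is the bookkeeping in the reduction: one must check that the mass weights $a_{m,i},b_{\ell,m,i}$ and the ordering of the arguments of $g_\ell$ in $F^{m,n}_i$ are precisely those produced by collapsing \eqref{Eq-VOperator} on the atoms, so that \eqref{lattice} is genuinely the finite characteristic system and not merely a system of the same shape. The second is that the modes of convergence do not all fit Proposition~\ref{prop-sol-fixedpoint} as stated: $h^m\to h$ only in the ${\sf L}^1$-in-$(t,\phi)$ sense of Lemma~\ref{le-h}, not uniformly, so the dependence on $h$ must be re-run with the integral $\int_0^T\!\int_Y\sup_x|h^m-h|$ feeding a Gronwall estimate (the quadratic Gronwall inequality of the appendix) rather than $\|h^m-h\|_\infty$; and since the $\eta$- and $\nu_0$-data converge only in the iterated limit, the order $n\to\infty$ before $m\to\infty$ must be respected throughout.
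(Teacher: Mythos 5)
Your proposal follows essentially the same route as the paper's proof: identify $\nu^{m,n}_{\cdot}$ as the fixed point solution of \eqref{Fixed} for the discretized data $(\eta^{m,n},h^m,\nu^{m,n}_0)$ by collapsing the Vlasov operator on the atoms so that the characteristic flow reduces to \eqref{lattice}, and then chain the three continuous-dependence estimates of Proposition~\ref{prop-sol-fixedpoint} through two auxiliary fixed point solutions and the triangle inequality, respecting the iterated order $n\to\infty$ then $m\to\infty$. Your caveat that $h^m\to h$ is only available in the integrated sense of Lemma~\ref{le-h} rather than in $\|\cdot\|_{\infty}$ is well taken --- the paper's Step III invokes the bound $C\|h-h^m\|_{\infty}$ from Proposition~\ref{prop-sol-fixedpoint}(ii) without further comment --- but this is a refinement of, not a departure from, the published argument.
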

The proof of Theorem~\ref{th-approx} is provided in Section~\ref{sect-proof}.
\begin{remark}
  The uniform integrability condition $\sup_{x\in X}\|\rho_0(x,\cdot)\|_{{\sf L}^1(Y,\mathfrak{m})}<\infty$ means that the uniform measure $\nu_0$ lies in $\mathcal{B}_{\mu_X,1}(X,\cM_+(Y))$. Although the continuity as well as the uniform integrability of the initial density is technical, it does generalize the results in \cite{KM18}, see Remark~\ref{re-generalization}.
\end{remark}

\begin{remark}
The continuity condition for DGMs $\mathbf{(A4)'}$ can be further relaxed to

  \medskip
\noindent{$\mathbf{(A4)''}$} $\eta=(\eta^1,\ldots,\eta^r)\in (\mathcal{B}(X,\cM_+(X)))^r$ such that $\cup_{i=1}^r\{z\in X\colon x\mapsto\eta^i_x\ \text{is}\ \text{disconti-}$ $\text{nuous at}\ z\}$ is finite.

Then $X=(\cup_{j=1}^KX_j)\cup(\cup_{k=1}^{K'}z_k)$ for some $K'\le K$, where the function $x\mapsto\eta^i_x$ is discontinuous at $x=z_k$, while is continuous confined to each subset $X_j$. Hence one can further take partitions of every $X_j$, and then choose an arbitrary point in each subset of the partition together with these discontinuity points $z_k$ to construct the approximation of DGMs and hence the ODE approximations.
\end{remark}

\section{Applications}\label{sect-applications}
In this section, we apply our main results to several models in biology. To save the Arabic numbers in the labels of assumptions, we point out that the labels of assumptions (together with $X,Y$) differ from subsection to subsection.

\subsection{A multi-group epidemic model without demography}
In this subsection, we apply our main results to an SIS epidemic model with heterogeneous group structure.

Assume

\medskip
\noindent ($\mathbf{H1}$) For $(u_1,u_2)\in\R^2_+$, let $\beta(t,u_1,u_2)\ge0$ be in general the disease transmission function which may not respect mass-action kinetics, and $\beta(t,u_1,u_2)=0$ provided $u_1u_2=0$. Moreover, $\beta$ is continuous in $t$, and locally Lipschitz continuous in $u_1,\ u_2$  uniformly in $t$.

\medskip

\noindent ($\mathbf{H2}$)  For $u\in\R_+$, let $\gamma(t,x,u)\ge0$ be the recovery rate function, and for every $x\in X$, $\gamma(t,x,u)=0$ provided $u=0$. Moreover, $\gamma$ is continuous in $t$, and Lipschitz continuous in $u\in \R_+$ uniformly in $t$, and continuous in $x$ uniformly in $u$.

For any fixed $N\in\N$, let \begin{equation}
  \label{Eq-Y-SIS}Y=\{u\in\R^2_+\colon u_1+u_2=N\}.
\end{equation}

\medskip
\noindent{$\mathbf{(H3)}$} $\nu_{\cdot}\in \mathcal{C}(\cT,\mathcal{B}_{\mu_X,1}(X,\cM_+(\R^2)))$  is uniformly compactly supported within $Y\subseteq\R^{2}_+$.
\medskip

Under ($\mathbf{H1}$)-($\mathbf{H3}$), consider a general non-local multi-group SIS model on a DGM $\eta$:
\begin{equation*}
\begin{split}
  \frac{\partial S_x}{\partial t}=&-\int_X\int_{\R^2_+}\beta(t,\psi_2,S_x)\rd(\nu_t)_y(\psi)\rd\eta_x(y)
  +\gamma(t,x,I_x),\\
  \frac{\partial I_x}{\partial t}=&\int_X\int_{\R^2_+}\beta(t,\psi_2,S_x)\rd(\nu_t)_y(\psi)\rd\eta_x(y)
  -\gamma(t,x,I_x),\end{split}
\end{equation*}
where $S_x$ and $I_x$ stand for the number of susceptible and infected individuals at location $x\in X$ (or interpreted as in the group with label $x$).

By $\mathbf{(H3)}$, let
$$g(t,\psi,\phi)=\beta(t,\psi_2,\phi_1)\begin{pmatrix}
  -1\\
  1
\end{pmatrix},\quad h(t,x,\phi)=\gamma(t,x,\phi_1)\begin{pmatrix}
1\\
-1
\end{pmatrix},$$ $$V[\eta,\nu_{\cdot},h](t,x,\psi)=\int_X\int_Yg(t,\psi,\phi)\rd(\nu_t)_y(\psi)\rd\eta_x(y)+h(t,x,\phi),$$and

\begin{equation*}
\widehat{V}[\eta,\rho_{\cdot},h](t,x,\phi)=\int_X\int_Yg(t,\psi,\phi)\rho(t,y,\psi)\rd\psi\rd\eta_x(y)
+h(t,x,\phi).
\end{equation*}
Consider the VE
\begin{equation}\label{Vlasov-SIS}
\begin{split}
&\frac{\partial\rho(t,x,\phi)}{\partial t}+\textrm{div}_{\phi}\left(\rho(t,x,\phi)\widehat{V}[\eta,\rho(\cdot),h](t,x,\phi)\right)=0,\quad t\in(0,T],\ x\in X,\ \mathfrak{m}\text{-a.e.}\ \phi\in Y,\\
 &\rho(0,\cdot)=\rho_0(\cdot).
\end{split}
  \end{equation}

 From Lemmas~\ref{le-partition}-\ref{le-graph}, there exists
 \begin{enumerate}
 \item[$\bullet$] a partition $\{A^m_i\}_{1\le i\le m}$ of $X$ and points $x^m_i\in A^m_i$ for $i=1,\ldots,m$, for every $m\in\N$,
 \item[$\bullet$]  a sequence $\{\varphi^{m,n}_{(i-1)n+j}\colon i=1,\ldots,m,j=1,\ldots,n\}_{n,m\in\N}\subseteq Y$ and $\{a_{m,i}\colon i=1,\ldots,$ $m\}_{m\in\N}\subseteq\R_+$,
  \item[$\bullet$] a sequence $\{y^{m,n}_{(i-1)n+j}\colon i=1,\ldots,m,j=1,\ldots,n\}_{m,n\in\N}\subseteq Y$ and $\{b_{m,i}\colon i=1,\ldots,$ $m\}_{m\in\N}\subseteq\R_+$,
 \end{enumerate}
 such that
  $$\lim_{m\to\infty}\lim_{n\to\infty}d_{\infty}(\nu_0^{m,n},\nu_0)=0,$$
   $$\lim_{m\to\infty}\lim_{n\to\infty}d_{\infty}(\eta^{m,n},\eta)=0,$$
     $$\lim_{m\to\infty}\int_0^T\int_Y\sup_{x\in X}\lt|h^m(t,x,\phi)-h(t,x,\phi)\rt|\rd\phi\rd t=0,$$
 where
 \begin{subequations}
   \label{discretization-SIS}
 \begin{alignat}{2}
(\nu_0^{m,n})_x\colon=\sum_{i=1}^m\mathbbm{1}_{A^m_i}(x)\frac{a_{m,i}}{n}\sum_{j=1}^n
\delta_{\varphi^{m,n}_{(i-1)n+j}},\quad x\in X,\\ \eta^{m,n}_x\colon=\sum_{i=1}^m\mathbbm{1}_{A^m_i}(x)\frac{b_{m,i}}{n}
\sum_{j=1}^n\delta_{y^{m,n}_{(i-1)n+j}},\quad x\in X,\\
 h^m(t,z,\phi)=\sum_{i=1}^m\mathbbm{1}_{A^m_i}(z)h(t,x_i^m,\phi),\quad t\in\cT,\ z\in X,\ \phi\in Y.
 \end{alignat}
 \end{subequations}
 Consider the following IVP of a coupled ODE system:
\begin{multline}
  \label{lattice-SIS}
  \dot{\phi}_{(i-1)n+j}=F^{m,n}_{i}(t,\phi_{(i-1)n+j},\Phi),\quad 0<t\le T,\quad \phi_{(i-1)n+j}(0)=\varphi_{(i-1)n+j},\\ i=1,\ldots,m,\ j=1,\ldots,n,
\end{multline}
where $\Phi=(\phi_{(i-1)n+j})_{1\le i\le m,1\le j\le n}$ and $$F^{m,n}_{i}(t,\psi,\Phi)=\sum_{p=1}^{m}\frac{a_{m,i}b_{m,p}}{n^2}\sum_{j=1}^n
\mathbbm{1}_{A^m_p}(y^{m,n}_{(i-1)n+j})\sum_{q=1}^{n}g(t,\psi,\phi_{(p-1)n+q})+h^m(t,x^m_i,\psi).$$

Then by Proposition~\ref{prop-well-posed-lattice}, there exists a unique solution $\phi^{m,n}(t)=(\phi^{m,n}_{(i-1)n+j}(t))$ to \eqref{lattice-SIS},
for $m,n\in\N$.

For $t\in\cT$, define \begin{equation}\label{Eq-approx-SIS}
(\nu_t^{m,n})_x\colon=\sum_{i=1}^m\mathbbm{1}_{A^m_i}(x)\frac{a_{m,i}}{n}\sum_{j=1}^{n}\delta_{\phi^{m,n}_{(i-1)n+j}(t)},\quad x\in X.
\end{equation}
\begin{theorem}
  Assume ($\mathbf{A1}$), and ($\mathbf{H1}$)-($\mathbf{H2}$). Then there exists a unique uniformly weak solution $\rho(t,x,\phi)$ to \eqref{Vlasov-SIS}. Assume additionally $\rho_0(x,\phi)$ is continuous in $x\in X$ for $\mathfrak{m}$-a.e. $\phi\in Y$ such that $\rho_0\in {\sf L}^1_+(X\times Y,\mu_X\otimes\mathfrak{m})$ and $$\sup_{x\in X}\|\rho_0(x,\cdot)\|_{{\sf L}^1(Y,\mathfrak{m})}<\infty.$$ Let $\nu_{\cdot}\in\mathcal{C}(\cT;\mathcal{B}_{\mu_X,1}(X,\cM_{\abs}(Y)))$ be the measure-valued function defined in terms of the uniformly weak solution to \eqref{Vlasov-SIS}:
$$\rd(\nu_t)_x=\rho(t,x,\phi)\rd\phi,\quad \text{for every}\quad t\in\cT,\quad \text{and}\quad x\in X.$$ Then $\nu_t\in \mathcal{C}(\cT,\mathcal{C}_{\mu_X,1}(X,\cM_+(Y)))$. Moreover, let
$\nu^{m,n}_0\in\mathcal{B}_{\mu_X,1}(X,\cM_+(Y))$, $\eta^{m,n}\in \mathcal{B}(X,$ $\cM_+(Y))$, and $h^m\in \mathcal{C}(\cT\times X\times Y,\R^2)$ be defined in \eqref{discretization-SIS},
and $\nu^{m,n}_{\cdot}$ be defined in \eqref{Eq-approx-SIS}. Then $$\lim_{n\to\infty}d_{\infty}(\nu_t^{m,n},\nu_t)=0.$$
\end{theorem}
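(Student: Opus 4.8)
The plan is to recognize this statement as a direct specialization of the abstract well-posedness theorem (Theorem~\ref{th-equi}) and the abstract approximation theorem (Theorem~\ref{th-approx}) to the concrete interaction kernel $g$, vertex vector field $h$, and compact convex state set $Y=\{u\in\R^2_+\colon u_1+u_2=N\}$ recorded just before the statement, with a single DGM (so $r=1$). The whole argument therefore reduces to verifying that (H1)--(H2), together with (A1) and the continuity hypothesis $\mathbf{(A4)'}$ on $\eta$ (needed for the discretization Lemmas~\ref{le-partition}--\ref{le-graph}), imply the abstract assumptions $\mathbf{(A1)}$--$\mathbf{(A7)}$. Once this is done, the two cited theorems return the unique uniformly weak solution of \eqref{Vlasov-SIS} and the convergence $\lim_{n\to\infty}d_{\infty}(\nu_t^{m,n},\nu_t)=0$ verbatim.

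First I would dispatch the regularity assumptions, which are routine. Assumption $\mathbf{(A1)}$ is given. Since $g$ enters only through $\beta$ evaluated at the scalar arguments $\psi_2$ and $S_x$, and $h$ only through $\gamma$ evaluated at $I_x$, continuity in $t$ and local Lipschitz continuity in the state variables uniformly in $t$ (assumptions $\mathbf{(A2)}$ and $\mathbf{(A3)}$) are inherited directly from (H1) and (H2). Assumption $\mathbf{(A5)}$ is exactly (H3). For $\mathbf{(A7)}$, continuity of $h$ in $x$ uniformly in $\phi$ is precisely the uniform-in-state continuity of $\gamma$ in $x$ from (H2); and, since $\gamma(t,x,0)=0$, the Lipschitz-in-state and continuity structure of (H2) together with compactness of $\cT\times X\times Y$ bound $\sup_{x}|h|$ on $Y$, giving the required integrability $\int_0^T\int_Y\sup_{x\in X}|h|\,\rd\phi\,\rd t<\infty$. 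Finally $\mathbf{(A4)'}$ is the standing continuity hypothesis on the DGM, which in particular yields $\mathbf{(A4)}$ used in Theorem~\ref{th-equi}.

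The one substantive point — and the main obstacle — is Bony's/Nagumo's condition $\mathbf{(A6)}$, i.e.\ the positive invariance of the segment $Y$, where the special algebraic structure of the SIS vector field is essential. Two features cooperate. First, both components of $g$ sum to zero and both components of $h$ sum to zero, so the Vlasov operator $V[\eta,\nu_{\cdot},h](t,x,\phi)$ is everywhere tangent to the line $\{u_1+u_2=N\}$; equivalently $\frac{\rd}{\rd t}(S_x+I_x)=0$, the total population is conserved along the characteristic flow, and the component of $V$ transversal to the segment against the outer normal vanishes. Second, the flow must not escape through the two endpoints of the segment: where $S_x=0$, hypothesis (H1) ($\beta(t,u_1,u_2)=0$ whenever $u_1u_2=0$) makes the infection integral vanish, so $\frac{\partial S_x}{\partial t}=\gamma(t,x,I_x)\ge0$ and $S_x$ cannot drop below $0$; where $I_x=0$, hypothesis (H2) ($\gamma(t,x,0)=0$) gives $\frac{\partial I_x}{\partial t}=\int_X\int_Y\beta(t,\psi_2,S_x)\,\rd(\nu_t)_y(\psi)\,\rd\eta_x(y)\ge0$ because $\beta\ge0$, so $I_x$ cannot drop below $0$. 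Translating these inward inequalities into $V[\eta,\nu_{\cdot},h]\cdot\upsilon(\phi)\le0$ for $\phi\in\partial Y$ (the transversal component contributing $0$ by tangency and the endpoint components contributing the two sign conditions) establishes $\mathbf{(A6)}$ with the convex compact $Y$.

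With $\mathbf{(A1)}$--$\mathbf{(A7)}$ verified, the conclusions follow by specialization. Theorem~\ref{th-equi} yields the unique uniformly weak solution $\rho(t,x,\phi)$ of \eqref{Vlasov-SIS}. Setting $\rd(\nu_t)_x=\rho(t,x,\phi)\,\rd\phi$, the additional hypotheses on $\rho_0$ in the present statement (continuity in $x$ for $\mathfrak{m}$-a.e.\ $\phi$, $\rho_0\in{\sf L}^1_+(X\times Y,\mu_X\otimes\mathfrak{m})$, and $\sup_{x}\|\rho_0(x,\cdot)\|_{{\sf L}^1(Y,\mathfrak{m})}<\infty$) are exactly those of Theorem~\ref{th-approx}; invoking it, with the time-continuity coming from Proposition~\ref{prop-sol-fixedpoint}, gives $\nu_t\in\mathcal{C}(\cT,\mathcal{C}_{\mu_X,1}(X,\cM_+(Y)))$ and $\lim_{n\to\infty}d_{\infty}(\nu_t^{m,n},\nu_t)=0$. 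The only bookkeeping left is to identify the single-DGM discretization data and right-hand side $F^{m,n}_i$ in \eqref{discretization-SIS}--\eqref{lattice-SIS} with the general ones in \eqref{discretization}--\eqref{lattice} at $r=1$, which is immediate.
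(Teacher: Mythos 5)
Your proposal is correct and follows essentially the same route as the paper: verify that (H1)--(H2) yield (A2), (A3), (A7), check Bony's condition (A6) for the segment $Y=\{u\in\R^2_+\colon u_1+u_2=N\}$, and then invoke the abstract well-posedness and approximation theorems. Your verification of (A6) is in fact more detailed than the paper's, which only appeals to conservativity of $S_x+I_x$; your additional endpoint arguments (using $\beta(t,u_1,u_2)=0$ when $u_1u_2=0$ and $\gamma(t,x,0)=0$ to rule out escape through $S_x=0$ and $I_x=0$) are exactly the missing sign conditions one needs to make that one-line appeal rigorous.
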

\begin{proof}
It is straightforward to verify that ($\mathbf{H1}$) implies ($\mathbf{A2}$), and ($\mathbf{H2}$) implies ($\mathbf{A3}$) and ($\mathbf{A7}$).
It remains to show ($\mathbf{A6}$) is fulfilled with $Y$ defined in \eqref{Eq-Y-SIS}. This is a simple consequence of the fact that  this SIS model is conservative:
$$\frac{\partial }{\partial t}(S_x(t)+I_x(t))=0.$$
\end{proof}

\subsection{A multi-group epidemic model with demography}
In this subsection we will apply our main results to an SEIRS epidemic model with demography of heterogeneous group structure. Before proposing the heterogeneous model, let us first revisit the single-group SEIRS model \cite{BSKA20} with the following flow chart:
\begin{center}
\begin{tikzpicture}
    [
        ->,>=stealth',auto,node distance=1.5cm,
  thick,main node/.style={circle,draw,font=\sffamily\Large\bfseries}
    ]
  \node[] (A1){S};
  \node[above = of A1] (A0) {$bN$};
  \node[below = of A1] (A2) {$d_1S$};
  \node[above left=.5cm and -.3cm of A1] (A01) {\sffamily\small{Birth}};
  \node[above left=.5cm and -.5cm of A2] (A12) {\sffamily\small{Death}};
  \node[right =3cm of A1] (B1) {$E$};
  \node[below = of B1] (B2) {$d_2E$};
  \node[above left=.5cm and -.5cm of B2] (B12) {\sffamily\small{Death}};
  \node[right = 3cm of B1] (C1) {$I$};
  \node[below = of C1] (C2) {$d_3I$};
  \node[above left=.5cm and -.5cm of C2]  (C12) {\sffamily\small{Death}};
 \node[right = 3cm of C1] (D1) {$R$};
  \node[below = of D1] (D2) {$d_4R$};
 \node[above left=.5cm and -.5cm of D2]  (D12) {\sffamily\small{Death}};

  \draw[->] (A0)--(A1);
  \draw[->] (A1)--(A2);
  \draw[->] (B1)--(B2);
  \draw[->] (C1)--(C2);
  \draw[->] (D1)--(D2);
  \draw[->,every node/.style={font=\sffamily\small}] node[below left=-.2cm and .6cm of B1]{Infection}  (A1)--node[above left=-.0cm and -.8cm of B1]{$\beta SI/N$} (B1);
  \draw[->,every node/.style={font=\sffamily\small}] node[below left=-.2cm and .6cm of C1]{Latency}  (B1)--node[above left=-.0cm and -.4cm of C1]{$\iota E$} (C1);
  \draw[->,every node/.style={font=\sffamily\small}] node[below left=-.2cm and .6cm of D1]{Recovery} (C1)-- node[above left=-.0cm and -.2cm of D1] {$\gamma I$} (D1);
 \path[every node/.style={font=\sffamily\small}]node[above left=.8cm and 3.5cm of D1]{Loss of immunity} (D1) edge[bend right] node [above left=.1cm and -.4cm of D1] {$\sigma R$} (A1);
  \end{tikzpicture}
\end{center}
The deterministic model is given by an ODE with mass-action disease transmission:
\begin{align*}
  \dot{S}=&bN-\beta SI/N+\sigma R-d_1S\\
  \dot{E}=&\beta SI/N-\iota E-d_2E\\
  \dot{I}=&\iota E-\gamma I-d_3I\\
  \dot{R}=&\gamma I-\sigma R-d_4R
\end{align*}
where $b$ is the birth rate, $d_i$ the death rates for different compartments, $N$ the total population size, $\beta$ the transmission rate per capita, $\iota^{-1}$ the latency period, $\gamma$ the recovery rate, and $\sigma$ the rate of losing immunity.

In the following, we will generalize the above model to a multi-group model with heterogeneous group structure. Let $(X,\mathfrak{B}(x),\mu_X)$ be a compact probability space satisfying ($\mathbf{A1}$).
For $i=1,\ldots,4$, define the time-and-location-dependent death rate functions $d_i\colon\cT\times X\to\R_+$, and $$d(x)=\inf_{t\in\cT}\min_{1\le i\le4}d_i(t,x),\quad x\in X.$$ The other constants become time-and-location-dependent as well. Let $\Lambda\colon X\to\R_+$ be the influx of newly born healthy susceptibles. Assume

\medskip
\noindent ($\mathbf{H1}$) $\Lambda\in \mathcal{C}(X)$; for $i=1,2,3,4$, $d_i(t,x)$ is continuous in $x$, and $d(x)>0$ for all $x\in X$.

\medskip
\noindent ($\mathbf{H2}$) $M\colon=\sup_{x\in X}\frac{\Lambda(x)}{d(x)}<\infty$.

\medskip
\noindent ($\mathbf{H3}$) For $(u_1,u_2)\in\R^2_+$, let $\beta(t,u_1,u_2)\ge0$ be in general the disease transmission function, and $\beta(t,u_1,u_2)=0$ provided $u_1u_2=0$. Moreover, $\beta$ is continuous in $t$, and locally Lipschitz continuous in $u_1,\ u_2$  uniformly in $t$.

\medskip
\noindent ($\mathbf{H4}$)  For $u\in\R_+$, let $\iota(t,x,u)\ge0$ be the reciprocal of the latency period function, and for every $x\in X$, $\iota(t,x,u)=0$ provided $u=0$. Moreover, $\iota$ is continuous in $t$, and Lipschitz continuous in $u\in \R_+$  uniformly in $t$, and continuous in $x$ uniformly in $u$.

\medskip

\noindent ($\mathbf{H5}$)  For $u\in\R_+$, let $\gamma(t,x,u)\ge0$ be the recovery rate function, and for every $x\in X$, $\gamma(t,x,u)=0$ provided $\phi_3=0$. Moreover, $\gamma$ is continuous in $t$, and locally Lipschitz continuous in $u\in \R_+$ uniformly in $t$, and continuous in $x$ uniformly in $u$.

\medskip

\noindent ($\mathbf{H6}$)  For $u\in\R_+$, let $\sigma(t,x,u)\ge0$ be the rate function of losing immunity, and for every $x\in X$, $\sigma(t,x,u)=0$ provided $u=0$. Moreover, $\sigma$ is continuous in $t$, and Lipschitz continuous in $u\in \R_+$ uniformly in $t$, and continuous in $x$ uniformly in $u$.

Let \begin{equation}
  \label{Eq-Y-SEIRS}
  Y=\{\phi\in\R^4_+\colon\|\phi\|_1\le M\}\subseteq\R^4_+.
\end{equation}

\medskip
\noindent{$\mathbf{(H7)}$} $\nu_{\cdot}\in \mathcal{C}(\cT,\mathcal{B}_{\mu_X,1}(X,\cM_+(\R^4)))$  is uniformly compactly supported within $Y\subseteq\R^{4}_+$.

Under ($\mathbf{H1}$)-($\mathbf{H7}$), consider a general non-local multi-group SEIRS model on a DGM $\eta$:
\begin{equation*}
\begin{split}
  \frac{\partial S_x}{\partial t}=&\Lambda(x)-\int_X\int_{\R^4_+}\beta(t,\psi_3,S_x)\rd(\nu_t)_y(\psi)\rd\eta_x(y)
  +\sigma(t,x,R_x)-d_1(t,x)S_x\\
  \frac{\partial E_x}{\partial t}=&\int_X\int_{\R^4_+}\beta(t,\psi_3,S_x)\rd(\nu_t)_y(\psi)\rd\eta_x(y)-\iota(t,x,E_x)-d_2(t,x)E_x\\
  \frac{\partial I_x}{\partial t}=&\iota(t,x,E_x)-\gamma(t,x,I_x)-d_3(t,x)I_x\\
  \frac{\partial R_x}{\partial t}=&\gamma(t,x,I_x)-\sigma(t,x,R_x)-d_4(t,x)R_x.
\end{split}
\end{equation*}

By $\mathbf{(H7)}$, let
$$g(t,\psi,\phi)=\begin{pmatrix}
  -\beta(t,\psi_3,\phi_1)\\
  \beta(t,\psi_3,\phi_1)\\
  0\\
  0
\end{pmatrix},\quad h(t,x,\phi)=\begin{pmatrix}
  \Lambda(x)+\gamma(t,x,\phi_4)-d_1(t,x)\phi_1\\
  -\iota(t,x,\phi_2)-d_2(t,x)\phi_2\\
  \iota(t,x,\phi_2)-\gamma(t,x,\phi_3)-d_3(t,x)\phi_3\\
  \gamma(t,x,\phi_3)-\sigma(t,x,\phi_4)-d_4(t,x)\phi_4
\end{pmatrix},$$ $$V[\eta,\nu_{\cdot},h](t,x,\psi)=\int_X\int_Yg(t,\psi,\phi)\rd(\nu_t)_y(\psi)\rd\eta_x(y)+h(t,x,\phi),$$and

\begin{equation*}
\widehat{V}[\eta,\rho_{\cdot},h](t,x,\phi)=\int_X\int_Yg(t,\psi,\phi)\rho(t,y,\psi)\rd\psi\rd\eta_x(y)
+h(t,x,\phi).
\end{equation*}
Consider the VE
\begin{equation}\label{Vlasov-SEIRS}
\begin{split}
&\frac{\partial\rho(t,x,\phi)}{\partial t}+\textrm{div}_{\phi}\left(\rho(t,x,\phi)\widehat{V}[\eta,\rho(\cdot),h](t,x,\phi)\right)=0,\quad t\in(0,T],\ x\in X,\ \mathfrak{m}\text{-a.e.}\ \phi\in Y,\\
 &\rho(0,\cdot)=\rho_0(\cdot).
\end{split}
  \end{equation}

Again from Lemmas~\ref{le-partition}-\ref{le-graph}, there exists
 \begin{enumerate}
 \item[$\bullet$] a partition $\{A^m_i\}_{1\le i\le m}$ of $X$ and points $x^m_i\in A^m_i$ for $i=1,\ldots,m$, for every $m\in\N$,
 \item[$\bullet$]  a sequence $\{\varphi^{m,n}_{(i-1)n+j}\colon i=1,\ldots,$ $m,j=1,\ldots,n\}_{n,m\in\N}\subseteq Y$ and $\{a_{m,i}\colon i=1,\ldots,$ $m\}_{m\in\N}\subseteq\R_+$,
  \item[$\bullet$] a sequence $\{y^{m,n}_{(i-1)n+j}\colon i=1,\ldots,m,j=1,\ldots,n\}_{m,n\in\N}\subseteq Y$ and $\{b_{m,i}\colon i=1,\ldots,m\}_{m\in\N}\subseteq\R_+$,
 \end{enumerate}
 such that
  $$\lim_{m\to\infty}\lim_{n\to\infty}d_{\infty}(\nu_0^{m,n},\nu_0)=0,$$
   $$\lim_{m\to\infty}\lim_{n\to\infty}d_{\infty}(\eta^{m,n},\eta)=0,$$
     $$\lim_{m\to\infty}\int_0^T\int_Y\sup_{x\in X}\lt|h^m(t,x,\phi)-h(t,x,\phi)\rt|\rd\phi\rd t=0,$$
 where  \begin{subequations}
   \label{discretization-SEIRS}
 \begin{alignat}{2}
(\nu_0^{m,n})_x\colon=\sum_{i=1}^m\mathbbm{1}_{A^m_i}(x)\frac{a_{m,i}}{n}\sum_{j=1}^n
\delta_{\varphi^{m,n}_{(i-1)n+j}},\quad x\in X,\\ \eta^{m,n}_x\colon=\sum_{i=1}^m\mathbbm{1}_{A^m_i}(x)\frac{b_{m,i}}{n}
\sum_{j=1}^n\delta_{y^{m,n}_{(i-1)n+j}},\quad x\in X,\\
 h^m(t,z,\phi)=\sum_{i=1}^m\mathbbm{1}_{A^m_i}(z)h(t,x_i^m,\phi),\quad t\in\cT,\ z\in X,\ \phi\in Y.
 \end{alignat}
 \end{subequations}
 Consider the following IVP of a coupled ODE system:
\begin{multline}
  \label{lattice-SEIRS}
  \dot{\phi}_{(i-1)n+j}=F^{m,n}_{i}(t,\phi_{(i-1)n+j},\Phi),\quad 0<t\le T,\quad \phi_{(i-1)n+j}(0)=\varphi_{(i-1)n+j},\\ i=1,\ldots,m,\ j=1,\ldots,n,
\end{multline}
where $\Phi=(\phi_{(i-1)n+j})_{1\le i\le m,1\le j\le n}$ and $$F^{m,n}_{i}(t,\psi,\Phi)=\sum_{p=1}^{m}\frac{a_{m,i}b_{m,p}}{n^2}\sum_{j=1}^n
\mathbbm{1}_{A^m_p}(y^{m,n}_{(i-1)n+j})\sum_{q=1}^{n}g(t,\psi,\phi_{(p-1)n+q})+h^m(t,x^m_i,\psi).$$

Then by Proposition~\ref{prop-well-posed-lattice}, there exists a unique solution $\phi^{m,n}(t)=(\phi^{m,n}_{(i-1)n+j}(t))$ to \eqref{lattice-SEIRS},
for $m,n\in\N$.

For $t\in\cT$, define \begin{equation}\label{Eq-approx-SEIRS}
(\nu_t^{m,n})_x\colon=\sum_{i=1}^m\mathbbm{1}_{A^m_i}(x)
\frac{a_{m,i}}{n}\sum_{j=1}^{n}\delta_{\phi^{m,n}_{(i-1)n+j}(t)},\quad x\in X.
\end{equation}
\begin{theorem}
  Assume ($\mathbf{A1}$), and ($\mathbf{H1}$)-($\mathbf{H6}$). Then there exists a unique uniformly weak solution $\rho(t,x,\phi)$ to \eqref{Vlasov-SEIRS}. Assume additionally $\rho_0(x,\phi)$ is continuous in $x\in X$ for $\mathfrak{m}$-a.e. $\phi\in Y$ such that $\rho_0\in {\sf L}^1_+(X\times Y,\mu_X\otimes\mathfrak{m})$ and $$\sup_{x\in X}\|\rho_0(x,\cdot)\|_{{\sf L}^1(Y,\mathfrak{m})}<\infty.$$ Let $\nu_{\cdot}\in\mathcal{C}(\cT;\mathcal{B}_{\mu_X,1}(X,\cM_{\abs}(Y)))$ be the measure-valued function defined in terms of the uniformly weak solution to \eqref{Vlasov-SEIRS}:
$$\rd(\nu_t)_x=\rho(t,x,\phi)\rd\phi,\quad \text{for every}\quad t\in\cT,\quad \text{and}\quad x\in X.$$ Then $\nu_t\in \mathcal{C}(\cT,\mathcal{C}_{\mu_X,1}(X,\cM_+(Y)))$. Moreover, let
$\nu^{m,n}_0\in\mathcal{B}_{\mu_X,1}(X,\cM_+(Y))$, $\eta^{m,n}\in \mathcal{B}(X,$ $\cM_+(Y))$, and $h^m\in \mathcal{C}(\cT\times X\times Y,\R^2)$ be defined in \eqref{discretization-SEIRS},
and $\nu^{m,n}_{\cdot}$ be defined in \eqref{Eq-approx-SEIRS}. Then $$\lim_{n\to\infty}d_{\infty}(\nu_t^{m,n},\nu_t)=0.$$
\end{theorem}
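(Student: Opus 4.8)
The plan is to read this result off the abstract theory by showing that the structural hypotheses (H1)--(H6) force the general assumptions under the specific choices $g(t,\psi,\phi)=(-\beta(t,\psi_3,\phi_1),\beta(t,\psi_3,\phi_1),0,0)^{\top}$ and $h$ as displayed, with $Y$ as in \eqref{Eq-Y-SEIRS}; the existence and uniqueness of the uniformly weak solution then follows from Theorem~\ref{th-equi}, and the approximation from Theorem~\ref{th-approx} (together with Proposition~\ref{prop-well-posed-lattice} for the well-posedness of \eqref{lattice-SEIRS}). Most of the verifications are routine and mirror the SIS case. The continuity in $t$ and local Lipschitz continuity in $(u_1,u_2)$ from (H3) give (A2) for $g$; the continuity in $t$, local Lipschitz continuity in the state variable, and continuity in $x$ uniform in the state from (H1) and (H4)--(H6), together with the linearity of the mortality terms $d_i(t,x)\phi_i$, give (A3) and (A7) for $h$, the uniform integrability in (A7) being automatic because $Y$ is compact and all data are continuous. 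The standing assumption that $\eta$ is a continuous DGM supplies (A4) and (A4)$'$.

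The crux is verifying that $Y=\{\phi\in\R^4_+\colon\|\phi\|_1\le M\}$ is positively invariant, i.e. Bony's condition (A6), $V[\eta,\nu_{\cdot},h](t,x,\phi)\cdot\upsilon(\phi)\le0$ on $\partial Y$, which I would check face by face. On each coordinate face $\{\phi_i=0\}$ the outer normal is $-e_i$, so the requirement reduces to $V_i\ge0$, and this is exactly what the vanishing conditions built into the hypotheses deliver: by (H3) one has $\beta(t,\psi_3,0)=0$, so the infection integral in $V_1$ drops out and $V_1=\Lambda(x)+\sigma(t,x,\phi_4)\ge0$; by (H4) one has $\iota(t,x,0)=0$, so $V_2=\int_X\int_Y\beta(t,\psi_3,\phi_1)\rd(\nu_t)_y(\psi)\rd\eta_x(y)\ge0$; by (H5) one has $\gamma(t,x,0)=0$, so $V_3=\iota(t,x,\phi_2)\ge0$; and by (H6) one has $\sigma(t,x,0)=0$, so $V_4=\gamma(t,x,\phi_3)\ge0$. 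On the remaining face $\{\|\phi\|_1=M\}$ the outer normal is proportional to $(1,1,1,1)$, so the requirement becomes $\sum_{i=1}^4 V_i\le0$; here the nonlocal coupling contributes nothing since $\sum_i g_i\equiv0$, and the reaction terms of $h$ telescope (the $\iota$, $\gamma$, and $\sigma$ contributions cancel in pairs), leaving $\sum_i V_i=\Lambda(x)-\sum_{i=1}^4 d_i(t,x)\phi_i\le\Lambda(x)-d(x)M\le0$, where the last inequality is precisely the definition $M=\sup_{x\in X}\Lambda(x)/d(x)$ from (H2).

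The one subtlety I anticipate is the behaviour on the lower-dimensional strata of $\partial Y$, the edges and vertices where several constraints are simultaneously active and the outer normal is not unique. I would dispose of this by observing that the normal cone at such a point is generated by the outer normals of the adjacent faces; since $V\cdot n\le0$ has just been shown for each generating normal $n$, it persists for every nonnegative combination, so (A6) holds on all of $\partial Y$. With (A1)--(A3), (A4)$'$, and (A6)--(A7) established, Theorem~\ref{th-equi} produces the unique uniformly weak solution $\rho$, and Theorem~\ref{th-approx} simultaneously yields $\nu_t\in\mathcal{C}(\cT,\mathcal{C}_{\mu_X,1}(X,\cM_+(Y)))$ and the convergence $\lim_{n\to\infty}d_{\infty}(\nu_t^{m,n},\nu_t)=0$.
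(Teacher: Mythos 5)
Your proposal is correct and follows essentially the same route as the paper: verify (A2), (A3), (A7), (A4)$'$ from (H1)--(H6), check Bony's condition (A6) face by face on $\partial Y$ using the vanishing hypotheses $\beta(t,\cdot,0)=\iota(t,x,0)=\gamma(t,x,0)=\sigma(t,x,0)=0$ and the bound $\Lambda(x)-d(x)M\le0$ on the top face, then invoke Theorems~\ref{th-equi} and~\ref{th-approx}. Your additional observation that the lower-dimensional strata of $\partial Y$ are handled by checking the generators of the normal cone is a point the paper leaves implicit, and is a welcome clarification rather than a deviation.
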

\begin{remark}
  When $\sigma\equiv0$, the SEIRS model reduces to SEIR model. Similar results apply to other types of epidemic models (e.g., SIRS, SIR, SIS models).
\end{remark}
\begin{proof}
It is straightforward to verify that ($\mathbf{H3}$) implies ($\mathbf{A2}$); ($\mathbf{H1}$) and ($\mathbf{H4}$)-($\mathbf{H6}$) together imply ($\mathbf{A3}$) and ($\mathbf{A7}$); It is easy to verify that the $\eta_{\cdot}$ given in Example~\ref{ex-discrete} satisfies $\mathbf{(A4)'}$. Let

It remains to show ($\mathbf{A6}$) is fulfilled with $Y$ defined in \eqref{Eq-Y-SEIRS}.
\begin{enumerate}
\item[(i)] For $\phi\in\{z\in Y\colon \sum_{i=1}^4z_i=M\}$, we have $\upsilon(\phi)=(1,1,1,1)$, and
\begin{align*}
  V[\eta,\nu_{\cdot},h](t,x,\phi)\cdot\upsilon(\phi)=&\frac{\partial(S_x+E_x+I_x+R_x)}{\partial t}\Big|_{(S_x,E_x,I_x,R_x)=\phi}\\
  =&\Lambda_x-d_1(t,x)\phi_1-d_2(t,x)\phi_2-d_3(t,x)\phi_3-d_4(t,x)\phi_4\\
  \le& \Lambda_x-d(x)\sum_{i=1}^4\phi_i=\Lambda_x-d(x)M\le0.
\end{align*}
\item[(ii)] For $\phi\in\{z\in Y\colon z_1=0,\quad \sum_{j\neq 1}z_j=M\}$, we have  $\upsilon(\phi)=(-1,0,0,0)$, and
\begin{align*}
  V[\eta,\nu_{\cdot},h](t,x,\psi)\cdot\upsilon(\phi)=-\frac{\partial(S_x)}{\partial t}\Big|_{(S_x,E_x,I_x,R_x)=\phi}=-(\Lambda(x)+\gamma(t,x,\phi_4))\le0.
\end{align*}
\item[(iii)] For $\phi\in\{z\in Y\colon z_2=0,\quad \sum_{j\neq 2}z_j=M\}$, we have $\upsilon(\phi)=(0,-1,0,0)$, and
\begin{align*}
  V[\eta,\nu_{\cdot},h](t,x,\phi)\cdot\upsilon(\phi)=&-\frac{\partial(E_x)}{\partial t}\Big|_{(S_x,E_x,I_x,R_x)=\psi}\\
  =&-\int_X\int_Y\beta(t,\psi_1,\phi_3)\rd(\nu_t)_y(\phi)\rd\eta_x(y)\le0.
\end{align*}
\item[(iv)] For $\phi\in\{z\in Y\colon z_3=0,\quad \sum_{j\neq 3}z_j=M\}$, we have $\upsilon(\phi)=(0,0,-1,0)$, and
\begin{align*}
  V[\eta,\nu_{\cdot},h](t,x,\phi)\cdot\upsilon(\phi)=-\frac{\partial(I_x)}{\partial t}\Big|_{(S_x,E_x,I_x,R_x)=\phi}=-\iota(t,x,\phi_2)\le0.
\end{align*}
\item[(v)] For $\phi\in\{z\in Y\colon z_4=0,\quad \sum_{j\neq 4}z_j=M\}$, we have $\upsilon(\phi)=(0,0,0,-1)$, and
\begin{align*}
  V[\eta,\nu_{\cdot},h](t,x,\phi)\cdot\upsilon(\phi)=-\frac{\partial(R_x)}{\partial t}\Big|_{(S_x,E_x,I_x,R_x)=\phi}=-\beta(t,x,\phi_3)\le0.
\end{align*}
\end{enumerate}
Hence ($\mathbf{A6}$) is fulfilled.
\end{proof}
\begin{remark}
  We remark that for this epidemic model, involved functions are locally Lipschitz but not globally Lipschitz.
\end{remark}

\subsection{Lotka-Volterra multi-patch model}

Let $(X,\mathfrak{B}(X),\mu_X)$ be a compact probability space satisfying ($\mathbf{A1}$). Assume that
\medskip

\noindent ($\mathbf{H1}$) $0\le W_1(u),W_2(u)\le u$ for all $u\in \R_+$, and $W_1$ and $W_2$ are odd functions and locally Lipschitz.

\medskip

\noindent ($\mathbf{H2}$) $\eta^1,\ \eta^2\in \mathcal{B}(X,\cM_+(X))$.

\medskip

Let $\Lambda_1,\ \Lambda_2>0$ be an arbitrary positive number satisfying
\begin{equation}
  \label{Lambdacondition}
\Lambda_1\ge\frac{\alpha}{\beta},\quad \Lambda_2\ge-\frac{\iota}{\theta}+\frac{\sigma}{\theta}\Lambda_1.
\end{equation}
Let $Y=\{\phi\in\R^2_+\colon \phi_1\le\Lambda_1,\ \phi_2\le\Lambda_2\}$ be the cube in the positive cone, which is a convex compact set.

\medskip

\noindent{$\mathbf{(H3)}$} $\nu_{\cdot}\in \mathcal{C}(\cT,\mathcal{C}_{\mu_X,1}(X,\cM_+(\R^2)))$  is uniformly compactly supported within $Y\subseteq\R_+^2$.

\medskip

Under ($\mathbf{H1}$)-($\mathbf{H3}$), consider the general Lotka-Volterra nonlocal patch prey-predator model \cite{L74,SL76,C20}:
\begin{equation}\label{LS}
\begin{split}
  \frac{\partial \phi_1(t,x)}{\partial t}=\phi_1(t,x)(\alpha-\beta\phi_1(t,x)-\gamma\phi_2(t,x))
  +\int_X\int_YW_1(\psi_1-\phi_1(t,x))\rd(\nu_t)_y(\psi)\rd\eta^1_x(y)\\
  \frac{\partial \phi_2(t,x)}{\partial t}=\phi_2(t,x)(-\iota+\sigma\phi_1(t,x)-\theta\phi_2(t,x))
  +\int_X\int_YW_2(\psi_2-\phi_2(t,x))\rd(\nu_t)_y(\psi)\rd\eta^2_x(y)
  \end{split}
\end{equation}
where $\phi_1(t)$ and $\phi_2(t)$ stand for population densities of the prey and predator at time $t$, respectively, and all given functions and parameters are non-negative.

Let $$g_1(\psi,\phi)=\begin{pmatrix}
  W_1(\psi_1-\phi_1)\\
  0
\end{pmatrix},\quad g_2(\psi,\phi)=\begin{pmatrix}
  0\\
  W_2(\psi_2-\phi_2)
\end{pmatrix},$$ \[h(\phi)=\begin{pmatrix}
  \phi_1(\alpha-\beta\phi_1-\gamma\phi_2)\\
  \phi_2(-\iota+\sigma\phi_1-\theta\phi_2)
\end{pmatrix},\] and \begin{equation*}
\widehat{V}[\eta,\rho_{\cdot},h](t,x,\phi)=\sum_{\ell=1}^2
\int_X\int_Yg_{\ell}(t,\psi,\phi)\rho(t,y,\psi)\rd\psi\rd\eta^{\ell}_x(y)+h(\phi).
\end{equation*}
Consider the VE
\begin{equation}\label{Vlasov-LS}
\begin{split}
&\frac{\partial\rho(t,x,\phi)}{\partial t}+\textrm{div}_{\phi}\left(\rho(t,x,\phi)\widehat{V}[\eta,\rho(\cdot),h](\phi)\right)=0,\quad t\in(0,T],\ x\in X,\ \mathfrak{m}\text{-a.e.}\ \phi\in Y,\\
 &\rho(0,\cdot)=\rho_0(\cdot).
\end{split}
  \end{equation}

 From Lemmas~\ref{le-partition}-\ref{le-graph}, there exists
 \begin{enumerate}
 \item[$\bullet$] a partition $\{A^m_i\}_{1\le i\le m}$ of $X$ and points $x^m_i\in A^m_i$ for $i=1,\ldots,m$, for every $m\in\N$,
 \item[$\bullet$]  a sequence $\{\varphi^{m,n}_{(i-1)n+j}\colon i=1,\ldots,m,j=1,\ldots,n\}_{n,m\in\N}\subseteq Y$ and $\{a_{m,i}\colon i=1,\ldots,$ $m\}_{m\in\N}\subseteq\R_+$,
  \item[$\bullet$] a sequence $\{y^{\ell,m,n}_{(i-1)n+j}\colon i=1,\ldots,m,j=1,\ldots,n\}_{m,n\in\N}\subseteq Y$ and $\{b_{\ell,m,i}\colon i=1,\ldots,$ $m\}_{m\in\N}\subseteq\R_+$, for $\ell=1,2$,
 \end{enumerate}
 such that
  $$\lim_{m\to\infty}\lim_{n\to\infty}d_{\infty}(\nu_0^{m,n},\nu_0)=0,$$
   $$\lim_{m\to\infty}\lim_{n\to\infty}d_{\infty}(\eta^{\ell,m,n},\eta)=0,\quad \ell=1,\ldots,$$
     $$\lim_{m\to\infty}\int_0^T\int_Y\sup_{x\in X}\lt|h^m(t,x,\phi)-h(t,x,\phi)\rt|\rd\phi\rd t=0,$$
 where
 \begin{subequations}
   \label{discretization-LS}
 \begin{alignat}{2}
(\nu_0^{m,n})_x\colon=\sum_{i=1}^m\mathbbm{1}_{A^m_i}(x)\frac{a_{m,i}}{n}\sum_{j=1}^n
\delta_{\varphi^{m,n}_{(i-1)n+j}},\quad x\in X,\\ \eta^{\ell,m,n}_x\colon=\sum_{i=1}^m\mathbbm{1}_{A^m_i}(x)\frac{b_{\ell,m,i}}{n}
\sum_{j=1}^n\delta_{y^{\ell,m,n}_{(i-1)n+j}},\quad x\in X,\\
 h^m(t,z,\phi)=\sum_{i=1}^m\mathbbm{1}_{A^m_i}(z)h(t,x_i^m,\phi),\quad t\in\cT,\ z\in X,\ \phi\in Y.
 \end{alignat}
\end{subequations}
 Consider the following IVP of a coupled ODE system:
\begin{multline}
  \label{lattice-LS}
  \dot{\phi}_{(i-1)n+j}=F^{m,n}_{i}(t,\phi_{(i-1)n+j},\Phi),\quad 0<t\le T,\quad \phi_{(i-1)n+j}(0)=\varphi_{(i-1)n+j},\\ i=1,\ldots,m,\ j=1,\ldots,n,
\end{multline}
where $\Phi=(\phi_{(i-1)n+j})_{1\le i\le m,1\le j\le n}$ and $$F^{m,n}_{i}(t,\psi,\Phi)=\sum_{p=1}^{m}\frac{a_{m,i}b_{\ell,m,p}}{n^2}\sum_{j=1}^n
\mathbbm{1}_{A^m_p}(y^{\ell,m,n}_{(i-1)n+j})\sum_{q=1}^{n}g(t,\psi,\phi_{(p-1)n+q})+h^m(t,x^m_i,\psi).$$

Then by Proposition~\ref{prop-well-posed-lattice}, there exists a unique solution $\phi^{m,n}(t)=(\phi^{m,n}_{(i-1)n+j}(t))$ to \eqref{lattice-LS},
for $m,n\in\N$.

For $t\in\cT$, define \begin{equation}\label{Eq-approx-LS}
(\nu_t^{m,n})_x\colon=\sum_{i=1}^m\mathbbm{1}_{A^m_i}(x)\frac{a_{m,i}}{n}\sum_{j=1}^{n}\delta_{\phi^{m,n}_{(i-1)n+j}(t)},\quad x\in X.
\end{equation}

\begin{theorem}
  Assume ($\mathbf{A1}$), ($\mathbf{H1}$)-($\mathbf{H2}$), and $\Lambda_1,\ \Lambda_2$ satisfy \eqref{Lambdacondition}. Then there exists a unique uniformly weak solution $\rho(t,x,\phi)$ to \eqref{Vlasov-LS}. Assume additionally $\rho_0(x,\phi)$ is continuous in $x\in X$ for $\mathfrak{m}$-a.e. $\phi\in Y$ such that $\rho_0\in {\sf L}^1_+(X\times Y,\mu_X\otimes\mathfrak{m})$ and $$\sup_{x\in X}\|\rho_0(x,\cdot)\|_{{\sf L}^1(Y,\mathfrak{m})}<\infty.$$ Let $\nu_{\cdot}\in\mathcal{C}(\cT;\mathcal{B}_{\mu_X,1}(X,\cM_{\abs}(Y)))$ be the measure-valued function defined in terms of the uniformly weak solution to \eqref{Vlasov-LS}:
$$\rd(\nu_t)_x=\rho(t,x,\phi)\rd\phi,\quad \text{for every}\quad t\in\cT,\quad \text{and}\quad x\in X.$$ Then $\nu_t\in \mathcal{C}(\cT,\mathcal{C}_{\mu_X,1}(X,\cM_+(Y)))$. Moreover, let
$\nu^{m,n}_0$, $\eta^{\ell,m,n}$ for $\ell=1,2$, and $h^m$ be defined in \eqref{discretization-LS},
and $\nu^{m,n}_{\cdot}$ be defined in \eqref{Eq-approx-LS}. Then $$\lim_{n\to\infty}d_{\infty}(\nu_t^{m,n},\nu_t)=0.$$
\end{theorem}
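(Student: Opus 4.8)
The plan is to deduce everything from the general theory by verifying that the Lotka--Volterra data fit assumptions $(\mathbf{A1})$--$(\mathbf{A7})$, after which Theorem~\ref{th-equi} supplies existence and uniqueness of the uniformly weak solution and Theorem~\ref{th-approx} supplies the ODE discretization. First I would record the routine verifications. Since $g_1,g_2$ are obtained by composing the locally Lipschitz, $t$-independent functions $W_1,W_2$ with the affine maps $(\psi,\phi)\mapsto\psi_\ell-\phi_\ell$, they are continuous in $t$ and locally Lipschitz in $(\psi,\phi)$, so $(\mathbf{H1})$ yields $(\mathbf{A2})$. The drift $h(\phi)$ is a fixed quadratic polynomial, hence locally Lipschitz in $\phi$ and, being independent of $(t,x)$ and bounded on the compact $Y$, it satisfies $(\mathbf{A3})$ and $(\mathbf{A7})$. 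Assumption $(\mathbf{H3})$ is the support hypothesis $(\mathbf{A5})$ relative to the compact convex $Y$, and $(\mathbf{H2})$ is $(\mathbf{A4})$; for the discretization half one reads the underlying DGMs as continuous, i.e.\ as $(\mathbf{A4})'$ (as for the continuous DGMs of Examples~\ref{ex-mix} and~\ref{ex-triangle}).

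The crux is $(\mathbf{A6})$: Bony's condition on the rectangle $Y=\{\phi\in\R^2_+:\phi_1\le\Lambda_1,\ \phi_2\le\Lambda_2\}$. I would check the inequality $V[\eta,\nu_{\cdot},h](t,x,\phi)\cdot\upsilon(\phi)\le0$ face by face on $\partial Y$, using that $\nu_{\cdot}$ is supported in $Y\subseteq\R^2_+$, so every $\psi\in\supp(\nu_t)_y$ obeys $0\le\psi_\ell\le\Lambda_\ell$. Recall that the $\ell$-th component of $V$ equals $\int_X\int_Y W_\ell(\psi_\ell-\phi_\ell)\,\rd(\nu_t)_y(\psi)\,\rd\eta^\ell_x(y)$ plus the $\ell$-th entry of $h(\phi)$, and the four outer normals are $(-1,0),(0,-1),(1,0),(0,1)$.

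On the lower faces $\{\phi_\ell=0\}$ the relevant sign is $-V_\ell$, whose nonlocal part is $-\int_X\int_Y W_\ell(\psi_\ell)\,\rd(\nu_t)_y(\psi)\,\rd\eta^\ell_x(y)\le0$ because $\psi_\ell\ge0$ and $0\le W_\ell(u)\le u$ for $u\ge0$ by $(\mathbf{H1})$, while the local part vanishes since $h_\ell(\phi)$ carries a factor $\phi_\ell=0$. The harder upper faces $\{\phi_\ell=\Lambda_\ell\}$ are where the hypotheses are used nontrivially: there $\psi_\ell-\Lambda_\ell\le0$, and the oddness of $W_\ell$ from $(\mathbf{H1})$ forces $W_\ell(\psi_\ell-\Lambda_\ell)\le0$, so the nonlocal part of $V_\ell$ is $\le0$; the local part is controlled precisely by \eqref{Lambdacondition}. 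On $\{\phi_1=\Lambda_1\}$ one bounds $\Lambda_1(\alpha-\beta\Lambda_1-\gamma\phi_2)\le\Lambda_1(\alpha-\beta\Lambda_1)\le0$ using $\phi_2\ge0$ and $\Lambda_1\ge\alpha/\beta$; on $\{\phi_2=\Lambda_2\}$ one bounds $\Lambda_2(-\iota+\sigma\phi_1-\theta\Lambda_2)\le\Lambda_2(-\iota+\sigma\Lambda_1-\theta\Lambda_2)\le0$ using $\phi_1\le\Lambda_1$ and $\theta\Lambda_2\ge-\iota+\sigma\Lambda_1$. At the corners the outer normal cone is spanned by the two adjacent face normals, so the inequality persists under positive combinations, giving $(\mathbf{A6})$.

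The main obstacle I anticipate is the interplay on the upper faces between the \emph{nonlocal} coupling and the \emph{local} Lotka--Volterra reaction: oddness of $W_\ell$ is exactly what keeps the inflow term $W_\ell(\psi_\ell-\Lambda_\ell)$ from pushing $\phi$ out of $Y$, and \eqref{Lambdacondition} is exactly what makes the logistic/predator reaction inward-pointing on the same face; securing both signs simultaneously, uniformly in $(t,x)$ and over all admissible $\nu_{\cdot}$, is the delicate point. Once $(\mathbf{A1})$--$(\mathbf{A7})$ are in place, existence and uniqueness of $\rho$ follow from Theorem~\ref{th-equi}, while the identification $\nu_t\in\mathcal{C}(\cT,\mathcal{C}_{\mu_X,1}(X,\cM_+(Y)))$ and the convergence $\lim_{n\to\infty}d_{\infty}(\nu_t^{m,n},\nu_t)=0$ follow from Theorem~\ref{th-approx} applied to the discretizations \eqref{discretization-LS} and the coupled ODE system \eqref{lattice-LS}.
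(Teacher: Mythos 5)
Your proposal is correct and follows essentially the same route as the paper: verify $(\mathbf{A2})$, $(\mathbf{A3})$, $(\mathbf{A4})'$, $(\mathbf{A7})$ from $(\mathbf{H1})$--$(\mathbf{H2})$ and compactness of $Y$, then establish Bony's condition $(\mathbf{A6})$ face by face on $\partial Y$ using the oddness and the bound $0\le W_\ell(u)\le u$ together with \eqref{Lambdacondition}, and finally invoke Theorems~\ref{th-equi} and~\ref{th-approx}. Your treatment is in fact slightly more careful than the paper's (you make explicit the sign argument on the lower faces, the corner case via positive combinations of the adjacent normals, and the gap between $(\mathbf{H2})$ giving only $(\mathbf{A4})$ versus the continuity $(\mathbf{A4})'$ needed for the discretization half), but these are refinements of the same argument rather than a different approach.
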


\begin{proof}
 First note that ($\mathbf{H1}$) implies ($\mathbf{A2}$). It is readily verified that  ($\mathbf{A3}$) and  ($\mathbf{A7}$) are fulfilled since $Y$ is compact. In addition, $\mathbf{(A4)'}$ follows from ($\mathbf{H2}$). Hence it suffices to show that ($\mathbf{A6}$) holds with $Y$ for some $c,\Lambda>0$.

  Note that $\partial Y=\{\phi_1=0\}\cup\{\phi_2=0\}\cup\{\phi_1=\Lambda_1\}\cup\{\phi_2=\Lambda_2\}$. In what follows, we will show that \[V[\eta,\nu_{\cdot},h](t,x,\phi)\cdot\upsilon(\phi)\le0,\quad \text{for all}\ t\in\mathcal{T},\ x\in X,\quad \phi\in\partial Y,\]
  where $\upsilon(\phi)$ is the outer normal vector at $\phi$. We prove it case by case.

\begin{enumerate}
\item[(i)] For $\phi\in\{\varphi\in Y\colon \varphi_1=0\}$, $\upsilon(\phi)=(-1,0)$, and
\begin{align*}
  V[\eta,\nu_{\cdot},h](t,x,\phi)\cdot\upsilon(\phi)
  =-\int_X\int_YW_1(\psi_1-\phi_1)\rd(\nu_t)_y(\phi)\rd\eta^1_x(y)\le0.
\end{align*}
\item[(ii)] For $\phi\in\{\varphi\colon \varphi_2=0\}$, $\upsilon(\phi)=(0,-1)$, and
\begin{align*}
  V[\eta,\nu_{\cdot},h](t,x,\phi)\cdot\upsilon(\phi)
  =-\int_X\int_YW_2(\psi_2-\phi_2)\rd(\nu_t)_y(\phi)\rd\eta^2_x(y)\le0.
\end{align*}
\item[(iii)] For $\phi\in\{\varphi\colon \varphi_1=\Lambda_1\}$, $\upsilon(\phi)=(1,0)$. By ($\mathbf{H1}$),
\begin{align*}
  &V[\eta,\nu_{\cdot},h](t,x,\phi)\cdot\upsilon(\phi)\\
  =&\Lambda_1(\alpha-\beta\Lambda_1-\gamma\phi_2)
  +\int_X\int_YW_1(\psi_1-\Lambda_1)\rd(\nu_t)_y(\phi)\rd\eta^1_x(y)\\
  \le&\Lambda_1(\alpha-\beta\Lambda_1)\le0,
\end{align*}
since $\Lambda_1\ge\frac{\alpha}{\beta}$.
\item[(iv)] For $\phi\in\{\varphi\colon \varphi_2=\Lambda_2\}$, $\upsilon(\phi)=(1,0)$. By ($\mathbf{H1}$),
\begin{align*}
  &V[\eta,\nu_{\cdot},h](t,x,\phi)\cdot\upsilon(\phi)\\
  =&\Lambda_2(-\iota+\sigma\phi_1-\theta\Lambda_2)+\int_X\int_YW_2(\psi_2-\Lambda_2)\rd(\nu_t)_y(\phi)
  \rd\eta^2_x(y)\\
  \le&\Lambda_2(-\iota+\sigma\Lambda_1-\theta\Lambda_2)\le0,
\end{align*}
since $\Lambda_2\ge-\frac{\iota}{\theta}+\frac{\sigma}{\theta}\Lambda_1$.
\end{enumerate}
\end{proof}

\begin{remark}
 \begin{enumerate}
    \item[$\bullet$] Analogous results can be derived for models describing interaction between an abundant prey and a rare predator with an Allee effect \cite{L74}, by alternating the sign of $\theta$ in \eqref{LS} to represent the self-activation (i.e., Allee effect) instead of the self-inhibition for the predator. In particular, application of our main results covers the graphon model proposed in \cite{C20}, where \eqref{LS} with $\int_X\int_YW_1(\psi_1-\phi_1(t,x))\rd(\nu_t)_y(\psi)$ $\rd\eta^1_x(y)$ replaced by
     \[D_1\int_XK(x,y)(\phi_1(y,t)-\phi_1(x,t))\rd y\] and $\int_X\int_YW_2(\psi_2-\phi_2(t,x))\rd(\nu_t)_y(\psi)\rd\eta^2_x(y)$ replaced by
     \[D_2\int_XK(x,y)(\phi_2(y,t)-\phi_2(x,t))\rd y,\] with $X=[0,1]$ and $K$ the adjacency function.
     \item[$\bullet$] When $\sigma$ is negative, the system can be used to model sRNA pathways with heterogeneous structure \cite{BF11}, where it was shown that a convex compact positively invariant set can be constructed.
  \item[$\bullet$] The integro-differential equation \eqref{LS} can be regarded as a generalization of the reaction diffusion Lotka-Volterra model.
  \end{enumerate}
\end{remark}

\subsection{Hegselmann-Krause opinion dynamics model}

To model opinion dynamics of multi-agents, consider the Hegselmann-Krause model \cite{HK02}:
\begin{alignat*}{2}
\dot{\phi}_i=&\frac{1}{N}\sum_{j=1}^NG(|\phi_j-\phi_i|)(\phi_j-\phi_i),\quad 0<t\le T,\\
\phi_i(0)=&\varphi_i,\quad i=1,\ldots,N.
\end{alignat*}
where $\phi_i^N\in\R^d$ stands for the opinion of agent $i$, and $G\colon\R_+\to\R_+$ the interaction function.

Let $(X,\mu_X)$ be a compact measurable Polish space satisfying ($\mathbf{A1}$). Assume that

\medskip

\noindent ($\mathbf{H1}$) $G\colon\R^d\to\R_+$ is locally Lipschitz continuous.

\medskip
\noindent{$\mathbf{(H2)}$} $\eta\in \mathcal{C}(X,\cM_+(X))$.

Let $\Lambda>0$ be an arbitrary positive real number, and \begin{equation}
  \label{Eq-Y-HK}
  Y=\{\varphi\in\R^d\colon|\varphi_i|\le\Lambda,\quad i=1,\ldots,d\}\subseteq\R^d
\end{equation} be a cube centered at the origin. Hence $Y$ is a convex compact set.

\medskip
\noindent{$\mathbf{(H3)}$} $\nu_{\cdot}\in \mathcal{C}(\cT,\mathcal{B}_{\mu_X,1}(X,\cM_+(\R^d)))$  is uniformly compactly supported within $Y$.

To generalize this network model, consider the following non-local model with heterogeneous structure (in terms of $\eta$)  under $\mathbf{(H1)}$-$\mathbf{(H3)}$:
\begin{equation*}
\begin{split}
  \frac{\partial \phi}{\partial t}(t,x)=&\int_{X}\int_{\R^d}G(|\psi-\phi(t,x)|)(\psi-\phi(t,x))\rd(\nu_t)_y(\psi)\rd \eta_x(y)\\
  \phi(0,x)=&\varphi(x),
\end{split}
\end{equation*}
where $\phi(t,x)\in\R^d$.

Consider the VE
\begin{equation}\label{Vlasov-HK}
\begin{split}
&\frac{\partial\rho(t,x,\phi)}{\partial t}+\textrm{div}_{\phi}\left(\rho(t,x,\phi)\widehat{V}[\eta,\rho(\cdot)](t,x,\phi)\right)=0,\quad t\in(0,T],\ x\in X,\ \mathfrak{m}\text{-a.e.}\ \phi\in Y,\\
 &\rho(0,\cdot)=\rho_0(\cdot),
\end{split}
  \end{equation}
  where $\widehat{V}[\eta,\rho(\cdot)](t,x,\phi)
  =\int_{X}\int_{\R^d}G(|\psi-\phi(t,x)|)(\psi-\phi(t,x))\rho(t,y,\psi)\rd\psi\rd \eta_x(y)$.

 From Lemmas~\ref{le-partition}-\ref{le-graph}, there exists
 \begin{enumerate}
 \item[$\bullet$] a partition $\{A^m_i\}_{1\le i\le m}$ of $X$ and points $x^m_i\in A^m_i$ for $i=1,\ldots,m$, for every $m\in\N$,
 \item[$\bullet$]  a sequence $\{\varphi^{m,n}_{(i-1)n+j}\colon i=1,\ldots,m,j=1,\ldots,n\}_{n,m\in\N}\subseteq Y$ and $\{a_{m,i}\colon i=1,\ldots,$ $m\}_{m\in\N}\subseteq\R_+$,
  \item[$\bullet$] a sequence $\{y^{m,n}_{(i-1)n+j}\colon i=1,\ldots,m,j=1,\ldots,n\}_{m,n\in\N}\subseteq Y$ and $\{b_{m,i}\colon i=1,\ldots,$ $m\}_{m\in\N}\subseteq\R_+$,
 \end{enumerate}
 such that
  $$\lim_{m\to\infty}\lim_{n\to\infty}d_{\infty}(\nu_0^{m,n},\nu_0)=0,$$
   $$\lim_{m\to\infty}\lim_{n\to\infty}d_{\infty}(\eta^{m,n},\eta)=0,$$
     $$\lim_{m\to\infty}\int_0^T\int_Y\sup_{x\in X}\lt|h^m(t,x,\phi)-h(t,x,\phi)\rt|\rd\phi\rd t=0,$$
 where
 \begin{subequations}
   \label{discretization-HK}
 \begin{alignat}{2}
(\nu_0^{m,n})_x\colon=\sum_{i=1}^m\mathbbm{1}_{A^m_i}(x)\frac{a_{m,i}}{n}\sum_{j=1}^n
\delta_{\varphi^{m,n}_{(i-1)n+j}},\quad x\in X,\\ \eta^{m,n}_x\colon=\sum_{i=1}^m\mathbbm{1}_{A^m_i}(x)\frac{b_{m,i}}{n}
\sum_{j=1}^n\delta_{y^{m,n}_{(i-1)n+j}},\quad x\in X,\\
 h^m(t,z,\phi)=\sum_{i=1}^m\mathbbm{1}_{A^m_i}(z)h(t,x_i^m,\phi),\quad t\in\cT,\ z\in X,\ \phi\in Y.
 \end{alignat}
 \end{subequations}
 Consider the following IVP of a coupled ODE system:
\begin{multline}
  \label{lattice-HK}
  \dot{\phi}_{(i-1)n+j}=F^{m,n}_{i}(t,\phi_{(i-1)n+j},\Phi),\quad 0<t\le T,\quad \phi_{(i-1)n+j}(0)=\varphi_{(i-1)n+j},\\ i=1,\ldots,m,\ j=1,\ldots,n,
\end{multline}
where $\Phi=(\phi_{(i-1)n+j})_{1\le i\le m,1\le j\le n}$ and $$F^{m,n}_{i}(t,\psi,\Phi)=\sum_{p=1}^{m}\frac{a_{m,i}b_{m,p}}{n^2}\sum_{j=1}^n
\mathbbm{1}_{A^m_p}(y^{m,n}_{(i-1)n+j})\sum_{q=1}^{n}g(t,\psi,\phi_{(p-1)n+q})+h^m(t,x^m_i,\psi).$$

Then by Proposition~\ref{prop-well-posed-lattice}, there exists a unique solution $\phi^{m,n}(t)=(\phi^{m,n}_{(i-1)n+j}(t))$ to \eqref{lattice-HK},
for $m,n\in\N$.

For $t\in\cT$, define \begin{equation}\label{Eq-approx-HK}
(\nu_t^{m,n})_x\colon=\sum_{i=1}^m\mathbbm{1}_{A^m_i}(x)\frac{a_{m,i}}{n}
\sum_{j=1}^{n}\delta_{\phi^{m,n}_{(i-1)n+j}(t)},\quad x\in X.
\end{equation}
\begin{theorem}
  Assume ($\mathbf{A1}$), and ($\mathbf{H1}$)-($\mathbf{H2}$). Then there exists a unique uniformly weak solution $\rho(t,x,\phi)$ to \eqref{Vlasov-HK}. Assume additionally $\rho_0(x,\phi)$ is continuous in $x\in X$ for $\mathfrak{m}$-a.e. $\phi\in Y$ such that $\rho_0\in {\sf L}^1_+(X\times Y,\mu_X\otimes\mathfrak{m})$ and $$\sup_{x\in X}\|\rho_0(x,\cdot)\|_{{\sf L}^1(Y,\mathfrak{m})}<\infty.$$ Let $\nu_{\cdot}\in\mathcal{C}(\cT;\mathcal{B}_{\mu_X,1}(X,\cM_{\abs}(Y)))$ be the measure-valued function defined in terms of the uniformly weak solution to \eqref{Vlasov-HK}:
$$\rd(\nu_t)_x=\rho(t,x,\phi)\rd\phi,\quad \text{for every}\quad t\in\cT,\quad \text{and}\quad x\in X.$$ Then $\nu_t\in \mathcal{C}(\cT,\mathcal{C}_{\mu_X,1}(X,\cM_+(Y)))$. Moreover, let
$\nu^{m,n}_0\in\mathcal{B}_{\mu_X,1}(X,\cM_+(Y))$, $\eta^{m,n}\in \mathcal{B}(X,$ $\cM_+(Y))$, and $h^m\in \mathcal{C}(\cT\times X\times Y,\R^{2d})$ be defined in \eqref{discretization-HK},
and $\nu^{m,n}_{\cdot}$ be defined in \eqref{Eq-approx-HK}. Then $$\lim_{n\to\infty}d_{\infty}(\nu_t^{m,n},\nu_t)=0.$$
\end{theorem}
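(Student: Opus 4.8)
The plan is to deduce both conclusions from the general well-posedness theorem (Theorem~\ref{th-equi}) and the general discretization theorem (Theorem~\ref{th-approx}), so the whole task reduces to checking that the structural hypotheses $(\mathbf{H1})$--$(\mathbf{H3})$ imply the abstract assumptions used there. For this model $r=1$, the vertex field vanishes, $h\equiv0$, and there is a single interaction kernel
\[
g(t,\psi,\phi)=G(|\psi-\phi|)(\psi-\phi),
\]
so all verifications except Bony's condition $(\mathbf{A6})$ are immediate.

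I would first record the easy reductions. Assumption $(\mathbf{A1})$ is assumed outright. Since $G$ is locally Lipschitz by $(\mathbf{H1})$ and $\psi\mapsto|\psi-\phi|$, $\psi\mapsto\psi-\phi$ are Lipschitz on bounded sets, the product $g$ is locally Lipschitz in $(\psi,\phi)$ with no dependence on $t$, which gives $(\mathbf{A2})$. Because $h\equiv0$, assumptions $(\mathbf{A3})$ and $(\mathbf{A7})$ hold trivially (the relevant Lipschitz constant, the $x$-modulus of continuity, and $\int|h|$ all vanish), and the functions $h^m$ appearing in \eqref{discretization-HK}--\eqref{lattice-HK} are identically zero. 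Assumption $(\mathbf{A4})'$ is precisely $(\mathbf{H2})$ (and a fortiori $(\mathbf{A4})$), while $(\mathbf{H3})$ supplies the uniform compact support of $\nu_\cdot$ inside the set $Y$ of \eqref{Eq-Y-HK}.

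The one substantive step is $(\mathbf{A6})$: I would check that the vector field points inward on $\partial Y$ whenever $\nu_\cdot$ is supported in $Y$. Coordinatewise the $i$-th component of the Vlasov operator is
\[
V_i(t,x,\phi)=\int_X\int_{\R^d}G(|\psi-\phi|)(\psi_i-\phi_i)\,\rd(\nu_t)_y(\psi)\,\rd\eta_x(y),
\qquad i=1,\ldots,d.
\]
On the facet $\{\phi_i=\Lambda\}$ the outer normal is $\upsilon(\phi)=e_i$; since $\supp(\nu_t)_y\subseteq Y$ gives $\psi_i\le\Lambda=\phi_i$ and $G\ge0$, the integrand is nonpositive, so $V[\eta,\nu_\cdot,h]\cdot\upsilon(\phi)=V_i\le0$. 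On the facet $\{\phi_i=-\Lambda\}$ the outer normal is $\upsilon(\phi)=-e_i$, and now $\psi_i\ge-\Lambda=\phi_i$ forces $V_i\ge0$, whence $V[\eta,\nu_\cdot,h]\cdot\upsilon(\phi)=-V_i\le0$. These sign computations hold for every coordinate, and since $Y$ is a convex box the normal cone at any lower-dimensional face is the nonnegative span of the $\pm e_i$ already treated, so the inequality persists there as a nonnegative combination; this establishes $(\mathbf{A6})$.

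I expect this invariance check to be the only conceptual point, and it is mild because the Hegselmann--Krause interaction is purely attractive: each coordinate is dragged toward a weighted average of the others and so cannot leave a box already containing the data. With $(\mathbf{A1})$--$(\mathbf{A7})$ verified, unique existence of the uniformly weak solution follows from Theorem~\ref{th-equi}, while the regularity $\nu_\cdot\in\mathcal{C}(\cT,\mathcal{C}_{\mu_X,1}(X,\cM_+(Y)))$ together with $\lim_{n\to\infty}d_\infty(\nu_t^{m,n},\nu_t)=0$ follows from Theorem~\ref{th-approx}.
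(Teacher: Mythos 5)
Your proposal is correct and follows essentially the same route as the paper: verify that $(\mathbf{H1})$ gives $(\mathbf{A2})$ for $g(\psi,\phi)=G(|\psi-\phi|)(\psi-\phi)$, note $(\mathbf{A3})$ and $(\mathbf{A7})$ are trivial with $h\equiv0$ and $(\mathbf{A4})'$ is $(\mathbf{H2})$, check Bony's condition $(\mathbf{A6})$ on the facets $\{\phi_i=\pm\Lambda\}$ by the same sign computation, and then invoke Theorems~\ref{th-equi} and~\ref{th-approx}. Your extra remark on the normal cone at lower-dimensional faces of the box is a small point of additional care that the paper leaves implicit.
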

\begin{proof}
Let $g(\psi,\phi)=G(|\psi-\phi|)(\psi-\phi)$. It follows from $\mathbf{(H1)}$ that $g$ satisfies $\mathbf{(A2)}$. Indeed, $\mathbf{(H1)}$ implies $G$ is locally bounded Lipschitz, and hence for $(\psi,\phi),(\widetilde{\psi},\widetilde{\psi})\in\mathcal{N}$, where $\mathcal{N}$ is an open set in $\R^{2d}$, we have
\[\begin{split}
  &|G(|\psi-\phi|)(\psi-\phi)-G(|\widetilde{\psi}-\phi|)(\widetilde{\psi}-\phi)|\\
  \le&
  |G(|\psi-\phi|)(\psi-\phi)-G(|\psi-\phi|)(\widetilde{\psi}-\phi)|
  +|G(|\psi-\phi|)(\widetilde{\psi}-\phi)-G(|\widetilde{\psi}-\phi|)(\widetilde{\psi}-\phi)|\\
  \le&|G(|\psi-\phi|)||\psi-\widetilde{\psi}|
  +\frac{|G(|\psi-\phi|)-G(|\widetilde{\psi}-\phi|)|}{|\psi-\widetilde{\psi}|}|\psi-\widetilde{\psi}|
  |\widetilde{\psi}-\phi|\\
  \le&\BL(G|_{\mathcal{N}})(1+\max_{(w,u)\in\mathcal{N}}(|w|+|u|))|\psi-\widetilde{\psi}|,
\end{split}\]i.e., $g$ is locally Lipschitz in $\psi$.
Similarly, one can show $g$ is locally Lipschitz in $\phi$. Hence $g$ is locally Lipschitz in $(\psi,\phi)$. Moreover, $\mathbf{(A3)}$ and $\mathbf{(A7)}$ are automatically fulfilled with a trivial $h\equiv0$; $\mathbf{(H2)}$ implies $\mathbf{(A4)'}$.
  Finally we show ($\mathbf{A6}$) is satisfied with the $Y$ given in \eqref{Eq-Y-HK}, under the assumption of $\mathbf{(H3)}$. To see this, let $D_j^+=\{\varphi\in Y\colon \varphi_j=\Lambda\}$ and $D_j^-=\{\varphi\in Y\colon \varphi_j=-\Lambda\}$, $j=1,\ldots,d$. Hence $\partial Y=\cup_{j=1}^d(\cup D_j^+\cup D_j^-)$.
  Let $1\le j\le d$. Then for $\phi\in D_j^+$, $\upsilon(\phi)=e_j$, where $(e_j)_{1\le j\le d}$ forms the standard orthonormal basis of $\R^d$. Hence \begin{align*}
  &V[\eta,\nu_{\cdot}](t,x,\phi)\cdot\upsilon(\phi)
  =\int_{X}\int_{Y}G(|\psi-\phi(t,x)|)(\psi_j-\Lambda)\rd(\nu_t)_y(\psi)\rd \eta_x(y)\le0.
\end{align*}
Similarly, for $\phi\in D_j^+$, $\upsilon(\phi)=-e_j$, we have
\begin{align*}
  &V[\eta,\nu_{\cdot}](t,x,\phi)\cdot\upsilon(\phi)
  =-\int_{X}\int_{Y}G(|\psi-\phi(t,x)|)(\psi_j+\Lambda)\rd(\nu_t)_y(\psi)\rd \eta_x(y)\le0.
\end{align*}
This shows that ($\mathbf{A6}$) is satisfied.
\end{proof}

\section{Discussion and outlooks}\label{sect-discussion}

The approach in this paper naturally extends to the setting on a compact Riemannian manifold, e.g., the torus $\mathbb{T}^m$ or sphere $\mathbb{S}^m$, which are typically used in the Kuramoto models of lower or higher order interactions \cite{BBK20}, the swarm sphere model \cite{O06}, as well as models for opinion dynamics \cite{CLP15}.

Nevertheless, the compactness of the vertex space $X$ is technically crucial. Without this, there may exist no sequence of partitions of $X$ with the maximal diameters decreasing to zero. This property is indispensable for the topology induced by $d_{\infty}$, as the continuity may not be sufficient to ensure the pointwise convergence of the distances between the fiber measures $\eta_x$ and their (best) uniform discrete approximations. Nonetheless, such compactness can be sacrificed, e.g., by adding some other mild condition on the homogeneity as well as uniform boundedness in total variation norm of $\eta_x$ for large $x$.

Although we only allow finitely many DGMs for the IPS, one may further consider IPS on countably many DGMs. This may lead to a deeper understanding how complex the geometry of digraphs (in other words, the interactions among different particles) will change the dynamics of the IPS. One other crucial matter that may arise is that the union of underlying vertex spaces may be of infinite dimensional (no longer a set in a Euclidean space).

In addition, the same topic remains open when we lose continuity (up to a finite set of discontinuity points) of underlying DGMs as well as the initial measure of the VEs in the vertex variable. Such continuity conditions seem crucial due to the topology induced by the uniform bounded Lipschitz metric. Since allowing countably many discontinuity points means the mass of fiber measures $\eta_x$ for $x\in X$ may not be uniformly bounded. Hence the distance between the DGM $\eta$ (even still in $\mathcal{B}(X,\cM_+(X))$) and any finitely supported approximation ($\eta^{m,n}$) can be uniformly away from zero. For instance, consider the DGM in Remark~\ref{re-not-dense}. This may indicate a nice weaker topology than the uniform weak topology given in this paper will be helpful in further generalizations.

Moreover, to find a better analytical perspective or to enlarge the space of digraph measures may allow us to address the MFL of dynamical systems on heterogeneous graphs (particularly sparse ones). We will leave the above questions for our future work.

\section{Proofs of main results}\label{sect-proof}
\subsection{Proof of Theorem~\ref{theo-well-posedness-characteristic}}
\begin{proof}
 By Proposition~\ref{Vlasovf}, the Picard-Lindel\"{o}f's iteration \cite[Theorem~2.2]{T12} yields the unique existence of solutions $\phi(t,x)$ to \eqref{Charac}; moreover, it is standard to extend the solution $\phi(t,x)$ to the maximal existence interval $(\tau_{x,t_0}^-,\tau_{x,t_0}^+)\cap\cT$ for every $x\in X$ and $t_0\in\cT$ satisfying the dichotomy in (i)-(ii).

  The positive invariance of $Y$ follows directly from ($\mathbf{A6}$), using Bony's condition \cite{B69} (see also \cite{C72,R72,W98}). This further yields that $\phi(t,x)\in Y$ for all $t\in[t_0,\tau_{x,t_0}^+)\cap\cT$ and thus (i-b) is impossible. This shows (i-a) holds.
  Since $t_0\in\cT$ is arbitrary, the solution maps $\left\{\mathcal{S}^x_{t,s}[\eta,\nu_{\cdot},h]\right\}_{t,s\in\cT}$ form a group on $Y$ such that
$$\phi(t,x)=\mathcal{S}^x_{t,s}[\eta,\nu_{\cdot},h]\phi(s,x),\quad \text{for all}\ s,t\in\cT.$$
\end{proof}

\subsection{Proof of Theorem~\ref{th-equi}}
\begin{proof}
For $\nu_0\in\mathcal{B}_{\mu_X,1}(X;\cM_{+,\abs}(Y))$ defined in terms of $\rho_0$, let $\nu_{\cdot}$ be the unique solution to the fixed point equation \eqref{Fixed} associated with $\nu_0$. Hence by Proposition~\ref{prop-continuousdependence}(v)-(vi), $\nu_t\in\mathcal{B}_{\mu_X,1}(X;\cM_{+,\abs}(Y))$ for every $t\in\cT$. Let \begin{equation}\label{eq-weak}\rho(t,x,\phi)=\frac{\rd(\nu_t)_x(\phi)}{\rd\phi},\quad t\in\cT,\ x\in X,\ \mathfrak{m}\text{-a.e.}\ \phi\in Y.\end{equation}
To conclude the proof, (1) we show $\rho(t,x,\phi)$ is a weak solution to \eqref{Vlasov}. (2) For every weak solution $\rho(t,x,\phi)$ to the IVP of \eqref{Vlasov}, let
\[\rd(\nu_t)_x(\phi)=\rho(t,x,\phi)\rd\phi\rd\mu_X(x).\]
We show that $\nu_{\cdot}\in \mathcal{C}(\cT,\mathcal{B}_{\mu_X,1}(X,\cM_+(Y)))$, is a solution to the fixed point equation associated with $\nu_0$. Then by the uniqueness of solutions to the fixed point equation, we obtain the uniqueness of weak solutions to the IVP of \eqref{Vlasov}.
\begin{enumerate}
\item[Step I.] We show $\rho(t,x,\phi)$ in \eqref{eq-weak} is a weak solution to \eqref{Vlasov}.
First, by Proposition~\ref{prop-nu}(ii), $t\mapsto\rho(t,x,\phi)$ is uniformly weakly continuous. Moreover, by Proposition~\ref{prop-continuousdependence}(v), $$\int_X\int_Y\rho(t,x,\phi)\rd\phi\rd x=1,\quad \text{for all}\ t\in\cT.$$ It remains to verify that $\rho(t,x,\phi)$ solves \eqref{eq-test}. Let $w\in \mathcal{C}^1(\cT\times Y)$ with $\supp w\subseteq [0,T[\times U$ and $U\subset\subset Y$. Then
\begin{align*}
&\int_0^T\int_Y\frac{\partial w(t,\phi)}{\partial t}\rho(t,x,\phi)\rd\phi\rd t\\
  =&\int_0^T\int_Y\frac{\partial w(t,\phi)}{\partial t}\rd(\nu_t)_x(\phi)\rd t\\
  =&\int_0^T\int_Y\frac{\partial w(t,\phi)}{\partial t}\rd((\nu_0)_x\circ \mathcal{S}_{0,t}^x[\eta,\nu_{\cdot},h])(\phi)\rd t\\
  &\hspace{-.4cm}\xlongequal{\mathcal{S}_{0,t}^x[\eta,\nu_{\cdot},h]\phi\mapsto\phi}
  \int_0^T\int_Y\partial_1 w(t,\mathcal{S}_{t,0}^x[\eta,\nu_{\cdot},h]\phi)\rd(\nu_0)_x(\phi)\rd t\\
  =&\int_Y\int_0^T\partial_1 w(t,\mathcal{S}_{t,0}^x[\eta,\nu_{\cdot},h]\phi)\rd t\rd(\nu_0)_x(\phi)\\
  =&-\int_Yw(0,\phi)\rd(\nu_0)_x(\phi)\\&-\int_Y\int_0^T\partial_2 w(t,\mathcal{S}^x_{t,0}[\eta,\nu_{\cdot},h]\phi)\cdot V[\eta,\nu_{\cdot},h](t,x,\mathcal{S}_{t,0}^x[\eta,\nu_{\cdot},h]\phi)\rd t\rd(\nu_0)_x(\phi)\\
  &\hspace{-.4cm}\xlongequal{\mathcal{S}_{t,0}^x[\eta,\nu_{\cdot},h]\phi\mapsto\phi}
  -\int_Yw(0,\phi)\rd(\nu_0)_x(\phi)-\int_Y\int_0^T\frac{\partial w(t,\phi)}{\partial \phi}\cdot V[\eta,\nu_{\cdot},h](t,x,\phi)\rd t\rd(\nu_t)_x(\phi)\\
  =&-\int_Yw(0,\phi)\rho_0(x,\phi)\rd\phi-\int_0^T\int_Y\frac{\partial w}{\partial \phi}(t,\phi)\cdot \widehat{V}[\eta,\rho_{\cdot},h](t,x,\phi)\rho(t,x,\psi)\rd\phi\rd t,
\end{align*}i.e., \eqref{eq-test} holds.
\item[Step II.] Let $\cT\times X\ni(t,x)\mapsto(\nu_t)_x\in\cM_+(Y)$ be such that $$\rd(\nu_t)_x(\phi)=\rho(t,x,\phi)\rd\phi,\quad t\in\cT,\ x\in X,\ \mathfrak{m}\text{-a.e.}\ \phi\in Y.$$ Since $\rho$ is a weak solution to \eqref{Vlasov}, it is ready to show that $\nu_{\cdot}\in\mathcal{C}(\cT,\mathcal{B}_{\mu_X,1}(X,$ $\cM_+(Y)))$, by
    Definition~\ref{def-weak-sol}(i)-(ii) as well as Proposition~\ref{prop-nu}(i).

    Then it remains to show that $\nu_t$ satisfies the fixed point equation. For $x\in X$, for every $w\in \mathcal{C}^1(\cT\times U)$ with $\supp w\subseteq [0,T[\times U$ and $U\subset\subset Y$, we have  \begin{align*}
&\int_0^T\int_Y\frac{\partial w(t,\phi)}{\partial t}\rd(\nu_t)_x(\phi)\rd t\\
=&-\int_Yw(0,\phi)\rd(\nu_0)_x(\phi)-\int_Y\int_0^T\frac{\partial w(t,\phi)}{\partial \phi}\cdot V[\eta,\nu_{\cdot},h](t,x,\phi)\rd t\rd(\nu_t)_x(\phi)\\
  =&-\int_Yw(0,\phi)\rd(\nu_0)_x(\phi)\\
  &-\int_Y\int_0^T\partial_2 w(t,\mathcal{S}^x_{t,0}[\eta,\nu_{\cdot},h]\phi)\cdot V[\eta,\nu_{\cdot},h](t,x,\mathcal{S}_{t,0}^x[\eta,\nu_{\cdot},h]\phi)\rd t\rd(\nu_0)_x(\phi)\\
  =&\int_Y\int_0^T\partial_1 w(t,\mathcal{S}_{t,0}^x[\eta,\nu_{\cdot},h]\phi)\rd t\rd(\nu_0)_x(\phi)\\
=&\int_0^T\int_Y\partial_1 w(t,\mathcal{S}_{t,0}^x[\eta,\nu_{\cdot},h]\phi)\rd(\nu_0)_x(\phi)\rd t\\
  =&\int_0^T\int_Y\frac{\partial w(t,\phi)}{\partial t}\rd((\nu_0)_x\circ \mathcal{S}_{0,t}^x[\eta,\nu_{\cdot},h])(\phi)\rd t,
\end{align*}
Let $v=\partial_1w$. Since  $w\in \mathcal{C}^1(\cT\times U)$ with $\supp w\subseteq [0,T[\times U$, we have $v\in\mathcal{C}(\cT\times U)$ with $\supp v\subseteq [0,T[\times U$ satisfying
\begin{equation}
  \label{identity-weak}
  \int_0^T\int_Yv(t,\phi)\rd(\nu_t)_x(\phi)\rd t=\int_0^T\int_Yv(t,\phi)\rd((\nu_0)_x\circ \mathcal{S}_{0,t}^x[\eta,\nu_{\cdot},h])(\phi)\rd t.
\end{equation}
 Let $t\in\cT$, $x\in X$, and $f\in \mathcal{C}(Y)$. Let $u=\mathbbm{1}_{\{t\}}$ and $$v(\tau,\phi)=u(\tau)f(\phi),\quad \tau\in\cT,\ \phi\in Y.$$ Since $\mathcal{C}(\cT)$ is dense in ${\sf L}^1(\cT)$, it is easy to construct mollifiers $\{u^n\}_{n\in\N}\subseteq\mathcal{C}(\cT)$ of $u$ with $\supp u^n\subseteq[0,T[$ and $0\le u^n\le1$ such that $$\lim_{n\to\infty}\int_0^T|u^n(\tau)-u(\tau)|\rd\tau=0.$$ Let $v^n(\tau,\phi)=u^n(\tau)f(\phi)$. Then $v^n\in\mathcal{C}(\cT\times U)$ with $\supp v\subseteq [0,T[\times U$. Substituting $v^n$ into \eqref{identity-weak} and taking the limit as $n\to\infty$, by Dominated Convergence Theorem, we have
\begin{equation*}
  \int_Yf(\phi)\rd(\nu_t)_x(\phi)=\int_Yf(\cS_{t,0}^x[\eta,\nu_{\cdot},h]\phi)\rd(\nu_0)_x(\phi).
\end{equation*}
Since $x\in X$ and $f$ were arbitrary, we have $$(\nu_t)_x=(\nu_0)_x\circ\cS_{0,t}^x[\eta,\nu_{\cdot},h].$$
Hence $\nu_{\cdot}$ is a solution to the fixed point equation.
\end{enumerate}
\end{proof}

\subsection{Proof of Theorem~\ref{th-approx}}
\begin{proof}
We first show that $\nu_{\cdot}\in \mathcal{C}(\cT,\mathcal{C}_{\mu_X,1}(X,\cM_+(Y)))$ and then prove the approximation result.

\begin{enumerate}
\item[$\bullet$] $\nu_{\cdot}\in \mathcal{C}(\cT,\mathcal{C}_{\mu_X,1}(X,\cM_+(Y)))$.  Since $x\mapsto\rho_0(x,\phi)$ is continuous  for $\mathfrak{m}$-a.e. $\phi\in Y$ and $$\sup_{x\in X}\|\rho_0(x,\cdot)\|_{{\sf L}^1(Y,\mu_Y)}<\infty,$$ by Proposition~\ref{prop-1} and Dominated Convergence Theorem,
\begin{align*}
  &d_{\sf BL}((\nu_0)_x,(\nu_0)_y)\le\|\rho_0(x,\cdot)-\rho_0(y,\cdot)\|_{{\sf L}^1(Y,\mu_Y)}\to0,\quad \text{as}\ |x-y|\to0,
\end{align*}
which implies that $\nu_0\in \mathcal{C}_{\mu_X,1}(X,\cM_+(Y))$. Since $\mathcal{C}(\cT,\mathcal{C}_{\mu_X,1}(X,\cM_+(Y)))$ is complete by Proposition~\ref{alpha-complete}, applying Banach fixed point theorem as in the proof of Proposition~\ref{prop-sol-fixedpoint} to $\mathcal{C}(\cT,\mathcal{C}_{\mu_X,1}(X,\cM_+(Y)))$  as well as using the uniqueness of solutions to VE yields $\nu_{\cdot}\in \mathcal{C}(\cT,\mathcal{C}_{\mu_X,1}(X,\cM_+(Y)))$.
\item[$\bullet$] Approximation of the VE. We prove the approximation result in four steps. Roughly, we first show $\nu^{m,n}_{\cdot}$ is the solution to a fixed point equation. Then constructing  solutions to two other auxiliary fixed point equations, using continuous dependence of solutions of the fixed point equation on $\eta_{\cdot}$, $h$, as well as the initial measure, we show the approximation result by triangle inequalities.

\medskip
\noindent Step I. $\nu^{m,n}_{\cdot}$ defined in \eqref{Eq-approx} is the solution to the fixed point equation associated with $\eta^{m,n}$ and $h^m$: \begin{equation}\label{Eq-fixed-approx}\nu^{m,n}_{\cdot}
=\cA[\eta^{m,n},\nu^{m,n}_{\cdot},h^m]\nu^{m,n}_{\cdot}.\end{equation}

To prove this, we calculate $\cA[\eta^{m,n},\nu^{m,n}_{\cdot},h^m]$ explicitly and show $\nu^{m,n}_{\cdot}$ satisfies \eqref{Eq-fixed-approx}. By uniqueness of solutions, we prove that $\nu^{m,n}_{\cdot}$ is the unique solution to \eqref{Eq-fixed-approx}. We express the Vlasov operator first. For $x\in A^m_i$,
\begin{align*}
  &V[\eta^{m,n},\nu^{m,n}_{\cdot},h^m](t,x,\phi)\\
  =&\sum_{\ell=1}^r\int_X\int_Yg_{\ell}(t,\psi,\phi)\rd(\nu_t^{m,n})_y(\psi)\rd\eta^{\ell,m,n}_x(y)
  +h^m(t,x,\phi)\\
  =&\sum_{\ell=1}^r\frac{b_{\ell,m,i}}{n}
  \sum_{j=1}^n\int_Yg_{\ell}(t,\psi,\phi)\rd(\nu_t^{m,n})_{y^{m,n}_{(i-1)n+j}}(\psi)+h^m(t,x_i^m,\phi)\\
  =&\sum_{\ell=1}^r\frac{b_{\ell,m,i}}{n}
  \sum_{j=1}^n\sum_{p=1}^m\frac{a_{m,p}}{n}\mathbbm{1}_{A^m_p}(y^{\ell,m,n}_{(i-1)n+j})
  \sum_{q=1}^ng_{\ell}(t,\phi^{m,n}_{(p-1)n+q}(t),\phi)+h^m(t,x_i^m,\phi)\\
  =&F^{m,n}_i(t,\phi,\Phi^{m,n}(t)),
\end{align*}
where $\Phi^{m,n}(t)=(\phi^{m,n}_{(i-1)n+j}(t))_{1\le i\le m,1\le j\le n}$.

Consider the equation of characteristics for every $x\in X$:
\[\frac{\partial\phi(t,x)}{\partial t}=V[\eta^{m,n},\nu^{m,n}_{\cdot},h^m](t,x,\phi),\]
and let $\cS^x_{t,0}[\eta^{m,n},\nu^{m,n}_{\cdot},h^m]$ be its flow. Let $(\phi^{m,n}_{(i-1)n+j}(t))_{1\le i\le m,1\le j\le n}$ be the solution to the coupled ODE system \eqref{lattice} subject to the initial condition $$(\varphi^{m,n}_{1},\ldots,\varphi^{m,n}_{n},\ldots,
\varphi^{m,n}_{(i-1)n+1},\ldots,\varphi^{m,n}_{(i-1)n+n},\ldots,
\varphi^{m,n}_{(m-1)n+1},\ldots,\varphi^{m,n}_{mn}).$$ We first show that for $i=1,\ldots,m$, for $x\in A^m_i$, $j=1,\ldots,n$,
\begin{equation}\label{Eq-15}\phi^{m,n}_{(i-1)n+j}(t)
=\cS^x_{t,0}[\eta^{m,n},\nu^{m,n}_{\cdot},h^m]\varphi^{m,n}_{(i-1)n+j}.\end{equation}
Indeed, \begin{align*}
  &\varphi^{m,n}_{(i-1)n+j}+\int_0^tV[\eta^{m,n},\eta^{m,n}_{\cdot},h^m]
  (\tau,x,\phi^{m,n}_{(i-1)n+j}(\tau))\rd\tau\\
  =&\varphi^{m,n}_{(i-1)n+j}+\int_0^tF^{m,n}_i(\tau,\phi^{m,n}_{(i-1)n+j}
  (\tau),\Phi^{m,n}(\tau))\rd\tau=\phi^{m,n}_{(i-1)n+j}(t),
\end{align*}
for $\phi^{m,n}_{(i-1)n+j}(t)$ is the solution to \eqref{lattice}. This verifies \eqref{Eq-15}, from which we can conclude that \begin{equation}\label{Eq-17}(\nu^{m,n}_t)_x=(\nu_0^{m,n})_x\circ \cS^x_{0,t}[\eta^{m,n},\eta^{m,n}_{\cdot},h^m],\quad x\in X,\end{equation} and hence \eqref{Eq-fixed-approx} holds. To see this,
pick an arbitrary Borel measurable set $B\in\mathcal{B}(Y)$, let $f=\mathbbm{1}_B$.
Then for $x\in A^m_i$,
\begin{align*}
  &\int f\rd(\nu^{m,n}_0)_x\circ \cS^x_{0,t}[\eta^{m,n},\eta^{m,n}_{\cdot},h^m]\\
  =&\int f\circ \cS^x_{t,0}[\eta^{m,n},\eta^{m,n}_{\cdot},h^m]\rd(\nu^{m,n}_0)_x\\
  =&\frac{a_{m,i}}{n}
  \sum_{j=1}^nf\lt(\cS^x_{t,0}[\eta^{m,n},\eta^{m,n}_{\cdot},h^m]\varphi^{m,n}_{(i-1)n+j}\rt)\\
  =&\frac{a_{m,i}}{n}
  \sum_{j=1}^nf\lt(\phi^{m,n}_{(i-1)n+j}(t)\rt)=\int_Yf\rd(\nu^{m,n}_t)_x,
\end{align*}
which shows that \eqref{Eq-17} holds since $B$ was arbitrary and $X=\cup_{i=1}^mA^m_i$.

\medskip

\noindent Step II. Construct an auxiliary approximation based on continuous dependence on DGMs. Since $\nu_0\in \mathcal{C}_{\mu_X,1}(X,\cM_+(Y))$, let $\widehat{\nu}^{m,n}_{\cdot}$ be the solution to the fixed point equation confined to $\mathcal{C}(\cT,\mathcal{C}_{\mu_X,1}(X,\cM_+(Y)))$
\[\widehat{\nu}^{m,n}_{\cdot}=\cA[\eta^{m,n},\widehat{\nu}^{m,n}_{\cdot},h]\widehat{\nu}^{m,n}_{\cdot}\] with $\widehat{\nu}^{m,n}_0=\nu_0$.
By Proposition~\ref{prop-sol-fixedpoint}(iii), \begin{equation}\label{Eq-18}
\lim_{m\to\infty}\lim_{n\to\infty}d_{\infty}(\nu_t,\widehat{\nu}_t^{m,n})=0,
\end{equation}
since $$\lim_{m\to\infty}\lim_{n\to\infty}d_{\infty}(\eta^{\ell},\eta^{\ell,m,n})=0,\quad \ell=1,\ldots,r.$$

\medskip

\noindent Step III. Construct another auxiliary approximation based on continuous dependence on $h$. Let $\bar{\nu}^{m,n}_{\cdot}$ be the solution to the fixed point equation
\[\bar{\nu}^{m,n}_{\cdot}=\cA[\eta^{m,n},\bar{\nu}^{m,n}_{\cdot},h^m]\bar{\nu}^{m,n}_{\cdot}\] with $\bar{\nu}^{m,n}_0=\nu_0$.
By Lemma~\ref{le-graph} and Lemma~\ref{le-h}, $$\sup_{m,n\in\N}\|\eta^{m,n}\|+\sup_{m\in\N}|h^m|<\infty$$ are uniformly bounded. Hence $\sup_{m,n\in\N}\|\bar{\nu}^{m,n}_{\cdot}\|<\infty$ is also uniformly bounded. Moreover, $d_{\infty}(\widehat{\nu}^{m,n}_{t},\nu_{t})\to0$ also implies $$\sup_{m,n\in\N}\|\widehat{\nu}^{m,n}_{\cdot}\|<\infty.$$

By Proposition~\ref{prop-sol-fixedpoint}(ii), \begin{alignat*}{2}
&d_{\infty}(\bar{\nu}_t^{m,n},\widehat{\nu}_t^{m,n})
\le C\|h-h^m\|_{\infty},
\end{alignat*}
where\[C=\sup_{m,n\in\N}\frac{1}{L_3(\widehat{\nu}^{m,n}_{\cdot},h)}\|\bar{\nu}^{m,n}_{\cdot}\|
    \textnormal{e}^{\BL(h)\textnormal{e}^{L_1({\widehat{\nu}}^{m,n}_{\cdot})T}T}\textnormal{e}^{(L_1({\widehat{\nu}}^{m,n}_{\cdot})
    +L_2(\eta^{m,n})
    \|\bar{\nu}^{m,n}_{\cdot}\|)T}<\infty.\]

\medskip

\noindent Step IV. Since $\nu^{m,n}_{\cdot}$ is the solution to the fixed point equation
\[\nu^{m,n}_{\cdot}=\cA[\eta^{m,n},\nu^{m,n}_{\cdot},h^m]\nu^{m,n}_{\cdot}\] with initial condition $\nu^{m,n}_0$, by Lemma~\ref{le-ini-2} and Proposition~\ref{prop-sol-fixedpoint}(i),
\begin{equation}\label{Eq-20}d_{\infty}(\nu_t^{m,n},\bar{\nu}_t^{m,n})\le \textnormal{e}^{(L_1(\bar{\nu}^{m,n}_{\cdot})+L_2\|\nu^{m,n}_{\cdot}\|)t}
d_{\infty}(\nu_0^{m,n},\nu_0)\to0,\quad \text{as}\ n\to\infty, m\to\infty.\end{equation}

In sum, from \eqref{Eq-18}-\eqref{Eq-20}, by triangle inequality it yields that
\[\lim_{m\to\infty}\lim_{n\to\infty}d_{\infty}(\nu_t,\nu^{m,n}_t)=0.\]
\end{enumerate}
\end{proof}

\section*{Acknowledgement}
Both authors thank G.S. Medvedev's comments on Remark~\ref{re-generalization}. CK acknowledges the TUM International Graduate School of Science and Engineering (IGSSE) for support via the project ``Synchronization in Co-Evolutionary Network Dynamics (SEND)'' and a Lichtenberg Professorship funded by the Volkswagen Foundation. CX acknowledges TUM Foundation Fellowship as well as the Alexander von Humboldt Fellowship funded by Alexander von Humboldt Foundation.
\bibliographystyle{plain}
{\footnotesize\bibliography{references}}
\appendix

\section{Proof of Proposition~\ref{Vlasovf}}\label{appendix-Vlasovf}
\begin{proof}
  Let $E_{\nu_{\cdot}}$ be as in $\mathbf{(A5)}$.  Then \begin{multline*}
    V[\eta,\nu_{\cdot},h](t,x,\phi)=\sum_{i=1}^r\int_X\int_{E_{\nu_{\cdot}}}g_i(t,\psi,\phi)\rd(\nu_t)_y(\psi)
  \rd\eta^i_x(y)+h(t,x,\phi),\\ \quad t\in\cT,\ x\in X,\ \phi\in \R^{r_2}.\end{multline*}

We prove the properties of the Vlasov operator case by case. For $i=1,\ldots,r$, let $g_i=(g_{i,1},\ldots,g_{i,r_2})$.
\begin{enumerate}
\item[(i)] $V[\eta,\nu_{\cdot},h](t,x,\phi)$ is continuous in $t\in\cT$. It suffices to show that for all $i=1,\ldots,r$ and $j=1,\ldots,r_2$, $\int_X\int_{\R^{r_2}}g_{i,j}(t,\psi,\phi)\rd(\nu_t)_y(\psi)\rd\eta^i_x(y)$ is continuous in $t$. Take any sequence $(t_k)_{k}\subseteq\cT$ converging to $t$. By $(\mathbf{A5})$ and Proposition~\ref{prop-nu}(ii), we have $\nu_{\cdot}$ is weakly continuous on $\mathcal{T}$, and hence $\int_{\R^{r_2}}g_{i,j}(t,\psi,\phi)$ $\rd(\nu_t)_y(\psi)$ is continuous in $t\in\cT$, for every $y\in X$. Moreover, by  $(\mathbf{A5})$ and Proposition~\ref{prop-nu}(ii) again,
    \begin{align*}
    \sup_{y\in X}\lt|\int_{\R^{r_2}}g_{i,j}(t_k,\psi,\phi)\rd(\nu_{t_k})_y(\psi)\rt|
    =&\sup_{y\in X}\lt|\int_{E_{\nu_{\cdot}}}g_{i,j}(t_k,\psi,\phi)\rd(\nu_{t_k})_y(\psi)\rt|\\
    \le&\|g_i(\cdot,\phi)\|_{\infty}\sup_{\tau\in\cT}\sup_{y\in X}(\nu_{\tau})_y(\R^{r_2})<\infty,
    \end{align*} where  by $(\mathbf{A1})$ $$\|g_i(\cdot,\phi)\|_{\infty}=\sup_{\tau\in\cT}\sup_{\psi\in E_{\nu_{\cdot}}}|g_i(\tau,\psi,\phi)|<\infty.$$ Hence $$\int_{\R^{r_2}}g_{i,j}(t_k,\psi,\phi)\rd(\nu_t)_y(\psi)$$ is integrable w.r.t. $\eta_x$ since $\|\eta\|<\infty$ by $(\mathbf{A1})$ and Proposition~\ref{prop-fibercomplete}. By Dominated Convergence Theorem, we have \[\lim_{k\to\infty}\int_X\int_{\R^{r_2}}g_{i,j}(t_k,\psi,\phi)\rd(\nu_t)_y(\psi)\rd\eta^i_x(y)
    =\int_X\int_{\R^{r_2}}g_{i,j}(t,\psi,\phi)\rd(\nu_t)_y(\psi)\rd\eta^i_x(y).\]
\item[(ii)]  We will show $V[\eta,\nu_{\cdot},h](t,x,\phi)$ is locally Lipschitz continuous in $\phi$, uniformly in $(t,x)$.

We first show that $$G(t,x,\phi)=\sum_{i=1}^r\int_X\int_{E_{\nu_{\cdot}}}g_i(t,\psi,\phi)\rd(\nu_t)_y(\psi)\rd\eta^i_x(y)$$ is locally Lipschitz in $\phi$, uniformly in $(t,x)$. Since $E_{\nu_{\cdot}}$ is compact, and $g_i$ are locally Lipschitz in $(\psi,\phi)$, uniformly in $t\in\cT$, by finite covering theorem, for a neighborhood $\mathcal{N}\subseteq\R^{r_2}$ of $\phi$, $g_i(t,\psi,\phi)$ are locally Lipschitz in $\phi$ with Lipschitz constant $\mathcal{L}_{\mathcal{N}}(g_i)$, uniformly in $\psi\in E_{\nu_{\cdot}}$ and $t\in\cT$, where
$$\mathcal{L}_{\mathcal{N}}(g_i)=\sup_{t\in\cT}\sup_{x\in X}\sup_{\psi\in E_{\nu_{\cdot}}}\sup_{\tiny\begin{array}{l}
  \phi_1\neq \phi_2,\\
\phi_1,\phi_2\in\mathcal{N}
\end{array}}\frac{|g_i(t,\psi,\phi_1)-g_i(t,\psi,\phi_2)|}{|\phi_1-\phi_2|}<\infty.$$ This shows that for $\phi_1,\phi_2\in\mathcal{N}$, \begin{align*}
  |G(t,x,\phi_1)-G(t,x,\phi_2)|\le&\sum_{i=1}^r\int_X\int_{E_{\nu_{\cdot}}}|g_i(t,\psi,\phi_1)
  -g_i(t,\psi,\phi_2)|
  \rd(\nu_t)_y(\psi)\rd\eta^i_x(y)\\
  \le&\sum_{i=1}^r \mathcal{L}_{\mathcal{N}}(g_i)|\phi_1-\phi_2|(\nu_t)_y(\R^{r_2})\eta^i_x(X)\\
  \le&\|\nu_{\cdot}\|\sum_{i=1}^r \mathcal{L}_{\mathcal{N}}(g_i)\|\eta^i\||\phi_1-\phi_2|,
\end{align*}
where $\|\eta^i\|<\infty$ by Proposition~\ref{prop-fibercomplete}(i) and  $\|\nu_{\cdot}\|<\infty$ by $\mathbf{(A5)}$ and Proposition~\ref{prop-nu}(ii).

Similarly, for $\phi_1,\phi_2\in\mathcal{N}$, \[|h(t,x,\phi_1)-h(t,x,\phi_2)|\le\mathcal{L}_{\mathcal{N}}(h)|\phi_1-\phi_2|,\] where
$$\mathcal{L}_{\mathcal{N}}(h)=\sup_{t\in\cT}\sup_{x\in X}\sup_{\tiny\begin{array}{l}
  \phi_1\neq \phi_2,\\
\phi_1,\phi_2\in\mathcal{N}
\end{array}}\frac{|h(t,x,\phi_1)-h(t,x,\phi_2)|}{|\phi_1-\phi_2|}<\infty.$$
Hence $V[\nu_{\cdot},\eta,h](t,x,\phi)$ is  locally Lipschitz in $\phi$ uniformly $(t,x)$. Altogether it yields
\[\sup_{t\in\cT}\sup_{x\in X}\lt|V[\eta,\nu_{\cdot},h](t,x,\phi_1)-V[\eta,\nu_{\cdot},h](t,x,\phi_2)\rt|\le L_1(\nu_{\cdot})|\phi_1-\phi_2|,\]
where
$$L_1(\nu_{\cdot})\colon=\|\nu_{\cdot}\|\sum_{i=1}^r\mathcal{L}_{\mathcal{N}}(g_i)\|\eta^i\|
+\mathcal{L}_{\mathcal{N}}(h)<\infty.$$
\end{enumerate}
We assume $(\mathbf{A6})$ for the rest.
For all $t\in\cT$ and $x\in X$, we can rewrite $V$ as
\begin{multline*}
  V[\eta,\nu_{\cdot},h](t,x,\phi)=\sum_{i=1}^r\int_X\int_Yg_i(t,\psi,\phi)\rd(\nu_t)_y(\psi)\rd\eta^i_x(y)
+h(t,x,\phi),\\ \quad t\in\cT,\ x\in X,\ \phi\in Y.\end{multline*}

Since $g_i$ and $h$ are local Lipschitz, again by finite covering theorem, when restricted to $Y$, they are globally bounded Lipschitz with $$\mathcal{L}(g_i)=\sup_{t\in\cT}\sup_{\tiny\begin{array}{l}
  (\psi_1,\phi_1)\neq (\psi_2,\phi_2),\\
(\psi_1,\phi_1),(\psi_2,\phi_2)\in Y^2
\end{array}}\frac{|g_i(t,\psi_1,\phi_1)-g_i(t,\psi_2,\phi_2)|}{|\psi_1-\psi_2|+|\phi_1-\phi_2|}<\infty,$$

$$\mathcal{L}(h)=\sup_{t\in\cT}\sup_{x\in X}\sup_{\tiny\begin{array}{l}
  \phi_1\neq \phi_2,\\
\phi_1,\phi_2\in Y
\end{array}}\frac{|h(t,x,\phi_1)-h(t,x,\phi_2)|}{|\phi_1-\phi_2|}<\infty,$$
$$\|g_i\|_{\infty}=\sup_{t\in\cT}\sup_{
(\psi,\phi)\in Y^2}|g_i(t,\psi,\phi)|,\quad \|h\|_{\infty}=\sup_{t\in\R}\sup_{x\in X}\sup_{
\phi\in Y}|h(t,x,\phi)|,$$
$$\BL(g_i)=\|g_i\|_{\infty}+\mathcal{L}(g_i),\quad \BL(h)=\|h\|_{\infty}+\mathcal{L}(h).$$
\begin{enumerate}
\item[(iii)] $V[\eta,\nu_{\cdot},h](t,x,\phi)$ is Lipschitz continuous in $h$, since
\[
  |V[\eta,\nu_{\cdot},h_1](t,x,\phi)-V[\eta,\nu_{\cdot},h_2](t,x,\phi)|=|h_1(t,x,\phi)-h_2(t,x,\phi)|,
\]
which implies
\[\sup_{t\in\cT}\sup_{x\in X}\sup_{\phi\in Y}|V[\eta,\nu_{\cdot},h_1](t,x,\phi)-V[\eta,\nu_{\cdot},h_2](t,x,\phi)|\le\|h_1-h_2\|_{\infty}.\]
\item[(iv)] We will show $V[\eta,\nu_{\cdot},h](t,x,\phi)$ is Lipschitz continuous in $\nu_{\cdot}$:
\begin{equation}\label{L3}
  \sup_{x\in X}\sup_{\phi\in Y}|V[\nu^1_{\cdot}](t,x,\phi)-V[\nu^2_{\cdot}](t,x,\phi)|
  \le L_2d_{\infty}(\nu^1_t,\nu^2_t),
\end{equation} for some positive and finite constant $L_2$.

For all $x\in X$ and $\phi\in Y$,
\begin{align*}
&\lt|V[\nu^1_{\cdot}](t,x,\phi)-V[\nu^2_{\cdot}](t,x,\phi)\rt|\\
=&\lt|\sum_{i=1}^r\int_{X\times Y}g_i(t,\psi,\phi)\mathrm{d}((\nu_t^1)_y(\psi)-(\nu^2_t)_y(\psi))\mathrm{d}\eta^i_x(y)\rt|\\
\le&\sum_{i=1}^r(\BL(g_i)+1)\int_{X}\lt|\int_Y\frac{g_i(t,\psi,\phi)}{\BL(g_i)+1}
\mathrm{d}((\nu^1_t)_y(\psi)-(\nu^2_t)_y(\psi))\rt|
\mathrm{d}\eta^i_x(y)\\
=&\sum_{i=1}^r(\BL(g_i)+1)\int_{X}\sum_{j=1}^{r_2}\lt|\int_Y\frac{g_{i,j}(t,\psi,\phi)}{\BL(g_i)+1}
\mathrm{d}((\nu^1_t)_y(\psi)-(\nu^2_t)_y(\psi))\rt|
\mathrm{d}\eta^i_x(y)\\
\le&r_2\sum_{i=1}^r(\BL(g_i)+1)\int_Xd_{\sf BL}((\nu^1_t)_y,(\nu^2_t)_y)\mathrm{d}\eta^i_x(y)\\
\le& r_2\sum_{i=1}^r(\BL(g_i)+1)\eta^i_x(X)\sup_{y\in X}d_{\sf BL}((\nu^1_t)_y,(\nu^2_t)_y)\le L_3d_{\infty}(\nu^1_t,\nu^2_t),
\end{align*}
where $L_2\colon=r_2\sum_{i=1}^r(\BL(g_i)+1)\|\eta^i\|<\infty$ by $(\mathbf{A1})$. This shows \eqref{L3} holds.
\item[(v)] Assume $\nu_{\cdot}\in \mathcal{C}(\cT,\mathcal{C}_{\mu_X,1}(X,\cM_+(Y)))$. We will show $V[\eta,\nu_{\cdot},h](t,x,\phi)$ is continuous in $\eta$.
Since $\nu_{\cdot}\in \mathcal{C}(\cT,\mathcal{C}_{\mu_X,1}(X,\cM_+(Y)))$, by Proposition~\ref{prop-nu}, for every $t\in\cT$, $(\nu_t)_x$ is weakly continuous in $x$: For every $f\in \mathcal{C}(Y)$, $x\mapsto(\nu_t)_x(f)$ is continuous.

Let $(\eta^{K,i})_K\subseteq \mathcal{B}(X,\cM_+(Y))$ for $i=1,\ldots,r$ such that $$\lim_{K\to\infty}d_{\sf BL}(\eta^{K,i},\eta^i)=0,\quad i=1,\ldots,r.$$ The rest is to show that for every $\xi_{\cdot}\in \mathcal{B}(\cT,\cM_+(Y))$,
\[\lim_{K\to\infty}\int_0^t\int_Y\sup_{x\in X}|V[\eta,\nu_{\cdot},h](\tau,x,\phi)-V[\eta^K,\nu_{\cdot},h](\tau,x,\phi)|\rd\xi_{\tau}(\phi)\rd\tau=0,
\quad \forall t\in\cT.\]

Note that for every given $t\in\cT$,
\begin{align*}
&\lt|V[\eta,\nu_{\cdot},h](t,x,\phi)-V[\eta^K,\nu_{\cdot},h](t,x,\phi)\rt|\\
=&\lt|\sum_{i=1}^r\int_{X}\int_Yg_i(t,\psi,\phi)\mathrm{d}(\nu_t)_y(\psi)
\mathrm{d}\bigl(\eta^i_x(y)-\eta^{K,i}_x(y)\bigr)\rt|\\
=&\lt|\sum_{i=1}^r\sum_{j=1}^{r_2}\int_{X}G_{i,j}(t,y,\phi)\mathrm{d}\bigl(\eta^i_x(y)
-\eta^{K,i}_x(y)\bigr)\rt|,
\end{align*}
where $G_{i,j}(t,y,\phi)\colon=(\nu_t)_y(g_{i,j}(t,\cdot,\phi))$. By $(\mathbf{A4})$ and Proposition~\ref{prop-nu}, we have $\|\nu_{\cdot}\|<\infty$.

By $\nu_{\cdot}\in \mathcal{C}(\cT,\mathcal{C}_{\mu_X,1}(X,\cM_+(Y)))$ and Proposition~\ref{prop-nu}(iii), we have $G_{i,j}(t,y,\phi)$ is continuous in $y$ for each $j=1,\ldots,r_2$.

Next, we construct bounded Lipschitz approximations of $G_{i,j}(t,\cdot,\phi)$.

For every $n\in\N$, let $$G^n_{i,j}(t,y,\phi)=\inf_{z\in X}(G_{i,j}(t,z,\phi)+n|y-z|),\quad y\in X.$$ It is readily verified that $$\BL(G^n_{i,j}(t,\cdot,\phi))\le n+\sup_{z\in X}|G_{i,j}(t,z,\phi)|\le n+\BL(g_i)\|\nu_t\|$$ and $G^n_{i,j}(t,\cdot,\phi)\in\BL(X)$ converges to $G_{i,j}(t,\cdot,\phi)$ uniformly. Hence for $\varepsilon>0$, there exists $N=N(t,\phi)\in\N$ such that for all $n\ge N$,
$$\sup_{y\in X}|G^n_{i,j}(t,y,\phi)-G_{i,j}(t,y,\phi)|<\varepsilon.$$
By Proposition~\ref{prop-nu}(ii), we have $\|\nu_{\cdot}\|<\infty$. Let $$a_n\colon=n+\|\nu_{\cdot}\|\sup_{1\le i\le r}\BL(g_{i})<\infty.$$
Hence\begin{align*}
&\left|V[\eta,\nu_{\cdot},h](t,x,\phi)-V[\eta^K,\nu_{\cdot},h](t,x,\phi)\right|\\
\le&\sum_{i=1}^r\sum_{j=1}^{r_2}\left(\left|\int_XG^N_{i,j}(t,y,\phi)\rd(\eta^i_x(y)-\eta^{K,i}_x(y))\right|
\right.\\
&\left.+\left|\int_{X}(G_{i,j}(t,y,\phi)-G^N_{i,j}(t,y,\phi))
\rd(\eta^i_x(y)-\eta^{K,i}_x(y))\right|\right)\\
\le&\sum_{i=1}^r\sum_{j=1}^{r_2}\left(a_N\sup_{f\in\BL_1(X)}\left|\int_Xf\rd(\eta^i_x-\eta_x^{K,i})\right|
+\varepsilon(\eta^i_x(X)+\eta^{K,i}_x(X))\right)\\
\le&\sum_{i=1}^r\sum_{j=1}^{r_2}\left(a_Nd_{\sf BL}(\eta_x^i,
\eta_x^{K,i})+\varepsilon(\eta_x^i(X)+\eta_x^{K,i}(X))\right)\\
\le&\sum_{i=1}^r\sum_{j=1}^{r_2}\left((\varepsilon+a_N)d_{\sf BL}(\eta_x^i,
\eta_x^{K,i})+2\varepsilon\eta_x^i(X)\right)\\
=& r_2\sum_{i=1}^r\left((\varepsilon+a_N)d_{\sf BL}(\eta_x^i,\eta_x^{K,i})+2\varepsilon \eta_x^i(X)\right),
\end{align*}
which further implies that
\begin{align*}
  &\sup_{x\in X}\lt|V[\eta,\nu_{\cdot},h](t,x,\phi)-V[\eta^K,\nu_{\cdot},h](t,x,\phi)\rt|\\
  \le& r_2(\varepsilon+a_N)\sum_{i=1}^rd_{\infty}(\eta^i,\eta^{K,i})+2\varepsilon r_2\sum_{i=1}^r\|\eta^i\|,
\end{align*}
Since $$\lim_{K\to\infty}\sum_{i=1}^rd_{\infty}(\eta^i,\eta^{K,i})=0,$$ we can further choose $K_0\in\N$ large enough such that for all $K\ge K_0$,
\[r_2(\varepsilon+a_N)\sum_{i=1}^rd_{\infty}(\eta^i,\eta^{K,i})<\varepsilon.\]
This shows $$\lim_{K\to\infty}\sup_{x\in X}|V[\eta,\nu_{\cdot},h](t,x,\phi)-V[\eta^K,\nu_{\cdot},h](t,x,\phi)|=0.$$ For every $\xi_{\cdot}\in \mathcal{B}(\cT,\cM_+(Y))$, it yields from Proposition~\ref{prop-fibercomplete}(iii)\footnote{Here we replace the compact space $X$ by the compact interval $\cT$.} that $$\sup_{t\in\cT}\xi_t(Y)<\infty.$$ This shows that for $K\ge K_0$,
\begin{align*}
  &\int_Y\sup_{x\in X}|V[\eta,\nu_{\cdot},h](\tau,x,\phi)-V[\eta^K,\nu_{\cdot},h](\tau,x,\phi)|\rd\xi_{\tau}(\phi)\\
  \le&\varepsilon (1+2r_2\sum_{i=1}^r\|\eta^i\|)\sup_{t\in\cT}\xi_{t}(Y)<\infty,
\end{align*}
by Dominated Convergence Theorem, we have
\[\lim_{K\to\infty}\int_Y\sup_{x\in X}|V[\eta,\nu_{\cdot},h](\tau,x,\phi)-V[\eta^K,\nu_{\cdot},h](\tau,x,\phi)|\rd\xi_{\tau}(\phi)=0,\quad \forall \tau\in\cT.\]
Since $\cT$ is compact, it follows from the Dominated Convergence again that
\[\lim_{K\to\infty}\int_0^t\int_Y\sup_{x\in X}|V[\eta,\nu_{\cdot},h](\tau,x,\phi)-V[\eta^K,\nu_{\cdot},h](\tau,x,\phi)|\rd\xi_{\tau}(\phi)\rd\tau=0,
\quad \forall t\in\cT.\]
\end{enumerate}
\end{proof}
\section{Proof of Proposition~\ref{prop-continuousdependence}}\label{appendix-prop-continuousdependence}
\begin{proof}
We will suppress the variables in $V[\eta,\nu_{\cdot},h](t,x,\psi)$ and $\mathcal{S}^x_{s,t}[\eta,\nu_{\cdot},h]$ whenever they are clear and not the emphasis from the context. The properties of $\cA$ follows from that of $\mathcal{S}^x_{0,t}[\eta,\nu_{\cdot},h]$. Hence in the following, we will first establish corresponding continuity and Lipschitz continuity for $\mathcal{S}^x_{0,t}[\eta,\nu_{\cdot},h]$ and then apply the results to derive respective properties for $\cA$.
\begin{enumerate}
\item[(i)]
\begin{enumerate}
\item[$\bullet$] Let $\nu_{\cdot}\in \mathcal{C}(\cT,\mathcal{B}_{\mu_X,1}(X,\cM_+(Y)))$. We will show $$t\mapsto\cA[\eta,h]\nu_{t}\in \mathcal{C}(\cT,\mathcal{B}_{\mu_X,1}(X,\cM_+(Y))).$$
For every $x\in X$ and $t\in\cT$, since $Y$ is positively invariant under the flow $\mathcal{S}^x_{t,0}[\eta,\nu_{\cdot},h]$ by Theorem~\ref{theo-well-posedness-characteristic}, we have
\[\mathcal{S}^x_{t,0}[\eta,\nu_{\cdot},h]Y\subseteq Y,\quad Y\subseteq\mathcal{S}^x_{0,t}[\eta,\nu_{\cdot},h]Y,\]
\[\mathcal{S}^x_{0,t}[\eta,\nu_{\cdot},h]A\subseteq\R^{r_2}\setminus Y,\quad \text{for any Borel set}\ A\subset\R^{r_2}\setminus\mathcal{S}^x_{t,0}[\eta,\nu_{\cdot},h]Y.\]
Hence$$(\cA[\eta,h]\nu_{t})_x(A)=(\nu_0)_x(\mathcal{S}^x_{0,t}[\eta,\nu_{\cdot},h]A)=0,$$
 which implies that $$\supp(\cA[\eta,h]\nu_{t})_x\subseteq\mathcal{S}^x_{t,0}[\eta,\nu_{\cdot},h]Y\subseteq Y,$$
 i.e., $\supp(\cA[\eta,h]\nu_{t})_x\in\cM(Y)$.
 Moreover, since $Y\subseteq\mathcal{S}^x_{0,t}[\eta,\nu_{\cdot},h]Y$,
 we have
 \begin{equation}\label{eq-conservation}
 (\cA[\eta,h]\nu_{t})_x(Y)=(\nu_0)_x(\mathcal{S}^x_{0,t}[\eta,\nu_{\cdot},h]Y)=(\nu_0)_x(Y),
 \end{equation}
 i.e., the mass conservation law holds. Since $\nu_t\in \mathcal{B}_{\mu_X,1}(X,\cM_+(Y))$, integrating both sides of \eqref{eq-conservation} with respect to $\mu_X$ on $X$ yields
$\cA[\eta,h]\nu_{t}\in \mathcal{B}_{\mu_X,1}(X,\cM_+(Y))$.

\item[$\bullet$]
Now we show the continuity in $t$. Indeed,
\begin{align*}
  &d_{\infty}(\cA[\eta,\nu_{\cdot},h]\nu_{t},\cA[\eta,\nu_{\cdot},h]\nu_{s})\\
  =&\sup_{x\in X}d_{\sf BL}((\nu_0)_x\circ \cS^x_{0,t}[\eta,\nu_{\cdot},h],(\nu_0)_x\circ \cS^x_{0,s}[\eta,\nu_{\cdot},h])\\
  =&\sup_{x\in X}\sup_{f\in\mathcal{BL}_1(Y)}\lt|\int_Y\lt(f\circ \cS^x_{t,0}[\eta,\nu_{\cdot},h]\phi-f\circ \cS^x_{s,0}[\eta,\nu_{\cdot},h]\phi\rt)\rd(\nu_0)_x(\phi)\rt|\\
  \le&\sup_{x\in X}\int_Y\lt|\cS^x_{t,0}[\eta,\nu_{\cdot},h]\phi-\mathcal{S}^x_{s,0}[\eta,\nu_{\cdot},h]\phi\rt|
  \rd(\nu_0)_x(\phi)\\
  =&\sup_{x\in X}\int_Y
  \lt|\int_s^tV[\eta,\nu_{\cdot},h](x,\tau,\mathcal{S}^x_{\tau,0}[\eta,\nu_{\cdot},h]\phi)\rd\tau\rt|
  \rd(\nu_0)_x(\phi)\\
  \le&\sup_{x\in X}\int_Y\int_s^t\lt(\sum_{i=1}^r\int_X\int_Y\BL(g_i)\rd(\nu_{\tau})_y\rd\eta^i_x(y)+\BL(h)\rt)\rd\tau
  \rd(\nu_0)_x(\phi)\\
  \le&\sup_{x\in X}\int_Y\int_s^t\lt(\sum_{i=1}^r\BL(g_i)\eta^i_x(X)\|\nu_{\cdot}\|+\BL(h)\rt)\rd\tau\rd(\nu_0)_x(\phi)\\
  \le&|s-t|\lt(\sum_{i=1}^r\BL(g_i)\|\eta^i\|\|\nu_{\cdot}\|+\BL(h)\rt)\|\nu_0\|\to0,
\end{align*}
as $|s-t|\to0$. This shows that $$t\mapsto\cA[\eta,h]\nu_{t}\in \mathcal{C}(\cT,\mathcal{B}_{\mu_X,1}(X,\cM_+(Y))).$$
\item[$\bullet$] Finally, we show the continuity in $x$. Assume $\nu_{\cdot}\in \mathcal{C}(\cT,\mathcal{C}_{\mu_X,1}(X,\cM_+(Y)))$. We will show $\cA[\eta,h]\nu_{\cdot}\in \mathcal{C}(\cT,\mathcal{C}_{\mu_X,1}(X,\cM_+(Y)))$. Based on the above properties of $\cA[\eta,h]$, it suffices to show that
the continuity of measures in $x$ is preserved: $x\mapsto(\nu_0)_x\circ \mathcal{S}_{t,0}^x[\eta,\nu_{\cdot},h]$ is continuous.  Indeed,
\begin{align*}
  &d_{\sf BL}((\nu_0)_x\circ \mathcal{S}_{t,0}^x[\eta,\nu_{\cdot},h],(\nu_0)_y\circ \mathcal{S}_{t,0}^y[\eta,\nu_{\cdot},h])\\
  =&\sup_{f\in\BL_1(Y)}\lt|\int_Yf\circ \cS_{t,0}^x[\eta,\nu_{\cdot},h]\phi\rd(\nu_0)_x(\phi)-f\circ \cS_{t,0}^y[\eta,\nu_{\cdot},h]\phi\rd(\nu_0)_y(\phi)\rt|\\
  \le&\int_Y\lt|\cS_{t,0}^x[\eta,\nu_{\cdot},h]\phi-\mathcal{S}^y_{t,0}[\eta,\nu_{\cdot},h]\phi\rt|
  \rd(\nu_0)_x(\phi)\\&+\sup_{f\in\BL_1(Y)}\lt|\int_Yf\circ\cS_{t,0}^y[\eta,\nu_{\cdot},h]\phi
  \rd((\nu_0)_x(\phi)-(\nu_0)_y(\phi))\rt|.
\end{align*}
By Proposition~\ref{Vlasovf}(ii), $\cS_{t,0}^y[\eta,\nu_{\cdot},h]\phi$ is Lipschitz continuous with constant $1+L_1T$. Indeed,
\begin{align*}
  &|\cS_{t,0}^y[\eta,\nu_{\cdot},h]\phi-\cS_{t,0}^y[\eta,\nu_{\cdot},h]\varphi|\\
  \le&|\phi-\varphi|+\int_0^t|V[\eta,\nu_{\cdot},h](t,y,\cS_{\tau,0}^y[\eta,\nu_{\cdot},h]\phi)
  -V[\eta,\nu_{\cdot},h](t,y,\cS_{\tau,0}^y[\eta,\nu_{\cdot},h]\phi)|\rd\tau\\
  \le&|\phi-\varphi|+L_1\int_0^t|\cS_{\tau,0}^y[\eta,\nu_{\cdot},h]\phi
  -\cS_{\tau,0}^y[\eta,\nu_{\cdot},h]\varphi|\rd\tau,
\end{align*}
which implies by Gronwall's inequality that
\[|\cS_{t,0}^y[\eta,\nu_{\cdot},h]\phi-\cS_{t,0}^y[\eta,\nu_{\cdot},h]\varphi|\le \textnormal{e}^{L_1t}|\phi-\varphi|\le \textnormal{e}^{L_1T}|\phi-\varphi|.\]
Hence \[\frac{f\circ\cS_{t,0}^y[\eta,\nu_{\cdot},h]}{1+\textnormal{e}^{L_1T}}\in\BL_1(Y).\]
In addition,
\begin{align*}
 &|V[\eta,\nu_{\cdot},h](\tau,x,\cS_{\tau,0}^x[\eta,\nu_{\cdot},h]\phi)
  -V[\eta,\nu_{\cdot},h](\tau,y,\cS_{\tau,0}^y[\eta,\nu_{\cdot},h]\phi)|\\
  \le&|V[\eta,\nu_{\cdot},h](\tau,x,\cS_{\tau,0}^x[\eta,\nu_{\cdot},h]\phi)
  -V[\eta,\nu_{\cdot},h](\tau,x,\cS_{\tau,0}^y[\eta,\nu_{\cdot},h]\phi)|\\
  &+|V[\eta,\nu_{\cdot},h](\tau,x,\cS_{\tau,0}^y[\eta,\nu_{\cdot},h]\phi)
  -V[\eta,\nu_{\cdot},h](\tau,y,\cS_{\tau,0}^y[\eta,\nu_{\cdot},h]\phi)|\\
 \le&\sum_{i=1}^r\int_X\int_{Y}\Bigl|g_i(\tau,\psi,\cS_{\tau,0}^x[\eta,\nu_{\cdot},h]\phi)
 -g_i(\tau,\psi,\cS_{\tau,0}^y[\eta,\nu_{\cdot},h]\phi)\Bigr|
 \rd(\nu_{\tau})_z(\psi)\rd\eta^i_x(z)\\
 &+|h(\tau,x,\cS_{\tau,0}^x[\eta,\nu_{\cdot},h]\phi)
 -h(\tau,x,\cS_{\tau,0}^y[\eta,\nu_{\cdot},h]\phi)|\\
 &+\sum_{i=1}^r\Bigl|\int_X\int_{Y}g_i(\tau,\psi,\cS_{\tau,0}^y[\eta,\nu_{\cdot},h]\phi))
 \rd(\nu_{\tau})_z(\psi)\rd(\eta^i_x(z)-\eta^i_y(z))\Bigr|\\
 &+|h(\tau,x,\cS_{\tau,0}^y[\eta,\nu_{\cdot},h]\phi)-h(\tau,y,\cS_{\tau,0}^y[\eta,\nu_{\cdot},h]\phi)|\\
 \le&\sum_{i=1}^r\int_X\int_{Y}\cL(g_i)
 |\cS_{\tau,0}^x[\eta,\nu_{\cdot},h]\phi-\cS_{\tau,0}^y[\eta,\nu_{\cdot},h]\phi|
 \rd(\nu_{\tau})_z(\psi)\rd\eta^i_x(z)\\
 &+\cL(h)|\cS_{\tau,0}^x[\eta,\nu_{\cdot},h]\phi-\cS_{\tau,0}^y[\eta,\nu_{\cdot},h]\phi|
 +\sup_{\varphi\in Y}|h(\tau,x,\varphi)-h(\tau,y,\varphi)|\\
 &+\sum_{i=1}^r\Bigl|\int_X\int_{Y}g_i(\tau,\psi,\cS_{\tau,0}^y[\eta,\nu_{\cdot},h]\phi)
 \rd(\nu_{\tau})_z(\psi)\rd(\eta^i_x(z)-\eta^i_y(z))\Bigr|\\
 \le&\left(\cL(h)+\|\nu_{\cdot}\|\sum_{i=1}^r\BL(g_i)\|\eta^i\|\right)
 |\cS_{\tau,0}^x[\eta,\nu_{\cdot},h]\phi-\cS_{\tau,0}^y[\eta,\nu_{\cdot},h]\phi|\\
 &+\sup_{\varphi\in Y}|h(\tau,x,\varphi)-h(\tau,y,\varphi)|\\
 &+\sum_{i=1}^r\Bigl|\int_X\int_{Y}g_i(\tau,\psi,\cS_{\tau,0}^y[\eta,\nu_{\cdot},h]\phi)
 \rd(\nu_{\tau})_z(\psi)\rd(\eta^i_x(z)-\eta^i_y(z))\Bigr|
\end{align*}
This implies that
\begin{align*}
  &|\cS_{t,0}^x[\eta,\nu_{\cdot},h]\phi-\cS_{t,0}^y[\eta,\nu_{\cdot},h]\phi|\\
  =&\int_0^t|V[\eta,\nu_{\cdot},h](\tau,x,\cS_{\tau,0}^x[\eta,\nu_{\cdot},h]\phi)
  -V[\eta,\nu_{\cdot},h](\tau,y,\cS_{\tau,0}^y[\eta,\nu_{\cdot},h]\phi)|\rd\tau\\
  \le&\left(\cL(h)+\|\nu_{\cdot}\|\sum_{i=1}^r\BL(g_i)\|\eta^i\|\right)
 \int_0^t|\cS_{\tau,0}^x[\eta,\nu_{\cdot},h]\phi-\cS_{\tau,0}^y[\eta,\nu_{\cdot},h]\phi|\rd\tau\\
 &+\sup_{\varphi\in Y}\int_0^t|h(\tau,x,\varphi)-h(\tau,y,\varphi)|\rd\tau\\
 &+\sum_{i=1}^r\int_0^t\Bigl|\int_X\int_{Y}g_i(\tau,\psi,\cS_{\tau,0}^y[\eta,\nu_{\cdot},h]\phi)
 \rd(\nu_{\tau})_z(\psi)\rd(\eta^i_x(z)-\eta^i_y(z))\Bigr|\rd\tau.
\end{align*}

By Gronwall's inequality,
\begin{align*}
  &|\cS_{t,0}^x[\eta,\nu_{\cdot},h]\phi-\cS_{t,0}^y[\eta,\nu_{\cdot},h]\phi|\\
  \le&\Bigl(\sup_{\varphi\in Y}\int_0^t|h(\tau,x,\varphi)-h(\tau,y,\varphi)|\rd\tau\\
 &+\sum_{i=1}^r\int_0^t\Bigl|\int_X\int_{Y}g_i(\tau,\psi,\cS_{\tau,0}^y[\eta,\nu_{\cdot},h]\phi)
 \rd(\nu_{\tau})_z(\psi)\rd(\eta^i_x(z)-\eta^i_y(z))\Bigr|\rd\tau\Bigr)
 \textnormal{e}^{C_1t},
\end{align*}
where $C_1=\cL(h)+\|\nu_{\cdot}\|\sum_{i=1}^r\BL(g_i)\|\eta^i\|$. This further shows
\begin{align*}
  &d_{\sf BL}((\nu_0)_x\circ \mathcal{S}_{t,0}^x[\eta,\nu_{\cdot},h],(\nu_0)_y\circ \mathcal{S}_{t,0}^y[\eta,\nu_{\cdot},h])\\
  \le&\int_0^t\int_Y|V[\eta,\nu_{\cdot},h](\tau,x,\cS_{\tau,0}^x[\eta,\nu_{\cdot},h]\phi)
  -V[\eta,\nu_{\cdot},h](\tau,y,\cS_{\tau,0}^y[\eta,\nu_{\cdot},h]\phi)|\rd(\nu_0)_x(\phi)\rd\tau\\
  &+ (1+\textnormal{e}^{L_1T})d_{\sf BL}((\nu_0)_x,(\nu_0)_y)\\
  \le&\left(\cL(h)+\|\nu_{\cdot}\|\sum_{i=1}^r\BL(g_i)\|\eta^i\|\right)
 \int_Y\int_0^t|\cS_{\tau,0}^x[\eta,\nu_{\cdot},h]\phi-\cS_{\tau,0}^y[\eta,\nu_{\cdot},h]\phi|
 \rd(\nu_0)_x(\phi)\rd\tau\\
 &+\|\nu_0\|\int_0^t\sup_{\varphi\in Y}|h(\tau,x,\varphi)-h(\tau,y,\varphi)|\rd\tau\\
 &+\sum_{i=1}^r\int_0^t\int_Y|\int_X\int_{Y}g_i(\tau,\psi,\cS_{\tau,0}^y[\eta,\nu_{\cdot},h]\phi)
 \rd(\nu_{\tau})_z(\psi)\rd(\eta^i_x(z)-\eta^i_y(z))|\rd(\nu_0)_x(\phi)\rd\tau\\
   &+ (1+\textnormal{e}^{L_1T})d_{\sf BL}((\nu_0)_x,(\nu_0)_y)\\
   \le&\left(\cL(h)+\|\nu_{\cdot}\|\sum_{i=1}^r\BL(g_i)\|\eta^i\|\right)
 \int_Y\int_0^t|\cS_{\tau,0}^x[\eta,\nu_{\cdot},h]\phi-\cS_{\tau,0}^y[\eta,\nu_{\cdot},h]\phi|
 \rd(\nu_0)_x(\phi)\rd\tau\\
 &+\sum_{i=1}^r\int_0^t\int_Y\Bigl|\int_X\int_{Y}g_i(\tau,\psi,\cS_{\tau,0}^y[\eta,\nu_{\cdot},h]\phi)
 \rd(\nu_{\tau})_z(\psi)\rd(\eta^i_x(z)-\eta^i_y(z))\Bigr|\rd(\nu_0)_x(\phi)\rd\tau\\
   &+\|\nu_0\|\int_0^t\sup_{\varphi\in Y}|h(\tau,x,\varphi)-h(\tau,y,\varphi)|\rd\tau+ (1+\textnormal{e}^{L_1T})d_{\sf BL}((\nu_0)_x,(\nu_0)_y)\\
    \le&\|\nu_0\|\left(\cL(h)+\|\nu_{\cdot}\|\sum_{i=1}^r\BL(g_i)\|\eta^i\|\right)\int_0^t
 \textnormal{e}^{C_1\tau}\rd\tau\Bigl(\int_0^t\sup_{\varphi\in Y}|h(\tau,x,\varphi)-h(\tau,y,\varphi)|\rd\tau\\
 &+\sum_{i=1}^r\int_0^t\Bigl|\int_X\int_{Y}g_i(\tau,\psi,\cS_{\tau,0}^y[\eta,\nu_{\cdot},h]\phi)
 \rd(\nu_{\tau})_z(\psi)\rd(\eta^i_x(z)-\eta^i_y(z))\Bigr|\rd\tau\Bigr)\\
 &+\sum_{i=1}^r\int_0^t\int_Y\Bigl|\int_X\int_{Y}g_i(\tau,\psi, \cS_{\tau,0}^y[\eta,\nu_{\cdot},h]\phi)
 \rd(\nu_{\tau})_z(\psi)\rd(\eta^i_x(z)-\eta^i_y(z))\Bigr|\rd(\nu_0)_x(\phi)\rd\tau\\
   &+\|\nu_0\|\int_0^t\sup_{\varphi\in Y}|h(\tau,x,\varphi)-h(\tau,y,\varphi)|\rd\tau+ (1+\textnormal{e}^{L_1T})d_{\sf BL}((\nu_0)_x,(\nu_0)_y).
\end{align*}
Since $\nu_{\cdot}\in \mathcal{C}(\cT,\mathcal{C}_{\mu_X,1}(X,\cM_+(Y))$, by Proposition~\ref{prop-nu}(iii), $(\nu_t)_x$ is weakly continuous in $x$. Since
By $(\mathbf{A2})$, $g_i$ is bounded Lipschitz, and $\cS_{\tau,0}^y[\eta,\nu_{\cdot},h]\phi$ is continuous in $\phi$, we have $g_i(\tau,\psi,\cS_{\tau,0}^y[\eta,\nu_{\cdot},h]\phi)$ is bounded continuous in $\phi$. Hence\\ \noindent $\int_{Y}g_i(\tau,\psi,\cS_{\tau,0}^y[\eta,\nu_{\cdot},h]\phi)
 \rd(\nu_{\tau})_z(\psi)$ is continuous in $z$. Moreover $$|\int_{Y}g_i(\tau,\psi,\cS_{\tau,0}^y[\eta,\nu_{\cdot},h]\phi)
 \rd(\nu_{\tau})_z(\psi)|\le\BL(g_i)\|\nu_{\cdot}\|<\infty$$ is also bounded on $X$, since $\eta^i\in \mathcal{C}(X,\cM_+(X))$, using Proposition~\ref{prop-nu}(iii) again, we know
\[\lim_{|x-y|\to0}\Bigl|\int_X\int_{Y}g_i(\tau,\psi,\cS_{\tau,0}^y[\eta,\nu_{\cdot},h]\phi)
 \rd(\nu_{\tau})_z(\psi)\rd(\eta^i_x(z)-\eta^i_y(z))\Bigr|=0.\]
 Since $$\int_0^t|\int_X\int_{Y}g_i(\tau,\psi,\cS_{\tau,0}^y[\eta,\nu_{\cdot},h]\phi)
 \rd(\nu_{\tau})_z(\psi)\rd(\eta^i_x(z)|\rd\tau\le\BL(g_i)\|\nu_{\cdot}\|\|\eta\|T<\infty,$$
 By Dominated Convergence Theorem,
 $$\lim_{|x-y|\to0}\int_0^t\Bigl|\int_X\int_{Y}g_i(\tau,\psi,\cS_{\tau,0}^y[\eta,\nu_{\cdot},h]\phi)
 \rd(\nu_{\tau})_z(\psi)\rd(\eta^i_x(z)-\eta^i_y(z))\Bigr|\rd\tau=0.$$
Similarly, $$\lim_{|x-y|\to0}\int_0^t\int_Y\Bigl|\int_X\int_{Y}g_i(\tau,\psi, \cS_{\tau,0}^y[\eta,\nu_{\cdot},h]\phi)
 \rd(\nu_{\tau})_z(\psi)\rd(\eta^i_x(z)-\eta^i_y(z))\Bigr|\rd(\nu_0)_x(\phi)\rd\tau=0.$$ Moreover, by
 $(\mathbf{A8})$ as well as the Dominated Convergence Theorem,
 \[\lim_{|x-y|\to0}\int_0^t\sup_{\varphi\in Y}|h(\tau,x,\varphi)-h(\tau,y,\varphi)|\rd\tau=0.\] Since
$\nu_0\in\mathcal{C}_{\mu_X,1}(X,\cM_+(Y))$, $$\lim_{|x-y|\to0}d_{\sf BL}((\nu_0)_x,(\nu_0)_y)=0.$$ All these limits together yield
\[\lim_{|x-y|\to0}d_{\sf BL}((\nu_0)_x\circ \mathcal{S}_{t,0}^x[\eta,\nu_{\cdot},h],(\nu_0)_y\circ \mathcal{S}_{t,0}^y[\eta,\nu_{\cdot},h])=0.\]
\end{enumerate}
\item[(ii)] To show this Lipschitz continuity, we first need to show that $\cS^x_{s,t}[\nu_{\cdot}]\phi(x)$ is Lipschitz continuous in $\phi(x)$. Note that
\begin{align*}
    &|\cS_{t,0}^x\phi_1(x)-\cS_{t,0}^x\phi_2(x)|\\
    \le&|\phi_1(x)-\phi_2(x)|
    +\int_0^t|V(\tau,x,\cS_{\tau,0}^x\phi_1(x))-V(\tau,x,\cS_{\tau,0}^x\phi_2(x))|\rd\tau\\
    \le&|\phi_1(x)-\phi_2(x)|+L_1(\nu_{\cdot})\int_0^t|\cS_{\tau,0}^x\phi_1(x)
    -\cS_{0,\tau}^x\phi_2(x)|\rd\tau.
    \end{align*}
    By Gronwall's inequality,
    \begin{equation}\label{Lip-phi}|\cS_{t,0}^x\phi_1(x)-\cS_{t,0}^x\phi_2(x)|\le \textnormal{e}^{L_1t}|\phi_1(x)-\phi_2(x)|.\end{equation}
    Similarly, one can show \begin{equation*}
    |\cS_{0,t}^x\phi_1(x)-\cS_{0,t}^x\phi_2(x)|\le \textnormal{e}^{L_1t}|\phi_1(x)-\phi_2(x)|.
    \end{equation*}
 Next, we show $\cS^x_{s,t}[\nu_{\cdot}]$ is Lipschitz continuous in $\nu_{\cdot}$. Observe that
\begin{align}
\nonumber&d_{\sf BL}((\nu_0)_x^1\circ \cS_{0,t}^x[\nu^1_{\cdot}],(\nu_0)_x^2\circ \cS^x_{0,t}[\nu^2_{\cdot}])\\
\label{sum}\le&d_{\sf BL}((\nu_0)_x^1\circ \cS_{0,t}^x[\nu^1_{\cdot}],(\nu_0)_x^1\circ \cS^x_{0,t}[\nu^2_{\cdot}])+d_{\sf BL}((\nu_0)_x^1\circ \cS^x_{0,t}[\nu^2_{\cdot}],(\nu_0)_x^2\circ \cS^x_{0,t}[\nu^2_{\cdot}]).
\end{align}
We now estimate the first term.
\begin{align*}
&d_{\sf BL}((\nu_0^1)_x\circ \cS^x_{0,t}[\nu^1_{\cdot}],(\nu_0^1)_x\circ \cS^x_{0,t}[\nu^2_{\cdot}])\\
=&\sup_{f\in\mathcal{BL}_1(Y)}\int_Y f(\phi)\mathrm{d}(((\nu_0^1)_x\circ \cS^x_{0,t}[\nu^1_{\cdot}])(\phi)-((\nu_0^1)_x\circ \cS^x_{0,t}[\nu^2_{\cdot}])(\phi))\\
=&\sup_{f\in\mathcal{BL}_1(Y)}\int_Y ((f\circ \cS^x_{t,0}[\nu^1_{\cdot}])(\phi)-(f\circ \cS^x_{t,0}[\nu^2_{\cdot}])(\phi))\mathrm{d}(\nu_0^1)_x(\phi)\\
\le&\int_Y\lt|\cS^x_{t,0}[\nu^1_{\cdot}](\phi)- \cS^x_{t,0}[\nu^2_{\cdot}](\phi)\rt|\mathrm{d}(\nu_0^1)_x(\phi)\colon=\lambda_x(t)\\
=&\int_Y\lt|\int_0^t(V[\nu^1_{\cdot}](\tau,x,\cS^x_{\tau,0}[\nu^1_{\cdot}]\phi)
  -V[\nu^2_{\cdot}](\tau,x,\cS^x_{\tau,0}[\nu^2_{\cdot}]\phi))
  \mathrm{d}\tau\rt|\mathrm{d}(\nu_0^1)_x(\phi)\\
  \le&\int_Y\lt|\int_0^t(V[\nu^1_{\cdot}](\tau,x,\cS^x_{\tau,0}[\nu^1_{\cdot}]\phi)
  -V[\nu^2_{\cdot}](\tau,x,\cS^x_{\tau,0}[\nu^1_{\cdot}]\phi))
  \mathrm{d}\tau\rt|\mathrm{d}(\nu_0^1)_x(\phi)\\
  &+\int_Y\lt|\int_0^t(V[\nu^2_{\cdot}](\tau,x,\cS^x_{\tau,0}[\nu^1_{\cdot}]\phi)
  -V[\nu^2_{\cdot}](\tau,x,\cS^x_{\tau,0}[\nu^2_{\cdot}]\phi))
  \mathrm{d}\tau\rt|\mathrm{d}(\nu_0^1)_x(\phi)\\
  \le&\int_Y\int_0^t\lt|V[\nu^1_{\cdot}](\tau,x,\cS^x_{\tau,0}[\nu^1_{\cdot}]\phi)
  -V[\nu^2_{\cdot}](\tau,x,\cS^x_{\tau,0}[\nu^1_{\cdot}]\phi)\rt|
  \mathrm{d}\tau\mathrm{d}(\nu_0^1)_x(\phi)\\
  &+\int_Y\int_0^t\lt|V[\nu^2_{\cdot}](\tau,x,\cS^x_{\tau,0}[\nu^1_{\cdot}]\phi)
  -V[\nu^2_{\cdot}](\tau,x,\cS^x_{\tau,0}[\nu^2_{\cdot}]\phi)\rt|
  \mathrm{d}\tau\mathrm{d}(\nu_0^1)_x(\phi)\\
  =&\int_0^t\int_Y\lt|V[\nu^1_{\cdot}](\tau,x,\cS^x_{\tau,0}[\nu^1_{\cdot}]\phi)
  -V[\nu^2_{\cdot}](\tau,x,\cS^x_{\tau,0}[\nu^1_{\cdot}]\phi)\rt|
  \mathrm{d}(\nu_0^1)_x(\phi)\mathrm{d}\tau\\
  &+\int_0^t\int_Y\lt|V[\nu^2_{\cdot}](\tau,x,\cS^x_{\tau,0}[\nu^1_{\cdot}]\phi)
  -V[\nu^2_{\cdot}](\tau,x,\cS^x_{\tau,0}[\nu^2_{\cdot}]\phi)\rt|
  \mathrm{d}(\nu_0^1)_x(\phi)\mathrm{d}\tau\\
  \le&\int_0^t\int_Y\lt|V[\nu^1_{\cdot}](\tau,x,\psi)-V[\nu^2_{\cdot}](\tau,x,\psi)\rt|
  \mathrm{d}(\nu_{\tau}^1)_x(\psi)\mathrm{d}\tau\\
  &+\int_0^t\int_Y\lt|V[\nu^2_{\cdot}](\tau,x,\cS^x_{\tau,0}[\nu^1_{\cdot}]\phi)
  -V[\nu^2_{\cdot}](\tau,x,\cS^x_{\tau,0}[\nu^2_{\cdot}]\phi)\rt|
  \mathrm{d}(\nu_0^1)_x(\phi)\mathrm{d}\tau,
\end{align*}
which implies by Proposition~\ref{Vlasovf} that
\begin{align*}
\lambda_x(t)\le& L_2\int_0^td_{\infty}(\nu^1_{\tau},\nu^2_{\tau})(\nu_{\tau}^1)_x(Y)
\mathrm{d}\tau\\&+L_1(\nu^2_{\cdot})\int_0^t\int_Y|\cS^x_{\tau,0}[\nu^1_{\cdot}]\phi
-\cS^x_{\tau,0}[\nu^2_{\cdot}]\phi|
\mathrm{d}(\nu_{0}^1)_x(\phi)\mathrm{d}\tau\\
\le&L_2\|\nu^1_{\cdot}\|\int_0^td_{\infty}(\nu^1_{\tau},\nu^2_{\tau})\rd\tau
+L_1(\nu^2_{\cdot})\int_0^t\lambda_x(\tau)\rd\tau.
\end{align*}
Applying Gronwall's inequality (see e.g., \cite[Lemma~2.5]{KM18}), we have
\begin{align}
\label{term-1}
\lambda_x(t)\le& L_2\|\nu^1_{\cdot}\|\textnormal{e}^{L_1(\nu^2_{\cdot})t}
\int_0^td_{\infty}(\nu^1_{\tau},\nu^2_{\tau})\textnormal{e}^{-L_1(\nu^2_{\cdot})\tau}\rd\tau.
\end{align}

Next, we estimate the second term. For $f\in\mathcal{BL}_1(Y)$, from \eqref{Lip-phi} it follows that
\[\mathcal{L}(f\circ \cS^x_{t,0}[\nu^2_{\cdot}])\le \mathcal{L}(f)\mathcal{L}(\cS^x_{t,0}[\nu^2_{\cdot}])
\le\mathcal{L}(f)\textnormal{e}^{L_1(\nu^2_{\cdot})t},\quad \|f\circ \cS^x_{t,0}[\nu^2_{\cdot}]\|_{\infty}\le\|f\|_{\infty}.\]
Hence $\BL(f\circ \cS^x_{t,0})\le \textnormal{e}^{L_1(\nu^2_{\cdot})t}$. For every $x\in X$,
\begin{alignat}{2}
 \nonumber &d_{\sf BL}((\nu_0^1)_x\circ \cS^x_{0,t}[\nu^2_{\cdot}],(\nu_0)_x^2\circ \cS^x_{0,t}[\nu^2_{\cdot}])\\
  \nonumber=&\sup_{f\in\mathcal{BL}_1(Y)}\int_Y(f\circ \cS^x_{t,0}[\nu^2_{\cdot}])(\phi)\rd((\nu_0^1)_x(\phi)-(\nu_0^2)_x(\phi))\\
 \label{term-2} \le&\textnormal{e}^{L_1(\nu^2_{\cdot})t}d_{\sf BL}((\nu_0)_x^1,(\nu_0)_x^2)\le \textnormal{e}^{L_1(\nu^2_{\cdot})t}d_{\infty}(\nu_0^1,\nu_0^2).
\end{alignat}
Combining \eqref{term-1} and \eqref{term-2}, it follows from \eqref{sum} that
\begin{align}
\nonumber&d_{\infty}(\cA[\eta,h]\nu_{t}^1,\cA[\eta,h]\nu_{t}^2)\\
\nonumber =&\sup_{x\in X}d_{\sf BL}((\nu_0)_x^1\circ \cS^x_{0,t}[\nu^1_{\cdot}],(\nu_0)_x^2\circ \cS^x_{0,t}[\nu^2_{\cdot}])\\
 \nonumber\le&\textnormal{e}^{L_1(\nu^2_{\cdot})t}d_{\infty}(\nu_0^1,\nu_0^2)+ L_2\|\nu^1_{\cdot}\|\textnormal{e}^{L_1(\nu^2_{\cdot})t}\int_0^td_{\infty}(\nu^1_{\tau},\nu^2_{\tau})
  \textnormal{e}^{-L_1(\nu^2_{\cdot})\tau}\rd\tau.
\end{align}

\item[(iii)] Lipschitz continuity of $\cA[\eta,h]$ in $h$.

We first need to establish the Lipschitz continuity for $\cS^x_{s,t}[h]$. It follows from Proposition~\ref{Vlasovf} that
\begin{align*}
  &|\cS^x_{0,t}[h_1]\phi-\cS^x_{0,t}[h_2]\phi|\\
  \le&\lt|\int_0^t\lt(V[h_1](\tau,x,\cS^x_{0,\tau}[h_1]\phi)
  -V[h_2](\tau,x,\cS^x_{0,\tau}[h_2]\phi)\rt)\rd\tau\rt|\\
  \le&\int_0^t\lt|V[h_1](\tau,x,\cS^x_{0,\tau}[h_1]\phi)
  -V[h_2](\tau,x,\cS^x_{0,\tau}[h_1]\phi(x))\rt|\rd\tau\\
  &+\int_0^t\lt|V[h_2](\tau,x,\cS^x_{0,\tau}[h_1]\phi)
  -V[h_2](\tau,x,\cS^x_{0,\tau}[h_2]\phi)\rt|\rd\tau\\
  \le&\int_0^t\lt|h_1(\tau,x,\cS^x_{0,\tau}[h_1]\phi(x))
  -h_2(\tau,x,\cS^x_{0,\tau}[h_1]\phi(x))\rt|\rd\tau\\
  &+L_1(\nu_{\cdot})\int_0^t\lt|\cS^x_{0,\tau}[h_1]\phi-\cS^x_{0,\tau}[h_2]\phi\rt|\rd\tau.
\end{align*}
By Gronwall's inequality, we have
\begin{align*}
\lt|\cS^x_{0,t}[h_1]\phi-\cS^x_{0,t}[h_2]\phi\rt|
\le&\textnormal{e}^{L_1t}\int_0^t|h_1(\tau,x,\cS^x_{0,\tau}[h_1]\phi)
-h_2(\tau,x,\cS^x_{0,\tau}[h_1]\phi)|\rd\tau.
\end{align*}
Note that   \begin{align*}
  &d_{\sf BL}((\nu_0)_x\circ \cS^x_{0,t}[h_1],(\nu_0)_x\circ \cS^x_{0,t}[h_2])\\
  =&\sup_{f\in\mathcal{BL}_1(Y)}\int_Yf(\phi)\rd((\nu_0)_x\circ \cS^x_{0,t}[h_1]-(\nu_0)_x\circ \cS^x_{0,t}[h^2])\\
  =&\sup_{f\in\mathcal{BL}_1(Y)}\int_Y\lt((f\circ \cS^x_{t,0}[h_1])(\phi)
  -(f\circ \cS^x_{t,0}[h_2])(\phi)\rt)\rd(\nu_0)_x(\phi)\\
  \le&\int_Y\lt|\cS^x_{t,0}[h_1]\phi
  -\cS^x_{t,0}[h_2]\phi)\rt|\rd(\nu_0)_x(\phi)=\colon\alpha_x(t)\\
  \le& \textnormal{e}^{L_1t}\int_0^t\int_Y|h_1(\tau,x,\cS^x_{0,\tau}[h_1]\phi)
  -h_2(\tau,x,\cS^x_{0,\tau}[h_2]\phi)|\rd(\nu_0)_x(\phi)\rd\tau\\
  \le& \textnormal{e}^{L_1t}\int_0^t\int_Y|h_1(\tau,x,\cS^x_{0,\tau}[h_1]\phi)
  -h_2(\tau,x,\cS^x_{0,\tau}[h_1]\phi)|\rd(\nu_0)_x(\phi)\rd\tau\\
  &+\textnormal{e}^{L_1t}\int_0^t\int_Y|h_2(\tau,x,\cS^x_{\tau,0}[h_1]\phi)
  -h_2(\tau,x,\cS^x_{\tau,0}[h_2]\phi)|\rd(\nu_0)_x(\phi)\rd\tau\\
  \le&\textnormal{e}^{L_1t}\int_0^t\int_Y|h_1(\tau,x,\phi)
  -h_2(\tau,x,\phi)|\rd(\nu_{\tau})_x(\phi)\rd\tau\\
  &+\BL(h_2)\textnormal{e}^{L_1t}\int_0^t\int_Y|\cS^x_{\tau,0}[h_1]\phi
  -\cS^x_{\tau,0}[h_2]\phi)|\rd(\nu_0)_x(\phi)\rd\tau,
\end{align*}
which implies that
\begin{align*}
\textnormal{e}^{-L_1t}\alpha_x(t)
  \le&\int_0^t\int_Y|h_1(\tau,x,\psi)-h_2(\tau,x,\psi)|\rd(\nu_{\tau})_x(\psi)\rd\tau
  +C_2\int_0^t\textnormal{e}^{-L_1\tau}\alpha_x(\tau)\rd\tau,\end{align*}
  where $C_2\colon=\BL(h_2)\textnormal{e}^{L_1T}$.
Applying Gronwall's inequality yields that
\begin{alignat}{2}
\nonumber\alpha_x(t)\le& \textnormal{e}^{(L_1+C_2)t}
\int_0^t\textnormal{e}^{-C_2\tau}\int_Y|h_1(\tau,x,\phi)-h_2(\tau,x,\phi)|\rd(\nu_{\tau})_x(\phi)\rd\tau\\
\label{h-estimate}\le& \|\nu_{\cdot}\|\textnormal{e}^{L_3t}\|h_1-h_2\|_{\infty}\int_0^t1\rd\tau,
\end{alignat}where $L_3=L_3(\nu_{\cdot},h_2)\colon=L_1(\nu_{\cdot})+C_2$, which further implies that
\begin{align*}
&d_{\infty}(\mathcal{A}[\eta,h_1](\nu_{t}),\mathcal{A}[\eta,h_2](\nu_{t}))\\
=&\sup_{x\in X}d_{\sf BL}((\nu_0)_x\circ \cS^x_{0,t}[h_1],(\nu_0)_x\circ \cS^x_{0,t}[h_2])\le T\|\nu_{\cdot}\|\textnormal{e}^{L_3t}\|h_1-h_2\|_{\infty}.
\end{align*}
\item[(iv)] Absolute continuity.
    Assume $\nu_0\in \mathcal{B}_{\mu_X,1}(X,\cM_{+,\abs}(Y))\hookrightarrow \mathcal{B}_{\mu_X,1}(X,\cM_{+,\abs}(\R^{r_2}))$.
For every $x\in X$, let $$\rho_0(x,\phi)=\frac{\rd(\nu_0)_x(\phi)}{\rd\phi},\quad \mathfrak{m}\text{-a.e.}\ \phi\in \R^{r_2},$$ be the Radon Nikodym derivative of $(\nu_0)_x$. Since $\cS^x_{0,t}[\eta,\nu_{\cdot},h]|_{\cS^x_{t,0}[\eta,\nu_{\cdot},h]Y}$ is Lipschitz continuous, by \cite[Thm.~3.1]{EG15}, one can extend $\cS^x_{0,t}[\eta,\nu_{\cdot},h]|_{\cS^x_{t,0}[\eta,\nu_{\cdot},h]Y}$ to a Lipschitz continuous function
$\widetilde{\cS}_{0,t}^x[\eta,\nu_{\cdot},h]$ from $\R^{r_2}$ to $\R^{r_2}$ such that
\[\widetilde{\cS}_{0,t}^x[\eta,\nu_{\cdot},h]=\cS^x_{0,t}[\eta,\nu_{\cdot},h]\quad \text{on}\ \cS^x_{t,0}[\eta,\nu_{\cdot},h]Y,\]
\[\mathcal{L}(\widetilde{\cS}_{0,t}^x[\eta,\nu_{\cdot},h])
\le\sqrt{r_2}\mathcal{L}(\cS^x_{0,t}[\eta,\nu_{\cdot},h]|_{\cS^x_{t,0}[\eta,\nu_{\cdot},h]Y}).\]
By Rademacher's Differentiability Theorem \cite[Thm.~3.2]{EG15} (see also \cite[Thm.~3.4.3]{CMN19}), we have $\widetilde{\cS}^x_{0,t}[\eta,\nu_{\cdot},h]$ is differentiable $\mathfrak{m}$-a.e. in $\R^{r_2}$. Using the change of variables formula for Lipschitz continuous maps in $\R^{r_2}$ \cite[Thm.~3.9]{EG15}, for any Lebesgue integrable function $f$ on $\R^{r_2}$,
\begin{equation}\label{eq-identity-f}
\int_{\R^{r_2}}f(\phi)\det\lt(\frac{\partial}{\partial\phi}
\widetilde{\cS}^x_{0,t}[\eta,\nu_{\cdot},h]\phi\rt)\rd\phi=\int_{\R^{r_2}}\sum_{\phi\in (\widetilde{\cS}^x_{0,t}[\eta,\nu_{\cdot},h])^{-1}(\psi)}f(\phi)\rd\psi,\end{equation}
where $\det(\frac{\partial}{\partial\phi}
\widetilde{\cS}^x_{0,t}[\eta,\nu_{\cdot},h]\phi)$ is the determinant of the Jacobian of $\widetilde{\cS}^x_{0,t}[\eta,\nu_{\cdot},h]$ for $\mathfrak{m}$-a.e. $\phi\in \R^{r_2}$. Let $g\in \mathcal{C}_{\sf b}(\R^{r_2})$ such that $\supp g\subseteq \cS^x_{t,0}[\eta,\nu_{\cdot},h]Y$. Then It is easy to show that $$f(\phi)=g(\phi)\rho_0(x,\phi),\quad \phi\in\R^{r_2},$$ is integrable on $\R^{r_2}$. For all $\psi\in\R^{r_2}\setminus Y$, we have $$(\widetilde{\cS}^x_{0,t}[\eta,\nu_{\cdot},h])^{-1}(\psi)\subset\R^{r_2}\setminus \cS^x_{t,0}[\eta,\nu_{\cdot},h]Y.$$ Since $$(\cS^x_{0,t}[\eta,\nu_{\cdot},h])^{-1}=\cS^x_{t,0}[\eta,\nu_{\cdot},h]\quad \text{on}\ Y,$$
$$(\cS^x_{t,0}[\eta,\nu_{\cdot},h])^{-1}=\cS^x_{0,t}[\eta,\nu_{\cdot},h]\quad \text{on}\ \cS^x_{t,0}[\eta,\nu_{\cdot},h]Y,$$
substituting this function $f$ into \eqref{eq-identity-f} yields that
\begin{align}
\nonumber&\int_{\cS^x_{t,0}[\eta,\nu_{\cdot},h]Y}g(\phi)\rho_0(x,\phi)\det\left(\frac{\partial}{\partial\phi}
\cS^x_{0,t}[\eta,\nu_{\cdot},h]\phi\right)\rd\phi\\
\nonumber=&\int_Y\sum_{\phi\in (\widetilde{\cS}^x_{0,t}[\eta,\nu_{\cdot},h])^{-1}(\psi)\cap\cS^x_{t,0}[\eta,\nu_{\cdot},h]Y}
g(\phi)\rho_0(x,\phi)\rd\psi,\\
\nonumber=&\int_Y\sum_{\phi=\cS^x_{t,0}[\eta,\nu_{\cdot},h](\psi)}g(\phi)\rho_0(x,\phi)\rd\psi\\
\nonumber=&\int_Yg(\cS^x_{t,0}[\eta,\nu_{\cdot},h](\psi))
\rho_0(x,\cS^x_{t,0}[\eta,\nu_{\cdot},h](\psi))\rd\psi\\
\nonumber=&\int_{\cS^x_{t,0}[\eta,\nu_{\cdot},h]Y}
g(\phi)\rho_0(x,\phi)\rd\cS^x_{0,t}[\eta,\nu_{\cdot},h](\phi).
\end{align}
This shows \begin{equation}\label{weak-identity}
\rho_0(x,\phi)\rd\cS^x_{0,t}[\eta,\nu_{\cdot},h](\phi)
=\rho_0(x,\phi)\det\left(\frac{\partial}{\partial\phi}
\cS^x_{0,t}[\eta,\nu_{\cdot},h]\phi\right)\rd\phi,\quad \mathfrak{m}\text{-a.e.}\ \phi\in\cS^x_{t,0}[\eta,\nu_{\cdot},h]Y.
\end{equation}
Since $$\rd(\nu_0)_x(\phi)=\rho_0(x,\phi)\rd\phi,\quad (\cA[\eta,h]\nu_{t})_x=(\nu_0)_x\circ\cS^x_{0,t}[\eta,\nu_{\cdot},h],$$
we have \begin{align*}&\rd(\cA[\eta,h]\nu_{t})_x(\phi)=\rd(\nu_0)_x(\cS^x_{0,t}[\eta,\nu_{\cdot},h]\phi)\\
=&\rho_0(x,\cS^x_{0,t}[\eta,\nu_{\cdot},h]\phi)\rd\cS^x_{0,t}[\eta,\nu_{\cdot},h]\phi,\quad \text{for}\ \phi\in\cS^x_{t,0}[\eta,\nu_{\cdot},h]Y,\end{align*}
which implies from \eqref{weak-identity} that
$$\rd(\cA[\eta,h]\nu_{t})_x(\phi)
=\rho_0(x,\cS^x_{0,t}[\eta,\nu_{\cdot},h]\phi)\det\left(\frac{\partial}{\partial\phi}
\cS^x_{0,t}[\eta,\nu_{\cdot},h]\phi\right)\rd\phi,\quad \mathfrak{m}\text{-a.e.}\ \phi\in\cS^x_{t,0}[\eta,\nu_{\cdot},h]Y,$$
i.e.,
$(\cA[\eta,h]\nu_{t})_x\in\cM_{+,\abs}(Y),\quad x\in X$ with
$$\frac{\rd(\cA[\eta,h]\nu_{t})_x(\phi)}{\rd\phi}
=\rho_0(x,\cS^x_{0,t}[\eta,\nu_{\cdot},h]\phi)\det\left(\frac{\partial}{\partial\phi}
\cS^x_{0,t}[\eta,\nu_{\cdot},h]\phi\right),\quad \mathfrak{m}\text{-a.e.}\ \phi\in Y.$$
\end{enumerate}
\end{proof}
\section{Proof of Proposition~\ref{prop-sol-fixedpoint}}\label{appendix-prop-sol-fixedpoint}
\begin{proof}
We will again suppress the variables in $V[\eta,\nu_{\cdot},h](t,x,\psi)$ and $\mathcal{S}^x_{s,t}[\eta,\nu_{\cdot},h]$ whenever they are clear and not the emphasis from the context.

We first use the Banach contraction principle to show that the solution $$\nu_{\cdot}\in\mathcal{C}(\mathcal{T},\mathcal{B}_{\mu_X,1}(X,\cM_+(Y)))$$ to \eqref{Fixed} exists uniquely. Then by Proposition~\ref{prop-continuousdependence}(vi), we have $$\nu_t\in \mathcal{B}_{\mu_X,1}(X,\cM_{+,\abs}(Y)),\quad \forall t\in\cT,$$ provided $\nu_0\in\mathcal{B}_{\mu_X,1}(X,\cM_{+,\abs}(Y))$.

It remains to show that $\mathcal{A}[\eta,h]$ is a contraction mapping from $(\mathcal{C}(\mathcal{T},\mathcal{B}_{\mu_X,1}(X,\cM_+(Y))),$ $d_{\alpha})$ to itself.

From the proof of Proposition~\ref{prop-continuousdependence}(i), we have $$\mathcal{A}[\eta,h]\colon (\mathcal{C}(\mathcal{T},\mathcal{B}_{\mu_X,1}(X,\cM_+(Y))),d_{\alpha})\to (\mathcal{C}(\mathcal{T},\mathcal{B}_{\mu_X,1}(X,\cM_+(Y))),d_{\alpha}).$$
It then suffices to show that  $\cA[\eta,h]$ is a contraction.

By the mass conservation law given in Proposition~\ref{prop-continuousdependence}(i), let $M=\|\nu^1_0\|$ and $\mathcal{B}^M_{\mu_X,1}(X,$ $\cM_+(X))=\{\mu\in\mathcal{B}_{\mu_X,1}(X,\cM_+(X)))\colon \|\mu\|\le M\}$. For $\nu^2_{\cdot}\in \mathcal{B}^M_{\mu_X,1}(X,\cM_+(X))$, let $\alpha\ge L_1(\nu^2_{\cdot})+L_2M+1$. Setting $\nu_0^1=\nu_0^2$ and multiplying $\textnormal{e}^{-\alpha t}$ in Proposition~\ref{prop-continuousdependence}(ii) yields
\begin{align*}
&d_{\alpha}(\cA[\eta,h](\nu_{\cdot}^1),\cA[\eta,h](\nu_{\cdot}^2))\\
\le&\sup_{t\in\mathcal{T}}L_2\|\nu^1_{\cdot}\|\textnormal{e}^{L_1(\nu^2_{\cdot})t}\int_0^t
d_{\infty}(\nu^1_{\tau},\nu^2_{\tau})\textnormal{e}^{-L_1(\nu^2_{\cdot})\tau}\rd\tau\\
\le& L_2\|\nu^1_{\cdot}\|\sup_{t\in\mathcal{T}}\textnormal{e}^{(L_1(\nu^2_{\cdot})-\alpha)t}\int_0^t
d_{\alpha}(\nu_{\cdot}^1,\nu_{\cdot}^2)
\textnormal{e}^{-(L_1(\nu^2_{\cdot})-\alpha)\tau}\rd\tau\\
\le&\frac{L_2\|\nu^1_{\cdot}\|}{\alpha-L_1(\nu^2_{\cdot})}d_{\alpha}(\nu_{\cdot}^1,\nu_{\cdot}^2)
\le\frac{1}{1+(L_2\|\nu^1_{\cdot}\|)^{-1}}d_{\alpha}(\nu_{\cdot}^1,\nu_{\cdot}^2),
\end{align*}
i.e., $\cA$ is a contraction mapping from $\mathcal{C}(\mathcal{T},\mathcal{B}^M_{\mu_X,1}(X,\cM(X)))$. That the solution $\nu_{\cdot}\in\mathcal{C}(\cT,$ $\mathcal{C}_{\mu_X,1}(X,\cM_+(Y)))$ provided $\nu_0\in \mathcal{C}_{\mu_X,1}(X,\cM_+(Y))$ follows from Proposition~\ref{prop-continuousdependence}(i).

  Next, we prove continuous dependence.
  \begin{enumerate}
  \item[(i)] Continuous dependence on initial conditions.

Substituting $\nu_t^i=\cA[\eta,h](\nu_{\cdot}^i)_t$ into
Proposition~\ref{prop-continuousdependence}(ii) yields
\[d_{\infty}(\nu^1_{t},\nu^2_{t})\le \textnormal{e}^{L_1(\nu^2_{\cdot})t}d_{\infty}(\nu_0^1,\nu_0^2)+ L_2\|\nu^1_{\cdot}\|\textnormal{e}^{L_1(\nu^2_{\cdot})t}\int_0^td_{\infty}(\nu^1_{\tau},\nu^2_{\tau})
  \textnormal{e}^{-L_1(\nu^2_{\cdot})\tau}\rd\tau.\]
Applying Gronwall's inequality again to $d_{\infty}(\nu^1_{t},\nu^2_{t})\textnormal{e}^{-L_1(\nu^2_{\cdot})t}$  yields
\[d_{\infty}(\nu^1_{t},\nu^2_{t})\le \textnormal{e}^{(L_1+L_2\|\nu^1_{\cdot}\|)t}d_{\infty}(\nu_0^1,\nu_0^2),\quad t\in\cT.\]
\item[(ii)] Continuous dependence of solutions of \eqref{Fixed} on $h$.

Let $\nu^i_{\cdot}\in \mathcal{C}(\mathcal{T},\mathcal{B}_{\mu_X,1}(X,\cM_+(Y)))$ for $i=1,2$ such that $\nu^1_0=\nu^2_0$.
    \begin{align*}
      &d_{\sf BL}((\nu_0)_x^1\circ \cS^x_{0,t}[\nu^1_{\cdot},h_1],(\nu^1_0)_x\circ \cS^x_{0,t}[\nu^2_{\cdot},h_2])\\
      \le&d_{\sf BL}((\nu_0)^1\circ \cS^x_{0,t}[\nu^1_{\cdot},h_1],\nu_0^1\circ \cS^x_{0,t}[\nu^1_{\cdot},h_2])+d_{\sf BL}(\nu_0^1\circ \cS^x_{0,t}[\nu^1_{\cdot},h_2],\nu_0^1\circ \cS^x_{0,t}[\nu^2_{\cdot},h_2]).
    \end{align*}
  It follows from \eqref{h-estimate} that
  \begin{align*}
  &d_{\sf BL}((\nu_0^1)_x\circ \cS^x_{0,t}[\nu^1_{\cdot},h_1],(\nu_0^1)_x\circ \cS^x_{0,t}[\nu^1_{\cdot},h_2])\le \|\nu^1_{\cdot}\|\|h_1-h_2\|_{\infty}\textnormal{e}^{L_3t}\int_0^t1\rd\tau.\end{align*}

It suffices to estimate $d_{\sf BL}((\nu_0^1)_x\circ \cS^x_{0,t}[\nu^1_{\cdot},h_2],(\nu_0^1)_x\circ \cS^x_{0,t}[\nu^2_{\cdot},h_2])$, which follows from \eqref{term-1} that:
\[
  d_{\sf BL}((\nu_0)_x^1\circ \cS^x_{0,t}[\nu^1_{\cdot},h_2],(\nu_0)_x^1\circ \cS^x_{0,t}[\nu^2_{\cdot},h_2])\le L_2\|\nu^1_{\cdot}\|\textnormal{e}^{L_1(\nu^2_{\cdot})t}
  \int_0^td_{\infty}(\nu^1_{\tau},\nu^2_{\tau})\textnormal{e}^{-L_1(\nu^2_{\cdot})\tau}\rd\tau.\]
Hence
\begin{align*}
  &d_{\sf BL}((\nu^1_t)_x,(\nu^2_t)_x)\\
  \le& L_2\|\nu^1_{\cdot}\|\textnormal{e}^{L_1(\nu^2_{\cdot})t}\int_0^td_{\infty}(\nu^1_{\tau},\nu^2_{\tau})
  \textnormal{e}^{-L_1(\nu^2_{\cdot})\tau}\rd\tau
  +\|\nu^1_{\cdot}\|\|h_1-h_2\|_{\infty}\textnormal{e}^{L_3t}\int_0^t1\rd\tau.
\end{align*}
Since $L_3-L_1=\BL(h_2)\textnormal{e}^{L_1T}>0$, by Gronwall's inequality,
\begin{align*}
 d_{\sf BL}((\nu^1_t)_x,(\nu^2_t)_x)
 \le&\textnormal{e}^{(L_1+L_2\|\nu^1_{\cdot}\|)t}\|\nu^1_{\cdot}\|\|h_1-h_2\|_{\infty}\textnormal{e}^
 {\BL(h_2)\textnormal{e}^{L_1T}T}\int_0^t\textnormal{e}^{-L_3\tau}\rd \tau,
\end{align*} This shows
\[d_{\infty}(\nu^1_t,\nu^2_t)\le \frac{1}{L_3}\|\nu^1_{\cdot}\|\textnormal{e}^{\BL(h_2)\textnormal{e}^{L_1T}T}\textnormal{e}^{(L_1+L_2
\|\nu^1_{\cdot}\|)t}\|h_1-h_2\|_{\infty}.\]

\item[(iii)] Continuous dependence on $\eta$. Since $\nu_0\in \mathcal{C}_{\mu_X,1}(X,\cM_+(Y))$, we have $\nu_{\cdot}\in\mathcal{C}(\cT,\mathcal{C}_{\mu_X,1}$ $(X,\cM_+(Y)))$. Let $\nu^K_{\cdot}\in \mathcal{C}(\mathcal{T},\mathcal{B}_{\mu_X,1}(X,\cM_+(Y)))$ such that $\nu_0=\nu^K_0$ be the solutions to the fixed point equations
    \[\nu_t=\cA[\eta,h]\nu_t,\quad \nu^K_t=\cA[\eta^K,h]\nu^K_t,\quad t\in\cT.\]
    Assume $$\lim_{K\to\infty}d_{\infty}(\eta^i,\eta^{K,i})=0,\quad i=1,\ldots,r.$$ In the following, we show
    \[\lim_{K\to\infty}d_{\infty}(\cA[\eta,h]\nu_t,\cA[\eta^K,h]\nu^K_t)=0,\quad t\in\cT.\]
By triangle inequality,    \begin{alignat}{2}
\nonumber      d_{\sf BL}((\nu_t)_x,(\nu^K_t)_x)=&d_{\sf BL}((\nu_0)_x\circ \cS^x_{0,t}[\eta,\nu_{\cdot}],(\nu_0)_x\circ \cS^x_{0,t}[\eta^K,\nu^K_{\cdot}])\\
\label{Eq-10}      \le&d_{\sf BL}((\nu_0)_x\circ \cS^x_{0,t}[\eta^K,\nu_{\cdot}],(\nu_0)_x\circ \cS^x_{0,t}[\eta^K,\nu^K_{\cdot}])\\
\nonumber      &+d_{\sf BL}((\nu_0)_x\circ \cS_{0,t}^x[\eta,\nu_{\cdot}],(\nu_0)_x\circ \cS^x_{0,t}[\eta^K,\nu_{\cdot}]).
    \end{alignat}
    From \eqref{term-1} it follows that
   \begin{align}
   \nonumber&d_{\sf BL}((\nu_0)_x\circ \cS^x_{0,t}[\eta^K,\nu_{\cdot}],(\nu_0)_x\circ \cS^x_{0,t}[\eta^K,\nu^K_{\cdot}])\\
   \nonumber\le&\int_Y|\cS_{t,0}^x[\eta^K,\nu_{\cdot}]\phi
   -\cS^x_{t,0}[\eta^K,\nu_{\cdot}^K]\phi|\rd(\nu_0)_x(\phi)=\colon\beta_x(t),\\
   \label{Eq-11}\le& L_{2,K}\|\nu_{\cdot}\|\textnormal{e}^{L_{1,K}(\nu_{\cdot}^K)t}
   \int_0^td_{\infty}(\nu_{\tau},\nu^K_{\tau})\textnormal{e}^{-L_{1,K}(\nu_{\cdot}^K)\tau}\rd\tau,
   \end{align}
    where the index $K$ in the constants indicates the dependence on $K$.

   We now estimate the second term.
\begin{align*}
  &d_{\sf BL}((\nu_0)_x\circ \cS^x_{0,t}[\eta,\nu_{\cdot}],(\nu_0)_x\circ \cS^x_{0,t}[\eta^K,\nu_{\cdot}])\\
  =&\sup_{f\in\mathcal{BL}_1(Y)}\int_Yf\rd((\nu_0)_x\circ \cS^x_{0,t}[\eta,\nu_{\cdot}]-(\nu_0)_x\circ \cS^x_{0,t}[\eta^K,\nu_{\cdot}])\\
  =&\sup_{f\in\mathcal{BL}_1(Y)}\int_Y\lt((f\circ \cS^x_{t,0}[\eta,\nu_{\cdot}])(\phi)-(f\circ \cS^x_{t,0}[\eta^K,\nu_{\cdot}])(\phi)\rt)\rd(\nu_0)_x(\phi)\\
  \le&\int_Y|\cS^x_{t,0}[\eta,\nu_{\cdot}]\phi-\cS^x_{t,0}[\eta^K,\nu_{\cdot}]\phi)|\rd(\nu_0)_x(\phi)
  =\colon\gamma_x(t)\\
  =&\int_Y\lt|\int_0^t(V[\eta,\nu_{\cdot}](\tau,x,\cS^x_{\tau,0}[\eta,\nu_{\cdot}]\phi)
  -V[\eta^K,\nu_{\cdot}](\tau,x,\cS^x_{\tau,0}[\eta^K,\nu_{\cdot}]\phi))
  \mathrm{d}\tau\rt|
  \mathrm{d}(\nu_0)_x(\phi)\\
  \le&\int_Y\lt|\int_0^t(V[\eta,\nu_{\cdot}](\tau,x,\cS^x_{\tau,0}[\eta,\nu_{\cdot}]\phi)
  -V[\eta,\nu_{\cdot}](\tau,x,\cS^x_{\tau,0}[\eta^K,\nu_{\cdot}]\phi))
  \mathrm{d}\tau\rt|
  \mathrm{d}(\nu_0)_x(\phi)\\
  &+\int_Y\lt|\int_0^t(V[\eta,\nu_{\cdot}](\tau,x,\cS^x_{\tau,0}[\eta^K,\nu_{\cdot}]\phi)
  -V[\eta^K,\nu_{\cdot}](\tau,x,\cS^x_{\tau,0}[\eta^K,\nu_{\cdot}]\phi))
  \mathrm{d}\tau\rt|
  \mathrm{d}(\nu_0)_x(\phi)\\
  \le&\int_Y\int_0^t\lt|V[\eta,\nu_{\cdot}](\tau,x,\cS^x_{\tau,0}[\eta,\nu_{\cdot}]\phi)
  -V[\eta,\nu_{\cdot}](\tau,x,\cS^x_{\tau,0}[\eta^K,\nu_{\cdot}]\phi)\rt|
  \mathrm{d}\tau\mathrm{d}(\nu_0)_x(\phi)\\
  &+\int_Y\int_0^t\lt|V[\eta,\nu_{\cdot}](\tau,x,\cS^x_{\tau,0}[\eta^K,\nu_{\cdot}]\phi)
  -V[\eta^K,\nu_{\cdot}](\tau,x,\cS^x_{\tau,0}[\eta^K,\nu_{\cdot}]\phi)\rt|
  \mathrm{d}\tau\mathrm{d}(\nu_0)_x(\phi)\\
  \le&L_1(\nu_{\cdot})\int_0^t\int_Y\lt|\cS^x_{\tau,0}[\eta,\nu_{\cdot}]\phi
  -\cS^x_{\tau,0}[\eta^K,\nu_{\cdot}]\phi\rt|
\mathrm{d}(\nu_0)_x(\phi)\mathrm{d}\tau\\
  &+\int_0^t\int_Y\lt|V[\eta,\nu_{\cdot}](\tau,x,\cS^x_{\tau,0}[\eta^K,\nu_{\cdot}]\phi)
  -V[\eta,\nu_{\cdot}](\tau,x,\cS^x_{\tau,0}[\eta^K,\nu^K_{\cdot}]\phi)\rt|
\mathrm{d}(\nu_0)_x(\phi)\mathrm{d}\tau\\
  +&\int_0^t\int_Y\lt|V[\eta,\nu_{\cdot}](\tau,x,\cS^x_{\tau,0}[\eta^K,\nu^K_{\cdot}]\phi)
  -V[\eta^K,\nu_{\cdot}](\tau,x,\cS^x_{\tau,0}[\eta^K,\nu^K_{\cdot}]\phi)\rt|
  \mathrm{d}(\nu_0)_x(\phi)\mathrm{d}\tau\\
  +&\int_0^t\int_Y\lt|V[\eta^K,\nu_{\cdot}](\tau,x,\cS^x_{\tau,0}[\eta^K,\nu^K_{\cdot}]\phi)
  -V[\eta^K,\nu_{\cdot}](\tau,x,\cS^x_{\tau,0}[\eta^K,\nu_{\cdot}]\phi)\rt|
\mathrm{d}(\nu_0)_x(\phi)\mathrm{d}\tau\\
  \le&L_1(\nu_{\cdot})\int_0^t\gamma_x(\tau)\rd\tau
  +L_1(\nu_{\cdot})\int_0^t\int_Y\lt|\cS^x_{\tau,0}[\eta^K,\nu_{\cdot}]\phi
  -\cS^x_{\tau,0}[\eta^K,\nu^K_{\cdot}]\phi\rt|
\mathrm{d}(\nu_0)_x(\phi)\mathrm{d}\tau\\
&+\int_0^t\int_Y\lt|V[\eta,\nu_{\cdot}](\tau,x,\phi)
  -V[\eta^K,\nu_{\cdot}](\tau,x,\phi)\rt|
\mathrm{d}(\nu^K_{\tau})_x(\phi)\mathrm{d}\tau\\
&+L_{1,K}(\nu_{\cdot})\int_0^t\int_Y\lt|\cS^x_{\tau,0}[\eta^K,\nu^K_{\cdot}]\phi
-\cS^x_{\tau,0}[\eta^K,\nu_{\cdot}]\phi\rt|
\mathrm{d}(\nu_0)_x(\psi)\mathrm{d}\tau \\
=&L_1(\nu_{\cdot})\int_0^t\gamma_x(\tau)\rd\tau+(
L_1(\nu_{\cdot})+L_{1,K}(\nu_{\cdot}))\int_0^t\beta_x(\tau)\rd\tau\\
&+\int_0^t\int_Y\lt|V[\eta,\nu_{\cdot}](\tau,x,\phi)
  -V[\eta^K,\nu_{\cdot}](\tau,x,\phi)\rt|
\mathrm{d}(\nu^K_{\tau})_x(\phi)\mathrm{d}\tau.
\end{align*}
To obtain further estimate, let $$\zeta^K_x(\tau)\colon=\int_Y\lt|V[\eta,\nu_{\cdot}](\tau,x,\phi)
  -V[\eta^K,\nu_{\cdot}](\tau,x,\phi)\rt|
\mathrm{d}(\nu_{\tau})_x(\phi).$$ By triangle inequality,
\begin{align*}
&\int_0^t\int_Y\lt|V[\eta,\nu_{\cdot}](\tau,x,\phi)
  -V[\eta^K,\nu_{\cdot}](\tau,x,\phi)\rt|
\mathrm{d}(\nu^K_{\tau})_x(\phi)\mathrm{d}\tau\\
\le& \int_0^t\zeta^K_x(\tau)\mathrm{d}\tau\\
&+\int_0^t\lt|\int_Y\lt|V[\eta,\nu_{\cdot}](\tau,x,\phi)
  -V[\eta^K,\nu_{\cdot}](\tau,x,\phi)\rt|
\mathrm{d}((\nu^K_{\tau})_x(\phi)-(\nu_{\tau})_x(\phi))\rt|\mathrm{d}\tau.
\end{align*}
Recall that $$\sup_{x\in X}\eta^{K,i}_x(Y)\le \sup_{x\in X}\eta^{i}_x(Y)+d_{\infty}(\eta^i,\eta^{K,i}),\quad i=1,\ldots,r.$$ This shows that there exists some $b>0$ independent of $K$ such that $$\sup_{K}(L_1(\eta,\nu_{\cdot})+L_{1,K}(\eta^K,\nu_{\cdot})),\quad \sup_{K}(L_2(\eta)+L_2(\eta^K))\le b,$$ since $$\sum_{i=1}^r(|\|\eta^i\|-\|\eta^{K,i}\||)\le\sum_{i=1}^rd_{\infty}(\eta^i,\eta^{K,i})\to0,\quad \text{as}\quad K\to\infty.$$ Moreover, it follows from Proposition~\ref{Vlasovf}(ii) that \begin{align*}
  &\Bigl|\bigl|V[\eta,\nu_{\cdot}](\tau,x,\varphi)
  -V[\eta^K,\nu_{\cdot}](\tau,x,\varphi)\bigr|-\bigl|V[\eta,\nu_{\cdot}](\tau,x,\phi)
  -V[\eta^K,\nu_{\cdot}](\tau,x,\phi)\bigr|\Bigr|\\
  \le&\lt|V[\eta,\nu_{\cdot}](\tau,x,\varphi)
  -V[\eta,\nu_{\cdot}](\tau,x,\phi)\rt|+\lt|V[\eta^K,\nu_{\cdot}](\tau,x,\varphi)
  -V[\eta^K,\nu_{\cdot}](\tau,x,\phi)\rt|\\
  \le&(L_1+L_{1,K})|\varphi-\phi|\le b|\varphi-\phi|.
\end{align*}
Further, by Proposition\ref{Vlasovf}(ii), one can show that
$$f_K(\tau,x,\varphi)\colon=\lt|V[\eta,\nu_{\cdot}](\tau,x,\varphi)
  -V[\eta^K,\nu_{\cdot}](\tau,x,\varphi)\rt|$$ is bounded Lipschitz in $\varphi$ with some constant $\widehat{b}>0$ such that $$\sup_{K\in\N}\sup_{\tau\in\cT}\sup_{x\in X}\BL(f_K(\tau,x,\cdot))\le \widehat{b}.$$ Hence
  \begin{align*}
  &\Bigl|\int_0^t\int_Y\lt|V[\eta,\nu_{\cdot}](\tau,x,\phi)
  -V[\eta^K,\nu_{\cdot}](\tau,x,\phi)\rt|
\mathrm{d}((\nu^K_{\tau})_x(\phi)-(\nu_{\tau})_x(\phi))\mathrm{d}\tau\Bigr|\\
\le& \widehat{b}\int_0^td_{\sf BL}((\nu^K_{\tau})_x,(\nu_{\tau})_x)\rd\tau.\end{align*}

This further implies that
\begin{align*}
\gamma_x(t)\le& L_1\int_0^t\gamma_x(\tau)\rd\tau+b\int_0^t\beta_x(\tau)\rd\tau
+\widehat{b}\int_0^td_{\sf BL}((\nu^K_{\tau})_x,(\nu_{\tau})_x)\rd\tau+\int_0^t\zeta^K_x(\tau)\rd\tau.
\end{align*}
By Gronwall's inequality, we have
\begin{align*}
  \gamma_x(t)\le \textnormal{e}^{L_1t}\lt(b\int_0^t\beta_x(\tau)\rd\tau
  +\widehat{b}\int_0^td_{\sf BL}((\nu^K_{\tau})_x,(\nu_{\tau})_x)\rd\tau
  +\int_0^t\zeta^K_x(\tau)\rd\tau\rt)
  \end{align*}
Hence by \eqref{Eq-10}, \eqref{Eq-11} and monotonicity of $\int_0^t\textnormal{e}^{-L_{1,K}\tau}d_{\infty}(\nu_{\tau},\nu^K_{\tau})
(\nu_{\tau}^K)_x(Y)\rd\tau$ in $t$, we have for $t\in\cT$,
\begin{align*}
&d_{\sf BL}((\nu_t)_x,(\nu^K_t)_x)\le\beta_x(t)+\gamma_x(t)\\
\le& \beta_x(t)+\textnormal{e}^{L_1t}\lt(b\int_0^t\beta_x(\tau)\rd\tau
+\widehat{b}\int_0^td_{\sf BL}((\nu^K_{\tau})_x,(\nu_{\tau})_x)\rd\tau
+\int_0^t\zeta^K_x(\tau)\rd\tau\rt)\\
\le&
L_{2,K}\|\nu_{\cdot}\|\int_0^t\textnormal{e}^{L_{1,K}(t-\tau)}d_{\infty}(\nu_{\tau},\nu^K_{\tau})
(\nu_{\tau}^K)_x(Y)\rd\tau\\
&+\textnormal{e}^{L_1t}b\int_0^t\beta_x(\tau)\rd\tau+\widehat{b}\textnormal{e}^{L_1t}\int_0^td_{\sf BL}((\nu^K_{\tau})_x,(\nu_{\tau})_x)\rd\tau
+\textnormal{e}^{L_1t}\int_0^t\zeta^K_x(\tau)\rd\tau\\
\le&(b-L_2)\|\nu_{\cdot}\|\int_0^t\textnormal{e}^{(b-L_1)(t-\tau)}d_{\infty}(\nu_{\tau},\nu^K_{\tau})
(\nu_{\tau}^K)_x(Y)\rd\tau\\
&+\textnormal{e}^{L_1t}b\int_0^t(b-L_2)\|\nu_{\cdot}\|\int_0^{\tau}\textnormal{e}^{(b-L_1)(\tau-s)}
d_{\infty}(\nu_{s},\nu^K_{s})
(\nu_{s}^K)_x(Y)\rd s\rd\tau\\
&+\widehat{b}\textnormal{e}^{L_1t}\int_0^td_{\sf BL}((\nu^K_{\tau})_x,(\nu_{\tau})_x)\rd\tau
+\textnormal{e}^{L_1t}\int_0^t\zeta^K_x(\tau)\rd\tau\\
\le&(b-L_2)\|\nu_{\cdot}\|(1+\textnormal{e}^{L_1t}bt)\int_0^t\textnormal{e}^{(b-L_1)(t-\tau)}d_{\infty}
(\nu_{\tau},\nu^K_{\tau})
(\nu_{\tau}^K)_x(Y)\rd\tau\\
&+\widehat{b}\textnormal{e}^{L_1t}\int_0^td_{\sf BL}((\nu^K_{\tau})_x,(\nu_{\tau})_x)\rd\tau
+\textnormal{e}^{L_1t}\int_0^t\zeta^K_x(\tau)\rd\tau\\
   \le&(b-L_2)\|\nu_{\cdot}\|(1+bT)\textnormal{e}^{L_1t}\int_0^t\textnormal{e}^{(b-L_1)(t-\tau)}d_{\infty}
   (\nu_{\tau},\nu^K_{\tau})(\nu_{\tau})_x(Y)\rd\tau\\
&+(b-L_2)\|\nu_{\cdot}\|(1+bT)\textnormal{e}^{L_1t}\int_0^t\textnormal{e}^{(b-L_1)(t-\tau)}
d_{\infty}(\nu_{\tau},\nu^K_{\tau})|(\nu^K_{\tau})_x(Y)-(\nu_{\tau})_x(Y)|\rd\tau\\
&+\widehat{b}\textnormal{e}^{L_1t}\int_0^td_{\sf BL}((\nu^K_{\tau})_x,(\nu_{\tau})_x)\rd\tau
+\textnormal{e}^{L_1t}\int_0^t\zeta^K_x(\tau)\rd\tau\\
   \le& (b-L_2)\|\nu_{\cdot}\|(1+bT)\textnormal{e}^{(2b-L_1)t}\int_0^t
   \textnormal{e}^{-2(b-L_1)\tau}d_{\infty}(\nu_{\tau},\nu^K_{\tau})d_{\sf BL}((\nu_{\tau})_x,(\nu^K_{\tau})_x)\rd\tau\\
   &+(b-L_2)\|\nu_{\cdot}\|(1+bT)\textnormal{e}^{bt}\int_0^t\textnormal{e}^{-(b-L_1)\tau}
   d_{\infty}(\nu_{\tau},\nu^K_{\tau})\rd\tau
   +\textnormal{e}^{L_1t}\int_0^t\zeta^K_x(\tau)\rd\tau,
\end{align*}
This further shows
\begin{align*}
  w_K(t)\le L_4\int_0^tw_K(\tau)^2+L_4w_K(\tau)+c_K,
\end{align*}
where $L_4=(b-L_2)\|\nu_{\cdot}\|(1+bT)\textnormal{e}^{bT}$, $w_K(t)\colon=\textnormal{e}^{-(b-L_1)t}d_{\infty}(\nu_t,\nu^K_t)$ is continuous by Proposition~\ref{prop-nu}, and $c_K=\textnormal{e}^{\max\{0,2L_1-b\}T}\sup_{x\in X}\int_0^T\zeta^K_x(\tau)\rd\tau$.
Applying Lemma~\ref{le-gronwall} to $w_K(t)$ yields
\[w_K(t)\le c_K\frac{\textnormal{e}^{L_4t}}{1+c_K(1-\textnormal{e}^{L_4t})},\quad t\in\cT,\]
provided $c_K<\frac{1}{\textnormal{e}^{L_4T}-1}$.

The rest is to show $\lim_{K\to\infty}c_K=0$.
For every $t\in\cT$, define $\widehat{\nu}_t\equiv\sup_{x\in X}(\nu_t)_x$:
\[\widehat{\nu}_t(E)=\sup_{x\in X}(\nu_t)_x(E),\quad \forall E\in\mathcal{B}(Y).\] Since $\nu_t\in \mathcal{B}(X,\cM_+(Y))$, it is easy to show that $\widehat{\nu}_t\in\cM_+(Y)$.
Hence \begin{align*}
\sup_{x\in X}\zeta_x^K(\tau)=&\int_Y\lt|V[\eta,\nu_{\cdot}](\tau,x,\phi)
  -V[\eta^K,\nu_{\cdot}](\tau,x,\phi)\rt|
\mathrm{d}(\nu_{\tau})_x(\phi)\\
\le&\sup_{x\in X}\int_Y\lt|V[\eta,\nu_{\cdot}](\tau,x,\phi)
  -V[\eta^K,\nu_{\cdot}](\tau,x,\phi)\rt|
\mathrm{d}\widehat{\nu}_{\tau}(\phi)\\
\le&\int_Y\sup_{x\in X}\lt|V[\eta,\nu_{\cdot}](\tau,x,\phi)
  -V[\eta^K,\nu_{\cdot}](\tau,x,\phi)\rt|
\mathrm{d}\widehat{\nu}_{\tau}(\phi),\end{align*}
by Proposition~\ref{Vlasovf}(v) (note that $\nu_{\tau}\in\mathcal{C}_{\mu_X,1}(X,\cM_+(Y))$), we have \begin{align*}
&\sup_{x\in X}\int_0^T\zeta_x^K(\tau)\rd\tau\le\int_0^T\sup_{x\in X}\zeta_x^K(\tau)\rd\tau\\
\le&\int_0^T\int_Y\sup_{x\in X}\lt|V[\eta,\nu_{\cdot}](\tau,x,\psi)
  -V[\eta^K,\nu_{\cdot}](\tau,x,\psi)\rt|
\mathrm{d}\widehat{\nu}_{\tau}(\psi)\rd\tau\to0,\quad \text{as}\ K\to\infty,
\end{align*}
i.e., $\lim_{K\to\infty}c_K=0$. Hence $\lim_{K\to\infty}\sup_{t\in\cT}w_K(t)=0$, i.e., $$\lim_{K\to\infty}\sup_{t\in\cT}d_{\infty}(\nu_t,\nu^K_t)=0.$$
\end{enumerate}
\end{proof}

\section{Proof of Lemma~\ref{le-partition}}\label{appendix-le-partition}
\begin{proof}
By ($\mathbf{A1}$), $X\subseteq\R^{r_1}$ is compact, there exists $R>0$ such that $X\subseteq[-R,R]^{r_1}$. Let $(B_i^{m})_{1\le i\le \lfloor m^{1/r_1}\rfloor^{r_1}}$ be the equipartition of $[-R,R]^{r_1}$ into $\lfloor m^{1/r_1}\rfloor^{r_1}$ copies of small cubes with $\Diam(B_i^m)=\frac{2r_1R}{\lfloor m^{1/r_1}\rfloor}$. Since $\lfloor m^{1/r_1}\rfloor^{r_1}\le m$, one can further partition some of these $B^m_i$ so that one obtains a possibly finer partition $(\widehat{B}^m_i)_{1\le i\le m}$ of $[-R,R]^{r_1}$ with $\Diam(\widehat{B}_i^m)\le\frac{2r_1R}{\lfloor m^{1/r_1}\rfloor}$. Let $A_i^m=\widehat{B}^m_i\cap X$ for $i=1,\ldots,m$. Then $$\Diam(A_i^m)\le\frac{2r_1R}{\lfloor m^{1/r_1}\rfloor}\to0,\quad \text{as}\ m\to\infty.$$
\end{proof}
\section{Proof of Lemma~\ref{le-ini-2}}\label{appendix-le-ini-2}
\begin{proof}
Since $\nu_0\in \mathcal{C}_{\mu_X,1}(X,\cM_+(Y))$, by Proposition~\ref{prop-fibercomplete}(i) we have $(\nu_0)_x(Y)<\infty$ for all $x\in X$.

First, it is easy to verify that
\begin{align*}
  \int_X(\nu_0^{m,n})_x(Y)\rd\mu_X(x)=&\sum_{i=1}^m\int_{A^m_i}(\nu_0^{m,n})_x(Y)\mu_X(x)
  =\sum_{i=1}^ma_{m,i}\mu_X(A^m_i)\\
  =&\sum_{\mu_X(A_i^m)>0}\int_{A_i^m}(\nu_0)_x(Y)\rd\mu_X(x)
  =\int_X(\nu_0)_x(Y)\rd\mu_X(x)=1.
\end{align*}
Since $\nu_0\in\mathcal{C}_{\mu_X,1}(X,\cM_+(Y))$, we have $\nu_0^{m,n}\in\mathcal{B}_{\mu_X,1}(X,\cM_+(Y))$.

By Proposition~\ref{prop-XBC}, for every $m\in\N$ and $i=1,\ldots,m$, there exists  $\{\varphi^{m,n}_{(i-1)n+j}\colon j=1,\ldots,n\}\subseteq Y$ such that for $1\le i\le m$ with $\mu_X(A_i^m)>0$, $$\lim_{n\to\infty}d_{\sf BL}\left(\frac{(\nu_0)_{x_i^m}}{\widehat{a}_{m,i}},
\frac{1}{n}\sum_{j=1}^n\delta_{\varphi^{m,n}_{(i-1)n+j}}\right)=0,$$
where $\widehat{a}_{m,i}=\nu_{x^m_i}(Y)$.
Define $\widehat{\nu}_0^{m,n}\in\mathcal{B}(X,\cM_+(Y))$ as follows: $$(\widehat{\nu}_0^{m,n})_x\colon=\sum_{i=1}^m\mathbbm{1}_{A^m_i}(x)
\frac{\widehat{a}_{m,i}}{n}\sum_{j=1}^n\delta_{\varphi^{m,n}_{(i-1)n+j}},\quad x\in X.$$
Next, we show $$\lim_{m\to\infty}\lim_{n\to\infty}d_{\infty}(\nu_0^{m,n},\nu_0)=0.$$ It then suffices to show that for every $m\in\N$,
\begin{equation}\label{Eq-auxiliary-1}
\lim_{m\to\infty}\lim_{n\to\infty}d_{\infty}(\widehat{\nu}_0^{m,n},\nu_0)=0\end{equation}
and \begin{equation}\label{Eq-auxiliary-2}
\lim_{m\to\infty}\lim_{n\to\infty}d_{\infty}(\nu_0^{m,n},\widehat{\nu}_0^{m,n})=0.\end{equation}

We first show \eqref{Eq-auxiliary-1}.

By definition, $$(\widehat{\nu}_0^{m,n})_y\equiv(\widehat{\nu}_0^{m,n})_{x^m_i},\quad y\in A^m_i,$$
which implies that for $y\in A^m_i$,
\begin{align*}
 d_{\infty}(\nu_0,\widehat{\nu}_0^{m,n})=&\sup_{y\in X}d_{\sf BL}((\nu_0)_{y},(\widehat{\nu}_0^{m,n})_{y})\\
 \le&\max_{1\le i\le m}\left(d_{\sf BL}((\nu_0)_{y},(\nu_0)_{x_i^m})
  +\widehat{a}_{m,i}d_{\sf BL}\Bigl(\frac{(\nu_0)_{x^m_i}}{\widehat{a}_{m,i}},
  \frac{(\nu_0^{m,n})_{x^m_i})}{\widehat{a}_{m,i}}\Bigr)\right).
\end{align*}

Since $x\mapsto(\nu_0)_x$ is continuous and $X$ is compact, we have $x\mapsto(\nu_0)_x$ is uniformly continuous. Due to this uniform continuity of $x\mapsto(\nu_0)_x$ as well as $$\lim_{m\to\infty}\max\limits_{1\le i\le m}\Diam A^m_i=0,$$ we have
\[\lim_{m\to\infty}\lim_{n\to\infty}\max_{1\le i\le m}d_{\sf BL}((\nu_0)_{y},(\nu_0)_{x_i^m})=\lim_{m\to\infty}\max_{1\le i\le m}d_{\sf BL}((\nu_0)_{y},(\nu_0)_{x_i^m})=0.\]

Moreover,
\begin{align*}
  &\max_{1 i\le m}\widehat{a}_{m,i}d_{\sf BL}\Bigl(\frac{(\nu_0)_{x^m_i}}{\widehat{a}_{m,i}},
  \frac{(\nu_0^{m,n})_{x^m_i})}{\widehat{a}_{m,i}}\Bigr)\le\sup_{y\in X}\nu_{y}(Y)d_{\sf BL}\Bigl(\frac{(\nu_0)_{x^m_i}}{\widehat{a}_{m,i}},
  \frac{(\nu_0^{m,n})_{x^m_i})}{\widehat{a}_{m,i}}\Bigr),
\end{align*}
which yields that
\[\lim_{m\to\infty}\lim_{n\to\infty}\max_{1\le i\le m}\widehat{a}_{m,i}d_{\sf BL}\Bigl(\frac{(\nu_0)_{x^m_i}}{\widehat{a}_{m,i}},
  \frac{(\nu_0^{m,n})_{x^m_i})}{\widehat{a}_{m,i}}\Bigr)=\lim_{m\to\infty}0=0.\]
Hence
\[\lim_{m\to\infty}\lim_{n\to\infty}d_{\infty}(\nu_0,\widehat{\nu}_0^{m,n})=0.\]

Since $1\in\mathcal{BL}_1(Y)$,  by the uniform continuity of $x\mapsto(\nu_0)_x$,
\begin{align*}
  &|(\nu_0)_x(Y)-(\nu_0)_y(Y)|\\
  =&|\int_Y1\rd((\nu_0)_x-(\nu_0)_y|\le d_{\sf BL}((\nu_0)_x,(\nu_0)_y)\to0,\quad \text{as}\quad |x-y|\to0,\quad \forall x,y\in X.
\end{align*}
This implies that  $x\mapsto(\nu_0)_x(Y)$ is uniformly continuous in $x$.

Now we show \eqref{Eq-auxiliary-2}. By the definition of $\nu_0^{m,n}$ and $\widehat{\nu}_0^{m,n}$, for $x\in A^m_i$ with $\mu_X(A^m_i)>0$,
\begin{align*}
  d_{\sf BL}((\nu_0^{m,n})_x,(\widehat{\nu}_0^{m,n})_x)
  =&\sup_{f\in\mathcal{BL}_1(Y)}\int_Yf\rd((\nu_0^{m,n})_x-(\widehat{\nu}_0^{m,n})_x)\\
  =&\sup_{f\in\mathcal{BL}_1(Y)}\frac{1}{n}\sum_{j=1}^nf(\varphi^{m,n}_{(i-1)n+j})(a_{m,i}-\widehat{a}_{m,i})\\
  \le&|a_{m,i}-\widehat{a}_{m,i}|=\left|\frac{\int_{A_i^m}(\nu_0)_x(Y)\rd\mu_X(x)}{\mu_X(A_i^m)}
  -\nu_{x^m_i}(Y)\right|\\
  \le&{\int_{A_i^m}|(\nu_0)_x(Y)-(\nu_0)_{x^m_i}(Y)|\rd\mu_X(x)}{\mu_X(A_i^m)}\\
  \le&\sup_{x\in A_i^m}|(\nu_0)_x(Y)-(\nu_0)_{x^m_i}(Y)|,
\end{align*}

For $x\in A^m_i$ with $\mu_X(A^m_i)=0$, $$d_{\sf BL}((\nu_0^{m,n})_x,(\widehat{\nu}_0^{m,n})_x)\equiv0.$$

Since $x\mapsto(\nu_0)_x(Y)$ is uniformly continuous in $x$ and $\max\limits_{1\le i\le m}\Diam A_i^m\to0$ as $m\to\infty$, we have
\[d_{\infty}(\nu_0^{m,n},\widehat{\nu}_0^{m,n})=\sup_{x\in X}d_{\sf BL}((\nu_0^{m,n})_x,(\widehat{\nu}_0^{m,n})_x)\le\max_{1\le i\le m}\sup_{x\in A_i^m}|(\nu_0)_x(Y)-(\nu_0)_{x^m_i}(Y)|\to0,\]
i.e., \eqref{Eq-auxiliary-2} holds.
\end{proof}
\section{Proof of Lemma~\ref{le-graph}}\label{appendix-le-graph}
\begin{proof}
The proof is analogous to that of Lemma~\ref{le-ini-2} by simply replacing $Y$ in Lemma~\ref{le-ini-2} by $X$.
\end{proof}

\section{Proof of Lemma~\ref{le-h}}\label{appendix-le-h}
\begin{proof}
Since $X$ is compact, $\mathbf{(A3)}$ and $\mathbf{(A7)}$ imply that $h$ is uniformly continuous in $x$. Since $$\lim_{m\to\infty}\max_{1\le i\le m}\Diam A^m_i=0,$$ for every $\varepsilon>0$, there exists $\sigma>0$ such that
$\max_{1\le i\le m}\Diam A^m_i<\sigma$ for all $m\ge M$
for some $M\in \N$, and  for $t\in\cT$, $\phi\in Y$, $$|h(t,x,\phi)-h(t,y,\phi)|<\varepsilon,\quad x,y\in X,\quad |x-y|<\sigma.$$ Hence for every $z\in X$, we have $z\in A^m_i$ for some $1\le i\le m$, and $|z-x_i^m|\le\Diam A^m_i<\sigma$, for all $m\ge M$. Then for all $m\ge M$,
\begin{align*}
  |h^m(t,z,\psi)-h(t,z,\psi)|=&|h(t,x_i^m,\psi)-h(t,z,\psi)|<\varepsilon,
 \end{align*} which implies that
\[\sup_{x\in X}|h^m(t,z,\psi)-h(t,z,\psi)|\le\varepsilon,\quad m\ge M,\]
since $z$ is independent of $\varepsilon$.
 This shows
  \[\lim_{m\to\infty}\sup_{z\in X}|h^m(t,z,\phi)-h(t,z,\phi)|=0,\quad t\in\cT,\ \phi\in Y.\]
  Moreover, by the definition of $h^m$, $$\sup_{z\in X}|h^m(t,z,\phi)-h(t,z,\phi)|\le 2\sup_{z\in X}|h(t,z,\phi)|,\quad t\in\cT,\ \phi\in Y$$ which yields the conclusion by the Dominated Convergence Theorem as well as integrability of $h$ in $(\mathbf{A7})$.
\end{proof}
\section{Proof of Proposition~\ref{prop-well-posed-lattice}}\label{appendix-prop-well-posed-lattice}
\begin{proof}
Analogous to the proof of Proposition~\ref{Vlasovf}, one can show that the vector field of \eqref{lattice} is Lipschitz continuous, and hence unique existence of a local solution to \eqref{lattice} is obtained by Picard-Lindel\"{o}f's iteration \cite[Theorem~2.2]{T12}. The rest is the same as in the proof of Proposition~\ref{Vlasovf}, since $Y$ is positively invariant owing to ($\mathbf{A6}$).
\end{proof}

\section{A quadratic Gronwall inequality}
\begin{lemma}\label{le-gronwall}
Let $a, b, c>0$ and  $w\in \mathcal{C}(\cT,\R_+)$. Assume
\begin{equation}\label{Eq-12}
  c<\frac{b}{a(\textnormal{e}^{bT}-1)}.
\end{equation}
If $w$ satisfies
\begin{equation}
  \label{Eq-13}
  w(t)\le a\int_0^tw(s)^2\rd s+b\int_0^tw(s)\rd s+c,\quad t\in\cT,
\end{equation}
then
\[w(t)\le\frac{bc\textnormal{e}^{bt}}{b+ac(1-\textnormal{e}^{bt})},\quad t\in\cT.\]
\end{lemma}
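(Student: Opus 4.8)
The plan is to reduce the quadratic integral inequality \eqref{Eq-13} to a scalar \emph{linear} differential inequality by means of a Riccati-type substitution, and then invert a monotone relation. First I would introduce the comparison function
\[
v(t)=a\int_0^t w(s)^2\,\rd s+b\int_0^t w(s)\,\rd s+c,\qquad t\in\cT,
\]
which is continuously differentiable since $w\in\mathcal{C}(\cT,\R_+)$. By construction $v(0)=c$, the inequality \eqref{Eq-13} reads $w(t)\le v(t)$, and $v'(t)=aw(t)^2+bw(t)\ge0$, so $v$ is nondecreasing and $v(t)\ge c>0$ on $\cT$. Because the map $x\mapsto ax^2+bx$ is increasing on $\R_+$ and $0\le w(t)\le v(t)$, I obtain the autonomous differential inequality $v'(t)\le a v(t)^2+b v(t)$.

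Next I would linearize this Riccati inequality. Since $v>0$, the function $g(t)=\dfrac{v(t)}{a v(t)+b}$ is well defined and $C^1$, and a direct computation gives
\[
g'(t)=\frac{b\,v'(t)}{(a v(t)+b)^2}\le\frac{b\,v(t)\bigl(a v(t)+b\bigr)}{(a v(t)+b)^2}=b\,g(t),
\]
where I used $v'(t)\le v(t)(a v(t)+b)$. Thus $g'\le bg$, and the elementary scalar Gronwall inequality yields $g(t)\le g(0)\,\textnormal{e}^{bt}=\dfrac{c}{ac+b}\,\textnormal{e}^{bt}=:\kappa(t)$ for all $t\in\cT$.

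The final step is to recover a bound on $v$ from $\dfrac{v(t)}{a v(t)+b}\le\kappa(t)$. The map $F\colon x\mapsto\dfrac{x}{ax+b}$ is strictly increasing on $[0,\infty)$ with range $[0,1/a)$, and its inverse on that range is $F^{-1}(\kappa)=\dfrac{b\kappa}{1-a\kappa}$. Applying $F^{-1}$ to $F(v(t))\le\kappa(t)$ (legitimate precisely when $\kappa(t)$ lies in the range of $F$) gives $v(t)\le\dfrac{b\kappa(t)}{1-a\kappa(t)}$; substituting $\kappa(t)=\dfrac{c}{ac+b}\textnormal{e}^{bt}$ and simplifying produces
\[
v(t)\le\frac{bc\,\textnormal{e}^{bt}}{b+ac\bigl(1-\textnormal{e}^{bt}\bigr)},
\]
and since $w(t)\le v(t)$ this is the asserted estimate.

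The main obstacle is not the computation but controlling the interval on which the bound is meaningful: the Riccati comparison solution can blow up in finite time, so one must verify that the denominator $b+ac(1-\textnormal{e}^{bt})$, equivalently $1-a\kappa(t)$, stays strictly positive throughout $\cT$. Now $\kappa(t)<1/a$ is equivalent to $ac(\textnormal{e}^{bt}-1)<b$, and since the left-hand side is increasing in $t$, this holds for every $t\in\cT$ as soon as it holds at $t=T$, i.e. as soon as $c<\dfrac{b}{a(\textnormal{e}^{bT}-1)}$. This is exactly hypothesis \eqref{Eq-12}, which is therefore precisely what makes the inversion of $F$ valid on all of $\cT$; tracking this positivity carefully is the crux of the argument.
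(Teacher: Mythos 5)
Your argument is correct and follows essentially the same route as the paper: introduce the comparison function $v$ with $v(0)=c$, pass to the Riccati differential inequality $v'\le av^2+bv$, linearize by a Möbius substitution, integrate, and invert, with hypothesis \eqref{Eq-12} guaranteeing that the resulting denominator stays positive on all of $\cT$. The only (immaterial) difference is the choice of substitution — the paper uses $-1/v$, yielding the inhomogeneous inequality $(-1/v)'\le a-b(-1/v)$, whereas you use $v/(av+b)$, yielding the homogeneous inequality $g'\le bg$; both lead to the same bound.
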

\begin{proof}
  Let $u(t)=\int_0^tw(s)^2\rd s+b\int_0^tw(s)\rd s+c$, for $t\in\cT$. Then $u(t)\ge c$ for all $t\in\cT$ since $w(t)\ge0$. It follows from \eqref{Eq-13} that for all $t\in\cT$, $w(t)\le u(t)$ and
  \begin{equation}
    \label{Eq-14}
    u'(t)\le au(t)^2+bu(t).
  \end{equation}
  Let $v(t)=-u(t)^{-1}$. It is easy to verify that
  $$v'(t)\le a-bv(t),\quad t\in\cT.$$
  Using the auxiliary function $h(t)=\textnormal{e}^{bt}v(t)$, one can show that
   \[v(t)\le \textnormal{e}^{-bt}(v(0)+\frac{a}{b}(\textnormal{e}^{bt}-1)),\]
   i.e.,
   \[-u(t)^{-1}\le \textnormal{e}^{-bt}(-c^{-1}+\frac{a}{b}(\textnormal{e}^{bt}-1)).\]
   It then follows from \eqref{Eq-12} that
   \[w(t)\le u(t)\le\frac{bc\textnormal{e}^{bt}}{b+ac(1-\textnormal{e}^{bt})}.\]
\end{proof}
\end{document}